\documentclass [11pt] {article}
\usepackage{amsfonts}
\usepackage{amssymb}
\usepackage{times}
\usepackage{color}
\usepackage{graphicx}

\setlength{\topmargin}{-.5in}
\setlength{\textheight}{9in}
\setlength{\oddsidemargin}{5pt}
\setlength{\textwidth}{6.25in}

\newcommand{\Hom}{\mathrm{Hom}}

\newtheorem{lemma}{Lemma}[section]
\newtheorem{corollary}{Corollary}[section]
\newtheorem{theorem}{Theorem}

\newtheorem{remark}{Remark}[section]

\newtheorem{proposition}{Proposition}[section]
\newtheorem{definition}{Definition}[section]

\def\con{\star}

\def\<{\langle}
\def\>{\rangle}

\def\cl{{\rm cl}}

\def\to{\rightarrow}

\def\im{{\rm im}}

\def\bX{{\bf X}}

\def\bA{{\bf A}}

\def\bm{{\bf \mu}}

\begin{document}

\title{On higher order Fourier analysis}
\author{{\sc Bal\'azs Szegedy}}

\maketitle

\abstract{We develop a theory of higher order structures in compact abelian groups. In the frame of this theory we prove general inverse theorems and regularity lemmas for Gowers's uniformity norms. We put forward an algebraic interpretation of the notion ``higher order Fourier analysis'' in terms of continuous morphisms between structures called compact $k$-step nilspaces. As a byproduct of our results we obtain a new type of limit theory for functions on abelian groups in the spirit of the so-called graph limit theory. Our proofs are based on an exact (non-approximative) version of higher order Fourier analysis which appears on ultra product groups.}

\tableofcontents

\section{Introduction}

Higher order Fourier analysis is a notion which has many aspects and interpretations. The subject originates in a fundamental work by Gowers \cite{Gow},\cite{Gow2} in which he introduced a sequence of norms for functions on abelian groups and he used them to prove quantitative bounds for Szemer\'edi's theorem on arithmetic progressions \cite{Szem1} Since then many results were published towards a better understanding of the Gowers norms \cite{GrTao},\cite{GrTao2},\cite{GTZ},\cite{GowW},\cite{GowW2},\cite{GowW3},\cite{TZ},\cite{Sz1},\cite{Sz2},\cite{Sz3},\cite{Sz4},\cite{Sz5}
Common themes in all these works are the following four topics:

\medskip

\noindent{1.)~}{\it Inverse theorems for the Gowers norms.}

\noindent{2.)~}{\it Decompositions of functions into structured and random parts.}

\noindent{3.)~}{\it Counting structures in subsets and functions on abelian groups.}

\noindent{4.)~}{\it Connection to ergodic theory, nilmanifolds and nil sequences.}

\medskip

In the present paper we wish to contribute to all of these topics however we also put forward three other directions:

\medskip

\noindent{5.)~}{\it We develop an an algebraic interpretation of higher order Fourier analysis based on morphisms between structures that are generalizations of nilmanifolds.}

\noindent{6.)~}{\it We replace finite groups by arbitrary compact abelian groups.}

\noindent{7.)~}{\it We introduce limit objects for functions on abelian groups in the spirit of the graph limit theory.}

Important results on the fifth and sixth topics were also obtained by Host and Kra in the papers \cite{HKr2},\cite{HKr3}.
The paper \cite{HKr2} is the main motivation of \cite{NP} which is the corner stone of our approach.

\bigskip

\noindent{\bf Remark:}~~{\it Note that most the results in the present paper were obtained by the author in \cite{Sz1},\cite{Sz2},\cite{Sz3},\cite{Sz4}. However this paper together with \cite{NP} is a self contained account of the author's approach to higher order Fourier analysis. Many proofs are significantly different and more elementary than the discussion in the above four papers. 
The material of \cite{Sz5} is not covered by this paper and it will be a part of another sequence of papers in the topic.} 

\bigskip

To summarize the results in this paper we start with the definition of Gowers norms.
Let $f:A\rightarrow\mathbb{C}$ be a bounded measurable function on a compact abelian group $A$. Let $\Delta_t f$ be the function with $\Delta_tf(x)=f(x)\overline{f(x+t)}$.
With this notation
$$\|f\|_{U_k}=\Bigl(\int_{x,t_1,t_2,\dots,t_k\in A}\Delta_{t_1}\Delta_{t_2}\dots\Delta_{t_k}f(x)~d\mu^{k+1}\Bigr)^{2^{-k}}$$
where $\mu$ is the normalized Haar measure on $A$.
These norms satisfy the inequality $\|f\|_{U_k}\leq\|f\|_{U_{k+1}}$.
It is easy to verify that
\begin{equation}\label{u2norm}
\|f\|_{U_2}=\Bigl(\sum_{\chi\in\hat{A}}|\lambda_\chi|^4\Bigr)^{1/4}
\end{equation}
where $\lambda_\chi=(f,\chi)$ is the Fourier coefficient corresponding to the linear character $\chi$.
This formula explains the behaviour of the $U_2$ norm in terms of ordinary Fourier analysis.
However if $k\geq 3$, ordinary Fourier analysis does not seem to give a good understanding of the $U_k$ norm.

Small $U_2$ norm of a function $f$ with $|f|\leq 1$ is equivalent with the fact that $f$ is ``noise'' or ``quasi random'' from the ordinary Fourier analytic point of view. This means that all the Fourier coefficients have small absolute value. Such a noise however can have a higher order structure measured by one of the higher Gowers norms.
Isolating the structured part from the noise is a central topic in higher order Fourier analysis.   
In $k$-th order Fourier analysis a function $f$ is considered to be quasi random if $\|f\|_{U_{k+1}}$ is small.
As we increase $k$, this notion of noise becomes stronger and stronger and so more and more functions are considered to be structured. Our goal is to describe the structures that arise this way.

The prototype of a decomposition theorem into structured and quasi random parts is Szemer\'edi's famous regularity lemma for graphs \cite{Szem2}.
The regularity lemma together with an appropriate counting lemma is a fundamental tool in combinatorics.
It is natural to expect that a similar regularization corresponding to the $U_{k+1}$ norm is helpful in additive combinatorics.
One can state the graph regularity lemma as a decomposition theorem for functions of the form $f:V\times V\rightarrow\mathbb{C}$ with $\|f\|\leq 1$.
Roughly speaking it says that $f=f_s+f_e+r_r$ where $f_r$ has small cut norm, $f_e$ has small $L^1$ norm and $f_s$ is of bounded complexity. (All the previous norms are normalized to give $1$ for the constant $1$ function.) We say that $f_r$ has complexity $m$ if there is a partition of $V$ into $m$ almost equal parts such that $f_s(x,y)$ depends only on the partition sets containing $x$ and $y$. This can also be formulated in a more algebraic way. A complexity $m$ function on $V\times V$ is the composition of $\phi:V\times V\rightarrow [m]\times [m]$ ({\bf algebraic part}) with another function $f:[m]\times [m]\rightarrow\mathbb{C}$ ({\bf analytic part}) where $[m]$ is the set of first $m$ natural numbers and $\phi$ preserves the product structure in the sense that $\psi=g\times g$ for some map $g:V\rightarrow [m]$. 
In this language the requirement that the partition sets are of almost equal size translates to the condition that $\phi$ is close to be preserving the uniform measure. 

Based on this one can expect that there is a regularity lemma corresponding to the $U_{k+1}$ norm of a similar form.
This means that a bounded (measurable) function $f$ on a finite (or more generally on a compact) abelian group is decomposable as 
$f=f_s+f_e+f_r$ where $\|f_r\|_{U_{k+1}}$ is small, $\|f_e\|_1$ is small and $f_s$ can be obtained as the composition of $\phi:A\rightarrow N$ (algebraic part) and $g:N\rightarrow\mathbb{C}$ (analytic part) where $\phi$ is some kind of algebraic morphism preserving an appropriate structure on $A$.
The function $\phi$ would correspond to a regularity partition and $g$ would correspond to a function associating values with the partition sets.
The almost equality of the partition sets in Szemer\'edi's lemma should correspond to the fact that $\phi$ satisfies some approximative measure preserving property. 

We show that this optimistic picture is almost exactly true with some new additional features.
Quite interestingly, if $k>1$, to formulate the regularity lemma for the $U_{k+1}$ norm we need to introduce new structures called $k$-step nilspaces. It turns out that $k$-step nilspaces are forming a category and the morphisms are suitable for the purpose of regularization. 
Another interesting phenomenon is that geometry comes into the picture.
Topology and geometry does not play a direct role in stating the regularity lemma for graphs. (Note that a connection of Szemer\'edi's regularity lemma to topology was highlighted in \cite{LSz4}.) However in the abelian group case, even if we just regularize functions on finite abelian groups, compact geometric structures come up naturally as target spaces of the morphism $\phi$. To get a strong enough regularity lemma we will require that the values of the function $\phi:A\rightarrow N$ are so evenly distributed that we can basically say that $\phi$ (approximatly) reproduces the geometry of $N$ on the abelian group $A$. To guarantee that the composition $g\circ f$ respects this approximative geometry on $A$ we need to measure how much the function $g:N\rightarrow\mathbb{C}$ respects the geometry on $N$.
A possible way of doing it is to require that $g$ is continuous with bounded Lipschitz constant in some fixed metric on $N$. However the Lipschitz condition is not crucial in our approach. It can be replaced by almost any reasonable complexity notion. For example we can use an arbitrary ordering of an arbitrary countable $L^\infty$-dense set of continuous functions on $N$ and then we can require that $g$ is on this list with a bounded index.
  
\bigskip

To state our regularity lemma we will need the definition of nilspaces.
Nilspaces are common generalizations of abelian groups and nilmanifolds.
An abstract cube of dimension $n$ is the set $\{0,1\}^n$.
A cube of dimension $n$ in an abelian group $A$ is a function $f:\{0,1\}^n\rightarrow A$ which extends to an affine homomorphism (a homomorphism plus a translation) $f':\mathbb{Z}^n\rightarrow A$. Similarly, a morphism $\psi:\{0,1\}^n\rightarrow\{0,1\}^m$ between abstract cubes is a map which extends to an affine morphism from $\mathbb{Z}^n\rightarrow\mathbb{Z}^m$.

Roughly speaking, a nilspace is a structure in which cubes of every dimension are defined and they behave very similarly as cubes in abelian groups.
\begin{definition}[Nilspace axioms] A nilspace is a set $N$ and a collection $C^n(N)\subseteq N^{\{0,1\}^n}$ of functions (or cubes) of the form $f:\{0,1\}^n\rightarrow N$ such that the following axioms hold.
\begin{enumerate}
\item {\bf(Composition)} If $\psi:\{0,1\}^n\rightarrow\{0,1\}^m$ is a cube morphism and $f:\{0,1\}^m\rightarrow N$ is in $C^m(N)$ then the composition $\psi\circ f$ is in $C^n(N)$.
\item {\bf(Ergodictiry)} $C^1(N)=N^{\{0,1\}}$.
\item {\bf(Gluing)} If a map $f:\{0,1\}^n\setminus\{1^n\}\rightarrow N$ is in $C^{n-1}(N)$ restricted to each $n-1$ dimensional face containing $0^n$ then $f$ extends to the full cube as a map in $C^n(N)$.
\end{enumerate} 
\end{definition}

\bigskip

If $N$ is a nilspace and in the third axiom the extension is unique for $n=k+1$ then we say that $N$ is a $k$-step nilspace.
If a space $N$ satisfies the first axiom (but the last two are not required) then we say that $N$ is a {\bf cubespace}. 
A function $f:N_1\rightarrow N_2$ between two cubespaces is called a {\bf morphism} if $\phi\circ f$ is in $C^n(N_2)$ for every $n$ and function $\phi\in C^n(N_1)$.
The set of morphisms between $N_1$ and $N_2$ is denoted by $\Hom(N_1,N_2)$. With this notation $C^n(N)=\Hom(\{0,1\}^n,N)$.
We say that $N$ is a compact nilspace if $N$ has a compact, second countable, Hausdorff topology on it and $C^n(N)$ is a closed subset of $N^{\{0,1\}^n}$ for every $n$. 

The nilspace axiom system is a variant of the Host-Kra axiom system for parallelepiped structures \cite{HKr2}. In \cite{HKr2} the two step case is analyzed and it is proved that the structures are tied to two nilpotent groups. A systematic analysis of $k$-step nilspaces (with a special emphasis on the compact case) was carried out in \cite{NP}. It will be important that the notion of Haar measure can be generalized for compact nilspaces. It was proved in \cite{NP} that compact nilspaces are inverse limits of finite dimensional ones and the connected components of a finite dimensional compact nilspace are nilmanifolds with cubes defined through a given filtration on the nilpotent group. It is crucial that a $k$-step compact nilspace $N$ can be built up using $k$ compact abelian groups $A_1,A_2,\dots,A_k$ as structure groups in a $k$-fold iterated abelian group bundle. The nilspace $N$ is finite dimensional if and only if all the structure groups are finite dimensional or equivalently: the dual groups $\hat{A_1},\hat{A_2},\dots,\hat{A_n}$ are all finitely generated. 
It follows from the results in \cite{NP} that there are countably many finite dimensional $k$-step nilspaces up to isomorphism. An arbitrary ordering on them will be called a {\bf complexity notion}.

For every finite dimensional nilspace $N$ and natural number $n$ we fix a metrization of the weak convergence of probability measures on $C^n(N)$.
Let $M$ and $N$ be (at most) $k$-step compact nilspaces such that $N$ is finite dimensional. Let $\phi:M\rightarrow N$ be a continuous morphism and let us denote by $\phi_n:C^n(M)\rightarrow C^n(N)$ the map induced by $\phi$ using composition. The map $\phi$ is called $b$-balanced if the probability distribution of $\phi_n(x)$ for a random $x\in C^n(M)$ is at most $b$-far from the uniform distribution on $C^n(N)$ whenever $n\leq 1/b$.
Being well balanced expresses a very strong surjectivity property of morphisms which is for example useful in counting.

\begin{definition}[Nilspace-polynomials] Let $A$ be a compact abelian group. A function $f:A\rightarrow\mathbb{C}$ with $|f|\leq 1$ is called a $k$-degree, complexity $m$ and $b$-balanced nilspace-polynomial if
\begin{enumerate}
\item $f=\phi\circ g$ where $\phi:A\rightarrow N$ is a continuous morphism of $A$ into a finite dimensional compact nilspace $N$,
\item $N$ is of complexity at most $m$,
\item $\phi$ is $b$-balanced,
\item $g$ is continuous with Lipschitz constant $m$. 
\end{enumerate}
\end{definition}

Note, that (as it will turn out) a nilspace-polynomial on a cyclic group is polynomial nilsequence with an extra periodicity property.
Now we are ready to state the decomposition theorem.

\begin{theorem}[Regularization]\label{reglem} Let $k$ be a fixed number and $F:\mathbb{R}^+\times\mathbb{N}\rightarrow\mathbb{R^+}$ be an arbitrary function. Then for every $\epsilon>0$ there is a number $n=n(\epsilon,F)$ such that for every measurable function $f:A\rightarrow\mathbb{C}$ on a compact abelian group $A$ with $|f|\leq 1$ there is a decomposition $f=f_s+f_e+f_r$ and number $m\leq n$ such that the following conditions hold.
\begin{enumerate}
\item $f_s$ is a degree $k$, complexity $m$ and $F(\epsilon,m)$-balanced nilspace-polynomial,
\item $\|f_e\|_1\leq\epsilon$,
\item $\|f_r\|_{U_{k+1}}\leq F(\epsilon,m)$~,~$|f_r|\leq 1$ and $|(f_r,f_s)|~,~|(f_r,f_e)|\leq F(\epsilon,m)$.
\end{enumerate}
\end{theorem}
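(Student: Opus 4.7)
My approach would follow the ultraproduct strategy that the paper flags as its central technical device, arguing by contradiction. Assume the theorem fails: then there exist $\epsilon>0$ and $F$, and a sequence of compact abelian groups $A_j$ and measurable functions $f_j:A_j\to\mathbb{C}$ with $|f_j|\le 1$, for which no decomposition satisfying the three conditions exists with parameter $m\le j$. Form the ultraproduct $\mathbf{A}=\prod_\omega A_j$ endowed with its Loeb probability measure, and let $f=\lim_\omega f_j\in L^\infty(\mathbf{A})$ with $|f|\le 1$.

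On the ultraproduct I would invoke the exact (non-approximative) structure theorem that the paper advertises as the ultraproduct counterpart of higher order Fourier analysis, producing a $\sigma$-algebra $\mathcal{F}_k\subseteq L^\infty(\mathbf{A})$ together with the orthogonal decomposition $f=\mathbb{E}(f|\mathcal{F}_k)+(f-\mathbb{E}(f|\mathcal{F}_k))$, where the second summand has vanishing $U_{k+1}$ seminorm. The structural input imported from \cite{NP} is that $\mathcal{F}_k$ is the $\sigma$-algebra generated by continuous morphisms $\phi:\mathbf{A}\to N$ into finite-dimensional $k$-step compact nilspaces $N$, and that every such $\phi$ transports the ultraproduct Haar measure to Haar measure on $N$ exactly in every cube dimension, so $\phi$ is $0$-balanced.

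The next step is to commit to one approximating nilspace factor. Pick a finite-dimensional $k$-step $N$ of some complexity $m$ and a continuous $\phi:\mathbf{A}\to N$ with $\|\mathbb{E}(f|\sigma(\phi))-\mathbb{E}(f|\mathcal{F}_k)\|_1\le\epsilon/2$; write $\mathbb{E}(f|\sigma(\phi))=g_0\circ\phi$ for some measurable $g_0:N\to\mathbb{C}$ with $|g_0|\le 1$, and approximate $g_0$ in $L^1(N)$ by $g:N\to\mathbb{C}$ with $|g|\le 1$ and bounded Lipschitz constant, possible since $N$ is compact metric. Then $f_s:=g\circ\phi$, $f_e:=\mathbb{E}(f|\mathcal{F}_k)-g\circ\phi$ and $f_r:=f-\mathbb{E}(f|\mathcal{F}_k)$ satisfy the three decomposition conditions on $\mathbf{A}$, with the orthogonality estimates $(f_r,f_s)=(f_r,f_e)=0$ coming for free from $f_r\perp L^2(\mathcal{F}_k)$.

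The final step is descent. For $\omega$-almost every index $j$ the morphism $\phi$ is the ultralimit of continuous morphisms $\phi_j:A_j\to N$, and because $\phi$ is $0$-balanced on $\mathbf{A}$ a standard ultraproduct transfer shows that $\phi_j$ is $F(\epsilon,m)$-balanced for $\omega$-almost every $j$. The same transfer gives $\|f_{j,e}\|_1\le\epsilon$, $\|f_{j,r}\|_{U_{k+1}}\le F(\epsilon,m)$, and the required orthogonality approximations, producing a valid decomposition of $f_j$ at complexity $m$ independent of $j$, which contradicts the choice of the counterexample sequence as soon as $j\ge m$. I expect the hardest step to be the transfer itself: one must verify that $0$-balancedness of $\phi$ on the ultraproduct translates uniformly into $F(\epsilon,m)$-balancedness of the finite approximants $\phi_j$ across all cube dimensions $n\le 1/F(\epsilon,m)$, which uses the explicit compatibility between the nilspace cube structure and the ultraproduct topology developed in \cite{NP}.
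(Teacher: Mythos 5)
Your proposal follows essentially the same route as the paper: argue by contradiction, pass to the ultraproduct, apply the exact decomposition $f=\mathbb{E}(f|\mathcal{F}_k)+f_r$ with $\|f_r\|_{U_{k+1}}=0$ realized through a strong nilspace factor, cut down to a finite-dimensional factor via the inverse limit theorem, replace the measurable function on the nilspace by a Lipschitz one, and descend to the finite groups along the ultrafilter. The splitting of the $L^1$ error and the use of exact measure preservation ($0$-balancedness) of strong factors to get the balancedness and orthogonality conditions on $\omega$-many indices are all as in the paper.

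One step is asserted too casually and is where the real work hides: you write that for $\omega$-almost every $j$ the morphism $\phi$ ``is the ultralimit of continuous morphisms $\phi_j:A_j\to N$.'' What transfer actually gives you is that $\phi$ is the ultralimit of continuous \emph{functions} $\phi_j':A_j\to N$; being a nilspace morphism is an exact algebraic condition on cubes, and the $\phi_j'$ are only \emph{approximate} morphisms with error tending to $0$ along $\omega$. The paper closes this gap by invoking a rigidity result from \cite{NP} that corrects an approximate morphism into a genuine morphism $\phi_j$ at small uniform distance from $\phi_j'$; without that correction your $f_{j,s}$ need not be a nilspace-polynomial at all. By contrast, the step you single out as hardest --- transferring $0$-balancedness of $\phi$ to $F(\epsilon,m)$-balancedness of the $\phi_j$ --- is comparatively soft once the $\phi_j$ exist, since only finitely many cube dimensions $n\le 1/F(\epsilon,m)$ are involved and the relevant distributions converge along $\omega$.
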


\begin{remark} The Gowers norms can also be defined for functions on $k$-step compact nilspaces. It makes sense to generalize our results from abelian groups to nilspaces. Almost all the proofs are essentially the same. This shows that the (algebraic part) of $k$-th order Fourier analysis deals with continuous functions between $k$-step nilspaces. 
\end{remark}

Note that various other conditions could be put on the list in theorem \ref{reglem}.
For example the proof shows that $f_s$ looks approximately like a projection of $f$ to a $\sigma$-algebra. This imposes strong restrictions on the value distribution of $f_s$ in terms of the value distribution of $f$. 

Theorem \ref{reglem} implies inverse theorems for the Gowers norms.
It says that if $\|f\|_{U_{k+1}}$ is separated from $0$ then it correlates with a bounded complexity $k$-degree nilspace polynomial $\phi$.
(We can also require the function $\phi$ to be arbitrary well balanced in terms of its complexity but we omit this from the statement to keep it simple.)

\begin{theorem}[General inverse theorem for $U_{k+1}$]\label{invthem} Let us fix a natural number $k$. For every $\epsilon>0$ there is a number $n$ such that if $\|f\|_{U_{k+1}}\geq\epsilon$ for some measurable function $f:A\rightarrow\mathbb{C}$ on a compact abelian group $A$ with $|f|\leq 1$ then $(f,g)\geq\epsilon^{2^k}/2$ for some nilspace polynomial $g$ of degree $k$ and complexity at most $n$.
\end{theorem}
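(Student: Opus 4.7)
The plan is to deduce Theorem~\ref{invthem} from the regularization theorem (Theorem~\ref{reglem}) by a short functional-analytic argument. I apply Theorem~\ref{reglem} to $f$ with a small parameter $\epsilon'$ (a tiny multiple of $\epsilon^{2^k}$) and a rapidly decreasing function $F$, obtaining a decomposition $f=f_s+f_e+f_r$ in which $f_s$ is a $k$-degree, complexity-$m$, $F(\epsilon',m)$-balanced nilspace polynomial, $\|f_e\|_1\leq\epsilon'$, $\|f_r\|_{U_{k+1}}\leq F(\epsilon',m)$, and $|(f_r,f_s)|\leq F(\epsilon',m)$.

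The first step transfers the lower bound $\|f\|_{U_{k+1}}\geq\epsilon$ onto $f_s$. Since $|f_e|\leq|f|+|f_s|+|f_r|\leq 3$, the elementary bound $\|f_e\|_{U_{k+1}}^{2^{k+1}}\leq\|f_e\|_\infty^{2^{k+1}-1}\|f_e\|_1$ makes $\|f_e\|_{U_{k+1}}$ of order $\epsilon'^{1/2^{k+1}}$; combined with $\|f_r\|_{U_{k+1}}\leq F(\epsilon',m)$ and the Gowers triangle inequality, this yields $\|f_s\|_{U_{k+1}}\geq\epsilon/2$ for suitably small $\epsilon'$ and $F$.

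The second and main step converts largeness of $\|f_s\|_{U_{k+1}}$ into a correlation lower bound $(f,g)\geq\epsilon^{2^k}/2$. Writing $f_s=g_N\circ\phi$ with $g_N$ Lipschitz on the finite-dimensional $k$-step nilspace $N$, balancedness of $\phi$ transfers Gowers norms between $A$ and $N$, giving $\|g_N\|_{U_{k+1}(N)}\geq\epsilon/3$. On $N$ I would extract a continuous bounded-complexity ``phase polynomial'' $h$ (a $k$-step analogue of a character) satisfying $|(g_N,h)_N|\geq\|g_N\|_{U_{k+1}(N)}^{2^k}$; this is the higher-order analogue of the $U_2$ inequality $\sum_\chi|\hat g(\chi)|^4\leq\max_\chi|\hat g(\chi)|^2\cdot\|g\|_2^2$, which yields $\max_\chi|\hat g(\chi)|\geq\|g\|_{U_2}^2$ and explains why the naive choice $g=f_s$ only produces the weaker bound $(f,f_s)\geq\|f_s\|_2^2\geq\|f_s\|_{U_{k+1}}^{2^{k+1}}$. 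Setting $g:=h\circ\phi$, one obtains a nilspace polynomial on $A$; writing $(f,g)=(f_s,g)+(f_e,g)+(f_r,g)$, the first term is close to $(g_N,h)_N\geq(\epsilon/3)^{2^k}$ by balance of $\phi$, while $|(f_e,g)|\leq\epsilon'$ by H\"older and $|(f_r,g)|\leq F(\epsilon',m)\cdot\|g\|_{U_{k+1}^*}$ by duality. Choosing $\epsilon'$ and $F$ small enough dominates these errors.

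The main obstacle is the sharp inverse step on $N$ --- establishing that for a finite-dimensional $k$-step compact nilspace, a bounded-complexity phase polynomial $h$ achieves $|(g_N,h)_N|\geq\|g_N\|_{U_{k+1}(N)}^{2^k}$. This should follow by induction on $k$ from the description in \cite{NP} of $N$ as an iterated abelian bundle with top structure group $A_k$: a Fourier expansion of $g_N$ along the fibers of the projection $N\to N_{k-1}$ onto the $(k-1)$-step factor converts $\|g_N\|_{U_{k+1}(N)}^{2^{k+1}}$ into a weighted sum involving $U_k(N_{k-1})$ norms of the character coefficients, enabling a Cauchy--Schwarz extraction of a dominant character followed by the inductive hypothesis on $N_{k-1}$. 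The subtlety is keeping the complexity and Lipschitz constant of the resulting phase polynomial bounded, which is ensured by the finite dimensionality of $N$.
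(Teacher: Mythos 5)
Your first step (transferring $\|f\|_{U_{k+1}}\geq\epsilon$ to $f_s$ via Theorem \ref{reglem}) matches the paper, but the rest of your argument rests on a miscalculation that sends you down an unnecessary and ultimately unproven path. You dismiss the choice $g=f_s$ on the grounds that it ``only produces the weaker bound $(f,f_s)\geq\|f_s\|_2^2\geq\|f_s\|_{U_{k+1}}^{2^{k+1}}$.'' The correct inequality is stronger: since $\|f_s\|_{U_{k+1}}\leq\|f_s\|_{L^{2^k}}$ and $|f_s|\leq 1$, one has
$$\|f_s\|_{U_{k+1}}^{2^k}\;\leq\;\|f_s\|_{L^{2^k}}^{2^k}\;=\;\int|f_s|^{2^k}\;\leq\;\int|f_s|^2\;=\;(f_s,f_s),$$
which is exactly Corollary \ref{l2becs} applied with $k+1$ in place of $k$. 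This gives $(f_s,f_s)\geq(\epsilon-o(1))^{2^k}$, and since $|(f_e,f_s)|\leq\|f_e\|_1$ and $|(f_r,f_s)|\leq F(\epsilon_2,m)$ are both negligible, $(f,f_s)\geq\epsilon^{2^k}/2$ follows with $g=f_s$, which is already a degree-$k$ nilspace polynomial of complexity at most $m\leq n$. This is precisely the paper's proof; no further extraction is needed.

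Your proposed second step is, as you acknowledge, the main obstacle, and it is a genuine gap: the claim that on a finite-dimensional $k$-step nilspace there is a bounded-complexity phase polynomial $h$ with $|(g_N,h)|\geq\|g_N\|_{U_{k+1}(N)}^{2^k}$ is a sharp polynomial-correlation inverse theorem that is not established in the paper or in \cite{NP}, and the clean exponent available for $U_2$ via Parseval has no known analogue for $k\geq 2$. There is also a secondary problem: the modulus-one functions in $W(\chi,N)$ produced by Lemma \ref{nilspchar} come from a Borel section and need not be continuous, let alone Lipschitz with bounded constant, so even granting the correlation bound, $h\circ\phi$ would not obviously be a nilspace polynomial in the sense required by the theorem. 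Both difficulties evaporate once you notice that $f_s$ itself already does the job.
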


Note that this inverse theorem is exact in the sense that if $f$ correlates with a bounded complexity nilspace polynomial then its Gowers norm is separated from $0$.

\bigskip

A strengthening of the decomposition theorem \ref{reglem} and inverse theorem \ref{invthem} deals with the situation when the abelian groups are from special families. 
For example we can restrict our attention to elementary abelian $p$-groups with a fixed prime $p$. Another interesting case is the set of cyclic groups or bounded rank abelian groups.
It also make sense to develop a theory for one particular infinite compact group like the circle group $\mathbb{R}/\mathbb{Z}$. (We will see in chapter \ref{chap:circle} that many features of higher order Fourier analysis become significantly simpler if we restrict it to the circle.) 
It turns out that in restricted families of groups we get restrictions on the structure groups of the nilspaces that we have to use in our decomposition theorem. 
To formulate these restrictions we need the next definition.

\begin{definition} Let $\mathfrak{A}$ be a family of compact abelian groups. We denote by $(\mathfrak{A})_k$ the set of finitely generated groups that arise as subgroups of $\hat{\bA}_k$ where $\bA$ is some ultra product of groups in $\mathfrak{A}$ and $\hat{\bA}_k$ is the $k$-th order dual group of $\bA$ in the sense of chapter \ref{chap:higherdual}. 
\end{definition}

The following statements about $(\mathfrak{A})_k$ follow from lemma \ref{duexp} and lemma \ref{charzero}.

\begin{enumerate}
\item {\bf (Bounded exponent and characteristic $p$)}~If $\mathfrak{A}$ is the set of finite groups of exponent $n$ then $(\mathfrak{A})_k=\mathfrak{A}$ for every $k$. In particular if $n=p$ prime then $\mathfrak{A}$ and $(\mathfrak{A})_k$ are just the collection of finite dimensional vector spaces over the field with $p$ elements.
\item {\bf (Bounded rank)}~If $\mathfrak{A}$ is the set of finite abelian groups of rank at most $d$ then $(\mathfrak{A})_1$ is the collection of finitely generated abelian groups whose torsion part has rank at most $d$. If $k\geq 2$ then $(\mathfrak{A})_k$ contains only free abelian groups. The case $d=1$ is the case of cyclic groups.
\item {\bf (Characteristic $0$)}~If $\mathfrak{A}$ is a family of finite abelian groups in which for every natural number $n$ there are only finitely many groups with order divisible by $n$ then $(\mathfrak{A})_k$ contains only free abelian groups for every $k$.
\item {\bf (Tori)}~If $\mathfrak{A}$ contains only tori $(\mathbb{R}/\mathbb{Z})^n$ then $(\mathfrak{A})_k$ contains only free abelian groups for every $k$.
\end{enumerate} 

\begin{definition} Let $\mathfrak{A}$ be a family of compact abelian groups. A $k$-step $\mathfrak{A}$-nilspace is a finite dimensional nilspace with structure groups $A_1,A_2,\dots,A_k$ such that $\hat{A_i}\in(\mathfrak{A})_i$ for every $1\leq i\leq k$.
A nilspace polynomial is called $\mathfrak{A}$-nilspace polynomial if the corresponding morphism goes into an $\mathfrak{A}$-nilspace.\end{definition}
Then we have the following.

\begin{theorem}[Regularization in special families]\label{restreg} Let $\mathfrak{A}$ be a set of compact abelian groups. Then Theorem \ref{reglem} restricted to functions on groups from $\mathfrak{A}$ is true with the stronger implication that the structured part $f_s$ is an $\mathfrak{A}$-nilspace polynomial.
\end{theorem}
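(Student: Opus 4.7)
The plan is to deduce Theorem \ref{restreg} from Theorem \ref{reglem} by tracing its proof through the ultra product machinery and observing that when the input groups all belong to $\mathfrak{A}$, the structure groups of the nilspace produced automatically sit inside $(\mathfrak{A})_k$. Since the paper's proof of the unrestricted regularization theorem is obtained by ``pulling back'' an exact decomposition on an ultra product group $\bA=\prod_\omega A_i$ to the coordinate groups $A_i\in\mathfrak{A}$, no new analysis is needed; only bookkeeping on which groups appear as structure groups.

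Concretely, I would argue by contradiction. Assume that for some $\epsilon>0$ and $F$ the strengthened conclusion fails. Then one finds a sequence of groups $A_i\in\mathfrak{A}$ and functions $f_i:A_i\to\mathbb{C}$, $|f_i|\le 1$, admitting no decomposition $f_i=f_{s,i}+f_{e,i}+f_{r,i}$ in which $f_{s,i}$ is an $\mathfrak{A}$-nilspace polynomial of complexity at most $i$ with the required balance and error bounds. Form the ultra product group $\bA=\prod_\omega A_i$ together with the ultra limit ${\bf f}=\lo f_i$, a measurable function on $\bA$ with $|{\bf f}|\le 1$. The exact (non-approximative) higher order Fourier analysis on $\bA$, which underlies the proof of Theorem \ref{reglem}, produces a decomposition ${\bf f}={\bf f}_s+{\bf f}_e+{\bf f}_r$, where ${\bf f}_s=g\circ\phi$ for some continuous morphism $\phi:\bA\to N$ into a finite dimensional $k$-step compact nilspace $N$ with structure groups $A_1,\dots,A_k$.

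The key point is now to identify the $A_j$. Because $\phi$ is a continuous morphism of $\bA$ into $N$, the factorization of $N$ as an iterated abelian bundle gives, at each step $j\le k$, continuous characters on the $j$-th order cube structure of $\bA$ that generate the dual $\hat{A_j}$. By the very definition of the $k$-th order dual group used in the definition of $(\mathfrak{A})_k$, these characters live inside $\hat{\bA}_j$. Since $N$ is finite dimensional, $\hat{A_j}$ is a finitely generated subgroup of $\hat{\bA}_j$, hence $\hat{A_j}\in(\mathfrak{A})_j$ and $N$ is an $\mathfrak{A}$-nilspace. Thus ${\bf f}_s$ is an $\mathfrak{A}$-nilspace polynomial on $\bA$.

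It remains to push this ultra product decomposition down to the coordinate groups $A_i$. Using the standard transfer arguments already developed in the paper (ultra limit of morphisms, approximation of the continuous $\phi$ on $\bA$ by morphisms $\phi_i:A_i\to N$, density arguments to ensure balance, and an ultra limit of Lipschitz $g$), one obtains, for $\omega$-almost every $i$, a decomposition $f_i=f_{s,i}+f_{e,i}+f_{r,i}$ with $f_{s,i}=g\circ\phi_i$ an $\mathfrak{A}$-nilspace polynomial satisfying all the prescribed estimates. For large enough $i$ this contradicts the hypothesis and completes the proof. The main obstacle is the middle step: verifying that the structure groups of the nilspace obtained from the ultra product construction are genuinely subgroups of $\hat{\bA}_k$ in the sense of the definition; once the higher order dual theory of Chapter \ref{chap:higherdual} is in place, however, this identification is built into how the structure groups are constructed from characters on cube sets.
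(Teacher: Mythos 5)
Your proposal is correct and follows essentially the same route as the paper: the paper's proof simply notes that the nilspace factor $N_j$ constructed in the proof of Theorem \ref{reglem} is a character preserving (i.e.\ strong) factor of $\bA$, so by Lemma \ref{homok} and Theorem \ref{charpres} the duals of its structure groups embed as finitely generated subgroups of the groups $\hat{\bA}_i$, which is exactly the definition of an $\mathfrak{A}$-nilspace. The one place where your write-up is loose is the claim that $\hat{A_j}$ is a \emph{subgroup} (not merely a homomorphic image) of $\hat{\bA}_j$: this requires the injectivity of the maps $\tau_j$, which is precisely the character-preserving property you allude to but do not name.
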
 

\begin{theorem}[Specialized inverse theorem for $U_{k+1}$]\label{restinv} Let $\mathfrak{A}$ be a set of compact abelian groups. Then for functions on groups in $\mathfrak{A}$ theorem \ref{invthem} holds with $\mathfrak{A}$-nilspace polynomials.
\end{theorem}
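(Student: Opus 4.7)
The plan is to mirror, almost verbatim, the derivation of the unrestricted inverse theorem \ref{invthem} from the regularization theorem \ref{reglem}, feeding in Theorem \ref{restreg} in place of Theorem \ref{reglem}. The point is that the derivation is a purely analytic manipulation of the decomposition $f=f_s+f_e+f_r$: it uses only the $L^1$ bound on $f_e$, the $U_{k+1}$ bound on $f_r$, the quasi-orthogonality estimates $|(f_r,f_s)|,|(f_r,f_e)|\le F(\epsilon',m)$, and the uniform bound $|f_s|\le 1$. It never opens up $f_s$ to inspect its internal algebraic structure, so if $f_s$ comes out of \ref{restreg} as an $\mathfrak{A}$-nilspace polynomial then the structured part produced at the end of the derivation inherits the same restriction.

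Concretely, given $f\colon A\to\mathbb{C}$ on a group $A\in\mathfrak{A}$ with $|f|\le 1$ and $\|f\|_{U_{k+1}}\ge\epsilon$, invoke Theorem \ref{restreg} with an auxiliary parameter $\epsilon'$ and a function $F(\epsilon',m)$ to be fixed in a moment. This yields $f=f_s+f_e+f_r$ with $f_s$ an $\mathfrak{A}$-nilspace polynomial of complexity $m\le n(\epsilon',F)$. Expand
$$\|f\|_{U_{k+1}}^{2^{k+1}}=\langle\underbrace{f,\ldots,f}_{2^{k+1}}\rangle_{U_{k+1}}$$
as a Gowers multilinear inner product and substitute the decomposition into each of the $2^{k+1}$ slots. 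The Gowers--Cauchy--Schwarz inequality bounds each cross-term containing at least one $f_r$ or $f_e$ slot by $3^{2^{k+1}-1}\cdot\max(\|f_r\|_{U_{k+1}},\|f_e\|_{U_{k+1}})$, and since $\|f_e\|_{U_{k+1}}\le 3(\epsilon')^{1/2^{k+1}}$ (from $\|g\|_{U_{k+1}}^{2^{k+1}}\le\|g\|_\infty^{2^{k+1}-1}\|g\|_1$), one can drive the sum of all cross-terms below $\epsilon^{2^{k+1}}/2$ once $\epsilon'$ and $F$ are small enough, leaving $\|f_s\|_{U_{k+1}}\ge(1-o(1))\epsilon$.

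To convert this into correlation, use the standard chain of inequalities (obtained by iterating the identity $\|g\|_{U_{k+1}}^{2^{k+1}}=\mathbb{E}_t\|\Delta_tg\|_{U_k}^{2^k}$ together with $|g|\le 1$) to lower-bound $\|f_s\|_2^2$ in terms of $\|f_s\|_{U_{k+1}}$, pushing $\|f_s\|_2^2\ge\epsilon^{2^k}/O(1)$. Then write $(f,f_s)=\|f_s\|_2^2+(f_e,f_s)+(f_r,f_s)$ and use $|(f_e,f_s)|\le\|f_e\|_1\le\epsilon'$ together with $|(f_r,f_s)|\le F(\epsilon',m)$ from the regularity output. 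Choosing the auxiliary parameters small yields $(f,f_s)\ge\epsilon^{2^k}/2$, so $g:=f_s$ is the required $\mathfrak{A}$-nilspace polynomial.

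The one technical delicacy is the interlocked quantifier order: $F$ and $\epsilon'$ must be fixed before the complexity $m$ returned by \ref{restreg} is known, yet the error bounds depend on $F(\epsilon',m)$. This is handled as in the proof of \ref{invthem}: pick $F$ decaying fast enough in $m$ so that $F(\epsilon',m)$ is dominated by a small fraction of $\epsilon^{2^k}$ uniformly in $m$, and then pick $\epsilon'$ small in terms of $\epsilon$ and $k$. The $\mathfrak{A}$-restriction itself plays no role in this quantitative accounting---it rides along for free once one uses \ref{restreg} rather than \ref{reglem}.
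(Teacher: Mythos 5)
Your proposal is correct and follows essentially the same route as the paper: the paper derives Theorem \ref{restinv} by running the exact argument used to deduce Theorem \ref{invthem} from Theorem \ref{reglem}, only with Theorem \ref{restreg} supplying the decomposition (taking $F(a,b)=a/b$ and a small auxiliary $\epsilon_2$, then using Corollary \ref{l2becs} to get $(f_s,f_s)\geq 2\epsilon^{2^k}/3$ and hence $(f,f_s)\geq\epsilon^{2^k}/2$). Your write-up merely fills in the quantitative details that the paper compresses into ``it is clear that,'' and your observation that the $\mathfrak{A}$-restriction rides along for free is precisely the paper's point.
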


This theorem shows in particular that if $\mathfrak{A}$ is the set of abelian groups in which the order of every element divides a fixed number $n$ (called groups of exponent $n$) then $\mathfrak{A}$-nilspaces (used in the regularization) are finite and all the structure groups have exponent $n$. 

In the $0$ characteristic case $\mathfrak{A}$-nilspaces are $k$-step nilmanifold with a given filtration. This will help us to give a generalization of the Green-Tao-Ziegler theorem \cite{GTZ} for a multidimensional setting.
In the case of the circle group or more generally tori's, again we only get $k$-step nilmanifolds.

\bigskip
We highlight our results about counting and limit objects for function sequences.
Roughly speaking, counting deals with the density of given configurations in subsets or functions on compact abelian groups. We have two goals with counting. One is to show that our regularity lemma is well behaved with respect to counting and the second goal is
to show that function sequences in which the density of every fixed configuration converges have a nice limit object which is a measurable function on a nilspace. This fits well into the recently developed graph and hypergraph limit theories \cite{LSz1},\cite{BCLSV1},\cite{LSz3},\cite{LSz4},\cite{ESz}.
 
Counting in compact abelian groups has two different looking but equivalent interpretations. One is about evaluating certain integrals and the other is about the distribution of random samples from a function.
Let $f:A\rightarrow\mathbb{C}$ be a bounded function and the compact group $A$.
An integral of the form 
$$\int_{x,y,z\in A} f(x+y)f(x+z)f(y+z)~d\mu^3$$
can be interpreted as the triangle density in the weighted graph $M_{x,y}=f(x+y)$.
Based on this connection, evaluating such integrals can be called counting in $f$.
Note that one might be interested in more complicated integrals like this:
$\int f(x+y+z)^5\overline{f(x+y)}~d\mu^3$
where conjugations and various powers appear. 
It is clear that as long as the arguments are sums of different independent variables then all the above integrals can be obtained from knowing the seven dimensional distribution of
\begin{equation}\label{jointdis}
(f(x),f(y),f(z),f(x+y),f(x+z),f(y+z),f(x+y+z))\in\mathbb{C}^7
\end{equation}
where $x,y,z$ are randomly chosen elements form $A$ with respect to the Haar measure.
One can think of the above integrals as multi dimensional moments of the distribution in (\ref{jointdis}).
We will say that such a moment (or the integral itself) is simple if it does not contain higher powers. (We allow conjugation in simple moments.)
We will see that there is a slight, technical difference between dealing with simple moments and dealing with general moments.

Every moment can be represented as a colored (or weighted) hypergraph on the vertex set $\{1,2,\dots,n\}$ where $n$ is the number of variables and an edge $S\subseteq\{1,2,\dots,n\}$ represents the term $f(\sum_{i\in S}x_i)$ in the product.
The color of an edge tells the appropriate power and conjugation for the corresponding term.
The degree of a moment is the maximal size of an edge minus one in this hypergraph.
Let $\mathcal{M}$ denote the set of all simple moments and let $\mathcal{M}_k$ denote the collection of simple moments of degree at most $k$. 
We will denote by $D_n(f)$ the joint distribution of $\{f(\sum_{i\in S} x_i)\}_{S\subset[n]}$ where $[n]=\{1,2,\dots,n\}$.
It is a crucial fact that all the moments and the distributions $D_n$ can also be evaluated for functions on compact nilspaces with a distinguished element $0$. 
We call nilspaces with such an element ``rooted nilspaces''.
Let $N$ be a rooted nilspace. If we choose a random $n$-dimensional cube $c:\{0,1\}^n\rightarrow N$ in $C^n(N)$ with $f(0^n)=0$ then the joint distribution of the values $\{f(c(v))\}_{v\in\{0,1\}^n}$ gives the distribution $D_n(f)$. 

We continue with an interesting example.
Let $C=\mathbb{R}/\mathbb{Z}$ the circle group and let $\chi(x)=e^{x2\pi i}$ defined on $C$. The dual group of $C$ is the cyclic group generated by the linear character $\chi$.
Let us consider the function sequence $f_n=\chi+\chi^n$ for $n\in\mathbb{N}$. 
One can calculate that $\{f_n\}_{n=1}^\infty$ converges in the sense that for every  $k\in\mathbb{N}$ and $M\in\mathcal{M}_k$ the values $\{M(f_i)\}_{i=1}^\infty$ converege. In fact for every $n$ the sequence $\{D_n(f_i)\}_{i=1}^\infty$ is a convergent sequence of distributions. 
The natural question arises if there is a natural limit object for this function sequence. It turns out that there is no function on the circle which represents the limit however on the torus $C^2$ the function $f(x,y)=\chi(x)+\chi(y)$ has the property that $\lim_{i\to\infty}D_n(f_i)=D(f)$ holds for every $n$. It turns out that more complicated limit objects can arise. For example there are function sequences on the circle (or on finite abelian groups) which converge to functions on the Heisenberg nilmanifold. 
Our next theorems provide limit objects for convergent function sequences.

\begin{theorem}[Limit object I.]\label{simplim} Assume that $\{f_i\}_{i=1}^\infty$ is a sequence of uniformly bounded measurable functions on the compact abelian groups $\{A_i\}_{i=1}^\infty$. Then if $\lim_{i\to\infty} M(f_i)$ exists for every $M\in\mathcal{M}$ then there is a measurable function (limit object) $g:N\rightarrow\mathbb{C}$ on a compact rooted nilspace $N$ such that $M(g)=\lim_{i\to\infty}M(f_i)$ for $M\in\mathcal{M}$.
\end{theorem}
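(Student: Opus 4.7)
The plan is to pass to an ultraproduct, extract a compact nilspace factor, and define the limit as a conditional expectation that descends to that factor. First I fix a non-principal ultrafilter $\omega$ on $\mathbb{N}$ and form the ultraproduct $\bA=\prod_\omega A_i$ equipped with its Loeb $\sigma$-algebra and the ultralimit of the normalized Haar measures. The ultralimit $\ff=\lim_\omega f_i$ is a uniformly bounded Loeb-measurable function on $\bA$. Because by hypothesis the ordinary limit $\lim_{i\to\infty}M(f_i)$ exists for every simple moment, it coincides with the ultralimit, and therefore $M(\ff)=\lim_{i\to\infty}M(f_i)$ for all $M\in\mathcal{M}$. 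The problem is thus reduced to producing a measurable function $g$ on a compact rooted nilspace with the same simple moments as $\ff$.

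Second, I would use the higher-order Fourier structure developed earlier in the paper (specifically the higher dual chapter) to construct, for each $k$, the maximal $k$-step compact nilspace factor $\pi_k:\bA\to\bA_k$, whose structure groups are controlled by the higher duals $\hat{\bA}_i$ for $i\le k$. The factors form an inverse system of compact nilspaces, and I let $N$ be their inverse limit, which is again a compact nilspace; rooting $N$ at the image of $0_\bA$ makes it a rooted nilspace in the sense of the paper. Pulling back along the $\pi_k$ yields an increasing filtration of $\sigma$-algebras $\cF_1\subseteq\cF_2\subseteq\cdots$ on $\bA$, and I set $\tilde g=E(\ff\mid\bigvee_k\cF_k)$, which descends to a Borel-measurable function $g:N\to\mathbb{C}$.

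Third, I need to verify that $M(g)=M(\ff)$ for every simple moment, say of degree $k$. A standard repeated application of the Gowers--Cauchy--Schwarz inequality shows that $|M(h_1)-M(h_2)|$ is bounded by a constant (depending on $M$) times $\|h_1-h_2\|_{U_{k+1}}$ for uniformly bounded $h_j$. The decisive non-approximative input is that on the ultraproduct the $U_{k+1}$-characteristic $\sigma$-algebra of $\ff$ coincides with $\cF_k$; in particular $\|\ff-E(\ff\mid\cF_k)\|_{U_{k+1}}=0$. Combining these two facts gives $M(\ff)=M(E(\ff\mid\cF_k))=M(g)$, once one observes that simple moments on $\bA$ which factor through $\bA_k$ equal the corresponding cube integrals on $\bA_k$ (and hence on $N$), because the morphism $\pi_k$ sends cubes in $\bA$ to cubes in $\bA_k$ and pushes forward the uniform measure on $n$-cubes to the uniform measure on $n$-cubes.

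The main obstacle I expect is precisely the identification of the $U_{k+1}$-characteristic factor of $\bA$ with the pullback of the $k$-step nilspace factor $\bA_k$. This is the exact, ultraproduct form of the Host--Kra characteristic factor theorem and is the content of the sharp inverse theorem for the Gowers norms on ultraproducts; it requires the full nilspace machinery of \cite{NP} to ensure the factor is not merely a measure-theoretic object but genuinely carries a compact nilspace structure, and it is the non-approximative step that the earlier regularization and inverse theorems are ultimately built to supply. Once this structural fact is in hand, the remainder of the argument is essentially formal bookkeeping with conditional expectations and cube integrals.
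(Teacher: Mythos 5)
Your overall strategy --- pass to the ultraproduct, project onto the higher-order Fourier $\sigma$-algebras, realize the projection on an inverse limit of compact nilspace factors, and transfer moments via the measure-preserving property and a Gowers--Cauchy--Schwarz estimate --- is the same as the paper's. There is, however, one concrete gap in your second step: the ``maximal $k$-step compact nilspace factor'' $\pi_k:\bA\rightarrow\bA_k$ whose pullback $\sigma$-algebra equals $\mathcal{F}_k$ does not exist in the paper's category. A compact nilspace is by definition second countable, so any continuous factor $\bA\rightarrow N$ generates only a \emph{separable} sub-$\sigma$-algebra; but $\mathcal{F}_k$ is in general non-separable (already $L^2(\mathcal{F}_1)$ is spanned by the uncountably many mutually orthogonal characters of $\hat{\bA}_1\cong\prod_\omega\hat{A_i}$). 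This is precisely why theorem \ref{main3} is stated only for separable $\mathcal{G}\subset\mathcal{F}_k$. As written, your inverse limit $N$ is not a compact (second countable) nilspace, so the conclusion of the theorem is not reached.

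The repair is the reduction the paper actually performs: you do not need all of $\mathcal{F}_k$, only enough to carry $g=\mathbb{E}(f\mid\bigvee_k\mathcal{F}_k)$. The $\sigma$-algebra generated by $g$ is separable, hence contained in a separable nil $\sigma$-algebra $\mathcal{B}$ of infinite order (the argument of lemma \ref{embednil}); applying theorem \ref{main} to the $\mathcal{B}_k=\mathcal{B}\cap\mathcal{F}_k$ produces a compatible inverse system of strong $k$-step factors $\gamma_k:\bA\rightarrow N_k$ whose inverse limit is a genuine compact rooted nilspace on which $g$ lives. With that substitution your third step is correct: lemma \ref{momentproj} gives $M(f)=M(\mathbb{E}(f|\mathcal{F}_k))$ for $M\in\mathcal{M}_k$, and the transfer of moments to $N$ uses the \emph{rooted} measure-preserving property of strong factors (theorem \ref{charpres}) --- note that moments are integrals over the rooted cube spaces $C^n_0$, so surjectivity of $\gamma^n$ on unrooted cube measures alone is not the statement you need, though the two are equivalent for strong factors.
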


\begin{corollary}[Limit object II.]\label{simplimcor} Let $k$ be a fixed natural number. Assume that $\{f_i\}_{i=1}^\infty$ is a sequence of uniformly bounded measurable functions on the compact abelian groups $\{A_i\}_{i=1}^\infty$. Then if $\lim_{i\to\infty} M(f_i)$ exists for every $M\in\mathcal{M}_k$ then there is a measurable function (limit object) $g:N\rightarrow\mathbb{C}$ on a compact $k$-step rooted nilspace $N$ such that $M(g)=\lim_{i\to\infty}M(f_i)$ for $M\in\mathcal{M}_k$.
\end{corollary}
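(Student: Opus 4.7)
The plan is to deduce this corollary from Theorem~\ref{simplim} by first extending the convergence hypothesis so that Theorem~\ref{simplim} applies, and then projecting the resulting limit nilspace onto its $k$-step factor. Since the collection $\cM$ is countable and the sequences $\{M(f_i)\}_{i=1}^\infty$ are uniformly bounded (because the $f_i$ are uniformly bounded), a standard diagonal extraction lets me pass to a subsequence along which $\lim_i M(f_i)$ exists for \emph{every} $M\in\cM$; by hypothesis these limits agree with the prescribed values whenever $M\in\cM_k$. Applying Theorem~\ref{simplim} to this subsequence produces a compact rooted nilspace $\tilde N$ together with a measurable $\tilde g:\tilde N\to\bC$ such that $M(\tilde g)=\lim_i M(f_i)$ for all $M\in\cM$.

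Next I project onto the $k$-step factor. The structure theory of compact nilspaces from \cite{NP} supplies a canonical surjective morphism $\pi:\tilde N\to N$ onto a compact rooted $k$-step nilspace $N$, obtained by collapsing the structure groups of $\tilde N$ from level $k+1$ upwards in the iterated abelian-bundle decomposition; the root of $\tilde N$ maps to a root of $N$. Because this quotient is built by integrating out the higher structure groups, $\pi$ pushes the cube Haar measure on $C^n(\tilde N)$ to that on $C^n(N)$ for every $n$. I then define $g:N\to\bC$ as the conditional expectation of $\tilde g$ along $\pi$, i.e.\ the unique (modulo null sets) measurable function satisfying $g\circ\pi=\bE[\tilde g\mid\pi^{-1}(\cB(N))]$.

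What remains, and what I expect to be the main obstacle, is the verification that $M(g)=M(\tilde g)$ for every $M\in\cM_k$. Writing $\tilde g=g\circ\pi+h$, the residual $h$ is $L^2$-orthogonal to every function pulled back from $N$. Expanding a degree-$\le k$ simple moment $M(\tilde g)$ multilinearly over its hyperedges, the pure $(g\circ\pi)$-term reproduces $M(g)$ via the measure-preservation of $\pi$, and I plan to annihilate every remaining cross term by invoking the Cauchy--Schwarz--Gowers inequality along the $(k+1)$-dimensional cube slices indexed by the hyperedges (all of size at most $k+1$): each such cross term is controlled by a product involving $\|h\|_{U_{k+1}(\tilde N)}$. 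The decisive input is the identification, inherent in the $k$-step quotient construction of \cite{NP} and reflecting the Host--Kra correspondence between the $U_{k+1}$-seminorm and the $k$-step characteristic factor, of $N$ as exactly the compact $k$-step factor for which this seminorm vanishes on the orthogonal complement, and hence on $h$.

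The hard part, therefore, is not the construction of $N$ and $g$ but the input that the $U_{k+1}$-seminorm of a function on a compact nilspace annihilates on passage to its canonical $k$-step quotient, together with the moment estimate that degree-$\le k$ simple moments are controlled by $U_{k+1}$. Both of these are structural statements about compact nilspaces rather than about the particular sequence $\{f_i\}$, so once they are in place the corollary follows by combining them with Theorem~\ref{simplim} and the diagonal extraction above.
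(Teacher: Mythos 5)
Your proposal is correct and is essentially the paper's own argument: the paper likewise produces the $k$-step limit object by conditioning onto the $k$-step factor (it sets $h_k=\mathbb{E}(h|N_k)$, equivalently works with $\mathbb{E}(f|\mathcal{F}_k)$ on $\bA$) and annihilates the degree-$\le k$ moments of the orthogonal error by the Gowers--Cauchy--Schwarz mechanism (lemma \ref{momentproj}, resting on lemma \ref{cornineq}), while your input $\|h\|_{U_{k+1}}=0$ on the quotient is exactly lemma \ref{gownilproj}. The only cosmetic differences are that you extract a diagonal subsequence where the paper just uses ultrafilter convergence of all moments, and that you run the projection computation on the limit nilspace rather than on the ultraproduct group.
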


Let $\mathcal{P}_r$ denote the space of Borel probability distributions supported on the set $\{x:|x|\leq r\}$ in $\mathbb{C}$.

\begin{theorem}[Limit object III.]\label{genlim} Assume that $\{f_i\}_{i=1}^\infty$ is a sequence of functions with $|f_i|\leq r$ on the compact abelian groups $\{A_i\}_{i=1}^\infty$. Then if $\lim_{i\to\infty} D_k(f_i)$ exists for every $k\in\mathbb{N}$ then there is a measurable function (limit object) $g:N\rightarrow\mathcal{P}_r$ on a compact rooted nilspace $N$ such that $D_k(g)=\lim_{i\to\infty}D_k(f_i)$ for $k\in\mathbb{N}$.
\end{theorem}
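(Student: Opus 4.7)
The plan is to realize the limit object on the ultra product of the $A_i$ and then describe it as a fiberwise conditional distribution over a nilspace factor of this ultra product.

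First I would fix a non-principal ultrafilter $\omega$ on $\bN$ and form the ultra product group $\bA = \prod_\omega A_i$ equipped with its Loeb probability measure. The sequence $(f_i)$ determines a bounded Loeb-measurable function $\mathbf{f}\colon \bA \to \bC$ with $|\mathbf{f}| \le r$. Because the cube sets on $\bA$ are the ultra products of the cube sets on each $A_i$, Loeb transfer gives $D_k(\mathbf{f}) = \lim_\omega D_k(f_i)$ for every $k$, and the hypothesis that the ordinary limits exist forces this to coincide with $\lim_{i\to\infty} D_k(f_i)$. Thus it suffices to find a compact rooted nilspace $N$ and a measurable $g\colon N \to \mathcal{P}_r$ with $D_k(g) = D_k(\mathbf{f})$ for every $k$.

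Next I would invoke the structure theory for $\bA$ developed earlier in the paper. The tower of $k$-step nilspace factors of $\bA$ together with its inverse limit produces a continuous morphism $\pi\colon \bA \to N$ onto a compact rooted nilspace capturing the full higher order structured part of $\bA$; taking the root of $N$ to be the image under $\pi$ of the identity makes $N$ rooted. Using disintegration of the Loeb measure with respect to $\pi$, I define $g(n)$ to be the regular conditional distribution of $\mathbf{f}(x)$ given $\pi(x) = n$. Since $|\mathbf{f}| \le r$, this is almost everywhere a Borel probability measure supported in $\{|z| \le r\}$, and measurable selection gives a Borel measurable $g\colon N \to \mathcal{P}_r$.

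To conclude $D_k(g) = D_k(\mathbf{f})$, the key ingredient is a relative independence statement: along a Loeb-random cube $c \in C^k(\bA)$, the vertex values $(\mathbf{f}(c(v)))_{v \in \{0,1\}^k}$ are conditionally independent given the projected cube $\pi \circ c \in C^k(N)$. Granted this, $D_k(\mathbf{f})$ is obtained by first sampling $\pi \circ c$ from its pushforward distribution on $C^k(N)$ (which is the uniform cube measure, by the balancedness/morphism properties of $\pi$ together with the Haar theory for compact nilspaces) and then drawing each coordinate independently from $g$ at the corresponding vertex, which is precisely the definition of $D_k(g)$ for a $\mathcal{P}_r$-valued function on a nilspace.

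The main obstacle is establishing this relative independence. It asserts that $\pi$ exhausts the cube correlations of $\mathbf{f}$, i.e.\ that no higher order structure of $\mathbf{f}$ escapes the maximal nilspace factor. This is the analogue in the present framework of the Host-Kra characteristic factor theorem, and should follow from the construction of $\pi$ via the higher order dual groups $\hat{\bA}_k$ together with the regularization theorem \ref{reglem}: any ``excess'' correlation of $\mathbf{f}$ on cubes would, by the inverse theorem \ref{invthem} applied on the ultra product, correlate with a nilspace polynomial factoring through a higher step nilspace factor, contradicting maximality of $\pi$. Once relative independence is in hand, the remaining matching of distributions is automatic and the theorem follows.
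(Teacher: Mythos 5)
Your overall strategy --- pass to the ultra product, realize the structured part as a compact nilspace factor, and let $g$ be the fibrewise conditional distribution of the ultra limit function ${\bf f}$ --- is the paper's strategy, and you correctly isolate the needed ingredient as a relative independence statement over the factor. The genuine gap is in how the factor is chosen. A compact, second countable nilspace factor generates only a \emph{separable} sub-$\sigma$-algebra of $\mathcal{A}$, whereas $\mathcal{F}=\vee_k\mathcal{F}_k$ is not separable, so there is no single compact $N$ ``capturing the full higher order structured part of $\bA$'' and no maximality to argue against. If instead you take $\pi$ to be the inverse limit of the factors that Theorem \ref{main} produces for ${\bf f}$ alone (as in the proof of Theorem \ref{simplim}), then $N$ is compact but your conditional independence claim is false over it: the $\sigma$-algebra of $\pi$ need only contain the projections $\mathbb{E}({\bf f}|\mathcal{F}_k)$, not the conditional moments $\mathbb{E}({\bf f}^a\overline{{\bf f}}^b|\mathcal{F}_k)$ for $a+b\geq 2$, and these genuinely enter $D_k({\bf f})$ (for instance through $\mathbb{E}_c\prod_v|{\bf f}(c(v))|^2$); conditioning on such a $\pi$ discards exactly this information. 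The fix --- and this is precisely the one extra step the paper adds for Theorem \ref{genlim} beyond Theorem \ref{simplim} --- is to first project the countable family $f^a\overline{f}^b$ onto $\mathcal{F}$, note that the resulting functions $g^{a,b}$ are a.e.\ the moments of a probability measure on the disc of radius $r$, and choose the separable nil $\sigma$-algebra $\mathcal{B}$ of lemma \ref{embednil} to contain all of them before building the factor.

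Once the factor is chosen this way, the relative independence should not be argued by a contradiction with the inverse theorem; it follows directly at the level of moments. Every joint moment of $D_k({\bf f})$ is a Gowers inner product of a function system of the form $\{f^{a_v}\overline{f}^{b_v}\}_v$; by multilinearity and lemma \ref{cornineq} each entry can be replaced by its projection $g^{a_v,b_v}$ to $\mathcal{F}_k$ without changing the value, and the rooted measure preserving property of the strong factor transports the resulting integral to $C^k(N)$, where it is by definition the corresponding moment of $D_k(g)$. Since all the distributions involved are supported on a fixed compact disc, matching moments determines them, and with these two corrections your argument coincides with the paper's proof.
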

Let us observe that theorem \ref{genlim} implies the other two.

We devote the last part of the introduction to our main method and the simple to state theorem \ref{ultreg} on ultra product groups which implies almost everything in this paper.
Let $\{A_i\}_{i=1}^\infty$ be a sequence of compact abelian groups and let $\bA$ be their ultra product.
Our strategy is to develop a theory for the Gowers norms on $\bA$ and then by indirect arguments we translate it back to compact groups.
First of all note that $U_{k+1}$ is only a semi norm on $\bA$. We prove in this paper that there is a unique maximal $\sigma$-algebra $\mathcal{F}_k$ on $\bA$ such that $U_{k+1}$ is a norm on $L^\infty(\mathcal{F}_k)$ and $L^\infty(\mathcal{F}_k)$ is orthogonal to every function whose $U_{k+1}$ norm is zero.
It follows that every function $f\in L^\infty(\bA)$ has a unique decomposition as $f_s+f_r$ where $\|f_r\|_{U_{k+1}}=0$ and $f_s$ is measurable in $\mathcal{F}_k$. 
This shows that on the ultra product $\bA$ it is simple to separate the structured part of $f$ from the random part. 

The question remains how to describe the structured part in a meaningful way.
It turns out that to understand this we need to go beyond measure theory and use topology. 
Note that the reason for this is not that the groups $A_i$ are already topological. Even if $\{A_i\}_{i=1}^\infty$ is a sequence of finite groups, topology will come into the picture in the same way.  

We will make use of the fact that $\bA$ has a natural $\sigma$-topology on it. A $\sigma$-topology is a weakening of ordinary topology where only countable unions of open sets are required to be open.
The structure of $\mathcal{F}_1$, which is tied to ordinary Fourier analysis, sheds light on how topology comes into the picture. It turns out that $\mathcal{F}_1$ can be characterized as the smallest $\sigma$-algebra in which all the continuous surjective homomorphisms $\phi:\bA\rightarrow G$ are measurable where $G$ is a compact abelian group.
In other words the ordinary topological space $G$ appears as a factor of the $\sigma$-topology on $\bA$.
The next theorem explains how nilspaces enter the whole topic:

\begin{theorem}[Characterization of $\mathcal{F}_{k+1}$.] The $\sigma$-algebra $\mathcal{F}_k$ is the smallest $\sigma$-algebra in which all continuous morphisms $\phi:\bA\rightarrow N$ are measurable where $N$ is a compact $k$-step nilspace.
\end{theorem}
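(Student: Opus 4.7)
The plan is to prove two inclusions between $\mathcal{F}_k$ and the $\sigma$-algebra $\mathcal{G}_k$ generated on $\bA$ by all continuous morphisms into compact $k$-step nilspaces.

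For $\mathcal{G}_k\subseteq\mathcal{F}_k$, fix a continuous morphism $\phi:\bA\to N$ into a compact $k$-step nilspace and a bounded Borel $g:N\to\mathbb{C}$. By the maximality characterization of $\mathcal{F}_k$ it suffices to show $(g\circ\phi,\,h)=0$ for every $h\in L^\infty(\bA)$ with $\|h\|_{U_{k+1}}=0$; by uniform approximation one may assume $g$ continuous. The key structural input is the $k$-step gluing axiom: the value of any cube $c\in C^{k+1}(N)$ at $1^{k+1}$ is uniquely and continuously determined by its values at the remaining vertices of $\{0,1\}^{k+1}$. Pulling this relation back through $\phi$ and iterating the Cauchy--Schwarz--Gowers inequality
$$\bigl|E_{c\in C^{k+1}(\bA)}\prod_{v\in\{0,1\}^{k+1}} u_v(c(v))\bigr|\le \prod_{v}\|u_v\|_{U_{k+1}},$$
applied with one slot equal to $h$, exhibits $g\circ\phi$ as a ``$(k{+}1)$-st dual function'' in the Gowers sense, which is therefore orthogonal to every $U_{k+1}$-null $h$.

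For $\mathcal{F}_k\subseteq\mathcal{G}_k$, take $f\in L^\infty(\mathcal{F}_k)$ with $\|f\|_\infty\le 1$, realize it as the ultralimit of internal bounded measurable functions $f_i$ on $A_i$, and for each $\epsilon>0$ apply the finitary Regularization Theorem \ref{reglem} to each $f_i$ with $F(\epsilon,m)$ chosen sufficiently small. This produces $f_i=(f_i)_s+(f_i)_e+(f_i)_r$ with $(f_i)_s=g_i\circ\phi_i$ a degree-$k$, complexity-$\le m(\epsilon)$, $F(\epsilon,m)$-balanced nilspace-polynomial. Since there are only countably many finite-dimensional $k$-step nilspaces of any fixed complexity, along the ultrafilter the nilspaces $N_i$ stabilize to a single compact $k$-step nilspace $N$, the balanced continuous morphisms $\phi_i$ assemble into a continuous morphism $\phi:\bA\to N$, and the uniformly Lipschitz functions $g_i$ converge uniformly to a continuous $g:N\to\mathbb{C}$. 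In the ultralimit $(f_i)_e$ vanishes in $L^1$ and $(f_i)_r$, having $U_{k+1}$-seminorm zero in the limit, projects to zero in $L^\infty(\mathcal{F}_k)$; hence $f$ is within $O(\epsilon)$ in $L^1$ of the $\mathcal{G}_k$-measurable function $g\circ\phi$. Letting $\epsilon\to 0$ gives $f\in L^\infty(\mathcal{G}_k)$.

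The main obstacle is the first direction: to argue rigorously that an \emph{arbitrary} continuous $g\circ\phi$ is a $(k{+}1)$-st dual function, one cannot simply invoke the gluing axiom once but must combine it with an inductive decomposition of $N$ as a tower of abelian bundles with structure groups $A_1,\ldots,A_k$, applying Fourier expansion in each fibre and using the previous case at the base. Handling the compactness/topological issues (weak convergence on $C^n(N)$, continuity of the ultralimit morphism) together with the interplay between the $\sigma$-topology on $\bA$ and the ordinary compact topology on $N$ constitutes the remaining technical work.
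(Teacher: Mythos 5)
The fatal problem is your second inclusion, $\mathcal{F}_k\subseteq\mathcal{G}_k$: it is circular. You invoke the finitary Regularization Theorem \ref{reglem} for the components $f_i$ and then pass to the ultralimit. But in this paper Theorem \ref{reglem} is \emph{derived} from Theorem \ref{main} (the statement that every bounded $\mathcal{F}_k$-measurable function on $\bA$ agrees, up to a $U_{k+1}$-null error, with a function factoring through a strong $k$-step nilspace factor), and Theorem \ref{main} is precisely the hard direction of the characterization you are trying to prove. The logical flow of the paper is: ultraproduct statement $\Rightarrow$ finitary regularity lemma, obtained by a compactness/contradiction argument; you cannot run it backwards without an independent proof of the finitary lemma, which is not available here. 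The paper's actual argument for this inclusion is entirely internal to $\bA$: one embeds a given separable sub-$\sigma$-algebra of $\mathcal{F}_k$ into a separable nil $\sigma$-algebra (Lemma \ref{embednil}), topologizes it using $(k+1)$-st order convolutions, and verifies --- via complete dependence of the couplings $\mathcal{B}\circ\Psi^{k+1}_x$ (Lemma \ref{compdep}), the uniqueness statement for supports of convolutions of open sets (Lemma \ref{uclos}), and positivity of all cubes (Proposition \ref{cubepos}) --- that the resulting topological factor satisfies the nilspace axioms and is a strong nilspace factor (Proposition \ref{topologization}). None of this is replaced by your ultralimit-of-regularizations scheme.

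The first inclusion, $\mathcal{G}_k\subseteq\mathcal{F}_k$, is the right target and your reduction (orthogonality to all $U_{k+1}$-null $h$ suffices, since then $f-\mathbb{E}(f|\mathcal{F}_k)$ is orthogonal to itself) is sound, but the claim that the gluing axiom plus Gowers--Cauchy--Schwarz ``exhibits $g\circ\phi$ as a $(k+1)$-st dual function'' is not a proof: an arbitrary continuous $g\circ\phi$ is not a convolution $[F]$, and the inequality (\ref{GCS}) only controls inner products with genuine convolutions. You acknowledge that one must induct on the abelian-bundle tower of $N$ with fibrewise Fourier expansion --- that is exactly the paper's Lemma \ref{homok}, which shows that each fibrewise component $h_\chi\circ\gamma$ with $h_\chi\in W(\chi,N)$ is a $k$-th order character and hence $\mathcal{F}_k$-measurable by Lemma \ref{kisk} --- but you leave it as ``remaining technical work,'' so this half is a correct outline with the substance missing. (One also has to reduce from an arbitrary continuous morphism to a nilspace \emph{factor} by restricting to the image with the pushed-forward cube structure before Lemma \ref{homok} applies.)
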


Another, stronger formulation of the previous theorem says that every separable $\sigma$-algebra in $\mathcal{F}_k$ is measurable in a  $k$-step compact, Hausdorff nilspace factor of $\bA$.
We will see later that we can also require a very strong measure preserving property for the nilspace factors $\phi:\bA\rightarrow N$. This will also be crucial in the proofs.
As a corollary we have a very simple regularity lemma on the ultra product group $\bA$.

\begin{theorem}[Ultra product regularity lemma]\label{ultreg} Let us fix a natural number $k$. Let $f\in L^\infty(\bA)$ be a function. Then there is a unique (orthogonal) decomposition $f=f_s+f_r$ such that $\|f_r\|_{U_{k+1}}=0$ and $f_s$ is measurable in a $k$-step compact nilspace factor of $\bA$.
\end{theorem}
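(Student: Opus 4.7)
The plan is to assemble the theorem from two ingredients already laid down in the introduction: (i) the existence (stated just before the characterization theorem) of a maximal $\sigma$-algebra $\mathcal{F}_k$ on $\bA$ on which $U_{k+1}$ is a norm and which is orthogonal to every function of zero $U_{k+1}$ seminorm, and (ii) the Characterization of $\mathcal{F}_{k+1}$ theorem, which says that $\mathcal{F}_k$ is generated by the continuous morphisms $\phi:\bA\to N$ into compact $k$-step nilspaces. Given these, the proof of Theorem \ref{ultreg} becomes quite short.

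\textbf{Existence.} Define $f_s:=\mathbb{E}(f\mid\mathcal{F}_k)$ and $f_r:=f-f_s$. Clearly $f_s\in L^\infty(\mathcal{F}_k)$. To check $\|f_r\|_{U_{k+1}}=0$, I would argue that on the ultraproduct the null set $\mathcal{N}_{k+1}:=\{g\in L^\infty(\bA):\|g\|_{U_{k+1}}=0\}$ is a closed linear subspace (via the Gowers--Cauchy--Schwarz inequality, which extends verbatim to $\bA$). By the maximality of $\mathcal{F}_k$ together with its orthogonality property, $L^\infty(\mathcal{F}_k)$ is exactly the orthogonal complement (in $L^2(\bA)$) of $\mathcal{N}_{k+1}$; otherwise a strictly larger $\sigma$-algebra generated by $\mathcal{F}_k$ together with the $L^2$-projection onto $\mathcal{N}_{k+1}^{\perp}$ would violate maximality. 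Hence the $L^2$-projection $f_s$ of $f$ onto $L^\infty(\mathcal{F}_k)$ has orthogonal residue $f_r\in\mathcal{N}_{k+1}$, as required.

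\textbf{Uniqueness.} Suppose $f=f_s+f_r=f_s'+f_r'$ with $f_s'\in L^\infty(\mathcal{F}_k)$ and $\|f_r'\|_{U_{k+1}}=0$. Then $f_s-f_s'=f_r'-f_r$. The left side is $\mathcal{F}_k$-measurable and bounded; the right side lies in $\mathcal{N}_{k+1}$ by linearity. Applying the orthogonality of $L^\infty(\mathcal{F}_k)$ to $\mathcal{N}_{k+1}$ to the vector $f_s-f_s'=f_r'-f_r$ paired with itself gives $\|f_s-f_s'\|_2^2=0$, so $f_s=f_s'$ and $f_r=f_r'$.

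\textbf{From $\mathcal{F}_k$ to a single nilspace factor.} It remains to show that the $\mathcal{F}_k$-measurable function $f_s$ is measurable with respect to a single continuous morphism $\phi:\bA\to N$ into a compact $k$-step nilspace. Since $f_s\in L^\infty(\bA)$ is bounded, its values are determined by countably many level sets, so $f_s$ is measurable in a separable sub-$\sigma$-algebra $\mathcal{F}'\subseteq\mathcal{F}_k$. By the Characterization theorem, $\mathcal{F}_k$ is generated by continuous morphisms into compact $k$-step nilspaces, so $\mathcal{F}'$ is generated by countably many such morphisms $\phi_i:\bA\to N_i$. The product $\Phi=(\phi_i)_i:\bA\to\prod_i N_i$ is a continuous morphism of cubespaces, and the inverse limit construction of \cite{NP} realizes the image (or its closure) inside a compact $k$-step nilspace $N$; $f_s$ then factors through $\Phi$ into $N$. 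The main technical point to verify here is that the category of compact $k$-step nilspaces is closed under the relevant countable inverse limits (and that continuous morphisms pass to the limit), which is precisely the content cited from \cite{NP}; that is the one step in the argument requiring nontrivial imported machinery.
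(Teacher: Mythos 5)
Your skeleton --- set $f_s=\mathbb{E}(f\mid\mathcal{F}_k)$, deduce $\|f_r\|_{U_{k+1}}=0$ from the defining property of $\mathcal{F}_k$, and get uniqueness from orthogonality --- is how the paper packages this statement, so the first two parts are aligned in spirit. Two caveats there. First, maximality of $\mathcal{F}_k$ alone does not yield $f-\mathbb{E}(f\mid\mathcal{F}_k)\in\mathcal{N}_{k+1}$ (your notation for the functions of vanishing $U_{k+1}$ seminorm): adjoining the $L^2$-projection of $f$ onto $\mathcal{N}_{k+1}^{\perp}$ to $\mathcal{F}_k$ does produce a larger $\sigma$-algebra, but there is no reason that the bounded functions measurable in it (products, level sets, and so on) remain orthogonal to $\mathcal{N}_{k+1}$, so no contradiction with maximality arises. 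What is actually needed is the equivalence $\|g\|_{U_{k+1}}=0\iff\mathbb{E}(g\mid\mathcal{F}_k)=0$ of Theorem \ref{propfk}, which the paper proves via the rank-one approximation machinery (Lemma \ref{lowrank}); you should cite that, not maximality. Second, for uniqueness you must also record that any $f_s'$ measurable in \emph{some} $k$-step nilspace factor is automatically measurable in $\mathcal{F}_k$ (Lemma \ref{homok}); you assume this tacitly when you write $f_s'\in L^\infty(\mathcal{F}_k)$.

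The genuine gap is your third step. From ``$\mathcal{F}_k$ is generated by continuous morphisms into compact $k$-step nilspaces'' one cannot pass to ``a separable sub-$\sigma$-algebra is captured by a single such morphism'' by forming the diagonal $\Phi=(\phi_i)_i$ into $\prod_i N_i$ and invoking inverse limits. The product $\prod_i N_i$ is a compact $k$-step nilspace, but a nilspace \emph{factor} requires the image $\Phi(\bA)$, equipped with the cubes $\Phi(C^n(\bA))$, to itself satisfy the gluing axiom and unique completion at level $k+1$, and neither is inherited from the ambient product: a corner in $\Phi(\bA)$ need not lift to a corner in $\bA$, and two cubes of $\bA$ that agree under $\Phi$ on $\{0,1\}^{k+1}\setminus\{1^{k+1}\}$ need not agree under $\Phi$ at the remaining vertex. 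Establishing exactly this is the content of Theorem \ref{main3} and Proposition \ref{topologization}, and it occupies most of the paper: one embeds the separable $\sigma$-algebra into a nil $\sigma$-algebra (Lemma \ref{embednil}), proves complete dependence of the cubic couplings (Lemma \ref{compdep}), controls supports of convolutions of open sets (Lemma \ref{uclos}), and shows that every cube is positive (Proposition \ref{cubepos}) before the nilspace axioms can be verified. None of this is supplied by \cite{NP}, which goes in the opposite direction (compact nilspaces are inverse limits of finite-dimensional ones). So the step you describe as ``the one step requiring nontrivial imported machinery'' is in fact the entire content of the theorem, and it is not imported.
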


\subsection{Nilmanifolds as nilspaces}\label{nilasnil}

In this chapter we outline the connection between nilmanifolds and nilspaces (for this topic see also \cite{NP}).
Let $F$ be a nilpotent Lie-group and let $F=F_0\geq F_1\geq F_2\geq...\geq F_k=\{1\}$ be a filtration in $F$ i.e. $[F,F_i]\leq F_{i+1}$ holds for every $0\leq i\leq k-1$. (Note that the existence of such a filtration implies that $F$ is at most $k$-nilpotent.)

Let us define the following cubic structure on $F$ which depends on the filtration $\{F_i\}_{i=1}^k$.
A map $f:\{0,1\}^n\rightarrow F$ is in $C^n(F)$ if it can be obtained from the constant $1$ map in a finite process where in each step we choose a natural number $1\leq i\leq k$ and an element $x\in N_i$ and then we multiply the value of $f$ on an $i+1$ co-dimensional face of $\{0,1\}^n$ by $x$. 
It is not hard to show that $F$ together with this cubic structure is a $k$-step nilspace.

An alternative way of defining the same cubic structure gives the sets $C^n(N)$ directly through an equation system in $F^{2^n}$. For $n\in\mathbb{N}$ let $g_n:\{0,1\}^n\rightarrow\{1,2,\dots,2^n\}$ be the ordering such that $g_1(0)=1,g(1)=2$ and if $n>1$ then $g_n((v,0))=g_{n-1}(v)$ and $g_n((v,1))=2^n+1-g_{n-1}(v)$ where $v\in\{0,1\}^{n-1}$.

\begin{definition} Let $G$ be a group and $f:\{0,1\}^n\rightarrow G$ be some function. We say that $f$ satisfies the Gray code property if $$\prod_{i=1}^{2^n}f(g_n^{-1}(i))^{(-1)^i}=1.$$
\end{definition}

Let $C^n(F)$ be the collection of functions $f:\{0,1\}^n\rightarrow F$ such that if $d\leq k+1$ and $g:\{0,1\}^d\rightarrow \{0,1\}^n$ is a morphism then the map $f\circ g$ satisfies the Gray code property modulo $G_{d-1}$.  
Note that it is enough to check the condition for morphisms $g:\{0,1\}^d\rightarrow\{0,1\}^n$ that are injective and the image is a $d$ dimensional face of $\{0,1\}^n$. 
This definition of $C^n(F)$ shows that $C^n(F)\subset F^{\{0,1\}^n}$ is a closed set. 

It will be crucial to describe morphisms from abelian groups (as nilspaces) into these nilspaces.
We will use the next two definitions by Leibman \cite{Lei},\cite{Lei2}.

\begin{definition}[Polynomial map between groups] A map $\phi$ of a group $G$ to a group $F$ is said to be polynomial of degree $k$ if it trivializes after $k+1$ consecutive applications of the operator $D_h,~h\in G$ defined by $D_h\phi(g)=\phi(g)^{-1}\phi(gh).$
\end{definition}

\begin{definition} Let $F$ be a $k$-nilpotent group with filtration $\mathcal{V}=\{F_i\}_{i=0}^k$ with $F=F_0$, $F_{i+1}\subseteq F_i$, $F_k=\{1\}$ and $[F_i,F]\subseteq F_{i+1}$ if $i<k$. A map $\phi:G\rightarrow F$ is a $\mathcal{V}$-polynomial if $\phi$ modulo $F_i$ is a polynomial of degree $i$.
\end{definition}

It is proved in \cite{Lei2} that $\mathcal{V}$ polynomials are closed under multiplication. 
Let $A$ be an abelian group. The second definition of the nilspace structure on $F$ shows that nilspace morphisms from $A$ to $F$ are exactly the $\mathcal{V}$ polynomials. 

\medskip

Let $\Gamma\leq F$ be a discrete co-compact subgroup in $F$. 
We denote by $M$ the (left) coset space $\{g\Gamma\}_{g\in F}$. Manifolds of the form $M$ are called nil-manifolds. 
Let $\pi:F\rightarrow M$ denote the projection $\pi(g)=g\Gamma$. We define the cubic structure on $M$ by $C^n(M)=\{\pi\circ c|c\in C^n(F)\}$. A simple calculation shows that $N$ together with this cubic structure is a $k$-step nilspace. However to guarantee that $C^n(M)\subset M^{2^n}$ is a closed set we also need that $F_i\cap\Gamma$ is co-compact in $F_i$ for every $0\leq i\leq k$. If this holds we will say that $\Gamma$ is co-compact in $\{F_i\}_{i=0}^k$.
Note that the structure groups of $M$ are the abelian groups $A_i=F_{i-1}\Gamma/F_i\Gamma$. 
The next theorem is one of the main results in \cite{NP}.

\begin{theorem}\label{finitedimnil} Let $M$ be a compact $k$-step nilspace such that the structure groups $\{A_i\}_{i=1}^k$ are all finite dimensional tori. Then there is a nilpotent Lie-group $F$ with filtration $\mathcal{V}=\{F_i\}_{i=0}^k$ and a discrete subgrup $\Gamma\leq F$ which is co-compact in $\mathcal{V}$ such that $M$ is (topologically) isomorphic to the nilspace corresponding to $(\mathcal{V},\Gamma)$.
\end{theorem}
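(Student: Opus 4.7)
\proof (Proposal.)
The plan is to proceed by induction on the step $k$, using the iterated abelian bundle description of compact nilspaces from \cite{NP}: $M$ fibers as a principal $A_k$-bundle over a $(k-1)$-step compact nilspace $M'$ whose structure groups are $A_1,\dots,A_{k-1}$, and by hypothesis each $A_i$ is a finite dimensional torus. The base case $k=1$ is immediate: a $1$-step compact nilspace is (after choosing a base point as unit) an abelian compact group; if that group is a torus $(\mathbb{R}/\mathbb{Z})^n$ then we take $F=F_0=\mathbb{R}^n$, $F_1=\{0\}$ and $\Gamma=\mathbb{Z}^n$, and the nilspace cubes on $F/\Gamma$ induced by the trivial filtration are exactly the affine cubes of the torus.

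For the inductive step, assume the result for step $k-1$ and let $M'\cong F'/\Gamma'$ with filtration $\mathcal{V}'=\{F'_i\}_{i=0}^{k-1}$, co-compact $\Gamma'$, and $F'_{k-1}=\{1\}$. Write $A_k\cong T^{m}=\mathbb{R}^m/\mathbb{Z}^m$ and lift it to its universal cover $V=\mathbb{R}^m$. The plan is to define $F$ as an extension $1\to V\to F\to F'\to 1$ encoding the principal $A_k$-bundle $M\to M'$: concretely, I would choose a continuous section (locally) of $M\to M'$, use the cubic structure of $M$ to read off a $2$-cocycle describing how the $A_k$-fibers are glued together over cubes, and then observe that the nilspace axioms force this cocycle to be a polynomial (of degree $\leq k$) cocycle on $F'$ with values in $V$. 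The corresponding central extension $F$ is then a simply connected nilpotent Lie group, and setting $F_k=V$ and $F_i=\text{preimage of }F'_i$ for $i<k$ gives a filtration $\mathcal{V}$ of length $k$ because $[F,F_i]$ lies in the $F'_{i+1}$-preimage and $[F,V]$ is trivial. The lattice is $\Gamma=\langle\mathbb{Z}^m,\widetilde{\Gamma'}\rangle$, where $\widetilde{\Gamma'}$ is a polynomial lifting of $\Gamma'$ into $F$; one checks $\Gamma\cap F_i$ is co-compact in $F_i$ by tracing through the inductive hypothesis and using that $\mathbb{Z}^m$ is co-compact in $V$.

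It remains to identify the cubic structures. I would appeal to the characterization recalled earlier in the excerpt: on a filtered nilpotent group, $C^n$ consists of those maps that satisfy the Gray code property on every $d$-face modulo $F_{d-1}$, equivalently the $\mathcal{V}$-polynomial maps from $\mathbb{Z}^n$. By construction, the cocycle defining the extension was chosen so that a map $\{0,1\}^n\to M$ is a cube of $M$ if and only if it lifts, locally, to a $\mathcal{V}$-polynomial $\{0,1\}^n\to F$ projecting to an $M'$-cube upstairs and satisfying the correct Gray-code congruence modulo $V=F_k$. This matches exactly the nilspace cubes on $F/\Gamma$ for the filtration $\mathcal{V}$, giving the required topological isomorphism $M\cong F/\Gamma$ as nilspaces.

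The principal obstacle I foresee is constructing the extension $F$ from the bundle data in a way that is both canonical enough to reproduce the cubic structure on $M$ and regular enough (polynomial cocycle, smooth Lie group structure) to yield a nilpotent Lie group with a co-compact lattice. Concretely, the delicate point is promoting the continuous $A_k$-valued cocycle describing $M\to M'$ to an actual polynomial $V$-valued cocycle on $F'$: this requires the higher nilspace axioms (via the $k$-step uniqueness of extensions in the gluing axiom) to force the needed polynomial identities, and it is the place where finite dimensionality of the structure groups is essential, since only then can one pass from $A_k$ to its universal cover and realize the extension as a Lie group rather than merely a topological group.
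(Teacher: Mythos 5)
There is nothing in this paper to compare against: Theorem \ref{finitedimnil} is explicitly imported from \cite{NP} (``one of the main results in \cite{NP}'') and no proof is given here. In \cite{NP} the argument does not go by writing down a cocycle for the top bundle; it goes through the group of \emph{translations} of $M$ filtered by height, showing (by induction on $k$, lifting translations from $M_{k-1}$ to $M$) that this group is a nilpotent Lie group acting transitively on $M$ with discrete co-compact point stabilizer, and that the cube sets coincide with those defined by the induced filtration. So your route is genuinely different from the source's, and it has a real gap.

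The gap is the sentence ``observe that the nilspace axioms force this cocycle to be a polynomial (of degree $\leq k$) cocycle on $F'$ with values in $V$.'' This is false as stated, and it is exactly the content of the theorem rather than an observation. What the degree-$k$ bundle description gives you is a continuous $A_k$-valued cocycle on $C^{k+1}(M_{k-1})$ satisfying the concatenation/additivity identities; the axioms do not make it polynomial, they only make it a cocycle. The assertion that every such continuous cocycle is, up to coboundary, induced by a Lie-group extension is precisely what must be proved, and already in the case $k=2$ over a torus base it is the Host--Kra parallelepiped structure theorem. Filling it requires the cohomological input that \cite{NP} supplies by other means (lifting translations, using that $A_k$ is a connected, divisible, finite-dimensional compact group to kill the obstructions); you flag this difficulty yourself in your final paragraph, but flagging it does not discharge it. A secondary, minor point: with the paper's convention $F_k=\{1\}$ and $A_k\cong F_{k-1}\Gamma/F_k\Gamma$, your assignment should read $F_{k-1}=V$ and $F_k=\{1\}$, not $F_k=V$.
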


\subsection{A multidimensional generalization of the Green-Tao-Ziegler theorem.}\label{cyclic}

Our goal in this chapter is to relate nilspace polynomials on cyclic groups to Leibman type polynomials.
As a consequence we will obtain a new proof of the inverse theorem by Green, Tao and Ziegler for cyclic groups.
We will use the notation from chapter \ref{nilasnil}

\begin{lemma}\label{polyab} Let $A$ be an abelian group. Then the set of at most degree $k$ polynomials form $\mathbb{Z}^n$ to $A$ is generated by the functions of the form
\begin{equation}\label{abpoly}
f(x_1,x_2,\dots,x_n)=a\prod_{i=1}^n{{x_i}\choose{n_i}}
\end{equation}
where $a\in A$ and $\sum_{i=1}^n n_i\leq k$.
(We use additive notation here)
\end{lemma}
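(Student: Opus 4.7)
The plan is a two-part argument: first verify that each proposed generator $f_{a,\vec{m}}(x)=a\prod_{i=1}^{n}\binom{x_i}{m_i}$ with $\sum m_i\leq k$ is itself a degree-$\leq k$ polynomial in the sense of Leibman, and second show that every polynomial $\phi:\mathbb{Z}^n\to A$ of degree $\leq k$ lies in the subgroup generated by such $f_{a,\vec{m}}$.

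For the first step, the identity $D_h(a P)(x)=a\cdot D_hP(x)$, valid for any $\mathbb{Z}$-valued $P$ and $a\in A$ via the $\mathbb{Z}$-action on $A$, reduces matters to the $\mathbb{Z}$-valued case. There, a short induction using the Leibniz-type formula $D_h(PQ)(x)=P(x)\cdot D_hQ(x)+D_hP(x)\cdot Q(x+h)$ shows that a product of $\mathbb{Z}$-valued Leibman polynomials of degrees $d_1,\dots,d_r$ has degree $\leq d_1+\cdots+d_r$. Since $\binom{x_i}{m_i}$, viewed as a map $\mathbb{Z}^n\to\mathbb{Z}$ depending only on $x_i$, is a Leibman polynomial of degree $m_i$, the product has degree $\leq\sum m_i\leq k$, and multiplication by $a$ preserves this.

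For the second step I would induct on $n$. In the base case $n=1$, given $\phi:\mathbb{Z}\to A$ with $\Delta^{k+1}\phi\equiv 0$ (where $\Delta=D_1$), set $a_j:=\Delta^j\phi(0)$ for $0\leq j\leq k$ and $\psi(x):=\sum_{j=0}^{k}a_j\binom{x}{j}$. The computation $\Delta^j\binom{x}{m}\big|_{x=0}=\delta_{j,m}$ yields $\Delta^j\psi(0)=a_j=\Delta^j\phi(0)$ for all such $j$, and by the triangular invertibility of finite differences $\phi$ and $\psi$ agree at $0,1,\dots,k$. Since $\Delta^{k+1}\phi\equiv 0$ determines $\phi$ uniquely from any $k+1$ consecutive values, $\phi=\psi$.

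For $n>1$, fix $\vec{y}=(x_2,\dots,x_n)$ and apply the $n=1$ case to the one-variable polynomial $x_1\mapsto\phi(x_1,\vec{y})$, obtaining
$$\phi(x_1,\vec{y})=\sum_{m_1=0}^{k}c_{m_1}(\vec{y})\binom{x_1}{m_1},\qquad c_{m_1}(\vec{y})=\Delta_1^{m_1}\phi(0,\vec{y}).$$
Each application of $\Delta_1=D_{e_1}$ drops Leibman degree by one, so $\Delta_1^{m_1}\phi$ has degree $\leq k-m_1$ on $\mathbb{Z}^n$, and its restriction $c_{m_1}$ to the hyperplane $x_1=0$ has Leibman degree $\leq k-m_1$ on $\mathbb{Z}^{n-1}$. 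The inductive hypothesis then expresses each $c_{m_1}$ as a sum of terms $a\prod_{i=2}^{n}\binom{x_i}{m_i}$ with $\sum_{i\geq 2}m_i\leq k-m_1$, and substituting yields the claimed expansion of $\phi$. The only slightly delicate points are the degree-tracking assertions, namely the multiplicativity of Leibman degrees under products and the fact that $\Delta_1^{m_1}$ followed by restriction lowers the degree by exactly $m_1$; both are routine manipulations of the defining identity $D_{h_1}\cdots D_{h_{k+1}}\phi\equiv 0$, so I do not anticipate a genuine obstacle.
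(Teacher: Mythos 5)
Your proof is correct, but it follows a genuinely different route from the paper's. The paper inducts on the degree $k$: it forms the top-order symmetric $k$-linear form $\omega=D_{y_1}\cdots D_{y_k}\phi$, realizes $\omega$ as a homomorphism on $\otimes^k(\mathbb{Z}^n)$ generated by elementary tensors, matches each elementary tensor with a binomial product of the form (\ref{abpoly}), and then subtracts to drop the degree and close the induction. You instead induct on the number of variables $n$, with the one-variable case handled by the Newton forward-difference expansion $\phi(x)=\sum_{j\le k}\Delta^j\phi(0)\binom{x}{j}$ and the multivariable case by expanding in $x_1$ with coefficients $c_{m_1}(\vec y)=\Delta_1^{m_1}\phi(0,\vec y)$ of complementary degree $\le k-m_1$. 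Your approach is more elementary and more explicit — it produces the coefficients $a$ concretely as iterated finite differences of $\phi$ at the origin, and it avoids any appeal to symmetry of the multilinear form or to tensor products — at the cost of a little bookkeeping (the sub-additivity of Leibman degree under products via the discrete Leibniz rule, and the fact that $\Delta_1^{m_1}$ followed by restriction to $x_1=0$ lowers degree by $m_1$), all of which you correctly identify and which do check out. You also verify the converse containment (that each generator really has degree $\le\sum m_i$), which the paper leaves implicit.
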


\begin{proof} We go by induction on $k$. The case $k=0$ is trivial. Assume that it is true for $k-1$. Let $g_1,g_2,\dots,g_n$ be the generators of $\mathbb{Z}^n$. If $\phi:\mathbb{Z}^n\rightarrow A$ is a polynomial map then $$\omega(y_1,y_2,\dots,y_k)=D_{y_1}D_{y_2}\dots D_{y_k}\phi$$ is a symmetric $k$-linear form on $\mathbb{Z}^n$. We claim that there is map $\phi'$ which is generated by the functions in (\ref{abpoly}) and whose $k$-linear form is equal to $\omega$. Let $f:\otimes^k(\mathbb{Z}^n)\rightarrow A$ be a homomorphism representing $\omega$. Then $f$ is generated by homomorphisms $h$ such that $h(g_{j_1}\otimes g_{j_2}\otimes\dots g_{j_k})=a$ for some indices $j_1,j_2,\dots,j_k$ (and any ordering of them) and take $0$ on any other tensor products of generators. It is enough to represent such an $h$ by a function of the form (\ref{abpoly}). 
It is easy to see that if $n_i$ is the multiplicity of $i$ among the indices $\{j_r\}_{r=1}^k$ then (\ref{abpoly}) gives a polynomial whose multi linear form is represented by $h$. 

The difference $\phi-\phi'$ has a trivial $k$-linear form which shows that it is a $k-1$ dimensional polynomial and then we use induction the generate $\phi-\phi'$.
\end{proof}

\begin{lemma}\label{felemelo} Let $\phi:\mathbb{Z}^n\rightarrow M$ be a morphism. Then there is a lift $\psi:\mathbb{Z}^n\rightarrow F$ such that $\psi$ is a $\mathcal{V}$-polynomial and $\psi$ composed with the projection $F\rightarrow M$ is equal to $\phi$.
\end{lemma}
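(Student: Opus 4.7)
The plan is to induct on the nilpotency class $k$ of $F$, that is, on the length of the filtration $\mathcal{V}$. When $k=1$, $F$ is abelian and $F_1=\{1\}$, so a morphism $\phi\colon\mathbb{Z}^n\to M$ is an affine homomorphism; one lifts it to $\psi$ by picking any preimages in $F$ of $\phi(0)$ and of the $\phi(g_i)$ along the standard generators $g_i$ of $\mathbb{Z}^n$, and extending by the unique affine formula.

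For the inductive step, observe that $F_{k-1}$ is central in $F$ (from $[F,F_{k-1}]\subseteq F_k=\{1\}$), so $\bar F:=F/F_{k-1}$ carries the shorter filtration $\bar{\mathcal{V}}=\{F_i/F_{k-1}\}_{i=0}^{k-1}$ and $\bar M := F/(F_{k-1}\Gamma)$ is a $(k-1)$-step nilmanifold with lattice $\bar\Gamma:=\Gamma F_{k-1}/F_{k-1}$. Composing $\phi$ with the projection $M\to\bar M$ produces $\bar\phi$, and the inductive hypothesis supplies a $\bar{\mathcal{V}}$-polynomial lift $\bar\psi\colon\mathbb{Z}^n\to\bar F$.

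The next step is to lift $\bar\psi$ itself to a $\mathcal{V}$-polynomial $\tilde\psi\colon\mathbb{Z}^n\to F$. Write $\bar\psi$ as an ordered product of basic polynomials $\bar v_j^{p_j(x)}$ with $\bar v_j\in\bar F_{i_j}$ and $p_j\colon\mathbb{Z}^n\to\mathbb{Z}$ of degree at most $i_j$ (such a decomposition is available through a Mal'cev-type coordinate system on $\bar F$), choose lifts $v_j\in F_{i_j}$, and set $\tilde\psi(x):=\prod_j v_j^{p_j(x)}$. Each factor $v_j^{p_j(\cdot)}$ is a basic $\mathcal{V}$-polynomial, and by Leibman's closure of $\mathcal{V}$-polynomials under pointwise multiplication \cite{Lei2} so is $\tilde\psi$; by construction $\tilde\psi$ reduces to $\bar\psi$ modulo $F_{k-1}$.

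Finally, $\pi\circ\tilde\psi$ and $\phi$ are two nilspace morphisms $\mathbb{Z}^n\to M$ agreeing modulo the top structure group $A_k=F_{k-1}/(F_{k-1}\cap\Gamma)$. Through the fiberwise central action of $F_{k-1}$ on $M$ these differ by a function $\delta\colon\mathbb{Z}^n\to A_k$, and the abelian-bundle description of $k$-step nilspaces from \cite{NP} shows that $\delta$ is itself a nilspace morphism to $A_k$ regarded as a $k$-step abelian nilspace. Lemma \ref{polyab} therefore represents $\delta$ as a sum of terms $a\prod_i\binom{x_i}{n_i}$ with $\sum_i n_i\le k$; lifting each coefficient $a$ to $\tilde a\in F_{k-1}$ and summing in the abelian group $F_{k-1}$ produces a $\mathcal{V}$-polynomial $\tilde\delta\colon\mathbb{Z}^n\to F$ (trivial modulo $F_i$ for $i\le k-1$, and of degree $\le k$ modulo $F_k$), and $\psi:=\tilde\delta\cdot\tilde\psi$ is the required $\mathcal{V}$-polynomial lift of $\phi$. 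The main obstacle I expect is producing the basic-generator factorization of $\bar\psi$ compatibly with the filtration, together with verifying that the discrepancy $\delta$ is genuinely a nilspace morphism of degree $\le k$; both reduce to the abelian-bundle tower structure of compact nilspaces established in \cite{NP}.
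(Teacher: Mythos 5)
Your argument is correct in outline, but it takes a genuinely different inductive route from the paper, and the difference matters for what you need to import. The paper inducts on the depth $j$ such that the image of $\phi$ lies in $F_{k-j}\Gamma$, always working inside the \emph{fixed} group $F$: at each step it reads off the component of $\phi$ in the single structure group $A_{k-j+1}=F_{k-j}\Gamma/F_{k-j+1}\Gamma$, uses Lemma \ref{polyab} (for an \emph{abelian} target) to build a correcting $\mathcal{V}$-polynomial $\alpha$ with values in $F_{k-j}$, and recurses on $\alpha^{-1}\phi$, whose image sits one level deeper. You instead induct on the length of the filtration by passing to the quotient $\bar F=F/F_{k-1}$ and $\bar M$, which forces you to lift the resulting $\bar{\mathcal V}$-polynomial $\bar\psi$ from $\bar F$ back to $F$. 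That step is the one place where your proof needs an ingredient the paper never uses: the Taylor-type factorization of an arbitrary $\bar{\mathcal V}$-polynomial as an ordered product of basic polynomials $\bar v_j^{p_j(x)}$ with $\bar v_j\in\bar F_{i_j}$ and $\deg p_j\le i_j$. This is a true and known statement (it is essentially the Taylor expansion of polynomial maps adapted to a filtration, as in Leibman and in the appendix of \cite{GTZ}), but it is strictly stronger than Lemma \ref{polyab}, which only gives such a factorization for polynomial maps into abelian groups; if you use this route you must either prove it or cite it explicitly, since "a Mal'cev-type coordinate system" does not by itself produce a factorization compatible with the filtration. Your final correction step is fine: the discrepancy $\delta$ is a morphism into $\mathcal{D}_k(A_k)$ by the bundle description of \cite{NP} (the paper invokes the same fact, via Theorem \ref{bundec}, in the proof of Theorem \ref{circhom}), its lift $\tilde\delta$ into the central subgroup $F_{k-1}$ is a $\mathcal{V}$-polynomial exactly as you say, and closure under products from \cite{Lei2} finishes the argument. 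In short: your proof works, at the cost of one extra nontrivial structural lemma that the paper's bottom-up induction is specifically designed to avoid.
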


\begin{proof} Using induction of $j$ we show the statement for maps whose image is in $F_{k-j}H$.
If $j=0$ then $\phi$ is a constant map and then the statement is trivial. Assume that we have the statement for $j-1$ and assume that the image of $\phi$ is in $F_{k-j}H$.
The cube preserving property of $\phi$ shows that $\phi$ composed with the factor map $F_{k-j}H\rightarrow F_{k-j}H/F_{k-j+1}H=A_{k-j}$ is a degree $k-j+1$ polynomial map $\phi_2$ of $\mathbb{Z}^n$ into the abelian group $A_{k-j}$. 

We have from lemma \ref{polyab} that using multiplicative notation
\begin{equation}\label{polfor}
\phi_2(x_1,x_2,\dots,x_n)=\prod_{t=1}^m a_t^{f_t(x_1,x_2,\dots,x_n)}
\end{equation}
where $a_t\in A_{k-j}$ and $f_t$ is an integer valued polynomial of degree at most $k-j+1$ for every $t$. Let us choose elements $b_1,b_2,\dots,b_m$ in $F_{k-j}$ such that their images in $A_{k-j}$ are $a_1,a_2,\dots,a_m$.
Let us define the function $\alpha:\mathbb{Z}\rightarrow F$ given by the formula (\ref{polfor}) when $a_t$ is replaced by $b_t$. The map $\alpha$ is a $\mathcal{V}$-polynomial.

Since $\phi$ maps to the left cosets of 
$H$ it makes sens to multiply $\phi$ by $\alpha^{-1}$ from the left. It is easy to see that the new map $\gamma=\alpha^{-1}\phi$ is a morphism of $\mathbb{Z}$ to $F_{k-j+1}H$ and thus by induction it can be lifted to a $\mathcal{V}$ polynomial $\delta$. Then we have that $\alpha\delta$ is a lift of $\phi$ to a polynomial map. 
\end{proof}

\begin{corollary} If $A$ is a finite abelian group and $f:A\rightarrow M$ is a morphism then for every homomorphism $\beta:\mathbb{Z}^n\rightarrow A$ there is a degree $k$ polynomial map $\phi:\mathbb{Z}^n\rightarrow F$ such that $\phi$ composed with the factor map $F\rightarrow M$ is the same as $\beta$ composed with $f$.
\end{corollary}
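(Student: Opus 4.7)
The plan is to apply Lemma \ref{felemelo} directly to the composition $f \circ \beta \colon \mathbb{Z}^n \to M$, so the main task is to verify that this composition is a nilspace morphism and then extract the degree $k$ polynomial property from the $\mathcal{V}$-polynomial structure of the lift.

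First I would check that $f \circ \beta$ is a morphism of nilspaces from $\mathbb{Z}^n$ to $M$. The map $\beta \colon \mathbb{Z}^n \to A$ is a group homomorphism, and cubes in an abelian group are by definition restrictions of affine homomorphisms $\mathbb{Z}^m \to A$; composing such an affine map with $\beta$ produces another affine homomorphism $\mathbb{Z}^m \to A$ followed by $\beta$, which is affine into $A$. Hence $\beta$ sends $C^m(\mathbb{Z}^n)$ into $C^m(A)$, i.e.\ $\beta$ is a morphism. Since morphisms compose, $f \circ \beta \colon \mathbb{Z}^n \to M$ is a morphism.

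Next, applying Lemma \ref{felemelo} to this morphism produces a lift $\psi \colon \mathbb{Z}^n \to F$ which is a $\mathcal{V}$-polynomial and which, after composition with the projection $F \to M$, recovers $f \circ \beta$. By the definition of a $\mathcal{V}$-polynomial, $\psi$ modulo $F_i$ is a polynomial of degree $i$ for every $i$. Taking $i = k$ and using $F_k = \{1\}$, the quotient map $F \to F/F_k$ is the identity, so $\psi$ itself is a polynomial map of degree $k$ from $\mathbb{Z}^n$ to $F$. This $\psi$ is the desired $\phi$.

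There is really no obstacle here beyond bookkeeping: Lemma \ref{felemelo} does all the substantive work, and the role of finiteness of $A$ is only that the hypothesis is phrased in terms of an arbitrary homomorphism $\beta \colon \mathbb{Z}^n \to A$ (finiteness is not used in the argument itself). The one point worth flagging is the trivial observation that group homomorphisms between abelian groups are automatically nilspace morphisms, which is what lets us reduce to the situation already handled by Lemma \ref{felemelo}.
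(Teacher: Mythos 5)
Your proposal is correct and is exactly the intended argument: the paper states this as an immediate corollary of Lemma \ref{felemelo}, the point being that $\beta$, as a group homomorphism, is a nilspace morphism, so $f\circ\beta$ is a morphism $\mathbb{Z}^n\to M$ to which the lemma applies, and the resulting $\mathcal{V}$-polynomial lift is a degree $k$ polynomial since $F_k=\{1\}$. Your side remarks (that finiteness of $A$ is not actually used, and that the only content beyond the lemma is the observation that homomorphisms are morphisms) are also accurate.
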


\begin{definition}[$d$-dimensional polynomial nilsequence] Assume that $F$ is a connected $k$-nilpotent Lie group with filtration $\mathcal{V}$ and $\Gamma$ is a co-compact subgroup of $F$. Assume that $M$ is the left coset space of $\Gamma$ in $F$. Then a map $h:\mathbb{Z}^d\rightarrow\mathbb{C}$ is called a $d$-dimensional polynomial nilsequence (corresponding to $M$) if there is a polynomial map $\phi:\mathbb{Z}^d\rightarrow F$ of degree $k$ and a continuous Lipschitz function $g:M\rightarrow\mathbb{C}$ such that $h$ is the composition of $\phi$, the projection $F\rightarrow M$ and $g$. 
The complexity of such a nilsequence is measured by the maximum of $c$ and the complexity of $N$.
\end{definition}

Let $\mathfrak{A}$ be a $0$-characteristic family of abelian groups. Then all the structure groups of $\mathfrak{A}$-nilspaces are tori. From theorem \ref{finitedimnil} and theorem \ref{restinv} we obtain the following consequence.
\begin{theorem}[polynomial nilsequence inverse theorem]\label{PNIT} Let $\mathfrak{A}$ be a $0$ characteristic family of finite abelian groups. Let us fix a natural number $k$. For every $\epsilon>0$ there is a number $n$ such that if $\|f\|_{U_{k+1}}\geq\epsilon$ for some measurable $f:A\rightarrow\mathbb{C}$ with $|f|\leq 1$ on $A\in\mathfrak{A}$ with $d$-generators $a_1,a_2,\dots,a_d$ then $(f,g)\geq\epsilon^{2^k}/2$ such that $g(n_1a_1+n_2a_2+\dots+n_da_d)=h(n_1,n_2,\dots,n_d)$ for some $d$-dimensional polynomial nilsequence $h$ of complexity at most $n$.
\end{theorem}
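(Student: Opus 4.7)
The plan is to derive Theorem \ref{PNIT} as a direct consequence of the specialized inverse theorem (Theorem \ref{restinv}) combined with the structural results of Section \ref{nilasnil}. First, apply Theorem \ref{restinv} to $f$: since $\mathfrak{A}$ is a $0$-characteristic family, we obtain a degree $k$ $\mathfrak{A}$-nilspace polynomial $g = \psi \circ \phi$ with $(f,g) \geq \epsilon^{2^k}/2$, where $\phi : A \to M$ is a continuous morphism into a finite-dimensional $k$-step nilspace $M$ of complexity at most $m = m(\epsilon,k)$, and $\psi : M \to \mathbb{C}$ is Lipschitz with constant at most $m$. By the characteristic $0$ property recorded after the definition of $(\mathfrak{A})_k$, each structure group $A_i$ of $M$ has dual $\hat{A_i}$ a finitely generated free abelian group, hence each $A_i$ is a finite-dimensional torus.

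Next, invoke Theorem \ref{finitedimnil} to identify $M$ topologically with a nilmanifold: there is a nilpotent Lie-group $F$ with filtration $\mathcal{V} = \{F_i\}_{i=0}^k$ and a discrete subgroup $\Gamma \leq F$ co-compact in $\mathcal{V}$ such that $M = F/\Gamma$. Now consider the homomorphism $\beta : \mathbb{Z}^d \to A$ defined by $\beta(n_1,\dots,n_d) = n_1 a_1 + \cdots + n_d a_d$; this is a nilspace morphism between abelian groups, so the composition $\phi \circ \beta : \mathbb{Z}^d \to M$ is a nilspace morphism. Apply the Corollary following Lemma \ref{felemelo} (with $A = \mathbb{Z}^d$, viewed as the free abelian group, and noting that the argument of that corollary works equally well for $\mathbb{Z}^d$ in place of a finite abelian group since Lemma \ref{felemelo} is stated for $\mathbb{Z}^n$) to obtain a $\mathcal{V}$-polynomial lift $\widetilde{\phi} : \mathbb{Z}^d \to F$ with $\pi \circ \widetilde{\phi} = \phi \circ \beta$, where $\pi : F \to M$ is the projection. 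Since the filtration terminates with $F_k = \{1\}$, being a $\mathcal{V}$-polynomial forces $\widetilde{\phi}$ to be a polynomial map of degree $k$ in the sense of Leibman.

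Setting $h(n_1,\dots,n_d) = \psi(\pi(\widetilde{\phi}(n_1,\dots,n_d)))$ exhibits $h$ as a $d$-dimensional polynomial nilsequence corresponding to $M$. By construction $g(\beta(n_1,\dots,n_d)) = h(n_1,\dots,n_d)$, so $g$ has the required factorization through $\beta$. The complexity of $h$ is controlled by the complexity of $M$ (bounded by $m$) together with the Lipschitz constant of $\psi$ (bounded by $m$), so taking $n$ to be an upper bound for both in terms of $\epsilon$ and $k$ (as furnished by Theorem \ref{restinv}) completes the proof.

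The main technical point, and the step I expect to require the most care, is the lifting argument: Lemma \ref{felemelo} is stated for a general abelian group $A = \mathbb{Z}^n$, but one must verify that the inductive construction used there (peeling off one filtration layer at a time via Lemma \ref{polyab}) produces a $\mathcal{V}$-polynomial lift $\widetilde{\phi}$ that genuinely has degree exactly $k$ as a $\mathbb{Z}^d \to F$ polynomial, and that the connectedness hypothesis built into the definition of polynomial nilsequence is met by the Lie group $F$ produced by Theorem \ref{finitedimnil}. If $F$ is not a priori connected, one replaces it by the identity component (which still contains the image of the polynomial lift $\widetilde{\phi}$, since $\widetilde{\phi}(0) = 1$ and polynomial maps from $\mathbb{Z}^d$ starting at the identity land in the identity component) and adjusts $\Gamma$ accordingly; this preserves the nilmanifold factorization needed for the definition of polynomial nilsequence.
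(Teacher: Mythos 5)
Your proposal is correct and follows essentially the same route the paper intends: the paper derives Theorem \ref{PNIT} in one sentence from Theorem \ref{restinv} and Theorem \ref{finitedimnil}, relying implicitly on Lemma \ref{felemelo} and its corollary for the polynomial lift, which is exactly the chain of reasoning you spell out (including the correct observation that a $\mathcal{V}$-polynomial is a degree $k$ Leibman polynomial since $F_k=\{1\}$). Your extra remarks on connectedness of $F$ and on applying the lifting corollary via $\beta:\mathbb{Z}^d\to A$ are sound fillings of details the paper leaves unstated.
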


Note that the above theorem implies an interesting periodicity since the defining equation of $g$ is true for every $d$-tuple $n_1,n_2,\dots,n_d$ of integers. 
Using theorem \ref{PNIT} we obtain the Green-Tao-Ziegler inverse theorems for functions $f:[N]\rightarrow\mathbb{C}$ with $|f|\leq 1$. Their point of view is that if we put the interval $[N]$ into a large enough cyclic group (say of size $m>N2^{k+1}$) then the normalized version of $\|f\|_{U_{k+1}}$ does not depend on the choice of $m$. The proper normalization is to divide with the $U_{k+1}$-norm of the characteristic function on $1_{[N]}$. 

To use theorem \ref{PNIT} in this situation we need to make sure that $m$ is not too big and that it has only large prime divisors. This can be done by choosing a prime between $N2^{k+1}$ and $N2^{k+2}$.
Then we can apply theorem \ref{PNIT} for the family of cyclic groups of prime order which is clearly a $0$ characteristic family.
What we directly get is that $f$ correlates with a bounded complexity polynomial nil-sequence of degree $k$.
This seems to be weaker then the Green-Tao-Ziegler theorem because they obtain the correlation with a linear nil-sequence. However in the appendix of \cite{GTZ} it is pointed out that the two versions are equivalent.

Form theorem \ref{PNIT} we can also obtain a $d$-dimensional inverse theorem for functions of the form $f:[N]^d\rightarrow\mathbb{C}$ with $|f|\leq 1$. Here we use the family of $d$-th direct powers of cyclic groups with prime order. 

\begin{theorem}[Multi dimensional inverse theorem] Let us fix two natural numbers $d,k>0$. Then for every $\epsilon>0$ there is a number $n$ such that for every function $f:[N]^d\rightarrow\mathbb{C}$ with $\|f\|_{U_{k+1}}\geq\epsilon$ there is a $d$-dimensional polynomial nil-sequence $h:\mathbb{Z}^d\rightarrow\mathbb{C}$ of complexity at most $n$ and degree $k$ such that $(f,h)\geq \epsilon^{2^k}/2$. 
\end{theorem}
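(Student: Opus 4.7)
The plan is to reduce the multi-dimensional inverse theorem to Theorem \ref{PNIT} by embedding $[N]^d$ into a suitable $d$-fold power of a cyclic group of prime order, following the same scheme that the paragraph above uses to recover the Green--Tao--Ziegler theorem in the one-dimensional case.

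First I would choose, by Bertrand's postulate, a prime $p$ with $N 2^{k+1} < p < N 2^{k+2}$ and set $A = (\mathbb{Z}/p\mathbb{Z})^d$ with its canonical generators $a_1,\ldots,a_d$. Let $\mathfrak{A}$ be the family of all groups of the form $(\mathbb{Z}/q\mathbb{Z})^d$ with $q$ a prime. Since for any fixed integer $m$ only finitely many primes $q$ satisfy $m \mid q^d$, the family $\mathfrak{A}$ is $0$-characteristic, so Theorem \ref{PNIT} applies and (by Theorem \ref{finitedimnil}) the relevant $\mathfrak{A}$-nilspaces are finite-dimensional nilmanifolds with a filtration.

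Second, extend $f:[N]^d\to\mathbb{C}$ to $\tilde f:A\to\mathbb{C}$ by zero, where $[N]^d$ is identified with $\{0,1,\dots,N-1\}^d \subset A$. The choice $p > N 2^{k+1}$ is exactly what is needed so that the shifted arguments $x+\omega_1 t_1+\dots+\omega_{k+1} t_{k+1}$ appearing in the definition of $\|\cdot\|_{U_{k+1}}$ do not wrap around the group: the un-normalized Gowers sum computing $\|\tilde f\|_{U_{k+1}}^{2^{k+1}}$ on $A$ then agrees with the same sum on $[N]^d$, divided by $|A|^{k+2}$. After dividing by $\|1_{[N]^d}\|_{U_{k+1}}$, which is the standard normalization alluded to in the paragraph preceding the theorem, one gets $\|\tilde f\|_{U_{k+1}} \geq c_{d,k}\,\epsilon$ with a constant $c_{d,k}$ depending only on $d$ and $k$.

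Third, apply Theorem \ref{PNIT} in the family $\mathfrak{A}$ with tolerance $c_{d,k}\epsilon$ and with the generators $a_1,\dots,a_d$. This produces a function $g:A\to\mathbb{C}$ of the form $g(n_1 a_1 + \dots + n_d a_d) = h(n_1,\dots,n_d)$, where $h:\mathbb{Z}^d \to \mathbb{C}$ is a $d$-dimensional polynomial nilsequence of degree $k$ and complexity bounded by some $n$ depending only on $d,k,\epsilon$, and with $(\tilde f, g) \geq (c_{d,k}\epsilon)^{2^k}/2$. Restricting to the embedded copy of $[N]^d$ and using that $\tilde f$ vanishes off this set converts $(\tilde f, g)_A$ into $(f, h)_{[N]^d}$, up to the same normalization factor, giving $(f, h) \geq \epsilon^{2^k}/2$ after absorbing constants into the definition of $n(\epsilon)$ (replacing $\epsilon$ by a smaller $\epsilon' = c'_{d,k}\epsilon$ at the start if necessary).

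The main obstacle is the careful bookkeeping in step two: one must verify that under the zero-extension and choice $p > N 2^{k+1}$, the Gowers norm really does pass to $\tilde f$ with only a dimension-dependent loss, and that the correlation produced by Theorem \ref{PNIT} transfers back to $[N]^d$ after the standard renormalization. Once this is in place, the conversion of the nilspace polynomial into a $d$-dimensional polynomial nilsequence is handled entirely by Lemma \ref{felemelo} and the discussion of Section \ref{cyclic}, so no genuinely new ingredient beyond Theorem \ref{PNIT} is required.
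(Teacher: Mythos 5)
Your proposal is correct and follows essentially the same route the paper indicates: embed $[N]^d$ into $(\mathbb{Z}/p\mathbb{Z})^d$ for a prime $p$ between $N2^{k+1}$ and $N2^{k+2}$, observe that the family of $d$-th direct powers of prime-order cyclic groups is of characteristic $0$, and apply Theorem \ref{PNIT}. The paper gives only this one-line sketch, so your more detailed bookkeeping of the normalization is a faithful elaboration rather than a departure.
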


Note that $(f,h)$ is the scalar product normalized as as $(f,h)=N^{-d}\sum_{v\in [N]^d}(f(v)\overline{h(v)})$.

\subsection{An example involving the Heisenberg group}\label{heis}

In this chapter we discuss an example which highlights a difference between the nilseqence approach used in \cite{GTZ} and the nilspace-polynomial approach used in the present paper. 

Let $e(x)=e^{x2\pi i}$. For an integer $1<t<m$ we introduce the function $f:\mathbb{Z}_m\rightarrow\mathbb{C}$ defined by $f(k)=\lambda^{k^2}$ where $\lambda=e(t/m^2)$ and $k=0,1,2,\dots,m-1$. Note that this function does not ``wrap around'' nicely like a more simple quadratic function of the form $k\mapsto\epsilon^{k^2}$ where $\epsilon$ is an $m$-th root of unity. This means that to define $f$ we need to choose explicit integers to represent the residue classes modulo $m$.
On the other hand it can be seen that $\|f\|_{U_3}$ is uniformly separated from $0$ so it has some quadratic structure.

In the nilsequence approach this function is not essentially different from the case of $k\mapsto\epsilon^{k^2}$.
However in our approach we are more sensitive about the periodicity issue since we want to establish $f$ through a very rigid algebraic morphism which uses the full group structure of $\mathbb{Z}_m$.
We will show that the quadratic structure of $f$ is tied to a nilspace morphism $\phi$ which maps $\mathbb{Z}_m$ into the Heisenberg nilmanifold.

The Heisenberg group $H$ is the group of three by three upper uni-triangular matrices with real entries.
Let $\Gamma\subset H$ be the set of integer matrices in $H$. It can be seen that $\Gamma$ is a co-compact subgroup. The left coset space $N=\{g\Gamma|g\in H\}$ of $\Gamma$ in $H$ is the Heisenberg nilmanifold.
Let $M\in H$ be the following matrix:
\[ \left( \begin{array}{ccc}
1 & 2t/m & t/m^2\\
0 & 1 & 1/m \\
0 & 0 & 1 \end{array} \right)\]
then

\[M^k= \left( \begin{array}{ccc}
1 & 2kt/m & k^2t/m^2\\
0 & 1 & k/m \\
0 & 0 & 1 \end{array} \right)\]
In particular $M^m$ is an integer matrix.
This implies that the map $\tau:k\rightarrow M^k\Gamma$ defines a periodic morphism from $\mathbb{Z}$ to $N$.
Since the period length is $m$, it defines a morphism $\phi:\mathbb{Z}_m\rightarrow N$.

Let $D$ be the set of elements in $H$ in which all entries are between $0$ and $1$. The set $D$ is a fundamental domain for $\Gamma$. We can define a function on $N$ by representing it on the fundamental domain.
Let $g:D\rightarrow\mathbb{C}$ be the function $A\rightarrow e(A_{1,3})$ where $A_{1,3}$ is the upper-right corner of the matrix $A$. 

We compute $g(\tau(k))=g(M^k\Gamma)$ by multiplying $M^k$ back into the fundamental domain $D$.
Since $g(M^k\Gamma)$ is periodic we can assume that $0\leq k<m$.
Let us multiply $M^k$ from the right by

\[ \left( \begin{array}{ccc}
1 & -\lfloor 2kt/m\rfloor & -\lfloor k^2t/m^2\rfloor\\
0 & 1 & 0\\
0 & 0 & 1 \end{array} \right)\]

We get 

\[ \left( \begin{array}{ccc}
1 & \{ 2kt/m\} & \{k^2t/m^2\}\\
0 & 1 & k/m\\
0 & 0 & 1 \end{array} \right)\in D\]

So the value of $g$ on $\tau(k)$ is $e(\{k^2t/m^2\})=e(k^2t/m^2)$.

\subsection{Higher order Fourier analysis on the cirlce}\label{chap:circle}

In this chapter we sketch a consequence of our results when specialized to the circle grouop $C=\mathbb{R}/\mathbb{Z}$.
Since the circle falls in to the $0$-characteristic case, theorem \ref{finitedimnil} shows that higher order Fourier analysis on the circle deals with continuous morphisms from $C$ to nilspaces that arise from nilmanifolds. We show that such morphisms arise from one parameter subgroups in nilpotent Lie-groups which periodically intersect a co-compact subgroup. It follows that Gowers norms have rather aesthetical inverse theorems on the circle.

Let us use the notation from chapter \ref{nilasnil}. We denote by $M$ the nilspace on $F/\Gamma$ corresponding to the filtration $\mathcal{V}$. Recall that $\pi:F\rightarrow M$ is the natural projection. We prove the following theorem. 

\begin{theorem}\label{circhom} If $\phi:C\rightarrow M$ is a continuous nilspace morphism with $\phi(0)=\pi(1)$. Then there is a group homomorphism $f:\mathbb{R}\rightarrow F$ such that $f(1)\in\Gamma$ and $\phi(x\mathbb{Z})=f(x)\Gamma$.
\end{theorem}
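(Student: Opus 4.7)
The plan is to reduce to the universal cover and apply a continuous analogue of Lemma~\ref{felemelo}, and then use the periodicity coming from $C=\mathbb{R}/\mathbb{Z}$ together with the nilpotent Lie structure to force the lift to be a one-parameter subgroup. First, I compose $\phi$ with the quotient $q:\mathbb{R}\rightarrow C$ to obtain a continuous nilspace morphism $\widetilde\phi=\phi\circ q:\mathbb{R}\rightarrow M$ with $\widetilde\phi(0)=\pi(1)$. Running the inductive lifting argument of Lemma~\ref{felemelo} with $\mathbb{Z}^n$ replaced by $\mathbb{R}$ produces a continuous $\mathcal{V}$-polynomial $f:\mathbb{R}\rightarrow F$ with $f(0)=1$ and $\pi\circ f=\widetilde\phi$; the change needed is that at each filtration step one lifts an ordinary continuous real polynomial into the simply connected vector-space layer $F_{k-j}/F_{k-j+1}$ via the smooth exponential map, in place of the integer representation used in Lemma~\ref{polyab}.

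Next I exploit periodicity. The identity $\widetilde\phi(x+1)=\widetilde\phi(x)$ means $f(x)^{-1}f(x+1)\in\Gamma$ for every $x\in\mathbb{R}$. This defines a continuous map from the connected space $\mathbb{R}$ into the discrete group $\Gamma$, hence it is constant; evaluating at $x=0$ identifies the constant as $\gamma:=f(1)\in\Gamma$, so $f(x+1)=f(x)\gamma$ for every $x$.

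To show $f$ is a one-parameter subgroup I use the simply connected nilpotent structure of $F$: the exponential map $\exp:\mathfrak{f}\rightarrow F$ is a diffeomorphism, so there is a unique $Y\in\mathfrak{f}$ with $\exp(Y)=\gamma$. Set $\tau(x):=\exp(xY)$, the one-parameter subgroup with $\tau(1)=\gamma$. The map $h(x):=f(x)\tau(x)^{-1}$ is a continuous $\mathcal{V}$-polynomial, since $\tau$ and $\tau^{-1}$ are one-parameter subgroups (and any one-parameter subgroup is a $\mathcal{V}$-polynomial: $D_h\tau(x)=\tau(h)$ is constant) and $\mathcal{V}$-polynomials are closed under products by Leibman. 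A direct computation using $\tau(x+1)=\tau(x)\gamma$ gives $h(x+1)=h(x)$ and $h(0)=1$. I then walk down the filtration: $h$ modulo $F_1$ is a continuous degree-$1$ polynomial into the real vector space $F/F_1$ which is periodic, hence constant $=1$, so $h$ takes values in $F_1$; $h$ modulo $F_2$ is a continuous polynomial of degree at most $2$ into the real vector space $F_1/F_2$, again periodic, hence constant $=1$, so $h$ takes values in $F_2$; iterating down to $F_k=\{1\}$ gives $h\equiv 1$. Thus $f=\tau$ is a continuous group homomorphism with $f(1)=\gamma\in\Gamma$ and $\phi(x\mathbb{Z})=f(x)\Gamma$.

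The main obstacle is the continuous $\mathbb{R}$-analogue of Lemma~\ref{felemelo}: that proof relies on the explicit integer-polynomial generators from Lemma~\ref{polyab}, and for $\mathbb{R}$ one must instead invoke the classification of continuous real polynomial maps into a finite-dimensional real vector space and use the smooth exponential to lift across each filtration layer. Once this continuous lift is in hand, the algebraic manipulation with $\tau$ and the final induction down the filtration are clean and elementary.
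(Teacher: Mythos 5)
Your route is genuinely different from the paper's, and its endgame is sound, but it front-loads all the difficulty into one step that you assert rather than prove. The paper never lifts $\phi$ globally to a $\mathcal{V}$-polynomial on $\mathbb{R}$. Instead it inducts on the step $k$: after quotienting by $F_{k-1}\cap\Gamma$ so that $F_{k-1}$ becomes a torus, it applies the induction hypothesis to $\varrho\circ\phi:C\rightarrow M_{k-1}$, obtains a one-parameter subgroup $f_1$ of $F/F_{k-1}$, lifts it to the unique one-parameter subgroup $f_2$ of $F$ through the corresponding element $g\in\Gamma$, and then studies the discrepancy $h:C\rightarrow F_{k-1}$ defined by $\phi(x)=h(x)\pi(f_2(x))$. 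That discrepancy is a continuous polynomial map of the circle into a torus, so Lemma \ref{lin} forces it to be a homomorphism, and centrality of $F_{k-1}$ lets it be absorbed into $f_2$. Your correction step plays the same role as Lemma \ref{lin} but in a cleaner setting: a periodic continuous polynomial $\mathbb{R}\rightarrow V$ into a vector space is an honest polynomial and hence constant, which is elementary, whereas Lemma \ref{lin} is proved by a Fourier-orthogonality argument on $C$. The periodicity observation ($x\mapsto f(x)^{-1}f(x+1)$ is continuous into the discrete $\Gamma$, hence constant $=\gamma$), the definition $h=f\tau^{-1}$ with $\tau(x)=\exp(xY)$, and the descent through the filtration are all correct.

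The gap is the continuous analogue of Lemma \ref{felemelo}, which you name but do not establish, and which is essentially the whole theorem: once a continuous $\mathcal{V}$-polynomial lift $f:\mathbb{R}\rightarrow F$ of $\widetilde\phi$ exists, the correction to a one-parameter subgroup is easy, so this lemma cannot be treated as a routine transcription. Two points need genuine argument. First, at each filtration stage the polynomial $\phi_2$ produced by the cube-preserving property takes values in the \emph{compact} group $A_{k-j}=F_{k-j}\Gamma/F_{k-j+1}\Gamma$ (a torus), not in the vector space $F_{k-j}/F_{k-j+1}$ as you write; you must first lift the continuous polynomial map $\mathbb{R}\rightarrow A_{k-j}$ through the universal cover (using that a continuous lift of a polynomial map from the connected, simply connected $\mathbb{R}$ is again polynomial, since its iterated differences land in a discrete set and vanish at the origin), then identify it as an honest real polynomial $\sum_t c_t x^t$, and only then realize each monomial inside $F_{k-j}$ via $x\mapsto\exp(x^t\log b_t)$ for suitable lifts $b_t$, checking that each such factor is a $\mathcal{V}$-polynomial. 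Second, this construction, like the paper's, silently requires $F$ to be connected and simply connected so that $\exp$ is a diffeomorphism and the one-parameter subgroup through $\gamma$ is unique; you should state this hypothesis, which is the setting in which Theorem \ref{circhom} is applied. With the lifting lemma written out along these lines your proof is complete.
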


Note that theorem \ref{circhom} implies that contintinuous morphisms from $C$ to $M$ (which are normalized in the way that $\phi(0)=\pi(1)$) are in a one to one correspondence with the elments of $\Gamma$. If $g\in\Gamma$ then basic Lie-group theory shows that there is a unique one parameter subgroup $f:\mathbb{R}\rightarrow F$ with $f(1)=g$. 
A surprising consequence of theorem \ref{circhom} is that filtrations and polynomial maps become irrelevant for the description of morphisms of the circle. They can be characterized through classical group homomorphisms.

\begin{lemma}\label{lin} Let $f:C\rightarrow C$ be a continuous polynomial map. Then $f$ is linear. (Consequently we obtain that any continuous polynomial map $f:C\rightarrow C^n$ is linear.)
\end{lemma}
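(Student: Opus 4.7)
The plan is to lift $f$ to the universal cover and reduce to the classical theory of polynomials on $\mathbb{R}$. Let $p:\mathbb{R}\to C=\mathbb{R}/\mathbb{Z}$ be the quotient map; since $\mathbb{R}$ is simply connected, the composition $f\circ p$ has a continuous lift $\tilde f:\mathbb{R}\to\mathbb{R}$. Because $\tilde f(x+1)$ and $\tilde f(x)$ both project to $f(x\bmod\mathbb{Z})$, the continuous function $\tilde f(x+1)-\tilde f(x)$ is integer valued on the connected space $\mathbb{R}$, hence constantly equal to some $n\in\mathbb{Z}$.

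Next I would show that $\tilde f$ is an honest real polynomial. Assume $f$ is polynomial of degree $k$ in the sense of this paper, so $D_{h_{k+1}}\cdots D_{h_1}f\equiv 0$ in $C$. Then the function $g(x,h_1,\dots,h_{k+1}):=(D_{h_{k+1}}\cdots D_{h_1}\tilde f)(x)$ is continuous on $\mathbb{R}^{k+2}$ and its reduction modulo $\mathbb{Z}$ vanishes identically, so $g$ is integer valued. Since $g$ also vanishes whenever some $h_i=0$ and $\mathbb{R}^{k+2}$ is connected, a continuous $\mathbb{Z}$-valued function must be constant, hence $g\equiv 0$. By Fr\'echet's classical theorem (a continuous real function whose $(k+1)$-fold iterated differences vanish identically is a polynomial of degree at most $k$), $\tilde f$ is a real polynomial of degree at most $k$.

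To conclude, I would write $\tilde f(x)=\sum_{i=0}^{k}a_i x^i$ and use the quasi-periodicity $\tilde f(x+1)-\tilde f(x)=n$: this forces the polynomial $\sum_{i\ge 1}a_i\bigl((x+1)^i-x^i\bigr)$ in $x$ to equal the constant $n$. Comparing coefficients of $x^{k-1},x^{k-2},\dots,x$ in turn yields $a_k=a_{k-1}=\dots=a_2=0$, and then the constant term gives $a_1=n$. Hence $\tilde f(x)=a_0+nx$, so $f(x\bmod\mathbb{Z})=nx+a_0\bmod\mathbb{Z}$ is affine with integer slope, i.e.\ linear in the sense of this paper. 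The parenthetical extension to maps $C\to C^n$ is then immediate by applying the result coordinate-wise.

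There is no serious obstacle; the whole argument is a routine application of the lifting trick together with Fr\'echet's theorem. The only point requiring mild care is the translation between the paper's algebraic notion of polynomial (iterated $D_h$ operators trivializing in the target group) and the analytic notion on $\mathbb{R}$, which is precisely what the integrality-plus-connectedness step accomplishes before Fr\'echet is invoked.
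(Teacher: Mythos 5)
Your proof is correct, but it takes a genuinely different route from the paper's. You lift $f$ through the covering map $p:\mathbb{R}\to\mathbb{R}/\mathbb{Z}$, observe that the iterated differences of the lift $\tilde f$ are continuous, integer-valued, and vanish at points of a connected domain (hence vanish identically), invoke Fr\'echet's classical theorem to conclude that $\tilde f$ is an ordinary real polynomial, and then kill every coefficient of degree at least $2$ using the quasi-periodicity $\tilde f(x+1)-\tilde f(x)=n\in\mathbb{Z}$. The paper instead argues intrinsically on the circle, viewing the target multiplicatively: it first reduces, by applying $D_h$ operators, to the case where $f$ is quadratic; then each $D_hf$ is a unimodular constant times a linear character, the map $h\mapsto D_hf$ is continuous in $L^2$, and distinct characters are orthogonal (hence at $L^2$-distance $\sqrt{2}$), so by connectedness of $C$ the character must be the one occurring at $h=0$, namely the trivial one, forcing $f$ to be linear. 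Your argument handles all degrees at once and uses no Fourier analysis, at the price of importing covering-space lifting and Fr\'echet's difference theorem; the paper's stays within the $L^2$/character framework used throughout and avoids the universal cover, at the price of the (briefly glossed) reduction to the quadratic case. Both arrive at the same explicit description $f(x)=nx+a_0$ with $n\in\mathbb{Z}$, i.e.\ an affine homomorphism, and the coordinate-wise extension to $C^n$ is immediate either way.
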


\begin{proof} In this proof we will think of $C$ as the complex unit circle. Assume by contradiction that $f$ is not linear. Then by repeatedly applying operators $D_h$ to $f$ we can get a non-linear quadratic function.
This means that it is enough to get a contradiction if $f$ is quadratic.
In this case $D_h(f)$ is a linear map that depends continuously on $h\in C$ in the $L_2$ norm. On the other hand  $D_h(f)$ is a linear character times a complex number from the unit circle. The characters are orthogonal to each other and so the character corresponding to $D_h(f)$ has to be the same for every $h$. This is only possible if it is the trivial character but then $f$ is linear.
\end{proof}

\medskip

\noindent{\it Proof of theorem \ref{circhom}}~~We go by induction on $k$. If $k=0$ then the statement is trivial.
Assume that we have the statement for $k-1\geq 0$. By factoring out with the central subgroup $F_{k-1}\cap\Gamma$ we can assume that $F_{k-1}\cap\Gamma$ is trivial. Thus $F_{k-1}$ is a torus. Let $M_{k-1}$ denote the nilspace $(N/F_{k-1})/\Gamma$ and let $\varrho:M\rightarrow M_{k-1}$ denote the natural projection. Elements of $M_{k-1}$ are orbits of the action of $F_{k-1}$ on $M$. Since $\varrho$ is a continuous nilspace morphism we have that $\varrho\circ\phi$ is a continuous morphism. By our induction hypothesis there is a one parameter subgroup $f_1:\mathbb{R}\rightarrow F/F_{k-1}$ representing $\varrho\circ\phi$. Assume that $f_1(1)=gF_{k-1}$ for some $g\in\Gamma$. Let $f_2:\mathbb{R}\rightarrow F$ denote the unique one parameter subgroup with $f_2(1)=g$. We have that $f_2(x)F_{k-1}=f_1(x)$ holds for every $x\in\mathbb{R}$. We have that $\pi(f_2(x))$ is in the same $F_{k-1}$ orbit as $\phi(x)$. Let $h:\mathbb{R}\rightarrow F_{k-1}$ denote the unique function with $\phi(x)=h(x)\pi(f_2(x))$. It is easy to see from basic nilspace theory (see theorem \ref{bundec}) that $h$ is a polynomial map so by lemma \ref{lin} it is a homomorphism. From $\pi(f_2(1))=\pi(1)$ and $\phi(1)=\pi(1)$ we have that $h(1)=0$. Since $F_{k-1}$ is in the center of $F$ we have that $f(x)=\pi(x)f_2(x)$ is a homomorphism which satisfies the required conditions.

\medskip 
 
We are ready to describe functions $f_s:C\rightarrow\mathbb{C}$ that are "structured" with respect to the $U_{k+1}$ norm.
Let $F$ be a $k$-nipontent Lie-group with a co-compact subgroup $\Gamma$ and let $h:F/\Gamma\rightarrow\mathbb{C}$ be a continuous function with $|h|\leq 1$. Let $g\in\Gamma$ be a fixed element and let $\tau:\mathbb{R}\rightarrow F$ be the unique one parameter subgroup with $\tau(1)=g$. Then we obtain a continuous function $f_s=h\circ\pi\circ\tau$ on $C=\mathbb{R}/\mathbb{Z}$. 
Note that $f_s$ can be regarded as a continuous periodic nil-sequence. 
Roughly speaking, if the complexity of $F/\Gamma$ and the Lipschitz constant of $h$ are both bounded then $f_s$ is a structured function in $k$-th order Fourier analysis. Theorem \ref{restinv} shows that if $\|f\|_{U_{k+1}}$ is separated from $0$ then $f$ correlates with such a structured function $f_s$. 

Using this we also get an ``interval'' version of the Green-Tao-Ziegler theorem for the $U_{k+1}([0,1])$ norm.
Functions on the interval $[0,1]$ can be represented in a large enough Cyclic group say $\mathbb{R}/2^{k+1}\mathbb{Z}$.
We obtain that if $f:[0,1]\rightarrow\mathbb{C}$ is a measurable function with $|f|\leq 1$ and $\|f\|_{U_{k+1}}$ is separated from $0$ then $f$ correlates with a continuous bounded complexity nilsequence.

\section{Compact abelian groups, Gowers norms and nilspaces}

\subsection{$\sigma$-algebras of probability spaces}

Let $(\Omega,\mathcal{A},\mu)$ be a probability space. We will use the standard notation for normed spaces obtained from $\Omega$. For $L^p(\Omega,\mathcal{A},\mu)$ we use the short hand notation $L^p(\Omega)$ or $L^p(\mathcal{A})$. Since we never consider two different probability measures on the same $\sigma$-algebra the meaning of these abbreviations is always clear from the context. We denote by $L^p_u$ the unit balls in these normed spaces. In this paper we only use the values $p=2,\infty$. 

We will often work with sub $\sigma$-algebras of $\mathcal{A}$ in a fixed probability space $(\Omega,\mathcal{A},\mu)$. For two sub $\sigma$-algebras $\mathcal{B},\mathcal{C}$ in $\mathcal{A}$ we denote by $\mathcal{B}\vee\mathcal{C}$ the $\sigma$-algebra generated by $\mathcal{B}$ and $\mathcal{C}$. The expression $\mathcal{B}\wedge\mathcal{C}$  denotes the intersection of $\mathcal{B}$ and $\mathcal{C}$. According to our definition a set $S$ is in $\mathcal{B}\wedge\mathcal{C}$ if there are measurable sets $B\in\mathcal{B}$ and $C\in\mathcal{C}$ such that $\bm(S\triangle A)=\bm(S\triangle B)=0$. 
If $\mathcal{B}$ is a sub $\sigma$-algebra in $\mathcal{A}$ and $f:\Omega_2\rightarrow \Omega$ is a measure preserving map for some probability space $\Omega_2$ then we denote by $\mathcal{B}\circ f$ the $\sigma$-algebra $\{f^{-1}(S)|S\in\mathcal{B}\}$.
Two sub $\sigma$-algebras $\mathcal{B}$ and $\mathcal{C}$ in $\mathcal{A}$ are called {\bf conditionally independent} if $\mathbb{E}(\mathbb{E}(f|\mathcal{C})|\mathcal{B})=\mathbb{E}(\mathbb{E}(f|\mathcal{B})|\mathcal{C})=\mathbb{E}(f|\mathcal{B}\wedge\mathcal{C})$ holds for an arbitrary bounded measurable function $f$. To prove that $\mathcal{B}$ and $\mathcal{C}$ are conditionally independent it is enough to check that $\mathbb{E}(f|\mathcal{C})=0$ whenever $f$ is measurable in $\mathcal{B}$ and $\mathbb{E}(f|\mathcal{B}\wedge\mathcal{C})=0$.

\begin{definition} Let $\{\mathcal{B}_i\}_{i=1}^n$ be a collection of sub $\sigma$-algebras in $\mathcal{A}$. Then we denote by $\mathcal{R}(\{\mathcal{B}_i\}_{i=1}^n)$ the set of functions of the form $f=\prod_{i=1}^nf_i$  where $f_i\in L^\infty_u(\mathcal{B}_i)$.
\end{definition}

We will use the following classical fact from measure theory.

\begin{lemma}\label{siggen} Let $\{\mathcal{B}_i\}_{i=1}^n$ be a collection of sub $\sigma$-algebras in $\mathcal{A}$ and let $\mathcal{B}=\bigvee_{i=1}^n\mathcal{B}_i$ be the $\sigma$-algebra generated by them.  Then every function in $L^2(\mathcal{B})$ can be approximated with an arbitrary precision in $L^2$ by a finite linear combination of functions in $\mathcal{R}=\mathcal{R}(\{\mathcal{B}_i\}_{i=1}^\infty)$.
\end{lemma}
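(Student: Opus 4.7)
The plan is to reduce this to Dynkin's $\pi$-$\lambda$ theorem. Let $V \subseteq L^2(\mathcal{A})$ denote the $L^2$-closure of the linear span of $\mathcal{R} = \mathcal{R}(\{\mathcal{B}_i\}_{i=1}^n)$; I want to show $V \supseteq L^2(\mathcal{B})$. Since simple functions are dense in $L^2(\mathcal{B})$, it suffices to prove that $\mathbf{1}_S \in V$ for every $S \in \mathcal{B}$.

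The first observation is that the collection $\mathcal{P}$ of ``rectangles'' $B_1 \cap B_2 \cap \cdots \cap B_n$ with $B_i \in \mathcal{B}_i$ is a $\pi$-system which generates $\mathcal{B}$ (the generated $\sigma$-algebra clearly contains each $\mathcal{B}_i$ by taking all other factors equal to $\Omega$, and is contained in $\mathcal{B}$ by definition). Moreover, the indicator of such a rectangle factors as $\prod_{i=1}^n \mathbf{1}_{B_i}$, which is a member of $\mathcal{R}$ itself. So $\mathbf{1}_S \in V$ for every $S \in \mathcal{P}$.

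Next, let $\mathcal{D} = \{S \in \mathcal{A} : \mathbf{1}_S \in V\}$. I claim $\mathcal{D}$ is a Dynkin ($\lambda$-)system. It contains $\Omega$ since $\mathbf{1}_\Omega = \prod_{i=1}^n \mathbf{1}_\Omega \in \mathcal{R}$. It is closed under complements because $\mathbf{1}_{\Omega \setminus S} = \mathbf{1}_\Omega - \mathbf{1}_S$ and $V$ is a linear subspace containing $\mathbf{1}_\Omega$. For closure under countable disjoint unions $S = \bigsqcup_k S_k$ with $S_k \in \mathcal{D}$, the partial sums $\sum_{k=1}^K \mathbf{1}_{S_k} = \mathbf{1}_{\bigsqcup_{k\leq K} S_k}$ lie in $V$ and converge to $\mathbf{1}_S$ pointwise and boundedly; dominated convergence in $L^2$ (with dominating function $1$) gives $\mathbf{1}_S \in V$ since $V$ is $L^2$-closed. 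By Dynkin's $\pi$-$\lambda$ theorem applied to $\mathcal{P} \subseteq \mathcal{D}$, the whole $\sigma$-algebra $\mathcal{B}$ lies in $\mathcal{D}$, proving the lemma.

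There is no serious obstacle; the only mild point of care is the verification of closure of $\mathcal{D}$ under countable disjoint unions, where one must invoke that $V$ is $L^2$-closed (not merely closed under finite linear combinations) together with bounded convergence, and the trivial observation that the generating $\pi$-system of rectangles already sits inside $\mathcal{R}$ via the factorization of indicator functions.
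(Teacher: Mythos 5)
Your argument is correct and complete: the rectangles form a $\pi$-system generating $\mathcal{B}$, their indicators lie in $\mathcal{R}$, and the $\lambda$-system verification (using that $V$ is a closed linear subspace of $L^2$ of a probability space, so bounded pointwise convergence of the partial indicator sums suffices) is carried out properly, after which Dynkin's theorem and the density of simple functions finish the job. The paper states this lemma as a classical fact and gives no proof of its own, so there is nothing to compare against; your $\pi$-$\lambda$ argument is the standard way to supply one. (Minor remark: the subscript $\infty$ in $\mathcal{R}(\{\mathcal{B}_i\}_{i=1}^\infty)$ in the paper's statement is a typo for $n$, as you implicitly assumed.)
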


A $\sigma$-algebra $\mathcal{B}\subseteq\mathcal{A}$ is {\bf separable} if there is a countable subset $S\subset\mathcal{B}$ such that for every $\epsilon>0$ and $H\in\mathcal{B}$ there is a set $T\in S$ such that $\mu(H\triangle T)\leq\epsilon$. Every $\sigma$-algebra which is generated by countable many sets is separable.
We will use the following basic fact.

\begin{lemma}\label{sepsiggen} Let $\{\mathcal{B}_i\}_{i=1}^n$ be a collection of sub $\sigma$-algebras in $\mathcal{A}$ and let $\mathcal{C}\subseteq\bigvee_{i=1}^n\mathcal{B}_i$ be a separable $\sigma$-algebra. Then there are separable $\sigma$-algebras $\mathcal{B}'_i\subseteq\mathcal{B}$ for $1\leq i\leq n$ such that $\mathcal{C}\subseteq\bigvee_{i=1}^n\mathcal{B}'_i$.
\end{lemma}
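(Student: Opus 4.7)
The plan is to apply Lemma \ref{siggen} countably many times to a dense family of generators of $\mathcal{C}$, and then aggregate the resulting finite lists of factors into a single countable generating family for each target $\sigma$-algebra $\mathcal{B}'_i$.

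First, since $\mathcal{C}$ is separable, fix a countable family $\{C_k\}_{k\in\mathbb{N}}\subseteq\mathcal{C}$ which is dense in the pseudo-metric $(A,B)\mapsto\mu(A\triangle B)$; equivalently, $\{1_{C_k}\}_{k\in\mathbb{N}}$ is $L^2$-dense in $\{1_C:C\in\mathcal{C}\}$. Next, for every pair $(k,m)\in\mathbb{N}^2$, apply Lemma \ref{siggen} to the function $1_{C_k}\in L^2(\bigvee_{i=1}^n\mathcal{B}_i)$: there exist finitely many products $\prod_{i=1}^n f_{k,m,j,i}$ with $f_{k,m,j,i}\in L_u^\infty(\mathcal{B}_i)$ and scalars $\alpha_{k,m,j}$ such that
\[
\Bigl\|1_{C_k}-\sum_{j}\alpha_{k,m,j}\prod_{i=1}^n f_{k,m,j,i}\Bigr\|_2<\frac{1}{m}.
\]

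Second, for each $i\in\{1,\dots,n\}$, let $\mathcal{B}'_i$ be the sub $\sigma$-algebra of $\mathcal{B}_i$ generated by the countable collection $\{f_{k,m,j,i}\}_{k,m,j}$ together with the $\mu$-null sets (equivalently, generated by the countably many level sets of these bounded measurable functions). A countable union of finite sets is countable, so each $\mathcal{B}'_i$ is countably generated, hence separable, and by construction $\mathcal{B}'_i\subseteq\mathcal{B}_i$.

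Finally, every one of the approximating finite sums above belongs to $L^2\bigl(\bigvee_{i=1}^n\mathcal{B}'_i\bigr)$, which is a closed subspace of $L^2(\mathcal{A})$; taking the $L^2$-limit as $m\to\infty$ we conclude $1_{C_k}\in L^2\bigl(\bigvee_{i=1}^n\mathcal{B}'_i\bigr)$, i.e.\ $C_k\in\bigvee_{i=1}^n\mathcal{B}'_i$ (up to null sets, which are included). Since $\{C_k\}$ is $L^2$-dense in the indicators of $\mathcal{C}$ and the $\sigma$-algebra $\bigvee_{i=1}^n\mathcal{B}'_i$ is closed under symmetric-difference limits, we obtain $\mathcal{C}\subseteq\bigvee_{i=1}^n\mathcal{B}'_i$, as required. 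The only subtle point is the bookkeeping in the second step---one must be sure that the factors of the approximating products can be gathered into a countable family per coordinate---but this is immediate because we index over $\mathbb{N}^3$.
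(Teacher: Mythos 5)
Your proof is correct, and since the paper states Lemma \ref{sepsiggen} as a "basic fact" without proof, your argument via Lemma \ref{siggen} is exactly the natural way to supply one. The only cosmetic point is that adjoining all $\mu$-null sets of $\mathcal{A}$ to $\mathcal{B}'_i$ technically breaks the literal containment $\mathcal{B}'_i\subseteq\mathcal{B}_i$; it is cleaner to omit them and observe that the resulting containments $C_k\in\bigvee_{i}\mathcal{B}'_i$ and $\mathcal{C}\subseteq\bigvee_i\mathcal{B}'_i$ hold modulo null sets, which is the convention used throughout the paper.
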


We will need the following lemma on conditional independence.

\begin{lemma}\label{siggen2} Let $\mathcal{B}$ and $\mathcal{C}$ be two conditionally independent $\sigma$-algebras and let $\mathcal{B}_1$ be a sub $\sigma$-algebra of $\mathcal{B}$.
Then $(\mathcal{C}\vee\mathcal{B}_1)\wedge\mathcal{B}=(\mathcal{C}\wedge\mathcal{B})\vee\mathcal{B}_1$.
\end{lemma}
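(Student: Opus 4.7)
The inclusion $(\mathcal{C}\wedge\mathcal{B})\vee\mathcal{B}_1\subseteq(\mathcal{C}\vee\mathcal{B}_1)\wedge\mathcal{B}$ is routine lattice bookkeeping: both generators $\mathcal{C}\wedge\mathcal{B}$ and $\mathcal{B}_1$ lie in $\mathcal{B}$ (the first by definition, the second by hypothesis), and both lie in $\mathcal{C}\vee\mathcal{B}_1$ (the first because $\mathcal{C}\wedge\mathcal{B}\subseteq\mathcal{C}$, the second trivially), so their join lies in the intersection. I would dispatch this in one sentence and concentrate on the reverse inclusion.

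For the hard direction, write $\mathcal{D}=(\mathcal{C}\wedge\mathcal{B})\vee\mathcal{B}_1$ and take an arbitrary $f\in L^\infty((\mathcal{C}\vee\mathcal{B}_1)\wedge\mathcal{B})$; it suffices to show $f$ is $\mathcal{D}$-measurable, equivalently $f=\mathbb{E}(f\mid\mathcal{D})$ in $L^2$. The plan is to combine two tools: the product-approximation statement of Lemma~\ref{siggen} applied to the pair $(\mathcal{C},\mathcal{B}_1)$, and the conditional independence hypothesis applied in the form $\mathbb{E}(\mathbb{E}(\cdot\mid\mathcal{C})\mid\mathcal{B})=\mathbb{E}(\cdot\mid\mathcal{B}\wedge\mathcal{C})$.

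Concretely, since $f\in L^2(\mathcal{C}\vee\mathcal{B}_1)$, Lemma~\ref{siggen} produces a sequence of finite linear combinations $g_n=\sum_i h_{i,n}k_{i,n}$ with $h_{i,n}\in L^\infty_u(\mathcal{C})$ and $k_{i,n}\in L^\infty_u(\mathcal{B}_1)$ such that $g_n\to f$ in $L^2$. Because $f$ is $\mathcal{B}$-measurable, applying the (contractive) projection $\mathbb{E}(\cdot\mid\mathcal{B})$ gives $\mathbb{E}(g_n\mid\mathcal{B})\to f$ in $L^2$. Now each $k_{i,n}$ is already $\mathcal{B}$-measurable (as $\mathcal{B}_1\subseteq\mathcal{B}$), so it can be pulled out of the conditional expectation, yielding $\mathbb{E}(g_n\mid\mathcal{B})=\sum_i k_{i,n}\,\mathbb{E}(h_{i,n}\mid\mathcal{B})$. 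The conditional independence of $\mathcal{B}$ and $\mathcal{C}$, together with the trivial identity $\mathbb{E}(h_{i,n}\mid\mathcal{C})=h_{i,n}$, identifies $\mathbb{E}(h_{i,n}\mid\mathcal{B})$ with $\mathbb{E}(h_{i,n}\mid\mathcal{B}\wedge\mathcal{C})$, which is measurable in $\mathcal{B}\wedge\mathcal{C}$. Thus each approximant $\mathbb{E}(g_n\mid\mathcal{B})$ is a sum of products of a $\mathcal{B}_1$-measurable and a $(\mathcal{B}\wedge\mathcal{C})$-measurable bounded function, hence $\mathcal{D}$-measurable, and the $L^2$-limit $f$ is $\mathcal{D}$-measurable as required.

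The only place where I expect any real care is the conditional independence step, specifically that one must apply the hypothesis in the direction $\mathbb{E}(\cdot\mid\mathcal{C})$ then $\mathbb{E}(\cdot\mid\mathcal{B})$ rather than the other way around, so that the $\mathcal{C}$-measurable factors $h_{i,n}$ get collapsed into $\mathcal{B}\wedge\mathcal{C}$. Everything else is standard measure-theoretic manipulation, and no further properties of $\mathcal{B}_1$ beyond $\mathcal{B}_1\subseteq\mathcal{B}$ are used.
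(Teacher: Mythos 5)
Your proof is correct and follows essentially the same route as the paper's: both dispatch the easy inclusion as trivial, then approximate $f$ via Lemma~\ref{siggen} by sums $\sum_i c_i b_i$ with $c_i$ $\mathcal{C}$-measurable and $b_i$ $\mathcal{B}_1$-measurable, project onto $\mathcal{B}$ (pulling out the $\mathcal{B}_1$-measurable factors), and use conditional independence to collapse $\mathbb{E}(c_i\mid\mathcal{B})$ into $\mathcal{B}\wedge\mathcal{C}$. No discrepancies worth noting.
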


\begin{proof}
It is trivial that $(\mathcal{C}\vee\mathcal{B}_1)\wedge\mathcal{B}\supseteq(\mathcal{C}\wedge\mathcal{B})\vee\mathcal{B}_1$.
To see the other containment let $f\in L^\infty((\mathcal{C}\vee\mathcal{B}_1)\wedge\mathcal{B})$.
Using that $f\in L^\infty(\mathcal{C}\vee\mathcal{B}_1)$ we have by lemma \ref{siggen} that for an arbitrary small $\epsilon>0$ there is an approximation of $f$ in $L_2$ of the form $f'=\sum_{i=1}^n c_ib_i$
where $c_i\in L^\infty(\mathcal{C})$ and $b_i\in L^\infty(\mathcal{B}_1)$. Since $\|f-f'\|_2\leq\epsilon$ and $f\in L^\infty(\mathcal{B})$ we have that $$\epsilon\geq\|\mathbb{E}(f|\mathcal{B})-\mathbb{E}(f'|\mathcal{B})\|_2=\|f-\sum_{i=1}^n \mathbb{E}(c_i|\mathcal{B})b_i\|_2.$$
Using conditional independence we have that $E(c_i|\mathcal{B})$ is measurable in $\mathcal{B}\wedge\mathcal{C}$ for every $i$ and the whole sum is measurable in $(\mathcal{C}\wedge\mathcal{B})\vee\mathcal{B}_1$.
Using it for every $\epsilon$ the proof is complete.
\end{proof}

\subsection{Couplings of probability spaces}

Let $I$ be a finite index set and $\mathcal{U}=\{(\Omega_i,\mathfrak{S}_i,\mu_i)\}_{i\in I}$ be a system of probability spaces. If all the probability spaces in $\mathcal{U}$ are separable the we say that $\mathcal{U}$ is separable.
A {\bf coupling } of $\mathcal{U}$ is a probability space $(\Omega,\mathfrak{S},\mu)$ together with measure preserving transformations $\{\psi_i:\Omega\rightarrow\Omega_i\}_{i\in I}.$ This means that for every $i\in I$ and set $S\in\mathfrak{S}_i$ we have $\mu(\psi_i^{-1}(S))=\mu_i(S)$.

\begin{definition}\label{coupeq} Let $\{\psi_i:\Omega\rightarrow\Omega_i\}_{i\in I}$ and $\{\psi_i^*:\Omega^*\rightarrow\Omega_i\}_{i\in I}$ be two couplings of $\mathcal{U}$ on the spaces $(\Omega,\mathfrak{S},\mu)$ and $(\Omega^*,\mathfrak{S}^*,\mu^*)$.
We say that these two couplings are {\bf equivalent} if for every system of sets $\{S_i\in\mathfrak{S}_i\}_{i\in I}$ we have
\begin{equation}\label{cupconst}
\mu\Bigl(\bigcap_{i\in I}\psi_i^{-1}(S_i)\Bigr)=\mu^*\Bigl(\bigcap_{i\in I}{\psi_i^*}^{-1}(S_i)\Bigr).
\end{equation}
\end{definition}

\begin{remark} It is not hard to show that in the previous definition the equivalence of the two couplings imply that if $F$ is an arbitrary $m$ variable set formula (using intersection, union and complement) , $\{M_j\in\mathfrak{S}_{a_j}\}_{j=1}^m$ is a system a events, $M$ is the value of $F$ on $\{\psi_{a_i}^{-1}(M_i)\}_{i=1}^m$ and $M^*$ is the value of $F$ on $\{{\psi^*_{a_i}}^{-1}(M_i)\}_{i=1}^m$ then $\mu(M)=\mu^*(M^*)$.
In other words the two couplings are equivalent if they can't be distinguished using probabilities of events formulated with events in $\mathcal{U}$ in a fixed way.
\end{remark}

\begin{definition} Let $\prod_{i\in I}\mathfrak{S}_i$ denote the set of vectors $(S_i)_{i\in I}$ where $S_i\in\mathfrak{S}_i$. Let
\begin{equation}\label{couprep}
f:\prod_{i\in I}\mathfrak{S}_i\rightarrow [0,1]
\end{equation}
be a function. We say that $f$ represents the (equivalence class) of a coupling $\Psi=\{\psi_i:\Omega\rightarrow\Omega_i\}_{i\in I}$ if for $S=(S_i)_{i\in I}$ the value of $f(S)$ is equal to the left hand side of (\ref{cupconst}). 
\end{definition}

The next lemma gives a simple answers to the following question: {\it For which functions $f$ is there a coupling $\Psi$ such that $f$ represents $\Psi$?}

\begin{lemma}\label{couprepl} A function $f$ of the form (\ref{couprep}) represents some coupling of $\mathcal{U}$ if and only if the following two conditions hold.
\begin{enumerate}
\item For every $j\in I$ and $A\in\mathfrak{S}_j$ if $S=(S_i)_{i\in I}$ denotes the vector with $S_i=\Omega_i$ for $i\neq j$ and $S_j=A$  then $f(S)=\mu_j(A)$.
\item $f$ is additive in every coordinate. This means that if in $S=(S_i)_{i\in I}$ the set $S_j$ is the disjoint union of $A$ and $B$ then $f(S)=f(T)+S(U)$ where $T$ (resp. $U$) is obtained from $S$ by replacing $S_j$ with $A$ (resp. $B$).
\end{enumerate}  
\end{lemma}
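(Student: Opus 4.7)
The forward implication is immediate: if $\Psi=\{\psi_i:\Omega\to\Omega_i\}_{i\in I}$ is a coupling with probability measure $\mu$ representing $f$, then condition (1) follows from $\mu(\psi_j^{-1}(A))=\mu_j(A)$, and condition (2) follows from additivity of $\mu$ applied to $\psi_j^{-1}(A\sqcup B)=\psi_j^{-1}(A)\sqcup\psi_j^{-1}(B)$.

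For the converse, the natural candidate for the coupling is $\Omega=\prod_{i\in I}\Omega_i$ equipped with the product $\sigma$-algebra $\mathfrak{S}=\bigotimes_{i\in I}\mathfrak{S}_i$ and coordinate projections $\psi_i$. I would define $\mu_0(\prod_i S_i)=f((S_i)_{i\in I})$ on the semi-algebra $\mathcal{R}$ of measurable rectangles, record the key inequality (since $f\geq 0$, condition (2) implies monotonicity of $f$ in each coordinate, and iterating with condition (1) gives the marginal bound $f((S_i))\leq \mu_j(S_j)$ for every $j\in I$), and then extend $\mu_0$ to the algebra $\mathcal{A}$ of finite disjoint unions of rectangles by finite additivity. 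Well-definedness of the extension follows from a grid-refinement argument: two rectangle decompositions of the same element of $\mathcal{A}$ admit a common refinement obtained from finite partitions of each $\Omega_i$, and iterated coordinate-wise additivity shows the $\mu_0$-sum along this refinement equals the $\mu_0$-sum along either original decomposition.

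The main obstacle is upgrading finite additivity of $\mu_0$ on $\mathcal{A}$ to countable additivity, after which Carath\'eodory's extension theorem produces the desired measure $\mu$ on $\mathfrak{S}$ and the $\psi_i$ are automatically measure-preserving. I would first establish continuity at $\emptyset$ for decreasing sequences of rectangles $R_n=\prod_i S_i^{(n)}$ (assuming without loss of generality $S_i^{(n+1)}\subseteq S_i^{(n)}$): since $\bigcap_n R_n=\prod_i\bigcap_n S_i^{(n)}$, the hypothesis $\bigcap_n R_n=\emptyset$ forces $\bigcap_n S_{i_0}^{(n)}=\emptyset$ for some $i_0\in I$, whence countable additivity of $\mu_{i_0}$ gives $\mu_{i_0}(S_{i_0}^{(n)})\to 0$ and the marginal bound yields $\mu_0(R_n)\leq\mu_{i_0}(S_{i_0}^{(n)})\to 0$. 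Passing from rectangles to the full algebra $\mathcal{A}$ is the technical heart of the argument: a decreasing sequence $E_n\searrow\emptyset$ in $\mathcal{A}$ can be reduced to sequences contained in a single rectangle by splitting each $E_n$ along a fixed rectangle partition of $E_1$ and invoking finite additivity, after which the standard telescoping-subadditivity trick deduces continuity at $\emptyset$ on $\mathcal{A}$ from countable subadditivity on rectangles. Carath\'eodory then delivers the measure $\mu$ on $\mathfrak{S}$ with $\mu(\prod_i S_i)=f((S_i)_{i\in I})$, and by construction $f$ represents the resulting coupling.
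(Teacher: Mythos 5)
Your forward direction, the definition of $\mu_0$ on rectangles, the grid-refinement argument for finite additivity on the algebra $\mathcal{A}$, and the continuity at $\emptyset$ along decreasing sequences of \emph{rectangles} (via the marginal bound $f(S)\leq\mu_{i_0}(S_{i_0})$) are all fine. The gap is precisely at the step you yourself flag as the technical heart: passing from rectangles to $\mathcal{A}$. Splitting $E_n$ along a rectangle partition $E_1=\bigsqcup_j R_j$ yields the sets $E_n\cap R_j$, which are still finite unions of rectangles rather than single rectangles, so the reduction does not terminate; and the ``telescoping-subadditivity trick'' needs countable subadditivity of $\mu_0$ on the semiring of rectangles, which is strictly stronger than continuity at $\emptyset$ along decreasing rectangles and has not been established.

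This gap cannot be closed in the stated generality, because the converse of the lemma is false for arbitrary probability spaces. Take $E\subseteq[0,1]$ with $\lambda^*(E)=1$ and $\lambda_*(E)=0$, let $\Omega_1=E$ and $\Omega_2=[0,1]\setminus E$ carry the trace $\sigma$-algebras and trace measures, and set $f\bigl((E\cap A)\times(E^{c}\cap B)\bigr)=\lambda(A\cap B)$ for Borel $A,B$. This is well defined, is (even countably) additive in each coordinate separately, and has the correct marginals, so it satisfies (1) and (2); your rectangle-level continuity at $\emptyset$ also holds for it. Yet for the dyadic partitions $\mathcal{P}_n$ of $[0,1]$ the sets $D_n=\bigcup_{I\in\mathcal{P}_n}(E\cap I)\times(E^{c}\cap I)$ lie in $\mathcal{A}$, decrease to the empty set, and satisfy $\mu_0(D_n)=\sum_{I\in\mathcal{P}_n}\lambda(I)=1$ for every $n$; pushing any putative coupling forward along $(\psi_1,\psi_2)$ would produce a countably additive extension of $\mu_0$, so no coupling represents this $f$. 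The correct converse requires an extra hypothesis on the $(\Omega_i,\mu_i)$, namely inner regularity with respect to a compact class in Marczewski's sense (satisfied by every space the paper actually uses: compact Hausdorff spaces with Borel measures and ultraproduct spaces), and at this point the argument should be the compact-class one: approximate each $S_i$ from inside by $K_i$ with $\mu_i(S_i\setminus K_i)$ small, use coordinatewise additivity together with your marginal bound to make $\mu_0(\prod_i S_i)-\mu_0(\prod_i K_i)$ small, and deduce countable additivity of $\mu_0$ on $\mathcal{A}$ from the compactness of the class of finite unions of products of the $K_i$. (For what it is worth, the paper states this lemma without proof, so there is nothing to compare against on its side.)
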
 

Let ${\rm coup}(\mathcal{U})$ denote the set of equivalence classes of the possible couplings of the system $\mathcal{U}$.
Lemma \ref{couprepl} says that elements of ${\rm coup}(\mathcal{U})$ are in a one to one correspondence with functions of the form (\ref{couprep}) which satisfy the two algebraic conditions of the lemma. 
We list a few basic concepts related to couplings.

\bigskip

\noindent{\bf Topology:}~We say that $\{\Psi_i\}_{i=1}^\infty$ is a convergent sequence in ${\rm coup}(\mathcal{U})$ if the representing functions $\{f_i\}_{i=1}^\infty$ converge for every fixed element $S\in\prod_{i\in I}\mathfrak{S}_i$. It is clear by lemma \ref{couprepl} that the pointwise limit of $\{f_i\}_{i=1}^\infty$ also represents a coupling. If the coupling is separable the we obtain a compact Hausdorff topological structure on ${\rm coup}(\mathcal{U})$. 

\bigskip 

\noindent{\bf Convexity:}~Let $\nu$ be any Borel probability measure on ${\rm coup}(\mathcal{U})$. Then the function $f$ defined by
$$f(S)=\int_{C\in {\rm coup}(\mathcal{U})} f_C(S)~d\nu$$ represents a coupling where $f_C$ is the function representing $C$. This shows that ${\rm coup}(\mathcal{U})$ is a convex set in the topological sense.

\bigskip

\noindent{\bf Complete dependence:}~We say that the coupling $\{\psi_i:\Omega\rightarrow\Omega_i\}_{i\in I}$ is completely dependent if for every $j\in I$ we have that $\mathfrak{S}_j\circ\psi_j=\vee_{i\neq j}\mathfrak{S}_i\circ\psi_i$. It is easy to see that this property depends only on the equivalence class of the coupling.

\bigskip

\noindent{\bf self-couplings and their sub-couplings:~}~Throughout this paper we will mostly study couplings when the probability spaces in $\mathcal{U}$ are all identical to a fixed space $(X,\mathcal{A},\nu)$. A coupling of such a system $\mathcal{U}$ is a system of measure preserving maps $\{\psi_i:\Omega\rightarrow X\}_{i\in I}$. Let us denote by ${\rm coup}(X,I)$ the set of self-couplings of $I$ copies of $X$. If $\phi:J\rightarrow I$ is a map then it induces a continuous map $\hat{\phi}:{\rm coup}(X,I)\rightarrow{\rm coup}(X,J)$ by $\hat{\phi}(\{\psi_i\}_{i\in I})=\{\psi_{\phi(j)}\}_{j\in J}$. If $\phi$ is injective and
 $C\in{\rm coup}(X,I)$ then we call $\hat{\phi}(C)$ the sub-couplig of $C$ corresponding to $\phi$.

\bigskip

\noindent{\bf The multi linear form $\xi$:}~Let $G=\{g_i:\Omega_i\rightarrow\mathbb{C}\}_{i\in I}$ be a system of bounded measurable functions. Let $C=\{\psi_i:\Omega\rightarrow\Omega_i\}_{i\in I}$ be a coupling of $\mathcal{U}$. 
Then $\xi$ is defined by
\begin{equation}\label{cupconst2}
\xi(C,G):=\mathbb{E}_{x\in\Omega}\Bigl(\prod_{i\in I} g_i(\psi_i(x))\Bigr).
\end{equation}
Notice that if $g_i$ is the characteristic function of a set $S_i\in\mathfrak{S}_i$ for every $i$ then the value in (\ref{cupconst2}) is equal to the value (on the left hand side) in (\ref{cupconst}) . This shows that $\xi$ determines the equivalence class of the coupling $C$.

\bigskip

\noindent{\bf Factor coupling and independence over a factor:}~Let $\mathfrak{S}_i'\subset\mathfrak{S}_i$ be a sub $\sigma$-algebras for $i\in I$. The coupling $\{\psi_i:\Omega\rightarrow\Omega_i\}_{i\in I}$ can be restricted to a coupling of the probability spaces $\{\Omega_i,\mathfrak{S}'_i,\mu_i\}_{i\in I}$. This will be called a factor coupling of the original one. We will say that the original coupling is independet over the factor given by $\{\mathfrak{S}_i'\}_{i\in I}$ if for every $j\in I$ and system of bounded measurable functions $G=\{g_i:\Omega_i\rightarrow\mathbb{C}\}_{i\in I}$ with $\mathbb{E}(g_j|\mathfrak{S}_j')=0$ we have $\xi(C,G)=0$. Equivalently, if $G=\{g_i:\Omega_i\rightarrow\mathbb{C}\}_{i\in I}$ is a system of bounded measurable functions and $G'=\{\mathbb{E}(g_i|\mathfrak{S}_i')\}_{i\in I}$ then $\xi(C,G)=\xi(C,G')$. This shows that if a coupling $C$ is independent over a certain factor $C'$, then the multi linear form $\xi(C,G)$ and thus $C$ itself is uniquely determined by $C'$.

\bigskip

The basic properties of $\xi$ are summarized in the next lemma.

\begin{lemma}\label{xiprop} The function $\xi$ satisfies the following properties.
\begin{enumerate}
\item For every $i\in I$ we have $\xi(C,G)\leq\|g_i\|_1\prod_{j\neq i}\|g_j\|_\infty$,
\item $\xi(G,C)$ is linear in each component $g_i$,
\item If $F=\{f_i:\Omega_i\rightarrow\mathbb{C}\}_{i\in I}$ is a system of measurable functions then
\begin{equation}\label{smallchange}
|\xi(G,C)-\xi(F,C)|\leq \Bigl(\sum_{i\in I}\|f_i-g_i\|_1\Bigr)\prod_{i=1}^n\max(\|f_i\|_\infty,\|g_i\|_\infty).
\end{equation}
\item for a fixed $G$ the value $\xi(C,G)$ depends only on the equivalence class of the coupling $C$,
\item for a fixed $G$ the function $C\rightarrow\xi(C,G)$ is continuous on ${\rm coup}(\mathcal{U})$.
\item If for every $i$ the space $\Omega_i$ is a compact, Hausdorff space and $\mathfrak{S}_i$ is the Borel $\sigma$-algebra then the functions of the form $C\rightarrow\xi(C,G)$ where $G$ is a system of continuous functions generate the topology on ${\rm coup}(\mathcal{U})$.
\end{enumerate}
\end{lemma}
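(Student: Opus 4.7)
The plan is to prove the six properties in order, since each later one leans on the earlier ones. Property (1) follows from the pointwise bound $|\prod_{j} g_j(\psi_j(x))| \leq |g_i(\psi_i(x))|\prod_{j\neq i}\|g_j\|_\infty$ together with the fact that $\psi_i$ is measure preserving, which gives $\int_\Omega |g_i(\psi_i(x))|\,d\mu = \|g_i\|_1$. Property (2) is immediate from linearity of the integral. Property (3) is a standard hybrid/telescoping argument: enumerate $I=\{i_1,\ldots,i_n\}$ and write $\xi(C,G)-\xi(C,F)$ as a sum of $n$ differences in which a single coordinate is swapped from $f_{i_k}$ to $g_{i_k}$; each individual difference is controlled by (1) applied to $g_{i_k}-f_{i_k}$ with the other coordinates bounded by $\max(\|f\|_\infty,\|g\|_\infty)$.

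Properties (4) and (5) both rely on approximating each $g_i$ in $L^1(\mu_i)$ by a simple function $s_i$ with $\|s_i\|_\infty \leq \|g_i\|_\infty$ (truncating if necessary). For simple $g_i = \sum_k a_{i,k}\mathbf{1}_{A_{i,k}}$, multilinearity expands $\xi(C,G)$ into a finite linear combination of coupling probabilities $\mu(\bigcap_i \psi_i^{-1}(A_{i,k_i}))$, which are by definition determined by the equivalence class of $C$ (giving (4) in the simple case), and which by the definition of the topology on ${\rm coup}(\mathcal{U})$ depend continuously on $C$ (giving (5) in the simple case). Property (3) then transfers both conclusions from simple systems to arbitrary bounded measurable systems with a uniform error bound that does not depend on $C$.

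Property (6) is the main step. One direction is immediate: continuous functions are bounded and measurable, so by (5) each $C\mapsto\xi(C,G)$ with $G$ continuous is continuous in the original topology, hence the topology generated by these functionals is no finer than the original. For the reverse inclusion it suffices to show that the generators of the original topology, namely the maps $C\mapsto \mu(\bigcap_{i\in I}\psi_i^{-1}(S_i))$ with $S_i\in\mathfrak{S}_i$ Borel, lie in the closure of the functions $C\mapsto \xi(C,H)$ with $H=\{h_i\}$ continuous. By regularity of Radon probability measures on compact Hausdorff spaces together with Urysohn's lemma, for each $\epsilon>0$ and each $i$ we can choose $h_i\in C(\Omega_i)$ with $0\leq h_i\leq 1$ and $\|h_i-\mathbf{1}_{S_i}\|_{L^1(\mu_i)}\leq \epsilon$; then (3) gives $|\xi(C,\{\mathbf{1}_{S_i}\})-\xi(C,\{h_i\})|\leq |I|\epsilon$ uniformly in $C$, so the rectangle-probability functional is a uniform limit of continuous-$G$ functionals.

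The main obstacle is precisely this last step: the $L^1$ approximation of $\mathbf{1}_{S_i}$ by continuous functions is per-coordinate and uses only the fixed measures $\mu_i$, but one needs the resulting approximation of $\xi(C,\cdot)$ to be uniform across all couplings simultaneously in order to conclude continuity in the weak topology. This is exactly the content of the small-change inequality (3), whose right-hand side depends only on the individual $L^1$ norms and sup-norms of the coordinates and not on $C$; feeding this uniform estimate into the approximation completes the argument.
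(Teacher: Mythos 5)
Your proof is correct and follows essentially the same route as the paper's: the telescoping argument for (3), reduction of (4) and (5) to step/simple functions via multilinearity followed by the small-change inequality, and for (6) the $L^1$ approximation of indicators by continuous functions combined with the uniformity in $C$ provided by (3). Your write-up is simply a more detailed version of the paper's argument.
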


\begin{proof} The first and second properties are trivial from the definition.
The third property follows from the first two by replacing each $g_i$ by $f_i$ in $n$ consecutive steps. 
To see the fourth property observe that by the second property, if in $F=\{f_i:\Omega_i\rightarrow\mathbb{C}\}_{i\in I}$ every function is a step function then the function $\xi(F,C)$ is a linear combination of numbers of the form (\ref{cupconst}) and so $\xi(F,C)$ depends only on the equivalence class of $C$. Furthermore the function $\tau:C\rightarrow\xi(F,C)$ is a linear combination of continuous functions and so it is continuous. In the general case we can approximate every $g_i$ be a step function $f_i$ with $\|f_i\|_\infty\leq\|g_i\|_\infty$ such that $\|f_i-g_i\|_1\leq\epsilon$. Let $C'$ be an equivalent coupling with $C$. Then by (\ref{smallchange}) we have that both $|\xi(G,C)-\xi(F,C)|$ and $|\xi(G,C')-\xi(F,C')|$ are at most $|I|\epsilon\prod_{i\in I}\|g_i\|_\infty$. Since this is true for every $\epsilon>0$ and $\xi(F,C)=\xi(F,C')$ we have that $\xi(G,C)=\xi(G,C')$. The same argument shows that the function $\psi:C\rightarrow\xi(G,C)$ can be arbitrarily well approxiamted in $L^\infty$ by a continuous function $\tau$ and so $\psi$ is continuous. 
The last property follows from inequality (\ref{smallchange}) and from the fact that every $\{0,1\}$ valued measurable function on $\Omega_i$ can be approximated with an arbitrary precision in $L^1$ by a continuous function of absolute value at most $1$.
\end{proof}


\subsection{Abelian groups, cubes and cubic couplings}\label{cubes}

Let $A$ be an abelian group. If $A$ is compact, Hausdorff and second countable then we use the short hand notion {\bf compact} for $A$. Compact abelian groups admit a unique, shift invariant probability measure called {\bf Haar measure}.
An affine version of $A$ is a set $X$ such that $A$ acts transitively and freely (fix point free) on $X$. This means that for every pair $x,y\in X$ there is a unique element $a\in A$ such that $x^a=y$. We can interpret $a$ as $y-x$, however $y+x$ does not have a natural interpretation. Affine abelian groups are very similar to abelian groups. If we fix an element in $x\in X$ then there is a natural bijection between $X$ and $A$ given by $a\longleftrightarrow x^a$. An {\bf affine homomorphism} from an abelian group $A$ to another abelian group $B$ is the composition of an ordinary homomorphism with a shift on $B$. Affine homomorphisms can be naturally defined between affine abelian groups making them a category. 

\medskip

An {\bf abstract cube} of dimension $n$ is a set of the form $\{0,1\}^n$ (or more generally $\{0,1\}^I$ where $I$ is a set of size $n$). We denote by $0$ the all $0$ vector in $\{0,1\}^n$. A function $\phi:\{0,1\}^a\rightarrow\{0,1\}^b$ is called a cube morphism if it extends to an affine homomorphism $\phi':\mathbb{Z}^a\rightarrow\mathbb{Z}^b$. 
Cube morphisms have a combinatorial description. They are maps $\phi:\{0,1\}^a\rightarrow\{0,1\}^b$ such that each coordinate function of $\phi(x_1,x_2,\dots,x_a)$ is one of $1$, $0$, $x_i$ and $1-x_i$ for some $1\leq i\leq a$. 
An $n$-dimensional cube (or more precisely the morphism of a cube) in an abelian group $A$ is a map $c:\{0,1\}^n\rightarrow A$ which extends to an affine homomorphism $c':\mathbb{Z}^n\rightarrow A$.
Cubes can also be described through the formula
$$c(e_1,e_2,\dots,e_n)=x+\sum_{i=1}^n t_ie_i$$
where $x,t_1,t_2,\dots,t_n$ are elements in $A$. Finally 
a map $c:\{0,1\}^n\rightarrow A$ is a cube if and only if for every morphism $\phi:\{0,1\}^2\rightarrow\{0,1\}^n$ we have that $$c(\phi(0,0))-c(\phi(1,0))-c(\phi(0,1))+c(\phi(1,1))=0.$$

Let $Q=\{0,1\}^n$. We denote by $\hom(Q,A)$ the set of morphisms of $Q$ into $A$. With respect to pont wise addition on $Q$ the set $\hom(Q,A)$ is an abelian group. Since every $n$-dimensional cube is uniquely determined by $x,t_1,t_2,\dots,t_n$ in the above formula we have that $\hom(Q,A)$ is isomorphic to the direct power $A^{n+1}$.  
If $S$ is a subset in $Q$ and $f:S\rightarrow A$ is an arbitrary function then we denote by $\hom_f(Q,A)$ the set of maps $c\in\hom(Q,A)$ such that restriction of $c$ to $S$ is $f$.
Note that $\hom_f(Q,A)$ may be empty if $f$ does not extend to a morphism of the full cube. 
If $\hom_f(Q,A)$ is not empty then it is a coset of the group $\hom_g(Q,A)$ where $g$ is the identically $0$ function on $S$.
In other words $\hom_f(Q,A)$ is an affine version of $\hom_g(Q,A)$.
If $A$ is compact then $\hom_g(Q,A)$ is also compact and so using its Haar measure we get a unique $\hom_g(Q,A)$ invariant probability space structure on $\hom_f(Q,A)$.

When it doesn't lead to confusion we will use the short hand notation $C^n(A)$ for $\hom(Q,A)$ and $C^n_f(A)$ for $\Hom_f(Q,A)$. If $S=\{0\}\subset Q$ and $f:S\rightarrow A$ is given by $f(0)=x$ then we use the short hand notation $C_x^n(A)$ for $\hom_f(Q,A)$.   
It is clear that $C_0^n(A)$ is a subgroup of $C^n(A)$ which is isomorphic to $A^n$. If $x\neq 0$ then $C_x^n(A)$ is an affine version of $C_0^n(A)$. 

It is a crucial idea in this paper to consider $C^n(A)$ (resp. $C^n_x(A)$) as a coupling of $2^n$ (resp. $2^n-1$) copies of $A$. For every $v\in\{0,1\}^n$ we define the map $\psi_v:C^n(A)\rightarrow A$ by $\psi_v(c)=c(v)$. These maps are surjective homomorphisms between compact abelian groups and so they are all measure preserving. Ovserve that if $v\neq 0$ then the restriction of $\psi_v$ to $C^n_x$ is a measure preserving map from $C^n_x$ to $A$.
The system of maps $\Psi^n=\{\psi_v\}_{v\in\{0,1\}^n}$ on $C^n(A)$ is a self coupling $A$ with index set $\{0,1\}^n$. Let $K_n=\{0,1\}^n\setminus\{0\}$ and let $\Psi^n_x$ be the restriction of the function system $\{\psi_v\}_{v\in K_n}$ to the probability space $C^n_x(A)$. Then $\Psi^n_x$ is a self coupling of $A$ with index set $K_n$.
It is very important to note that $\Psi^n_x$ is not a sub-coupling of $\Psi^n$ and it will depend on the choice of $x$.
Let $G=\{g_v\}_{v\in\{0,1\}^n}$ and $F=\{f_v\}_{v\in K_n}$ be systems of bounded measurable functions on $A$. Then the values of both $\xi(G,\Psi^n)$ and $\xi(F,\Psi^n_x)$ are crucial in this paper. 
The following formulas follow directly from the definitions.
\begin{equation}\label{ginner} \xi(G,\Psi^n)=\mathbb{E}_{x,t_1,t_2,\dots,t_n}\prod_{v\in\{0,1\}^n}g_v(x+\sum_{i=1}^n v_it_i),
\end{equation}
\begin{equation}\label{corner}
\xi(F,\Psi_x^n)=\mathbb{E}_{t_1,t_2,\dots,t_n}\prod_{v\in K_n}f_v(x+\sum_{i=1}^n v_it_i).
\end{equation} 

It turns out that certain calculations work out a bit nicer if we put conjugations on the terms in (\ref{ginner}) and (\ref{corner}) whose indices $v$ have an odd number of $1$'s. This motivates the next definitions.
Let us use the convention that if $t_v$ is a complex valued term which depends on an element $v\in\{0,1\}^n$ then $t_v^\con$ is $t_v$ if $v$ has an even number of $1$'s and is the conjugate of $t_v$ if $v$ has an odd number of $1$'s.

\begin{definition} Let $G=\{g_v\}_{v\in\{0,1\}^n}$ be $\{f_v\}_{v\in K_n}$ be function systems and $G^\con=\{g_v^\con\}_{v\in\{0,1\}^n}$ and $F^\con=\{f_v^\con\}_{v\in K_n}$ be their conjugated versions.
Then the {\bf Gowers inner product} $(G)$ of $G$ is defined by
$$(G)=\xi(G^\con,\Psi^n),$$
and the {\bf corner convolution} $[F]$ of $F$ is defined by
$$[F](x)=\xi(F^\con,\Psi^n_x).$$
By abusing the notation we will also define the convolution $[G]$. Let $G'$ be the function system obtained from $G$ by ignoring $g_0$. Then $[G]:=[G']$. We introduce the notations $$(G)^\times=\prod_{v\in\{0,1\}^n}g_v^\con\circ\psi_v~~~~~{\it and}~~~~~[F]^\times=\prod_{v\in K_n}f_v^\con\circ\psi_v.$$
If the function system $[F]$ (resp. $[G]$) is constant such that each member is equal to the same function $f$ (resp. $g$) then we use the short hand notations $$(f)_n=(F)~~,~~[g]_n=[G]~~,~~(f)_n^\times=(F)^\times~~,~~[g]_n^\times=[G]^\times.$$

\end{definition}

Convolutions of the form $[F]$ in the above definition will be also called $n$-th order convolutions if we need to emphasize the value $n$. 
Let us observe that with the above notation we have the following equations.
\begin{equation}\label{coincon}
(G)=([G],\overline{g_0})=([G]^\times,\overline{g_0}\circ\psi_0),
\end{equation}

$$\mathbb{E}((G)^\times)=(G),$$

\begin{equation}\label{rankcon1}
\mathbb{E}([F]^\times|\psi_0)=[F]\circ\psi_0,
\end{equation}

\begin{equation}\label{rankcon2}
\mathbb{E}_{y\in C^k_0(A)}[F]^\times(z+y)=[F](\psi_0(z)).
\end{equation}
Note that (\ref{rankcon1}) and (\ref{rankcon2}) are the same equations written in a different form.
An easy way of seeing (\ref{rankcon1}) and (\ref{rankcon2}) is to write the elements of $C^k(A)$ as vectors $z=(x,t_1,t_2,\dots,t_k)$ as described in chapter \ref{cubes}. In this coordinate system $C^k_0(A)$ is the set of vectors of the form $(0,t_1,t_2,\dots,t_k)$. Then $\psi_0(z)=x$ and (\ref{corner}) shows that the value $[G](x)$ is the average of $g$ on the coset of $C^k_0(A)$ containing $z$.

\begin{remark}\label{convat} Let $w\in\{0,1\}^n$, $K=\{0,1\}^n\setminus\{w\}$. Let $F=\{f_v\}_{v\in K}$ be a function system. One can define the convolution $[F]$ in a similar way as above since our setup does not distinguish the $0$ vector in $\{0,1\}^n$. Let $\alpha:\{0,1\}^n\rightarrow\{0,1\}^n$ be an automorphism with $\alpha(0)=w$. Then define $[F]$ as the convolution of the function system $\{f_{\alpha(v)}\}_{v\in K_n}$. It is clear that it does not depend on the choice of $\alpha$.
\end{remark}

\begin{lemma} Let $A$ be a compact abelian group and $n$ be a natural number. Then the map $\gamma:x\rightarrow\Psi^n_x$ is a continuous map from $A$ to ${\rm coup}(\mathcal{U}_0)$.
\end{lemma}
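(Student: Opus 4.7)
The plan is to use part (6) of Lemma \ref{xiprop}, which says that on a system of compact Hausdorff probability spaces the topology of ${\rm coup}(\mathcal{U}_0)$ is generated by the functionals $C \mapsto \xi(C,F)$ with $F$ a system of \emph{continuous} functions. Since $\mathcal{U}_0$ here is $|K_n|=2^n-1$ copies of the compact abelian group $A$ (equipped with the Borel $\sigma$-algebra and Haar measure), it therefore suffices to show that for every system $F=\{f_v\}_{v\in K_n}$ of continuous functions $f_v:A\to\mathbb{C}$ the composition $x\mapsto\xi(F,\Psi^n_x)$ is continuous on $A$.

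Using formula (\ref{corner}), I would write this composition explicitly as
$$\xi(F,\Psi^n_x)=\mathbb{E}_{t_1,\dots,t_n\in A}\,\prod_{v\in K_n}f_v\Bigl(x+\sum_{i=1}^n v_i t_i\Bigr),$$
and set $H(x,t_1,\dots,t_n):=\prod_{v\in K_n}f_v(x+\sum_i v_i t_i)$. Each factor is the composition of a continuous group operation on $A^{n+1}$ with the continuous function $f_v$, and there are only finitely many factors, so $H$ is a bounded continuous function on the compact space $A^{n+1}$.

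Since $A^{n+1}$ is compact, $H$ is uniformly continuous. Hence, given $\epsilon>0$, there is a neighbourhood $U$ of $0\in A$ such that $|H(x+y,t_1,\dots,t_n)-H(x,t_1,\dots,t_n)|<\epsilon$ for all $y\in U$ and all $(t_1,\dots,t_n)\in A^n$. Integrating over the $t_i$'s with respect to Haar measure gives $|\xi(F,\Psi^n_{x+y})-\xi(F,\Psi^n_x)|<\epsilon$ for all $y\in U$, which is the required continuity. (In fact this proves uniform continuity of $\gamma$, which is automatic since $A$ is compact.)

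There is no real obstacle here: the only thing to be careful about is that we invoke the right generating family of functionals for the topology on ${\rm coup}(\mathcal{U}_0)$. If one tried to verify continuity against arbitrary bounded measurable test systems $F$, one would need the approximation inequality (\ref{smallchange}) from Lemma \ref{xiprop} together with $L^1$-approximation of measurable functions by continuous ones; but part (6) of Lemma \ref{xiprop} spares us that step.
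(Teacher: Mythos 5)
Your proof is correct and follows exactly the paper's argument: reduce to continuous test systems via the last point of Lemma \ref{xiprop}, then read continuity of $x\mapsto\xi(F,\Psi^n_x)$ off formula (\ref{corner}). The only difference is that you spell out the uniform-continuity step that the paper leaves implicit.
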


\begin{proof} According to the last point in lemma \ref{xiprop} the topology on ${\rm coup}(\mathcal{U}_0)$ is generated by functions of the form $C\rightarrow \xi(F,C)$ where $F=\{f_v\}_{v\in K_n}$ is a system of continuous functions. This means that it is enough to check the continuity of the composition of $\gamma$ with such functions. This composition is the function $h:x\rightarrow\xi(F,\Psi^n_x)$. The formula (\ref{corner}) shows that the continuity of the functions $f_v$ imply the continuity of $h$.
\end{proof}

The previous lemma and the fourth point in lemma \ref{xiprop} imply the following corollary.

\begin{corollary} The function $[F]$ is continuous for an arbitrary system $F=\{f_v\}_{v\in K_n}$ of bounded measurable functions.
\end{corollary}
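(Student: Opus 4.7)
The plan is to present $[F]$ as a composition of two continuous maps. The previous lemma gives continuity of $\gamma: A \to {\rm coup}(\mathcal{U}_0)$, $\gamma(x) = \Psi^n_x$. The fifth point of lemma \ref{xiprop} supplies continuity of $C \mapsto \xi(C, F^\con)$ on ${\rm coup}(\mathcal{U}_0)$ for the fixed (bounded measurable) system $F^\con$. Composing, the function $x \mapsto \xi(F^\con, \Psi^n_x) = [F](x)$ is continuous.

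If one prefers to argue without invoking the abstract continuity statement on ${\rm coup}(\mathcal{U}_0)$, I would unpack the proof by $L^1$-approximation. For continuous $F$ the continuity of $[F]$ is immediate from formula (\ref{corner}) (or directly from the previous lemma). For a general bounded measurable $F$, I would choose continuous $f'_v$ with $\|f'_v\|_\infty \leq \|f_v\|_\infty$ and $\|f_v - f'_v\|_1 \leq \epsilon$ for each $v \in K_n$; these exist because continuous functions are $L^1$-dense in $L^\infty(A,\mu)$. Applying the perturbation inequality (\ref{smallchange}) from the third point of lemma \ref{xiprop} to the self-coupling $\Psi^n_x$ then yields
$$\sup_{x \in A}\, |[F](x) - [F'](x)| \leq (2^n-1)\, \epsilon \prod_{v \in K_n} \|f_v\|_\infty,$$
so $[F]$ is a uniform limit of continuous functions on $A$ and therefore continuous.

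The uniformity in $x$ of the perturbation bound is the only place that deserves scrutiny, and it holds because the marginal of $\Psi^n_x$ on each coordinate $v \in K_n$ is the Haar measure on $A$, independent of $x$; thus an $L^1$-error on $A$ translates into a uniform error in $x$. Apart from this observation the argument is a routine composition of continuous maps, and I do not anticipate any genuine obstacle.
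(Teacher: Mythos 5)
Your first paragraph is exactly the paper's proof: compose the continuity of $x\mapsto\Psi^n_x$ from the preceding lemma with the continuity of $C\mapsto\xi(C,G)$ for fixed $G$ from lemma \ref{xiprop} (the paper cites the ``fourth point'' but the relevant one is the continuity statement). The additional $L^1$-approximation argument is a correct unpacking of that continuity, with the uniformity in $x$ justified as you say by the fact that each marginal of $\Psi^n_x$ is Haar measure, but it is not needed beyond the first paragraph.
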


\subsection{Sub-couplings of cubic couplings}

Let $S\subset\{0,1\}^n$ be an arbitrary subset and $f:S\rightarrow A$ be an arbitrary function into a compact abelian group $A$. 
Let $\phi:\{0,1\}^m\rightarrow\{0,1\}^n$ be a morphism and let $g$ denote the function $f\circ\phi$ on the set $\phi^{-1}(S)$. Then we denote by $\hat{\phi}:C^n_f(A)\rightarrow C^m_g(A)$ the map defined by $\hat{\phi}(c)=c\circ\phi$. The natural question arises: {\it Under what conditions is the map $\hat{\phi}$ measure preserving?}
First of all notice that $\hat{\phi}$ is a continuous morphism between affine compact abelian groups and thus $\hat{\phi}$ is measure preserving if and only if it is surjective. 
The next lemma connects surjectivity of $\hat{\phi}$ with equivalence of couplings.

\begin{lemma}\label{cupis} Assume that for every $v\in\{0,1\}^n\setminus S$ the map $\psi_v:C^n_f(A)\rightarrow A$ is surjective and that $\hat{\phi}$ is surjective. Then the coupling $\{\psi_v\}_{v\in H}$ on $C^m_g(A)$ is equivalent with the coupling $\{\psi_{\phi(v)}\}_{v\in H}$ on $C^n_f(A)$ where $H=\{0,1\}^m\setminus\phi^{-1}(S)$.
\end{lemma}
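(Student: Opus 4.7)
The plan is to reduce the whole statement to the fact that a surjective continuous affine morphism between compact abelian groups is automatically measure preserving, and then to chase the definitions.

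First I would observe that $C^n_f(A)$ and $C^m_g(A)$ are affine compact abelian groups: each is either empty or an affine version of $C^n_0(A)$ (respectively $C^m_0(A)$), which is a closed subgroup of $A^{\{0,1\}^n}$ and thus a compact abelian group. The Haar measure on $C^n_0(A)$ transports uniquely to a shift-invariant probability measure on $C^n_f(A)$, and similarly for $C^m_g(A)$. The map $\hat{\phi}: C^n_f(A) \to C^m_g(A)$ defined by $c \mapsto c \circ \phi$ is continuous and affine (the difference of any two of its values is a homomorphism $C^n_0(A) \to C^m_0(A)$, since composition with $\phi$ is linear).

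Next, for any morphism of affine compact abelian groups, surjectivity is equivalent to being measure preserving: one translates to the homomorphism case, and a continuous surjective homomorphism $H_1 \to H_2$ of compact abelian groups pushes the Haar measure on $H_1$ to a shift-invariant probability measure on $H_2$, which by uniqueness of Haar measure must be $\mu_{H_2}$. Hence the hypothesis that $\hat{\phi}$ is surjective implies $\hat{\phi}$ is measure preserving.

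A direct computation shows that for any $v \in \{0,1\}^m$ we have the identity
\[
\psi_v \circ \hat{\phi} = \psi_{\phi(v)}
\]
as maps $C^n_f(A) \to A$, since $\psi_v(c \circ \phi) = (c \circ \phi)(v) = c(\phi(v)) = \psi_{\phi(v)}(c)$. For $v \in H$ we have $\phi(v) \in \{0,1\}^n \setminus S$, so the hypothesis gives that $\psi_{\phi(v)}$ is surjective. Combined with $\hat{\phi}$ being surjective, this forces $\psi_v: C^m_g(A) \to A$ to be surjective, hence measure preserving by the same Haar-uniqueness argument. So both systems are genuine couplings of $|H|$ copies of $A$.

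Finally, to verify equivalence in the sense of Definition \ref{coupeq}, take any family $\{S_v \in \mathrm{Borel}(A)\}_{v \in H}$. Applying the identity $\psi_v \circ \hat{\phi} = \psi_{\phi(v)}$ termwise yields
\[
\bigcap_{v \in H} \psi_{\phi(v)}^{-1}(S_v) \;=\; \hat{\phi}^{-1}\Bigl(\bigcap_{v \in H} \psi_v^{-1}(S_v)\Bigr),
\]
and because $\hat{\phi}$ is measure preserving the two sides have equal measure, proving equivalence of the two couplings. The only substantive point is the Haar-uniqueness argument yielding measure-preservation from surjectivity; everything else is definition chasing, so I do not anticipate any serious obstacle.
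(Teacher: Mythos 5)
Your proof is correct and follows essentially the same route as the paper: the paper's argument is precisely the identity $\psi_{\phi(v)}=\psi_v\circ\hat{\phi}$ combined with the observation (stated just before the lemma) that a surjective continuous affine morphism between compact affine abelian groups is measure preserving. You merely spell out the Haar-uniqueness justification and the intersection identity that the paper leaves implicit.
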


\begin{proof} The statement is clear from the facts that $\psi_{\phi(v)}=\psi_v\circ\hat{\phi}$ for every $v$ in $H$ and that $\hat{\phi}$ is measure preserving. 
\end{proof}

\begin{lemma}\label{cupis1} Let $0\leq k\leq n$ be integers and $S\subset\{0,1\}^n$ be the $k$-dimensional face in $\{0,1\}^n$ which consists of all vectors with $0$ in the last $n-k$ coordinates. Let $\tau:\{0,1\}^n\rightarrow\{0,1\}^{n-k}$ be the projection to the last $n-k$ coordinates. Assume that $f:S\rightarrow A$ is in $C^k(A)$ and $\phi:\{0,1\}^m\rightarrow\{0,1\}^n$ is a morphism such that $\tau\circ\phi$ is injective. Then $\hat{\phi}$ is surjective.
\end{lemma}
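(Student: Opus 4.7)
The plan is to show $\hat{\phi}$ is surjective by parametrizing both sides explicitly, reducing the problem to a concrete linear system over $A$, and solving that system using the combinatorial description of cube morphisms. Since $\hat{\phi}$ is a continuous homomorphism of affine compact abelian groups, surjectivity is equivalent to producing, for every $c' \in C^m_g(A)$, some $c \in C^n_f(A)$ with $c \circ \phi = c'$.

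First I would parametrize each $c \in C^n(A)$ as $c(v) = x + \sum_i v_i t_i$ via $(x, t_1, \ldots, t_n) \in A^{n+1}$. Since $f \in C^k(A)$ is itself affine, the condition $c|_S = f$ fixes $x$ and $t_1, \ldots, t_k$, leaving $(t_{k+1}, \ldots, t_n)$ as the free coordinates. Writing the affine extension $\phi'(w) = a + Mw$ --- where each row of $M$ has at most one nonzero entry in $\{-1, 0, 1\}$, reflecting the fact that every coordinate $\phi_i$ is one of $0$, $1$, $w_j$, or $1-w_j$ --- and expanding $\hat{\phi}(c)$ coordinatewise, the equation $\hat{\phi}(c) = c'$ reduces to the following clean statement: given the injective cube morphism $\beta := \tau \circ \phi \colon \{0,1\}^m \to \{0,1\}^{n-k}$ and any $h \in C^m(A)$ with $h|_{\beta^{-1}(0)} = 0$, produce $T \in C^{n-k}_0(A)$ with $T \circ \beta = h$.

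To construct $T$, I would classify the coordinate functions $\beta_i$ by type and let $I_0, I_1, I_j^+, I_j^-$ denote the sets of indices with $\beta_i$ equal to $0$, $1$, $w_j$, or $1-w_j$ respectively. Injectivity of $\beta$ says precisely that $I_j^+ \cup I_j^- \ne \emptyset$ for every $j$. Writing $h(w) = y + \sum_j s_j w_j$ and $T(u) = \sum_i u_i t'_i$, the equation $T \circ \beta = h$ unpacks into one coefficient equation per $j$ (fixing $s_j$ as a signed sum of $t'_i$ over $I_j^+ \cup I_j^-$) together with one equation fixing the constant term $y$. Setting most $t'_i = 0$ and using one representative of $I_j^+ \cup I_j^-$ per $j$ handles the coefficient equations immediately.

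The main obstacle is matching the constant $y$. In the case $\beta^{-1}(0) = \{w_0\}$, the constraint $h(w_0) = 0$ pins $y$ to a specific value that will match automatically once the per-$j$ representatives are chosen to conform to $w_0$ (in $I_j^+$ when $(w_0)_j = 0$ and in $I_j^-$ when $(w_0)_j = 1$, which are the only available options given $\phi(w_0) \in S$). In the case $\beta^{-1}(0) = \emptyset$, the value $y$ is unconstrained, and I would need an extra degree of freedom to adjust it --- namely either an unused element of $I_1$ or a mixed pair $(i_1, i_2) \in I_j^+ \times I_j^-$ for some $j$, which lets me shift $y$ while leaving every $s_j$ unchanged. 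The existence of such a mechanism follows by contradiction: if $I_1 = \emptyset$ and for every $j$ exactly one of $I_j^+, I_j^-$ is empty, then setting $w^*_j \in \{0,1\}$ according to which of $I_j^+, I_j^-$ is the nonempty one would produce $w^* \in \{0,1\}^m$ with $\beta(w^*) = 0$, contradicting $\beta^{-1}(0) = \emptyset$.
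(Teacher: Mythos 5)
Your proposal is correct and follows essentially the same route as the paper: both reduce, via the $(x,t_1,\dots,t_n)$ parametrization, to showing that the injective morphism $\beta=\tau\circ\phi$ induces a surjection onto the appropriate based cube set, and both hinge on the same dichotomy for matching the constant term (a point of $\beta^{-1}(0)$ pins it down automatically; otherwise a constant-$1$ coordinate or a $w_j$/$(1-w_j)$ pair supplies the missing degree of freedom --- exactly the paper's $\phi(v)_{m+1}=1$ versus $\phi(v)_{m+1}=1-v_1$ cases). Your write-up is somewhat more explicit than the paper's, which first normalizes $\phi$ by cube automorphisms before the case analysis, but the underlying argument is the same.
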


\begin{proof} Let $\tau'$ be the projection of $\{0,1\}^n$ to the first $k$ coordinates and let $z$ be the identically zero function on $S$. Since $C^n_f(A)=f\circ\tau'+C^n_z(A)$ we can assume without loss of generality that $f$ is identically $0$. It is enough to prove that $\hat{\phi}\circ\hat{\tau}:C^{n-k}_0(A)\rightarrow C_g^m(A)$ is surjective. This reduces the problem to the case when $k=0$. In this case $S$ is the $0$ vector and $\phi$ is an injective morphism.

By using appropriate automorphisms of $\{0,1\}^m$ and coordinate permutation of $\{0,1\}^n$ we can assume without loss of generality  (using the injectivity of $\phi$) that $\phi(v_1,v_2,\dots,v_m)_i=v_i$ if $1\leq i\leq m$ (this can be obtained from the combinatorial description of morphisms). 
We distinguish between two cases. In the first case $0$ is not in the range of $\phi$ and in the second case $\phi(0)=0$. 

In the first case, since the image of $\phi$ does not contain the zero vector we can assume that either $\phi(v)_{m+1}=1$ or $\phi(v)_{m+1}=1-v_1$. Let $\tau_2:\{0,1\}^n\rightarrow\{0,1\}^{m+1}$ be the projection to the
 first $m+1$ coordinates.  
 It is enough to show that $\hat{\phi}\circ\hat{\tau_2}$ is surjective. Using our parametrization, if $\phi(v)_{m+1}=1$ then for  $t=(0,t_1,t_2,\dots,t_{m+1})$ in $C^{m+1}_0(A)$ we have that $\hat{\phi}(\hat{\tau}_2(t))=(t_{m+1},t_{m+1}+t_1,\dots,t_{m+1}+t_m)$ and if $\phi(v)_{m+1}=1-v_1$ the $\hat{\phi}(\hat{\tau}_2(t))=(t_{m+1},t_1,t_{m+1}+t_2,\dots,t_{m+1}+t_m)$. Both are surjective.
 
In the second case we denote by $\tau_2:\{0,1\}^n\rightarrow\{0,1\}^m$ the projection to the first $m$ coordinates. The map $\hat{\tau_2}:C^m_0(A)\rightarrow C^n_0(A)$ composed with $\hat{\phi}:C^n_0(A)\rightarrow C_0^m(A)$ is obviously bijective which completes the proof.
\end{proof}

\subsection{Gowers norms and corner convolutions}\label{chap:gc}

Let $A$ be a compact abelian group.
If $f:A\rightarrow\mathbb{C}$ then we define the function $\Delta_t f$ by $(\Delta_t f)(x)=f(x)\overline{f(x+t)}$.
The Gowers norm $\|f\|_{U_n}$ is defined by
\begin{equation}\label{gowersnorm}
\|f\|_{U_n}^{2^n}=\mathbb{E}_{x,t_1,t_2,\dots,t_n}\Delta_{t_1,t_2,\dots,t_n}f(x)=(f)_n
\end{equation}
for $f\in L^\infty(A)$.

The so-called Gowers-Cauchy-Schwartz inequality says that if $F=\{f_v\}_{v\in\{0,1\}^n}$ is a system of bounded measurable functions then 
\begin{equation}\label{GCS}
|(F)|\leq\prod_{v\in\{0,1\}^n}\|f_v\|_{U_n}.
\end{equation}

\medskip

We continue with a basic trick which makes calculations with $(F)$ and $[F]$ easier.
Let $i\in [n]$ and let $Q\subset\{0,1\}^n$ be the set of vectors with $0$ in the $i$-th coordinate. Let $w\in\{0,1\}^n$ be the vector with $1$ at the $i$-th coordinate and $0$ everywhere else. For $t\in A$ we introduce $\delta_{i,t} F$ as the function system $\{f_v(x)\overline{f_{v+w}(x+t})\}_{v\in Q}$. If $F$ is a function system parametrized by $K_n$ then we define $\delta_{i,t} F$ by the previous formula such that $Q$ is repleced by $Q\setminus\{0\}$.
Then we have the following two equations 
\begin{equation}\label{dimred}
(F)=\mathbb{E}_t((\delta_{i,t} F))~~~~{\rm and}~~~~[F](x)=\mathbb{E}_t(\overline{f}_w(x+t)\delta_{i,t}[F](x)).
\end{equation}
The equations in (\ref{dimred}) are useful because they reduce the dimension $n$ in the calculations and thus they can be used in proofs with inductions on $n$. The next lemma is an example for this.  

\medskip

\begin{lemma}\label{cornineq} Let $F=\{f_v\}_{v\in K_n}$ be a system of bounded measurable functions on $A$. Then for every $j\in[n]$ we have that
$$|[F](x)|\leq\prod_{v\in K_n,v_j=0}\|f_v\|_\infty\prod_{v\in K_n,v_j=1}\|f_v\|_{U_n}.$$
\end{lemma}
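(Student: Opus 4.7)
The plan is to reduce the pointwise bound to the Gowers--Cauchy--Schwarz inequality (\ref{GCS}) by a single Cauchy--Schwarz step in the variables $t_{-j}:=(t_i)_{i\neq j}$. Starting from (\ref{corner}), I would split the product $\prod_{v\in K_n}f_v^\con(x+\sum_i v_it_i)$ according to whether $v_j=0$ or $v_j=1$. The factors with $v_j=0$ do not involve $t_j$, so they pass through $\mathbb{E}_{t_j}$; bounding each of them in absolute value by $\|f_v\|_\infty$ gives
\[
|[F](x)|\leq\prod_{v\in K_n,\,v_j=0}\|f_v\|_\infty\cdot\mathbb{E}_{t_{-j}}|Q(t_{-j})|,
\]
where $Q(t_{-j}):=\mathbb{E}_{t_j}\prod_{v\in K_n,\,v_j=1}f_v^\con(x+\sum_i v_it_i)$.

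Next I would apply Cauchy--Schwarz in $t_{-j}$ and expand $|Q(t_{-j})|^2=\mathbb{E}_{t_j,t_j'}(\,\cdot\,)$. A measure-preserving change of variables $y=x+t_j$, $s=t_j'-t_j$ (using shift invariance of the Haar measure on $A$) converts $\mathbb{E}_{t_{-j}}|Q|^2$ into
\[
\mathbb{E}_{y,s,t_{-j}}\prod_{v\in K_n,\,v_j=1}f_v^\con\Bigl(y+\sum_{i\neq j}v_it_i\Bigr)\overline{f_v^\con\Bigl(y+s+\sum_{i\neq j}v_it_i\Bigr)}.
\]
Reindexing via $w\in\{0,1\}^n$, where $w_{-j}\in\{0,1\}^{n-1}$ matches $v_{-j}$ and $w_j\in\{0,1\}$ selects which of the two factors is taken, and renaming $s$ as the $j$-th shift variable, this expression takes the form $\mathbb{E}_{y,t_1,\ldots,t_n}\prod_{w\in\{0,1\}^n}h_w(y+\sum_i w_it_i)$, where each $f_v$ with $v_j=1$ appears at exactly two vertices (once at $w_j=0$ and once, in conjugated form, at $w_j=1$).

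Up to the placement of complex conjugates, this last expression is precisely a Gowers inner product $(H)$ of a system $H=\{h_w\}_{w\in\{0,1\}^n}$. The Gowers--Cauchy--Schwarz inequality (\ref{GCS}) therefore bounds its absolute value by $\prod_w\|h_w\|_{U_n}=\prod_{v\in K_n,\,v_j=1}\|f_v\|_{U_n}^2$, where we use $\|\overline{f}\|_{U_n}=\|f\|_{U_n}$. Taking a square root and multiplying by the $L^\infty$ factors yields the claimed bound. The only real care is in matching the conjugation pattern of (\ref{GCS}) to the one arising after expanding $|Q|^2$; since the $U_n$ norm is invariant under complex conjugation, this is a bookkeeping step rather than a substantial obstacle.
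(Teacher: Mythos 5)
Your argument is correct, but it takes a genuinely different route from the paper. The paper proves the lemma by induction on $n$: it fixes a direction $i=n$ (after reducing to $j\neq n$ by symmetry), uses the one-variable reduction $[F](x)=\mathbb{E}_t(\overline{f}_w(x+t)\,\delta_{n,t}[F](x))$ from (\ref{dimred}), applies the inductive hypothesis to the $(n-1)$-dimensional convolution $[\delta_{n,t}F]$, then a Cauchy--Schwarz in the single variable $t$, and only at the very last step invokes the Gowers--Cauchy--Schwarz inequality (\ref{GCS}) to control $\mathbb{E}_t\bigl(\|g_v^t\|_{U_{n-1}}^{2^{n-1}}\bigr)$. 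You instead perform a single Cauchy--Schwarz in the $n-1$ variables $t_{-j}$ and recognize the duplicated expression directly as a full $n$-dimensional Gowers inner product; the reindexing works because $\{v\in K_n: v_j=1\}$ is exactly $\{v\in\{0,1\}^n: v_j=1\}$, so every vertex $w\in\{0,1\}^n$ receives a (conjugate of a) function $f_{v(w)}$ with each such $f_v$ appearing twice, and the change of variables $y=x+t_j$, $s=t_j'-t_j$ is measure preserving for the Haar measure. The conjugation bookkeeping you flag does close up (taking $h_w=\overline{f_{v(w)}}$ matches the $\con$-convention at every vertex), and $\|\overline f\|_{U_n}=\|f\|_{U_n}$ since $(\overline f)_n=\overline{(f)_n}=(f)_n\geq 0$. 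What your version buys is brevity and transparency: no induction, and it exhibits the lemma as literally one Cauchy--Schwarz away from (\ref{GCS}). What the paper's version buys is uniformity of method: the $\delta_{i,t}$ dimension-reduction device of (\ref{dimred}) is the workhorse for several later inductive arguments (e.g.\ lemmas \ref{simpzero} and \ref{charconv}), so rehearsing it here keeps the toolkit consistent. Either proof is acceptable.
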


\begin{proof} If $n=1$ then the statement is true with equality. If $n>1$ then by induction we assume that it is true for $n-1$. Without loss of generality (using symmetry) we can assume that $j\neq n$. 
We have that $\delta_{n,t} F=\{g_v^t\}_{v\in K_{n-1}}$ where $g_v^t$ is the function $y\mapsto\overline{f}_{(v,1)}(y+t)f_{(v,0)}(y)$. Let $w=(0,0,\dots,0,1)\in\{0,1\}^n$.
By (\ref{dimred}), induction, Cauchy-Schwartz inequality and using the fact that $\|g_v^t\|_\infty\leq\|f_{(v,0)}\|_\infty\|f_{(v,1)}\|_\infty$ we get
$$|[F](x)|\leq\|f_w\|_2\mathbb{E}_t^{1/2}\Bigl(\prod_{v\in K_{n-1},v_j=0}\|g_v^t\|^2_\infty\prod_{v\in K_{n-1},v_j=1}\|g_v^t\|^2_{U_{n-1}}\Bigr)\leq$$

$$\|f_w\|_\infty\prod_{v\in K_{n-1},v_j=0}\|f_{(v,0)}\|_\infty\|f_{(v,1)}\|_\infty\mathbb{E}_t^{1/2}\Bigl(\prod_{v\in K_{n-1},v_j=1}\|g_v^t\|^2_{U_{n-1}}\Bigr)\leq$$
$$\prod_{v\in K_n,v_j=0}\|f_v\|_\infty\prod_{v\in K_{n-1},v_j=1}\Bigl(\mathbb{E}_t(\|g_v^t\|_{U_{n-1}}^{2^{n-1}})\Bigr)^{2^{1-n}}$$

Let $v\in K_{n-1}$ and let $H=\{h_z\}_{z\in \{0,1\}^n}$ be the function system defined by $h_z=f_{(v,0)}$ if $z_n=0$ and $h_z=f_{(v,1)}$ if $z_n=1$. Then by (\ref{GCS}), (\ref{dimred}) we get
$$\mathbb{E}_t(\|g_v^t\|_{U_{n-1}}^{2^{n-1}})=\mathbb{E}_t((\delta_{n,t}H))=(H)\leq\|f_{(v,0)}\|_{U_n}^{2^{n-1}}\|f_{(v,1)}\|_{U_n}^{2^{n-1}}$$
which completes the proof.
\end{proof}

\medskip

\begin{lemma} If $k\geq 1$ then $\|f\|_{U_k}\leq\|f\|_{2^{k-1}}$.
\end{lemma}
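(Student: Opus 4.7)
The plan is to argue by induction on $k$. The base case $k=1$ is immediate from $|\mathbb{E}f|\le\mathbb{E}|f|$, i.e.\ $\|f\|_{U_1}\le\|f\|_1=\|f\|_{2^0}$.

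For the inductive step, I will specialise the reduction formula (\ref{dimred}) to the constant function system $F$ with every entry equal to $f$, so that $(F)=\|f\|_{U_k}^{2^k}$. Tracking the conjugation convention $g_v^{\con}$ built into the Gowers inner product together with the definition of $\delta_{i,t}F$, one checks that $\delta_{i,t}F$ is the constant system whose entries are all the function $\Delta_t f(x)=f(x)\overline{f(x+t)}$. Formula (\ref{dimred}) therefore becomes
\[
\|f\|_{U_k}^{2^k} \;=\; \mathbb{E}_t\,\|\Delta_t f\|_{U_{k-1}}^{2^{k-1}},
\]
which is the only nontrivial identity required.

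Applying the induction hypothesis to $\Delta_t f$ and raising to the $2^{k-1}$-th power gives
\[
\|\Delta_t f\|_{U_{k-1}}^{2^{k-1}} \;\le\; \|\Delta_t f\|_{2^{k-2}}^{2^{k-1}} \;=\; \Bigl(\mathbb{E}_x\,|f(x)|^{2^{k-2}}|f(x+t)|^{2^{k-2}}\Bigr)^{\!2}.
\]
One Cauchy-Schwarz in $x$, combined with translation invariance of Haar measure, bounds the inner expectation by $\|f\|_{2^{k-1}}^{2^{k-1}}$ uniformly in $t$, so the right-hand side is at most $\|f\|_{2^{k-1}}^{2^k}$. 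Averaging over $t$ and extracting a $2^k$-th root yields $\|f\|_{U_k}\le\|f\|_{2^{k-1}}$. The only step that requires any care is the first one — the bookkeeping of conjugations needed to identify $\delta_{i,t}F$ with the constant $\Delta_t f$ system; once that identity is in place, the rest reduces to a single Cauchy-Schwarz.
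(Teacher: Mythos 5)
Your proof is correct and follows essentially the same route as the paper: induction on $k$ via the reduction identity $\|f\|_{U_k}^{2^k}=\mathbb{E}_t\|\Delta_t f\|_{U_{k-1}}^{2^{k-1}}$, the induction hypothesis applied to $\Delta_t f$, and one application of Cauchy--Schwarz. The only cosmetic difference is that you bound the inner expectation uniformly in $t$ by Cauchy--Schwarz in $x$, whereas the paper applies Jensen and then Fubini over $(t,x)$; both close the argument identically.
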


\begin{proof} We prove the statement by induction. If $k=1$ then $\|f\|_{U_1}=|\mathbb{E}(f)|\leq\|f\|_1$.
Let $f_t$ denote the function with $f_t(x)=f(x+t)$. We have by induction that
$$\|f\|_{U_{k+1}}^{2^{k+1}}=\mathbb{E}_{t}(\|f\overline{f_t}\|_{U_k}^{2^k})\leq\mathbb{E}_t(\|f\overline{f_t}\|_{2^{k-1}}^{2^k})\leq\mathbb{E}_{t,x}(|f(x)|^{2^k}|f(x+t)|^{2^k})=\|f\|_{2^k}^{2^{k+1}}.$$
\end{proof}

\begin{corollary}\label{l2becs} If $k\geq 2$ and $|f|\leq 1$ then $(f,f)=\|f\|_2^2\geq \|f\|_{U_k}^{2^{k-1}}$.
\end{corollary}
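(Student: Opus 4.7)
The plan is to chain the previous lemma with the trivial bound coming from $|f|\leq 1$. The previous lemma gives $\|f\|_{U_k}\leq\|f\|_{2^{k-1}}$, hence raising both sides to the $(2^{k-1})$-th power yields
\[
\|f\|_{U_k}^{2^{k-1}}\leq \|f\|_{2^{k-1}}^{2^{k-1}}=\int_A |f|^{2^{k-1}}\,d\mu.
\]
Now I would use the hypothesis $k\geq 2$, which guarantees $2^{k-1}\geq 2$. Together with $|f|\leq 1$ this forces the pointwise bound $|f|^{2^{k-1}}\leq|f|^2$, and integrating gives
\[
\int_A |f|^{2^{k-1}}\,d\mu\leq \int_A |f|^2\,d\mu=\|f\|_2^2=(f,f).
\]
Combining the two inequalities delivers the claim. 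There is really no obstacle here; the only content is noting that the hypothesis $k\geq 2$ is precisely what is needed so that the exponent $2^{k-1}$ is at least $2$, letting us dominate $|f|^{2^{k-1}}$ by $|f|^2$ under the assumption $|f|\leq 1$. The case $k=1$ is excluded because then $2^{k-1}=1$ and the inequality $\|f\|_1\leq\|f\|_2^2$ need not hold for bounded $f$ (e.g.\ $f\equiv 1$).
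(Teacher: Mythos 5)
Your proof is correct and follows exactly the paper's argument: apply the preceding lemma to get $\|f\|_{U_k}^{2^{k-1}}\leq\|f\|_{2^{k-1}}^{2^{k-1}}$, then use $|f|\leq 1$ and $2^{k-1}\geq 2$ to dominate $\int|f|^{2^{k-1}}$ by $\int|f|^2=\|f\|_2^2$. Your added remark explaining why $k=1$ must be excluded is a nice touch but not needed for the claim.
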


\begin{proof} If $|f|\leq 1$ then $\|f\|_{U_k}^{2^{k-1}}\leq\|f\|_{2^{k-1}}^{2^{k-1}}\leq\|f\|_2^2$.
\end{proof}

\subsection{Low rank approximation and products of convolutions}

\begin{lemma}\label{shiftap} Let $A$ be a compact abelian group, $B<A$ a compact subgroup, $\epsilon>0$ and $f:A\rightarrow\mathbb{C}$ be a measurable function with $\|f\|_\infty\leq 1$. Let furthermore $f_B(z)=\mathbb{E}_{y\in B}f(z+y)$. Then there are elements $a_1,a_2,\dots,a_n$ in $B$ with $n\leq 1+4/\epsilon^2$ such that the function $g(z)=\frac{1}{n}\sum_{i=1}^n f(z+a_n)$ satisfies $\|f_B-g\|_2\leq\epsilon$.
\end{lemma}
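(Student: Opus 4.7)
\medskip
\noindent\textbf{Proof proposal.} The plan is a standard probabilistic (empirical average) argument. I would choose $a_1,\ldots,a_n \in B$ independently and uniformly at random with respect to the Haar measure on the compact group $B$, and set $g(z) = \frac{1}{n}\sum_{i=1}^n f(z+a_i)$. The key observation is that, for each fixed $z \in A$, translation invariance of the Haar measure on $B$ gives $\mathbb{E}_{a_i} f(z+a_i) = f_B(z)$, so $g(z) - f_B(z)$ is an average of $n$ independent mean-zero complex random variables.

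Next I would compute the second moment at a fixed point $z$. Using independence (the off-diagonal cross terms vanish) together with $\|f\|_\infty \leq 1$,
\[
\mathbb{E}\,|g(z) - f_B(z)|^2 \;=\; \frac{1}{n^2}\sum_{i=1}^n \mathbb{E}\,|f(z+a_i) - f_B(z)|^2 \;\leq\; \frac{1}{n}\,\mathbb{E}_{y\in B}|f(z+y)|^2 \;\leq\; \frac{1}{n}.
\]
Integrating over $z \in A$ against the Haar measure of $A$ and swapping the two expectations via Fubini,
\[
\mathbb{E}_{a_1,\ldots,a_n}\,\|g - f_B\|_2^2 \;\leq\; \frac{1}{n}.
\]

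Finally I would choose $n = \lceil 1/\epsilon^2\rceil$, which certainly satisfies $n \leq 1 + 4/\epsilon^2$. Then the expected squared $L^2$-error is at most $\epsilon^2$, so by the probabilistic method there exists at least one concrete choice of $a_1,\ldots,a_n \in B$ with $\|f_B - g\|_2 \leq \epsilon$, as required. There is no real obstacle: the whole argument is an application of the standard variance bound for i.i.d.\ sums, made possible by the fact that $f_B$ is by construction the mean of the random variable $f(z+a)$ as $a$ ranges uniformly over $B$. The slight slack in the constant ($1+4/\epsilon^2$ versus the tight $\lceil 1/\epsilon^2\rceil$) is harmless.
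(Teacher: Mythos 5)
Your proposal is correct and is essentially the same argument as the paper's: random i.i.d.\ sampling from the Haar measure on $B$, a pointwise variance bound of order $1/n$, Fubini to pass to the $L^2$ norm over $A$, and the probabilistic method to extract a concrete choice of $a_1,\dots,a_n$. The only difference is that you bound the pointwise variance by $1$ rather than the paper's cruder $4$ (from $|Y_z|\leq 2$), which lets you take $n=\lceil 1/\epsilon^2\rceil$; this still fits under the stated bound $n\leq 1+4/\epsilon^2$.
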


\begin{proof} Let $n$ be an integer with $4/\epsilon^2\leq n\leq 1+4/\epsilon^2$. Let $h(z,a_1,a_2,\dots,a_n)=\frac{1}{n}\sum_{i=1}^nf(z+a_n)$ and $r(z,a_1,a_2,\dots,a_n)=f_B(z)$ be functions defined on $A\times B^n$. For a fixed $z\in A$ let $Y_z$ denote the random variable $f(z+y)-f_B(z)$ where $y$ is chosen randomly from $B$. If $z\in A$ is fixed then the value of $h-r$ for a randomly chosen element $(a_1,a_2,\dots,a_n)$ in $B^n$ has the same distribution as the average of $n$ independent copies of $Y_z$ and so on this probability space ${\rm Var}(h-r)={\rm Var}(Y_z)/n\leq 4/n$.
By taking the average of this for every $z$ we get that
$\|h-r\|_2^2\leq 4/n$. Consequently there is a fixed vector $(a_1,a_2,\dots,a_n)$ in $B^n$ such that
the function $\mathbb{E}_z(h(z,a_1,a_2,\dots,a_n)-r(z,a_1,a_2,\dots,a_n))^2\leq 4/n\leq\epsilon^2$ which finishes the proof.
\end{proof}

\medskip

\begin{definition}\label{rankone}  A {\bf rank one} function on $C^k(A)$ is a function of the form $[G]^\times$ where $G=\{g_v\}_{v\in K_k}$ is a function system in $L^\infty_u(A)$.
\end{definition}

Note that rank one functions are shift invariant on $C^k(A)$ and are closed under point wise multiplication.
The next lemma says that convolutions have low rank approximations when lifted to the space $C^k(A)$ with $\psi_0$. 

\begin{lemma}[Low rank approximation]\label{lowrank} Let $F=\{f_v\}_{v\in K_k}$ be a system of functions in $L^\infty_u(A)$ and $\epsilon>0$. Then there is a function $g$ on $C^k(A)$ which is the average of at most $1+4/\epsilon^2$ rank one functions and $\|g-[F]\circ\psi_0\|_2\leq\epsilon$ 
\end{lemma}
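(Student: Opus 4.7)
The plan is to reduce directly to Lemma \ref{shiftap} applied on the compact abelian group $C^k(A)$ with the compact subgroup $B = C_0^k(A)$. Indeed, equation (\ref{rankcon2}) reads
\[
[F]\circ\psi_0(z) \;=\; \mathbb{E}_{y \in C_0^k(A)}\,[F]^\times(z+y),
\]
so the function $[F]\circ\psi_0$ on $C^k(A)$ is precisely the $B$-average of the rank one function $[F]^\times$, in the notation of Lemma \ref{shiftap} this is $([F]^\times)_B$.

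Since each $f_v \in L^\infty_u(A)$, the product $[F]^\times = \prod_{v \in K_k} f_v^\con \circ \psi_v$ satisfies $\|[F]^\times\|_\infty \le 1$. Applying Lemma \ref{shiftap} to $[F]^\times$ on the compact abelian group $C^k(A)$ with subgroup $C_0^k(A)$ yields elements $a_1,a_2,\dots,a_n \in C_0^k(A)$ with $n \le 1 + 4/\epsilon^2$ such that
\[
g(z) \;:=\; \frac{1}{n}\sum_{i=1}^n [F]^\times(z + a_i)
\]
satisfies $\|g - [F]\circ\psi_0\|_2 \le \epsilon$.

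It remains to check that $g$ is an average of rank one functions, i.e.\ that each shifted function $z \mapsto [F]^\times(z + a_i)$ is itself rank one. Using $\psi_v(z + a_i) = \psi_v(z) + \psi_v(a_i)$ and setting $g_v(x) = f_v\bigl(x + \psi_v(a_i)\bigr)$ (noting that conjugation commutes with translation in the obvious sense for the $\con$ convention), we get
\[
[F]^\times(z + a_i) \;=\; \prod_{v \in K_k} g_v^\con(\psi_v(z)),
\]
with each $g_v \in L^\infty_u(A)$. So this is a rank one function in the sense of Definition \ref{rankone}, which is exactly the shift-invariance of the class of rank one functions noted after that definition.

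There is no substantive obstacle here: the whole content of the lemma is the observation that conditioning on $\psi_0$ (i.e.\ averaging over the coset of $C_0^k(A)$) is the same as averaging $[F]^\times$ over the subgroup $C_0^k(A)$, after which the sampling argument of Lemma \ref{shiftap} replaces this Haar average by an average of $O(\epsilon^{-2})$ translates, each of which remains rank one.
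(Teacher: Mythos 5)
Your proof is correct and follows exactly the paper's argument: apply equation (\ref{rankcon2}) to recognize $[F]\circ\psi_0$ as the $C_0^k(A)$-average of the rank one function $[F]^\times$, then invoke Lemma \ref{shiftap} and note that translates of rank one functions remain rank one. The only difference is that you spell out the shift-invariance verification, which the paper leaves to the remark following Definition \ref{rankone}.
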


\begin{proof} Using (\ref{rankcon2}) and lemma \ref{shiftap} we obtain that there exist elements $y_1,y_2,\dots,y_n$ in $C^k_0(A)$ with $n\leq 1+4/\epsilon^2$ such that the function $g(z)=\frac{1}{n}\sum_{i=1}^n [F]^\times(z+y_i)$ satisfies $\|[F]\circ\psi_0-g\|_2\leq\epsilon$. Since the functions $z\rightarrow [F]^\times(z+y_i)$ are all rank one functions the proof is complete. 
\end{proof}

\begin{remark}\label{aptrans} It will be important that in lemma \ref{lowrank} the rank one functions occurring in the approximation use only shifted versions of functions from the system $F$.
\end{remark}

\begin{lemma}[product of convolutions]\label{prodconv} Let $F=\{f_v\}_{v\in K_k}$ and $G=\{g_v\}_{v\in K_k}$ be two systems in $L^\infty_u(A)$ and let $\epsilon>0$. Then there are function systems $H^i=\{h_v^i\}_{v\in K_k}$ in $L^\infty_u(A)$ for $i=1,2,\dots,n$ with $n\leq (1+64/\epsilon^2)^2$ such that 
$$\|~[F][G]-\frac{1}{n}\sum_{i=1}^n[H^i]~\|_2\leq\epsilon.$$
\end{lemma}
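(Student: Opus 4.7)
The plan is to work on the space $C^k(A)$ by pulling both convolutions back through the measure preserving map $\psi_0$, then reduce to a statement about rank one functions, which are closed under pointwise multiplication.

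First I would invoke Lemma \ref{lowrank} twice with precision $\epsilon/2$ to produce averages
\[
u = \frac{1}{m}\sum_{i=1}^m [F^i]^\times, \qquad v = \frac{1}{m'}\sum_{j=1}^{m'} [G^j]^\times
\]
of rank one functions on $C^k(A)$, with $m, m' \leq 1 + 16/\epsilon^2$, such that $\|[F]\circ\psi_0 - u\|_2 \leq \epsilon/2$ and $\|[G]\circ\psi_0 - v\|_2 \leq \epsilon/2$. By Remark \ref{aptrans} the rank one functions involved are bounded by $1$ in $L^\infty$, and since $|[F]|, |[G]| \leq 1$ as well, the triangle inequality gives $\|[F][G]\circ\psi_0 - uv\|_2 \leq \epsilon$.

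The key algebraic observation is that rank one functions are closed under pointwise multiplication, and moreover the product of $[F^i]^\times$ and $[G^j]^\times$ is precisely $[H^{i,j}]^\times$ where $H^{i,j} = \{f_w^i g_w^j\}_{w\in K_k}$. This works because the $\star$-conjugation convention depends only on the parity of the number of ones in $w$, so it distributes across products: $(f_w^i)^\star (g_w^j)^\star = (f_w^i g_w^j)^\star$. Each $H^{i,j}$ is a system in $L^\infty_u(A)$ since the factors are. Thus
\[
uv = \frac{1}{mm'}\sum_{i,j} [H^{i,j}]^\times.
\]

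Finally, I would apply the conditional expectation operator $\mathbb{E}(\cdot\,|\,\psi_0)$, which is a contraction in $L^2$. Since $[F][G]\circ\psi_0$ is already $\psi_0$-measurable it is fixed by this operator, and by equation (\ref{rankcon1}) each $\mathbb{E}([H^{i,j}]^\times | \psi_0) = [H^{i,j}]\circ\psi_0$. Therefore
\[
\Bigl\|[F][G]\circ\psi_0 - \frac{1}{mm'}\sum_{i,j} [H^{i,j}]\circ\psi_0\Bigr\|_2 \leq \|[F][G]\circ\psi_0 - uv\|_2 \leq \epsilon.
\]
Since $\psi_0$ is measure preserving, the same inequality holds on $A$ itself, giving the claim with $n = mm' \leq (1+16/\epsilon^2)^2 \leq (1+64/\epsilon^2)^2$. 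There is no real obstacle here once Lemma \ref{lowrank} and formula (\ref{rankcon1}) are in place; the only point requiring care is verifying that the $\star$-convention is compatible with multiplication so that the product of two rank one functions is genuinely rank one with all component functions bounded by $1$.
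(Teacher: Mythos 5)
Your proof is correct and follows essentially the same route as the paper: low-rank approximation of both convolutions via Lemma \ref{lowrank}, closure of rank one functions under pointwise multiplication (with the $\star$-convention distributing over products), and projection back through $\mathbb{E}(\cdot\,|\,\psi_0)$ using (\ref{rankcon1}). The only difference is bookkeeping — you use precision $\epsilon/2$ with a telescoping estimate for the product, where the paper uses $\epsilon/4$ and expands into four error terms; both land within the stated bound $n\leq(1+64/\epsilon^2)^2$.
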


\begin{proof} We use lemma \ref{lowrank} for both $F$ and $G$ with $\epsilon/4$. This way we obtain approximations $f$ and $g$ for $[F]\circ\psi_0$ and $[G]\circ\psi_0$ with $L^2$ error $\epsilon/4$ such that both $f$ and $g$ are the averages of at most $1+64/\epsilon^2$ functions of rank one. In particular $\|f\|_\infty,\|g\|_\infty\leq 1$.  
Let us write $[F]\circ\psi_0=f+e_F$ and $[G]\circ\psi_0=g+e_G$ where $\|e_F\|_2,\|e_G\|_2\leq\epsilon/4$ and $\|e_F\|_\infty,\|e_G\|_\infty\leq 2$. Now we have 
$$([F][G])\circ\psi_0=([F]\circ\psi_0)([G]\circ\psi_0)=fg+e_Fg+e_Gf+e_Fe_G.$$ By
$\|e_Gf\|_2,\|e_Fg\|_2\leq\epsilon/4$ and $\|e_Fe_G\|_2\leq\epsilon/2$ we get that $$\|([F][G])\circ\psi_0-fg\|_2\leq\epsilon.$$ The function $fg$ is the average of $n\leq (1+64/\epsilon^2)^2$ functions of rank one. Let us denote the corresponding function systems by $H^1,H^2,\dots,H^n$. By (\ref{rankcon1}) it follows  that $$\mathbb{E}(fg|\psi_0)=\Bigl(\frac{1}{n}\sum_{i=1}^n[H^i]\Bigr)\circ\psi_0.$$ The function $([F][G])\circ\psi_0$ is already measurable in the $\sigma$-algebra generated by $\psi_0$ and so conditional expectation with respect to this $\sigma$ algebra leaves it invariant. Since conditional expectation is a contraction on $L^2$ and $\psi_0$ is measure preserving the proof is complete.
\end{proof}

\subsection{Higher degree cubes}

\begin{definition} Let $A$ be an Abelian group. A map $c:\{0,1\}^n\rightarrow A$ is a degree-$k$ cube if it extends to a  degree-$k$ polynomial map $f:\mathbb{Z}^n\rightarrow B$. 
\end{definition}

It can be seen that a function$\{0,1\}^n\rightarrow A$ is a degree-$k$ cube if and only if for every $k+1$ dimensional face $S\subset\{0,1\}^n$ we have $\sum_{v\in S}(-1)^{h(v)}c(v)=0$ where $h(v)=\sum_{i=1}^nv_i$.
If $k\leq-1$ then we define a degree-$k$ cube as the constant $0$ function on $\{0,1\}^n$.
For an integer $k\in\mathbb{Z}$ and abelian group $A$ we introduce the cubespace $\mathcal{D}_k(A)$ in which $C^n(\mathcal{D}_k(A))$ is the collection of degree-$k$ cubes of dimension $n$. It is easy to seet that if $k\geq 1$ then $\mathcal{D}_k(A)$ is a $k$-step nilspace. We regard $\mathcal{D}_k(A)$ as a degree-$k$ version of of $A$. In particular $\mathcal{D}_1(A)$ is the group $A$ with the usual cubic structure.

\begin{lemma}\label{dualker1} Let $n\in\mathbb{N},k\in\mathbb{Z}$. Let $A$ be a compact abelian group and let $f\in C^n(\mathcal{D}_{n-k-1}(\hat{A}))$. Then for every $c\in C^n(\mathcal{D}_k(A))$ we have that $\prod_{v\in\{0,1\}^n} \chi_v^\con(c(v))=1$ where $\chi_v=f(v)$.
\end{lemma}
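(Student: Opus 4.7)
The plan is to reduce the claim to the standard fact that the full $n$-fold discrete derivative of a polynomial map $\mathbb{Z}^n\to\mathbb{R}/\mathbb{Z}$ of degree at most $n-1$ vanishes. First I dispose of the degenerate cases. If $k\leq -1$, then $c$ is identically $0$ and every factor $\chi_v^\con(c(v))$ equals $1$. Symmetrically, if $n-k-1\leq -1$, then every $\chi_v$ is the trivial character, and again every factor is $1$. So I may assume $0\leq k\leq n-1$, and I fix polynomial extensions $\tilde c:\mathbb{Z}^n\to A$ and $\tilde f:\mathbb{Z}^n\to\hat A$ of degrees at most $k$ and $n-k-1$ respectively, in Leibman's sense.

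Let $\langle\cdot,\cdot\rangle:\hat A\times A\to\mathbb{R}/\mathbb{Z}$ be the Pontryagin pairing, so that $\chi(a)=e^{2\pi i\langle\chi,a\rangle}$ for $\chi\in\hat A$, $a\in A$. The heart of the argument is the following multiplication claim: if $\phi:\mathbb{Z}^n\to\hat A$ is polynomial of degree $\leq a$ and $\psi:\mathbb{Z}^n\to A$ is polynomial of degree $\leq b$, then $g(x)=\langle\phi(x),\psi(x)\rangle$ is polynomial $\mathbb{Z}^n\to\mathbb{R}/\mathbb{Z}$ of degree $\leq a+b$. The proof is by induction on $a+b$: bilinearity of the pairing and additive notation $\phi(x+h)=\phi(x)+D_h\phi(x)$, $\psi(x+h)=\psi(x)+D_h\psi(x)$ give the product rule
\[
D_h g(x)=\langle D_h\phi(x),\psi(x)\rangle+\langle\phi(x),D_h\psi(x)\rangle+\langle D_h\phi(x),D_h\psi(x)\rangle,
\]
and each summand pairs polynomial maps whose degrees sum to at most $a+b-1$, so by induction each is polynomial of degree $\leq a+b-1$, hence so is $D_hg$ for every $h$. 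The base cases $a=0$ or $b=0$ reduce to the observation that composition with a group homomorphism preserves polynomial degree (a special case of the facts about $\mathcal{V}$-polynomials already cited from \cite{Lei2}), since $D_h(\chi\circ\psi)=\chi\circ D_h\psi$.

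Applying the multiplication claim with $\phi=\tilde f$, $\psi=\tilde c$, $a=n-k-1$, $b=k$ yields that $g(x)=\langle\tilde f(x),\tilde c(x)\rangle$ is polynomial of degree at most $n-1$. Therefore the full $n$-fold discrete derivative vanishes:
\[
0=D_{e_1}D_{e_2}\cdots D_{e_n}g(0)=\sum_{v\in\{0,1\}^n}(-1)^{n-|v|}g(v),
\]
where $|v|=\sum_iv_i$ and $e_i$ are the standard basis vectors of $\mathbb{Z}^n$. Multiplying by $(-1)^n$ gives $\sum_{v}(-1)^{|v|}\langle\chi_v,c(v)\rangle=0$ in $\mathbb{R}/\mathbb{Z}$. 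Exponentiating and using the identity $\chi_v^\con(c(v))=\chi_v(c(v))^{(-1)^{|v|}}$ yields $\prod_{v}\chi_v^\con(c(v))=1$, which is precisely the required conclusion.

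The only step that involves any work is the product-degree induction in the middle paragraph; the opening reduction and the closing finite-difference identity are immediate. This multiplication claim could alternatively be extracted from Leibman's closure of $\mathcal{V}$-polynomials under multiplication, but the bilinearity-based argument sketched here is short enough to be self-contained.
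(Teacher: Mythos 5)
Your proof is correct, and it packages the argument differently from the paper. The paper proves the lemma by induction on $n$ directly on the cube $\{0,1\}^n$: it splits the product over the two faces $Q$ and $Q+w$, applies the product rule for the pairing to regroup it as $\prod_{v\in Q}\bigl(\chi_v\overline{\chi_{v+w}}(c(v))\bigr)^\con\cdot\prod_{v\in Q}\bigl(\chi_{v+w}(c(v)-c(v+w))\bigr)^\con$, and observes that the two factors satisfy the hypotheses with parameters $(n-1,k)$ and $(n-1,k-1)$ respectively. You instead pass to the polynomial extensions on $\mathbb{Z}^n$ (which exist by the paper's very definition of a degree-$k$ cube), prove that the Pontryagin pairing of polynomials of degrees $\le a$ and $\le b$ is a polynomial of degree $\le a+b$ into $\mathbb{R}/\mathbb{Z}$, and finish with the vanishing of the full $n$-fold finite difference, which is exactly the alternating sum over $\{0,1\}^n$. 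The two arguments hinge on the same discrete product rule, but yours isolates it as a reusable degree-additivity lemma and replaces the face-by-face bookkeeping of conjugations with the clean identity $\chi_v^\con(c(v))=\chi_v(c(v))^{(-1)^{|v|}}$; the paper's version stays entirely on the discrete cube and never needs to name the extensions. Your handling of the degenerate cases $k\le -1$ and $k\ge n$ and of the base of the induction on $a+b$ (where degree $\le -1$ means identically trivial) is consistent with the paper's conventions, so there is no gap.
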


\begin{proof} We go by induction on $n$. If $n=0$ then the statement is trivial.  
Assume that $n>0$ and that the statement is true for $n-1$. Then by induction we have the product in the lemma is equal to $$\prod_{v\in Q}\Bigl(\chi_v\overline{\chi_{v+w}}(c(v))\Bigr)^\con\prod_{v\in Q}\Bigl(\chi_{v+w}(c(v)-c(v+w))\Bigr)^\con$$
where $Q=\{(v,0)|v\in\{0,1\}^{n-1}\}$~,~$w=(0,0,\dots,0,1)\in\{0,1\}^n$.
The first product satisfies the conditions with $n-1,k$ and the second one with $n-1,k-1$. We have by induction that both products are $1$.
\end{proof}

\begin{lemma}\label{dualker2} Let $n\in\mathbb{N},k\in\mathbb{Z}$. Let $A$ be a compact abelian group and let $f\in\hom(K_n,\mathcal{D}_{n-k-1}(\hat{A}))$. Then for every $c\in C^n_0(\mathcal{D}_k(A))$ we have that $\prod_{v\in\{0,1\}^n}\chi_v^\con(c(v))=1$ where $\chi_v=f(v)$.
\end{lemma}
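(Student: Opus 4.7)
The plan is to reduce lemma \ref{dualker2} to lemma \ref{dualker1} by extending the partial character system $f$ on $K_n$ to a cube on all of $\{0,1\}^n$ and then using the fact that $c(0)=0$ kills the extra factor. The core observation is that the $v=0$ term in the product of lemma \ref{dualker1} evaluates the extending character at $0$, which is trivially $1$, so the conclusion of lemma \ref{dualker1} restricts exactly to the conclusion of lemma \ref{dualker2}.

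First I would argue that any element $f\in\hom(K_n,\mathcal{D}_{n-k-1}(\hat A))$ extends to a cube $\tilde f\in C^n(\mathcal{D}_{n-k-1}(\hat A))$. The gluing axiom for nilspaces, as stated in the paper, provides an extension across a missing $1^n$ corner whenever the restrictions of $f$ to the $(n-1)$-dimensional faces through $0^n$ are cubes. Since the automorphism group of $\{0,1\}^n$ acts on $C^n(N)$ by the composition axiom, conjugating by the involution $v\mapsto 1^n-v$ yields the symmetric statement: a map on $K_n=\{0,1\}^n\setminus\{0^n\}$ whose restrictions to faces through $1^n$ are cubes extends to $\{0,1\}^n$. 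Applying this to $f$ produces the desired $\tilde f$; write $\chi_0:=\tilde f(0)$ for the chosen value at the missing corner.

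Next I would apply lemma \ref{dualker1} to $\tilde f$ and to $c$, noting that $C^n_0(\mathcal{D}_k(A))\subseteq C^n(\mathcal{D}_k(A))$, so the hypotheses of lemma \ref{dualker1} are met. This yields
$$\prod_{v\in\{0,1\}^n}\tilde\chi_v^\con(c(v))=1.$$
Because $c\in C^n_0(\mathcal{D}_k(A))$ forces $c(0)=0$, the $v=0$ factor equals $\tilde\chi_0^\con(0)=1$, so the product above coincides with the product over $K_n$ appearing in the statement of lemma \ref{dualker2}. The only subtle point is the symmetric form of the gluing axiom, which is immediate from the cubespace invariance under cube automorphisms; everything else is formal. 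As an alternative one could mimic the induction in the proof of lemma \ref{dualker1}, splitting $\{0,1\}^n$ along the last coordinate and invoking lemma \ref{dualker2} inductively in dimension $n-1$, but the extension-plus-reduction route above is cleaner.
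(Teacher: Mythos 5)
Your argument is correct, but it justifies the key step differently from the paper. Both proofs reduce to Lemma \ref{dualker1} by extending $f$ from $K_n$ to the full cube; the difference is how the extension is obtained. You invoke the gluing axiom of the nilspace $\mathcal{D}_{n-k-1}(\hat{A})$ (after conjugating by $v\mapsto 1^n-v$, which is legitimate by the composition axiom), so the existence of $\tilde f$ is abstract and the value $\tilde\chi_0$ is irrelevant because $c(0)=0$. The paper instead writes down the extension explicitly, setting $\overline{f}(0)=\prod_{v\in K_n}\chi_v^\con$ so that the alternating product over all of $\{0,1\}^n$ is trivial, and then verifies by hand that this extension lies in $C^n(\mathcal{D}_{n-k-1}(\hat{A}))$: for an $(n-k)$-dimensional face $S\subseteq K_n$ the face condition is the hypothesis on $f$, and for $S\ni 0$ it follows because $Q\setminus S$ is a disjoint union of faces parallel to $S$. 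Your route is shorter and cleaner, but it outsources exactly that combinatorial verification to the (stated but unproved in this paper) fact that $\mathcal{D}_m(\hat{A})$ satisfies corner completion; the paper's proof is self-contained and does not need $\mathcal{D}_{n-k-1}(\hat{A})$ to be a nilspace at all.

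One caveat you should address: the lemma allows arbitrary $k\in\mathbb{Z}$, so $n-k-1$ can be $0$ or negative, and in those cases $\mathcal{D}_{n-k-1}(\hat{A})$ is not a nilspace in the paper's sense (ergodicity fails, since $C^1$ consists only of constant, respectively trivial, pairs), so ``the gluing axiom for nilspaces'' cannot be cited verbatim. These degenerate cases do occur in the applications (e.g.\ in the proof of Theorem \ref{charpres}). They are easy to dispose of directly --- for $n-k-1\le -1$ every morphism from $K_n$ is trivial, and for $n-k-1=0$ it is constant, so the product is $\chi\bigl(\sum_{v\in K_n}(-1)^{h(v)}c(v)\bigr)=\chi(-c(0))=1$ --- but you should say so, or restrict the appeal to the gluing axiom to $n-k-1\ge 1$. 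The paper handles the analogous degeneracies with its ``$k<0$ or $k>n$ is trivial'' remark.
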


\begin{proof} Let us extend $f$ to the full cube $\{0,1\}^n$ such that $\overline{f}(0)=\prod_{v\in K_n}\chi_v^\con$. 
By lemma \ref{dualker1} it is enough to prove that this extension is in $C^n(\mathcal{D}_{n-k-1}(\hat{A}))$. If $k<0$ or $k>n$ then it is trivial. Assume that $0\leq k\leq n$ and
let $S$ be a face in $Q=\{0,1\}^n$ of dimension $n-k$. If $S\subset K_n$ then we have by our assumption that $\prod_{v\in S}f^\con(v)=1_A$. If $0\in S$ then $\prod_{v\in Q\setminus S}f^\con(v)=1_A$ since $Q\setminus S$ is a disjoint union of faces parallel to $S$. Then by $\prod_{v\in Q}f^\con(c)=1_A$ the proof is complete.
\end{proof}

\medskip

\subsection{Compact nilspaces}

The nilspace axioms were given in the introduction. 
In this chapter we review some of the results from \cite{NP} which we use in this paper.

Let $A$ be an abelian group and $X$ be an arbitrary set. An $A$ bundle over $X$ is a set $B$ together with a free action of $A$ such that the orbits of $A$ are parametrized by the elements of $X$. This means that there is a projection map $\pi:B\rightarrow X$ such that every fibre is an $A$-orbit. 
The action of $a\in A$ on $x\in B$ is denoted by $x+a$. Note that if $x,y\in B$ are in the same $A$ orbit then it makes sense to talk about the difference $x-y$ which is the unique element $a\in A$ with $y+a=x$. In other words the $A$ orbits can be regarded as affine copies of $A$.

A $k$-fold abelian bundle $X_k$ is a structure which is obtained from a one element set $X_0$ in $k$-steps in a way that in the $i$-th step we produce $X_i$ as an $A_i$ bundle over $X_{i-1}$.
The groups $A_i$ are the structure groups of the $k$-fold bundle. We call the spaces $X_i$ the $i$-th factors. 
If all the structure groups $A_i$ and spaces $X_i$ are compact and the actions are continuous then the $k$-fold bundle admits a Borel probability measure which is built up from the Haar measures of the structure groups in a recursive way. Let $\pi_i$ denote the the projection from $X_k$ to $X_i$.
Assume that the measure $\mu_{k-1}$ is already defined on $X_{k-1}$ and $\mu_{k-1}^*$ denotes the measure defined by $\mu^*_{k-1}(\pi_{k-1}^{-1}(S))=\mu_{k-1}(S)$ for Borel sets in $X_{k-1}$. Then the measure $\mu_k$ is the unique measure on $X_k$ with the property $$\int_{X_k} f ~d\mu_k=\int_{x\in X_k}\int_{a\in A_k} f(x+a)~d\nu_k~d\mu^*_{k-1}$$ where $\nu_k$ is the Haar measure on $A_k$ and $f$ is a bounded Borel function on $X_k$.

\begin{definition} Let $N_k$ be a $k$-fold abelian bundle with factors $\{N_i\}_{i=1}^k$ and structure groups $\{A_i\}_{i=1}^k$. Let $\pi_i$ denote the projection of $N_k$ to $N_i$.
Assume that $N_k$ admits a cubespace structure with cube sets $\{C^n(N_k)\}_{n=1}^\infty$.
We say that $N_k$ is a {\bf $k$-degree bundle} if it satisfies the following conditions
\begin{enumerate}
\item $N_{k-1}$ is a $k-1$ degree bundle.
\item For every $n\in\mathbb{N}$ the set $C^n(N_k)$ is a $C^n(\mathcal{D}_k(A_k))$-bundle with the pointwise action over $C^n(N_{k-1})$. The projection of the bundle is given by the composition with $\pi_{k-1}$.
\end{enumerate} 
\end{definition}

The next theorem form \cite{NP} says that $k$-degree bundles are the same as $k$-step nilspaces.

\begin{theorem}\label{bundec} Every $k$-degree bundle is a $k$-step nilspace and every $k$-step nilspace arises as a $k$-degree bundle. 
\end{theorem}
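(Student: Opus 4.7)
The plan is to proceed by induction on $k$, handling the two implications separately. The base case $k=0$ is a one-point space, and $k=1$ reduces to an abelian group with its usual cubic structure (i.e., $\mathcal{D}_1(A_1)$), both of which are checked directly against the nilspace axioms.

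For the forward direction, assume $N_k$ is a $k$-degree bundle and that $N_{k-1}$ is already a $(k-1)$-step nilspace by induction. I would verify the three nilspace axioms and the $k$-step uniqueness. \textbf{Composition} reduces through the bundle: given $\psi:\{0,1\}^n\to\{0,1\}^m$ and $f\in C^m(N_k)$, the projection $\pi_{k-1}\circ f\circ\psi$ lies in $C^n(N_{k-1})$ by induction, and after choosing any lift $g\in C^n(N_k)$ one checks that $f\circ\psi - g$ takes values in $C^n(\mathcal{D}_k(A_k))$, using that degree-$k$ polynomial cubes are closed under composition with cube morphisms (a direct polynomial computation). \textbf{Ergodicity} is automatic from the fibrewise free action. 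The main axiom is \textbf{gluing}: given a corner map $f$ on $\{0,1\}^n\setminus\{1^n\}$, apply gluing in $N_{k-1}$ to $\pi_{k-1}\circ f$ to obtain $h\in C^n(N_{k-1})$, choose any lift $\tilde f\in C^n(N_k)$ of $h$, and observe that $f-\tilde f$ is a corner map into the abelian group $A_k$ whose restrictions to the faces through $0^n$ are degree-$k$ cubes; this corner extends uniquely to a degree-$k$ cube on the full $\{0,1\}^n$ because the Gray-code identity in $\mathcal{D}_k(A_k)$ determines the missing vertex (this is essentially the argument behind lemma \ref{dualker2}). Adding this back to $\tilde f$ yields the required extension. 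Uniqueness at $n=k+1$ follows because $C_0^{k+1}(\mathcal{D}_k(A_k))$ is trivial.

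For the converse, given a $k$-step nilspace $N$ I would construct the tower of factors and structure groups from the top down. Define $A_k$ as the group of ``vertex-translations at $0^{k+1}$'': elements $a\in N^{\{0\}}$ (identified with a translation of a single coordinate) such that for every $c\in C^{k+1}(N)$ the modified cube $c+a\cdot 1_{\{0^{k+1}\}}$ remains in $C^{k+1}(N)$. Uniqueness of the $(k+1)$-dimensional extension makes this set a group; commutativity and free/transitive action on fibres follow from gluing applied inside $(k+2)$-dimensional cubes. The factor $N_{k-1}$ is the quotient of $N$ by the orbits of $A_k$, and one checks that the induced cube structure satisfies the nilspace axioms with uniqueness now at dimension $k$. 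Iterating produces factors $N_0\subset\cdots\subset N_{k-1}\subset N_k=N$ with abelian structure groups $A_1,\ldots,A_k$, and unwinding the definitions shows $C^n(N_k)$ is a $C^n(\mathcal{D}_k(A_k))$-bundle over $C^n(N_{k-1})$ for every $n$. In the compact case, continuity of the action and the bundle projection is inherited from the closedness of each $C^n(N)$ in $N^{2^n}$.

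The main obstacle is the converse direction, specifically showing that the candidate structure group $A_k$ is well-defined, abelian, and acts freely and transitively on the fibres of $N\to N_{k-1}$, \emph{and} that this bundle description propagates coherently to cubes of every dimension rather than just at dimension $k+1$ where it is forced. The propagation requires cutting an arbitrary $n$-cube into overlapping $(k+1)$-dimensional sub-cubes and checking that the $A_k$-translations glue consistently across the combinatorial boundary; this is where the full strength of the gluing and composition axioms is used simultaneously. A secondary, more technical difficulty in the compact setting is verifying continuity of the structure-group action and measurability of the bundle projection, which relies on the Hausdorff second countable topology and the closedness of the cube sets.
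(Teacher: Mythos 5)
The paper does not actually prove Theorem \ref{bundec}; it is quoted from \cite{NP}, so your sketch has to be judged on its own merits. Your forward direction ($k$-degree bundle $\Rightarrow$ $k$-step nilspace) is essentially correct and is the standard argument: composition and ergodicity descend through the bundle, gluing is done by completing in $N_{k-1}$, lifting, and completing the resulting $A_k$-valued corner in $\mathcal{D}_k(A_k)$, and uniqueness at dimension $k+1$ comes from the fact that a degree-$k$ cube of dimension $k+1$ vanishing on $\{0,1\}^{k+1}\setminus\{1^{k+1}\}$ vanishes everywhere (your phrase ``$C_0^{k+1}(\mathcal{D}_k(A_k))$ is trivial'' is not literally the right group --- you mean degree-$k$ cubes supported on the single vertex $1^{k+1}$ --- but the idea is right).

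The converse, however, contains a genuine gap rather than just deferred detail. Your definition of $A_k$ --- ``elements $a\in N^{\{0\}}$ \dots such that $c+a\cdot 1_{\{0^{k+1}\}}$ remains in $C^{k+1}(N)$'' --- presupposes an additive structure on $N$ that does not yet exist; at this stage there is no way to ``add $a$ to a vertex.'' What is actually needed (and what \cite{NP} does) is to first define the equivalence relation $x\sim_{k-1}y$ (say, via the existence of a cube equal to $x$ on $\{0,1\}^{k}\times\{0\}$-type configurations with $y$ replacing $x$ at one vertex), prove it \emph{is} an equivalence relation and compatible with cubes --- this already requires nontrivial gluing arguments in dimension $k+2$ --- and then define $A_k$ as a group of transformations of the fibres (translations of height $k$), proving separately that the action is well defined globally (a single group acting coherently on all fibres, not a fibre-by-fibre torsor), simply transitive, and abelian. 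You correctly list these as ``the main obstacle,'' but listing them is not resolving them: the simple transitivity and commutativity of $A_k$, and the propagation of the bundle structure from dimension $k+1$ to all dimensions $n$, are precisely the content of the theorem, and your sketch gives no mechanism for them beyond the assertion that uniqueness of completion should suffice. As written, the converse half is a plan, not a proof.
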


It will be important that if $N$ is a $k$-step compact nilspace then since the set $C^n(N)$ admits a $k$-fold bundle structure it has a natural probability measure on it.
Furthermore we have that the maps $\psi_v:C^n\rightarrow N$ defined by $\psi_v(c)=c(v)$ are all measure preserving and thus we can define the expressions $[F],(F),[F]^\times,(F)^\times$ similarly as in case of abelian groups.
We have for example that $[F]$ is continuous for every bounded measurable function system on $N$. 
We define $\|f\|_{U_n}$ by $\|f\|_{U_n}^{2^n}=(f)_n$ which is equal to $\mathbb{E}((f)_n^\times)$.
It turns out that if $n\leq k$ then $U_n$ is a semi-norm on $L^\infty(N)$ and it is a norm if $n\geq k+1$.

\begin{definition} Let $N$ be a $k$-step compact nilspace and $\chi\in\hat{A_k}$ be a linar character of the $k$-th structure group $A_k$. We denote by $W(\chi,N)$ the Hilbert space of functions $f\in L^2(N)$ such that $f(x+a)=f(x)\chi(a)$ holds for every $x\in N$ and $a\in A_k$.
\end{definition}

The next lemma is a direct consequence of theorem \ref{bundec}.

\begin{lemma}\label{nilspcharderiv} Assume that $N$ is a $k$-step compact nilspace, $\chi\in\hat{A}_k$ and $f\in W(\chi,N)$ is bounded. Then the function $(f)_{k+1}^\times$ (resp. $[f]_{k+1}^\times$ restricted to $C^{k+1}_x(N)$ for some $x\in N$) is the composition of a Borel function on $C^{k+1}(N_{k-1})$ (resp. $C^{k+1}_{\pi_{k-1}(x)}(N_{k-1})$~) and the projection $C^{k+1}(N)\rightarrow C^{k+1}(N_{k-1})$ induced by $\pi_{k-1}$.
Furthermore we have that $[f]_{k+1}\in V(\overline{\chi},N)$.
\end{lemma}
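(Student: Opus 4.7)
The plan is to exploit the $k$-degree bundle description of $N$ given by theorem \ref{bundec}, which says that $C^{k+1}(N)$ is a $C^{k+1}(\mathcal{D}_k(A_k))$-bundle over $C^{k+1}(N_{k-1})$ via the map induced by $\pi_{k-1}$. So two cubes $c_1, c_2 \in C^{k+1}(N)$ have the same image in $C^{k+1}(N_{k-1})$ if and only if $c_2 = c_1 + d$ pointwise for some $d \in C^{k+1}(\mathcal{D}_k(A_k))$. The factoring claim therefore reduces to showing that both $(f)^\times_{k+1}$ and $[f]^\times_{k+1}|_{C^{k+1}_x(N)}$ are invariant under the action of these fiber groups.

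For this invariance I will compute directly. Since $f \in W(\chi, N)$, we have $f(y+a) = f(y)\chi(a)$, and by cases on the parity of $h(v)$ it follows that $f^\con(y+a) = f^\con(y)\chi^\con(a)$ with the same $\con$ convention. Hence for $c_1 \in C^{k+1}(N)$ and $d \in C^{k+1}(\mathcal{D}_k(A_k))$,
\[
(f)^\times_{k+1}(c_1 + d) = \prod_{v \in \{0,1\}^{k+1}} f^\con(c_1(v))\,\chi^\con(d(v)) = (f)^\times_{k+1}(c_1)\cdot\!\!\prod_{v \in \{0,1\}^{k+1}}\chi^\con(d(v)).
\]
Now apply lemma \ref{dualker1} with $n = k+1$ and the constant map $\chi_v \equiv \chi$, which lies in $C^{k+1}(\mathcal{D}_0(\hat{A}_k))$ because $n - k - 1 = 0$; this gives $\prod_v \chi^\con(d(v)) = 1$, so $(f)^\times_{k+1}$ is constant on fibers, proving the first factoring. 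An identical computation restricted to $C^{k+1}_x(N)$, where the fiber becomes $C^{k+1}_0(\mathcal{D}_k(A_k))$ and the product runs over $K_{k+1}$, yields the $[f]^\times_{k+1}$ statement; this time the vanishing of $\prod_{v \in K_{k+1}} \chi^\con(d(v))$ follows from lemma \ref{dualker2} applied to the constant character map on $K_{k+1}$.

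For the last assertion, I will use the pointwise action of constants. Given $a \in A_k$ and $c \in C^{k+1}_x(N)$, the translate $c + a$ (with $a$ treated as the constant cube, which is a degree-$0$ and hence degree-$k$ cube in $\mathcal{D}_k(A_k)$) lies in $C^{k+1}_{x+a}(N)$, and the map $c \mapsto c+a$ is a measure-preserving bijection between these two fibers. The same character computation as above gives
\[
[f]^\times_{k+1}(c + a) = [f]^\times_{k+1}(c)\cdot\!\!\prod_{v \in K_{k+1}}\chi^\con(a).
\]
Counting parities in $K_{k+1} = \{0,1\}^{k+1}\setminus\{0\}$ gives $2^k$ vectors of odd weight and $2^k - 1$ of even weight, so the product collapses (using $|\chi(a)| = 1$) to $\overline{\chi(a)}$. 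Integrating over the fiber yields $[f]_{k+1}(x+a) = \overline{\chi(a)}\,[f]_{k+1}(x)$, i.e.\ $[f]_{k+1} \in W(\overline{\chi}, N)$ (reading the conclusion as $W$, matching the earlier definition).

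The only real obstacle is bookkeeping the signs and the precise setup for applying lemmas \ref{dualker1} and \ref{dualker2}; once the constant-character map is identified as a $\mathcal{D}_0(\hat{A}_k)$-cube (resp.\ $\hom(K_{k+1},\mathcal{D}_0(\hat{A}_k))$-map), everything reduces to the identities $f^\con(y+a) = f^\con(y)\chi^\con(a)$ and an invariance computation on fibers.
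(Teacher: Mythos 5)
Your proof is correct and follows exactly the route the paper intends: the paper gives no argument beyond declaring the lemma "a direct consequence of theorem \ref{bundec}", and your fiber-invariance computation via the $C^{k+1}(\mathcal{D}_k(A_k))$-bundle structure, together with lemmas \ref{dualker1} and \ref{dualker2} applied to the constant character map, is precisely the elaboration of that remark (your reading of $V(\overline{\chi},N)$ as $W(\overline{\chi},N)$ is also the right one). The parity count $2^k$ odd versus $2^k-1$ even vectors in $K_{k+1}$ and the resulting factor $\overline{\chi(a)}$ check out.
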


Another important fact about the spaces $W(\chi,N)$ is the following.

\begin{lemma}[Fourier decomposition on nilspaces]\label{nilfourdec} Let $N$ be a compact $k$-step nilspace. Then $$L^2(N)=\bigoplus_{\chi\in\hat{A_k}}W(\chi,N)$$ where the direct summands are orthogonal to each other. If $f:N\rightarrow\mathbb{C}$ is a bounded Borel measurable function then there is a unique decomposition
$f=\sum_{\chi\in\hat{A_k}}f_\chi$ into bounded functions $f_\chi\in W(\chi,N)$
converging in $L^2$. 
\end{lemma}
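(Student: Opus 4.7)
The plan is to use the bundle structure given by Theorem \ref{bundec}: a compact $k$-step nilspace $N$ is a $k$-fold abelian bundle, so the top structure group $A_k$ acts freely on $N$ with orbits being the fibres of the projection $\pi_{k-1}\colon N \to N_{k-1}$, and the probability measure on $N$ disintegrates as the Haar measure on $A_k$ along each fibre integrated against the measure on $N_{k-1}$. Since $A_k$ is compact and second countable, $\hat{A}_k$ is a countable discrete group, so the direct sum on the right-hand side is well-defined.

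First I would prove orthogonality. For $\chi\neq\chi'$ in $\hat{A}_k$, $f\in W(\chi,N)$, $g\in W(\chi',N)$, the bundle disintegration gives
\[
(f,g)=\int_N f\bar g\, d\mu_k=\int_{N_{k-1}}\!\Bigl(\int_{A_k}f(y+a)\overline{g(y+a)}\,d\nu_k(a)\Bigr)d\mu_{k-1}^*(y).
\]
Using the covariance $f(y+a)=\chi(a)f(y)$ and $g(y+a)=\chi'(a)g(y)$, the inner integral factors as $f(y)\overline{g(y)}\int_{A_k}\chi(a)\overline{\chi'(a)}\,d\nu_k(a)$, which vanishes by orthogonality of characters on $A_k$. (This step is fibrewise well-defined because $|f\bar g|$ is $A_k$-invariant, so the choice of fibre representative does not matter.)

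Next I would construct the decomposition. For bounded Borel $f\colon N\to\mathbb{C}$ and each $\chi\in\hat{A}_k$, define
\[
f_\chi(y):=\int_{A_k} f(y+a)\,\overline{\chi(a)}\,d\nu_k(a).
\]
Fubini's theorem ensures $f_\chi$ is Borel, and trivially $\|f_\chi\|_\infty\leq\|f\|_\infty$. A change of variable $a\mapsto a-b$ shows $f_\chi(y+b)=\chi(b)f_\chi(y)$, so $f_\chi\in W(\chi,N)$. For each $y$, the map $a\mapsto f(y+a)$ lies in $L^\infty(A_k)\subset L^2(A_k)$ and its classical Fourier expansion on the compact abelian group $A_k$ reads $f(y+a)=\sum_\chi f_\chi(y)\chi(a)$ in $L^2(A_k)$, with Parseval identity $\int_{A_k}|f(y+a)|^2 d\nu_k(a)=\sum_\chi |f_\chi(y)|^2$. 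Since $|f_\chi|^2$ is $A_k$-invariant, integrating over $N_{k-1}$ gives $\|f\|_2^2=\sum_\chi\|f_\chi\|_2^2$, and the fibrewise $L^2$ convergence together with the dominated convergence theorem (dominating by $\int_{A_k}|f(y+a)|^2 d\nu_k(a)$, which is integrable in $y$) upgrades this to $L^2(N)$ convergence of $\sum_\chi f_\chi$ to $f$. Uniqueness is immediate from orthogonality: taking the inner product of any alleged decomposition with the projection formula defining $f_\chi$ recovers the Fourier coefficients.

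Finally, the full identity $L^2(N)=\bigoplus_\chi W(\chi,N)$ for arbitrary $L^2$ functions follows by density of $L^\infty(N)$ in $L^2(N)$ combined with the orthogonality just established and the Parseval identity for bounded $f$; the projections $P_\chi\colon L^2(N)\to W(\chi,N)$, $f\mapsto f_\chi$, extend continuously and sum to the identity. The only mild subtlety is the bookkeeping around fibres as $A_k$-torsors rather than groups, but all the integrals that appear are $A_k$-invariant in the relevant sense and hence descend unambiguously to $N_{k-1}$, so no choice of section is needed.
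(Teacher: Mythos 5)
Your proposal is correct and follows essentially the same route as the paper, whose entire proof consists of defining $f_\chi(x)=\mathbb{E}_{b\in A_k}f(x+b)\overline{\chi(b)}$ and asserting the result; you have simply supplied the omitted details (orthogonality via the bundle disintegration, fibrewise Parseval, and the density argument). No gaps.
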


\begin{proof} It is clear that the functions defined by $f_\chi(x)=\mathbb{E}_{b\in A_k}f(x+b)\overline{\chi(b)}$ satisfy the above equality. 
\end{proof}

\begin{lemma}\label{nilspchar} Let $N$ be a $k$-step compact nilspace and $\chi\in\hat{A_k}$. Then there is a function $\phi\in W(\chi,N)$ such that $|\phi(x)|=1$ holds for every $x\in N$. Furthermore every function $f\in W(\chi,N)$ can be written as a product of $\phi$ and $h\circ\pi_{k-1}$ where $h$ is an $L^2$ function on the $k-1$ step factor of $N$.
\end{lemma}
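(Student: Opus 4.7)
The plan is to produce a Borel measurable unit-modulus function $\phi \in W(\chi,N)$ by trivializing the top-level $A_k$-bundle $\pi_{k-1}:N\to N_{k-1}$ with a measurable section, and then use division by $\phi$ to express any $f \in W(\chi,N)$ as the product of $\phi$ and a function pulled back from $N_{k-1}$.

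First I would produce a Borel section $s:N_{k-1}\to N$ of $\pi_{k-1}$. By theorem \ref{bundec}, $N$ is an $A_k$-bundle over $N_{k-1}$ in the compact, second countable, Hausdorff category, and the action of $A_k$ is free and continuous. A standard measurable selection theorem (for instance, Kuratowski--Ryll-Nardzewski applied to the closed-valued Borel map $y\mapsto \pi_{k-1}^{-1}(y)$ on a Polish base) yields such a Borel $s$. Given $s$, every $x\in N$ has a unique representation $x=s(\pi_{k-1}(x))+\alpha(x)$ for some Borel $\alpha:N\to A_k$, and I define
\[
\phi(x):=\chi(\alpha(x)).
\]
Clearly $|\phi(x)|=1$ for every $x\in N$. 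Since $\pi_{k-1}(x+a)=\pi_{k-1}(x)$ for all $a\in A_k$, uniqueness of the decomposition gives $\alpha(x+a)=\alpha(x)+a$, and therefore $\phi(x+a)=\chi(\alpha(x))\chi(a)=\phi(x)\chi(a)$, so $\phi\in W(\chi,N)$.

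For the second assertion, let $f\in W(\chi,N)$ and set $g:=f\overline{\phi}$. Then $g$ is bounded by $|f|$ in absolute value (so $g\in L^2(N)$) and for every $a\in A_k$,
\[
g(x+a)=f(x)\chi(a)\overline{\phi(x)}\,\overline{\chi(a)}=g(x),
\]
so $g$ is $A_k$-invariant. By the disintegration of the measure on $N$ along the bundle projection $\pi_{k-1}$, any such invariant $L^2$ function equals $h\circ\pi_{k-1}$ for a unique $h\in L^2(N_{k-1})$, where $\|h\|_2=\|g\|_2$ because $\pi_{k-1}$ is measure preserving. Multiplying by $\phi$ (whose modulus is $1$) gives the desired factorization $f=\phi\cdot(h\circ\pi_{k-1})$.

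The only non-trivial step is the construction of the Borel section $s$; once $s$ is in hand, both the equivariance of $\phi$ and the $A_k$-invariance of $f\overline{\phi}$ are immediate from the bundle structure, and the final descent of $g$ to $N_{k-1}$ is routine measure theory. If one prefers to avoid an abstract selection theorem, the same $s$ can be built by hand using the inverse-limit description of compact nilspaces from \cite{NP}: trivialize the top structure group in the finite-dimensional quotients and piece the sections together compatibly along the inverse system.
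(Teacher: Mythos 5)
Your proposal is correct and is essentially the paper's own argument: the paper also chooses a Borel representative system (section) $f$ for the fibres of $\pi_{k-1}$ and sets $\phi(x)=\chi(x-f(\pi_{k-1}(x)))$, which is exactly your $\chi(\alpha(x))$. You simply spell out the equivariance check and the descent of $f\overline{\phi}$ to $N_{k-1}$, which the paper leaves implicit.
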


\begin{proof} Let $f:N_{k-1}\rightarrow N_k$ be a function which chooses a Borel representative system for the fibres of $\pi_{k-1}$. Then we define $\phi(x)=\chi(x-f(\pi_{k-1}(x)))$. It is clear that $\phi$ satisfies the required condition.
\end{proof}

\begin{lemma}\label{gownilproj} Assume that $k\geq i\geq n-1\geq 0$ and $f\in L^\infty(N)$. Then $\|f\|_{U_n}=\|\mathbb{E}(f|\pi_i)\|_{U_n}$.
\end{lemma}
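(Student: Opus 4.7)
The plan is to reduce to a single-step projection and then iterate. Specifically, I will first prove the following one-step statement: for any $j$-step compact nilspace $M$ with $j \geq n$, any $g \in L^\infty(M)$, and the projection $\rho: M \to M_{j-1}$ to the $(j-1)$-step factor, one has $\|g\|_{U_n} = \|\mathbb{E}(g|\rho)\|_{U_n}$. Iterating this along the tower $N = N_k \to N_{k-1} \to \cdots \to N_i$, the tightest constraint is at the final step $N_{i+1} \to N_i$, which needs $i+1 \geq n$; this is exactly the hypothesis $i \geq n-1$. The case $i = k$ is trivial since $\pi_k$ is the identity.

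The one-step statement rests on a single structural observation. By Theorem \ref{bundec}, $C^n(M)$ is a $C^n(\mathcal{D}_j(A_j))$-bundle over $C^n(M_{j-1})$ under the pointwise action of $A_j$. A map $\{0,1\}^n \to A_j$ belongs to $C^n(\mathcal{D}_j(A_j))$ iff the degree-$j$ cancellation relations hold on every $(j+1)$-dimensional subface of $\{0,1\}^n$. When $n \leq j$ there are no such subfaces, so the condition is vacuous and $C^n(\mathcal{D}_j(A_j)) = A_j^{\{0,1\}^n}$, equipped with the product Haar measure. Consequently, integration over $C^n(M)$ decomposes as integration over $C^n(M_{j-1})$ followed by \emph{independent} Haar integrations over $A_j$ at each of the $2^n$ vertices.

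Expanding the Gowers inner product via this decomposition and applying Fubini within each fiber, for any lift $\tilde{c}$ of $c_0 \in C^n(M_{j-1})$ one obtains
\begin{equation*}
(g)_n = \int_{c_0} \prod_{v \in \{0,1\}^n} \Bigl( \int_{A_j} g(\tilde{c}(v)+a)\, da \Bigr)^{\con} dc_0.
\end{equation*}
The inner fiber-average is exactly $\mathbb{E}(g|\rho)(\tilde{c}(v))$, and since this quantity is $A_j$-invariant it depends only on $c_0(v) = \rho(\tilde{c}(v))$. The outer integral therefore reads as $\|\mathbb{E}(g|\rho)\|_{U_n}^{2^n}$, completing the one-step reduction.

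The only nontrivial ingredient is the identification $C^n(\mathcal{D}_j(A_j)) = A_j^{\{0,1\}^n}$ when $n \leq j$; after that the rest is a direct Fubini computation, and I foresee no substantive obstacle beyond carefully bookkeeping the iteration from $k$ down to $i$.
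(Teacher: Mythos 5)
Your proof is correct and follows essentially the same route as the paper: an iteration of one-step projections down the tower, where each step uses that $C^n(N_{j})$ is a $C^n(\mathcal{D}_{j}(A_{j}))$-bundle over $C^n(N_{j-1})$ and that $C^n(\mathcal{D}_{j}(A_{j}))=A_{j}^{\{0,1\}^n}$ for $n\leq j$, so the fiber integration factors into independent Haar averages at the vertices. The paper phrases the iteration as an induction on $k-i$ using the tower property of conditional expectation, but the content is identical.
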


\begin{proof} We prove the statement by induction on $k-i$. If $k=i$ then there is nothing to prove. Assume that the statement is true for $i+1\leq k$. By $\mathbb{E}(f|\pi_i)=\mathbb{E}(\mathbb{E}(f|\pi_{i+1})\pi_i)$ we can assume that $f$ is measurable in the factor $\pi_{i+1}$. By abusing the notation we can assume that $f$ is defined on $N_{i+1}$ and we do the calculation inside $N_{i+1}$. We have that $C^{i+1}(N_{i+1})$ is a $C:=C^{i+1}(\mathcal{D}_{i+1}(A_{i+1}))$ bundle over $C^{i+1}(N_i)$. Recall that by definition the set $C$ is equal to the set of all functions $\{0,1\}^{i+1}\rightarrow A_{i+1}$. We have that $$\|f\|_{U_{i+1}}=\mathbb{E}((f)_{i+1}^\times)=\mathbb{E}(\mathbb{E}((f)_{i+1}^\times|\pi_i))=\mathbb{E}(\mathbb{E}_{c\in C}((f)_{i+1}^\times)^c)=\mathbb{E}((\mathbb{E}_{a\in A_{i+1}}f^a)_{i+1}^\times).$$ Since the right hand side is equal to $\|\mathbb{E}(f|\pi_i)\|_{U_{i+1}}$ the proof is complete.
\end{proof}

\medskip

Morphisms between compact nilspaces were defined in the introduction. We will also need a stronger notion of morphism which was defined in \cite{NP}.

\begin{definition}[Fibre surjective morphism] Let $N$ and $M$ be two $k$-step nilspaces. A morphism $\phi:N\rightarrow M$ is called fibre surjective if for every $0\leq i\leq k$ and $x\in N_i$ we have that $\phi(\pi_i^{-1}(x))=\pi_i^{-1}(y)$ for some element $y\in M_i$.  
\end{definition}

Fibre surjective morphisms have the useful property (see \cite{NP}) that they are measure preserving in the very strong sense that the induced maps from $C^n_x(N)$ to $C^n_{\phi(x)}(M)$ are all measure preserving for arbitrary $x\in N$ and $n\in\mathbb{N}$. A crucial result from \cite{NP} says the following.

\begin{theorem}\label{inverselimit} If $N$ is a compact $k$-step nilspace then it is the inverse limit of finite dimensional $k$-step nilspaces where the maps used in the inverse system are all fibre surjective.
\end{theorem}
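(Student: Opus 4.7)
\proof (Sketch / proposal.) The plan is to reduce the statement, by induction on $k$, to two ingredients: a Pontryagin--duality description of each structure group as an inverse limit of finitely generated--dual compact abelian groups, and a quotienting procedure that turns a closed subgroup of the top structure group into a fibre surjective morphism of nilspaces. The base case $k=0$ is trivial (one point). For the inductive step, I view $N=N_k$ through theorem \ref{bundec} as an $A_k$-bundle over the $(k-1)$-step compact nilspace $N_{k-1}$, where on cubes $C^n(N_k)$ is a $C^n(\mathcal{D}_k(A_k))$-bundle over $C^n(N_{k-1})$.

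First I would deal with the top structure group. Because $N$ is second countable, $A_k$ is a compact, Hausdorff, second countable abelian group, so $\hat{A_k}$ is a countable discrete group. Writing $\hat{A_k}=\bigcup_{\alpha} B_\alpha$ as a directed union of finitely generated subgroups and setting $H_\alpha=B_\alpha^{\perp}\leq A_k$ gives a decreasing sequence of closed subgroups with $\bigcap_\alpha H_\alpha=\{0\}$ and $A_k/H_\alpha$ finite-dimensional. For each $\alpha$ I want to form a quotient $N/H_\alpha$ by dividing out the $H_\alpha$-action along the fibres of $\pi_{k-1}:N_k\to N_{k-1}$; its underlying set is $N_{k-1}$ together with the $A_k/H_\alpha$-torsor structure induced from the $A_k$-torsor structure on each fibre of $\pi_{k-1}$. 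I would declare the cubes of $N/H_\alpha$ to be the images of cubes of $N$ under the quotient, and then verify the nilspace axioms. The key point is that the bundle description of $C^n(N_k)$ over $C^n(N_{k-1})$ shows directly that the cubes of $N/H_\alpha$ form a $C^n(\mathcal{D}_k(A_k/H_\alpha))$-bundle over $C^n(N_{k-1})$, so by theorem \ref{bundec} applied in the other direction, $N/H_\alpha$ is a $k$-degree bundle over $N_{k-1}$ and hence a $k$-step compact nilspace; the map $N\to N/H_\alpha$ is visibly fibre surjective at every level because it is the identity on $N_i$ for $i<k$ and is a surjective continuous group homomorphism $A_k\to A_k/H_\alpha$ on the top fibre.

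Next I would combine this with the inductive hypothesis applied to $N_{k-1}$: fix a cofinal family $\{N_{k-1}\to M^\beta\}$ of fibre surjective morphisms with $M^\beta$ finite-dimensional $(k-1)$-step nilspaces. For each pair $(\alpha,\beta)$ I would build a finite-dimensional $k$-step approximation by pulling back: start from $N/H_\alpha$, which already has top structure group $A_k/H_\alpha$ finite dimensional and base $N_{k-1}$, and then push the base down to $M^\beta$ using the morphism $N_{k-1}\to M^\beta$. Concretely, the cocycle defining $N/H_\alpha$ as a bundle over $N_{k-1}$ must be replaced by a cocycle on $M^\beta$; here one uses fibre surjectivity (together with the measure theoretic machinery already developed in the paper) to descend the cocycle, up to passing to a cofinal refinement of the $\beta$'s if necessary. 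The resulting diagonal directed system of finite dimensional $k$-step nilspaces $N^{\alpha,\beta}$ admits compatible fibre surjective morphisms from $N$.

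Finally, I would check that $N=\varprojlim N^{\alpha,\beta}$. Injectivity of the canonical map $N\to\varprojlim N^{\alpha,\beta}$ follows from the $(k-1)$-step case and from $\bigcap_\alpha H_\alpha=\{0\}$: two points of $N$ with the same image in every $M^\beta$ lie over the same point of $N_{k-1}$, hence in the same $A_k$-orbit, and if additionally they have the same image in $A_k/H_\alpha$ for all $\alpha$ they coincide. Surjectivity and the fact that the inverse limit carries the correct cube structure use the compactness of $N$ together with the closedness of $C^n(N)$ in $N^{\{0,1\}^n}$, so that an element of the inverse limit of cubes sets lifts to a compatible cube in $N$. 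The main obstacle is the middle step, constructing the quotient $N/H_\alpha$ and verifying the gluing axiom for it; this is exactly where the bundle decomposition theorem \ref{bundec} is indispensable, because the gluing axiom must be checked simultaneously at every level of the tower and the bundle description reduces it to the corresponding statement for $\mathcal{D}_k(A_k/H_\alpha)$, which is a purely algebraic fact about polynomial maps into a compact abelian group. \qed
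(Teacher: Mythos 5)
First, a point of reference: the paper you are working from does not prove this theorem at all --- it is quoted verbatim from \cite{NP} as an imported black box, so there is no in-paper argument to compare yours against. Judged on its own, your sketch does follow the global strategy of the actual proof in \cite{NP}: induction on $k$ via the bundle decomposition of theorem \ref{bundec}, writing $\hat{A_k}$ as a directed union of finitely generated subgroups so that $A_k$ is an inverse limit of Lie groups $A_k/H_\alpha$, forming the quotient nilspaces $N/H_\alpha$, and combining with the inductive hypothesis on $N_{k-1}$. Your verification that $N/H_\alpha$ is again a compact $k$-step nilspace (the induced $C^n(\mathcal{D}_k(A_k/H_\alpha))$-bundle structure over $C^n(N_{k-1})$, hence the gluing and uniqueness axioms via the bundle characterization) is essentially right, as is the identification $N=\varprojlim N^{\alpha,\beta}$ at the end, which only needs compactness, closedness of the cube sets, and $\bigcap_\alpha H_\alpha=\{0\}$.

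The genuine gap is the middle step, and it is exactly the heart of the theorem. The quotient $N/H_\alpha$ has a finite dimensional top structure group but its base is still the (generally infinite dimensional) nilspace $N_{k-1}$; to obtain a finite dimensional $k$-step nilspace you must show that the degree-$k$ extension of $N_{k-1}$ by $A_k/H_\alpha$ descends to an extension of some finite dimensional factor $M^\beta$. You dispose of this with ``one uses fibre surjectivity (together with the measure theoretic machinery already developed in the paper) to descend the cocycle,'' but fibre surjectivity of $N_{k-1}\to M^\beta$ gives no such descent by itself: a priori the cocycle defining the bundle depends on the full topology of $N_{k-1}$ and need not factor through any $M^\beta$, even after refining $\beta$. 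In \cite{NP} this is a substantive theorem about cocycles --- one develops cross-sections and a cohomology theory for degree-$k$ cocycles on compact nilspaces and proves that a Borel cocycle with values in a compact abelian Lie group on an inverse limit of nilspaces is cohomologous to the pullback of a cocycle from a finite dimensional stage. Without an argument of this kind (or a substitute for it), your inverse system of finite dimensional $k$-step nilspaces $N^{\alpha,\beta}$ is not actually constructed, so the proof as written does not close.
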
 

Note that a nilspace is called finite dimensional if all the structure groups $\{A_i\}_{i=1}^k$ are finite dimensional i.e. compact Abelian Lie-groups. This means that the dual groups $\{\hat{A_i}\}_{i=1}^k$ are all finitely generated. In particular finite nilspaces are $0$ dimensional.

\section{Ultra product groups and their factors}

\subsection{Ultra product spaces}

Let $\omega$ be a non principal ultra filter on the natural numbers. 
Let $\{X_i,\mu_i\}_{i=1}^\infty$ be pairs where $X_i$ is a compact Hausdorff space and $\mu_i$ is a probability measure on the Borel sets of $X_i$.
We denote by $\bX$ the ultra product space $\prod_{\omega}X_i$.
The space $\bX$ has the following structures on it.

\medskip

\noindent{\bf Strongly open sets:} ~We call a subset of $\bX$ strongly open if it is the ultra product of open sets $\{S_i\subset X_i\}_{i=1}^\infty$.

\medskip

\noindent{\bf Open sets:}~We say that $S\subset \bX$ is open if it is a countable union of strongly open sets. Open sets on $\bX$ form a $\sigma$-topology. This is similar to a topology but it has the weaker axiom that only countable unions of open sets are required to be open. It can be proved that $\bX$ with this $\sigma$-topology is countably compact. This means that if $\bX$ is covered by countably many open sets then there is a finite sub-system which covers $\bX$.

\medskip

\noindent{\bf Borel sets:} A subset of $\bX$ is called Borel if it is in the $\sigma$-algebra generated by strongly open sets. We denote by $\mathcal{A}(\bX)$ the $\sigma$ algebra of Borel sets.

\medskip

\noindent{\bf Ultra limit measure:}  If $S\subseteq \bX$ is a strongly open set of the form $S=\prod_\omega S_i$ then we define $\mu(S)$ as $\lim_\omega\mu_i(S_i)$. It is well known that $\mu$ extends as a probability measure to the $\sigma$-algebra of Borel sets on $\bX$. 

\medskip

\noindent{\bf Ultra limit functions:} Let $T$ be a compact Hausdorff topological space. Let $\{f_i:X_i\rightarrow T\}_{i=1}^\infty$ be a sequence of Borel measurable functions. We define $f=\lim_\omega f_i$ as the function on $\bX$ whose value on the equivalence class of $\{x_i\in X_i\}_{i=1}^\infty$ is $\lim_\omega f_i(x_i)$. Such functions will be called ultra limit functions. It is easy to see that ultra limit functions can always be modified on a $0$ measure set that they becomes measurable in the Borel $\sigma$-algebra on $\bX$. This means that ultra limit functions are automatically measurable in the completion of the Borel $\sigma$-algebra.

\medskip

\noindent{\bf Measurable functions:} It is an important fact (see \cite{ESz}) that every bounded measurable function on $\bX$ is almost everywhere equal to some ultra limit function $f=\lim_\omega f_i$. 

\medskip

\noindent{\bf Continuity:} A function $f:\bX\rightarrow T$ from $\bX$ to a topological space $T$ is called continuous if $f^{-1}(U)$ is open in $\bX$ for every open set in $T$. If $T$ is a compact Hausdorff topological space then $f$ is continuous if and only if it is the ultra limit of continuous functions $f_i:X_i\rightarrow T$. Furthermore the image of $\bX$ in a compact Hausdorff space $T$ under a continuous map is compact. 

\medskip

The fact that an ultra product space has only a $\sigma$-topology and not a topology might be upsetting for the first look. However one can look at $\bX$ as a space which is glued together form many ``ordinary'' topological spaces. These topological spaces appear as quotients of $\bX$.

\begin{definition} A {\bf topological factor} of $\bX$ is a surjective continuous map $\bX\rightarrow T$ to a countably based, Hausdorff topological space $T$.
\end{definition}

Note that the compactness of $T$ follows automatically from the fact that $\bX$ is countably compact. One can equivalently define topological factors through equivalence relations $\sim$ on $\bX$ such that the collection of open sets on $\bX$ that are unions of equivalence classes form a countably based, Hausdorff topological space.  


\subsection{Ultra product groups and the correspondence principle}

Let $\{A_i\}_{i=1}^\infty$ be a sequence of compact abelian groups. Let $(\bA,\mathcal{A},\bm)$ be the triple where $\bA=\prod_\omega A_i$, $\mathcal{A}$ is the Borel $\sigma$ algebra on $\bA$ and $\bm$ is the ultra limit of the normalized Haar measures on $A_i$.
To avoid the situation when $\bA$ is finite we will assume that for every $n\in\mathbb{N}$ the set $\{i:|A_i|>n\}$ is in the ultra filter $\omega$. 
Note that $\bA$ is an abelian group which is similar to an ordinary compact group in the sense that it admits the shift invariant probability measure $\bm$ defined on the Borel $\sigma$-algebra $\mathcal{A}$. 

\medskip

\noindent{\bf Associated structures:}~In this paper we will often work with algebraic structures associated with $\bA$. The most typical one is the cube space $C^k(\bA)$. In all of our cases there is a commutativity between taking ultra products and taking the associated structure. For example there is a natural bijection between $C^k(\bA)$ and $\prod_\omega C^k(A_i)$.

\medskip

\noindent{\bf Topology and $\sigma$-algebra:}~An important source of differences between compact groups and their ultra products is the different behavior of their Borel $\sigma$-algebras. As an abstract set, the ultra product $\prod_\omega A_i\times A_i$ is in a natural bijection with $\prod_\omega A_i\times\prod_\omega A_i=\bA\times\bA$.
However the Borel $\sigma$-algebra on $\prod_\omega A_i\times A_i$ is bigger than the product $\mathcal{A}\otimes\mathcal{A}$.  (The same thing is true for the $\sigma$-topology defined on them.)

\medskip

\noindent{\bf Shift invariant $\sigma$-algebras:}~A sub $\sigma$-algebra $\mathcal{B}\subset\mathcal{A}$ is called {\bf shift invariant} if $S\in\mathcal{B}$ implies that $S+t\in\mathcal{B}$ holds for every $t\in\bA$. It is clear that $\mathcal{A}$ and the measure $\bm$ are both shift invariant.

\subsection{Higher order Fourier $\sigma$-algebras}

Let $f=\lim_\omega f_i$ be a bounded measurable function on the ultra product group $\bA$. 
The general correspondence principle for ultra product groups yields that $\|f\|_{U_k}=\lim_\omega\|f_i\|_{U_k}$ holds for every natural number $k$.
Since the ultra limit of positive numbers is non-negative it follows that $U_k$ is a semi norm on $L^\infty(\bA)$. 
We show in this chapter that for every $k\in\mathbb{N}$ there is a unique largest $\sigma$-algebra $\mathcal{F}_k\subset\mathcal{A}$ such that $U_{k+1}$ is a norm on $L^\infty(\mathcal{F}_k)$ and $L^\infty(\mathcal{F}_k)$ is orthogonal to every function with zero $U_{k+1}$ norm. Roughly speaking, a function is measurable in $\mathcal{F}_k$ if it is purely ``structured'' in $k$-th order Fourier analysis. 
One of many advantages of the ultra product framework is that there is clear distinction between $k$-th order noise and $k$-th order structure on $\bA$. Functions that have zero $U_{k+1}$ norm are considered to be ``pure noise'' in the $k$-th order setting.

\begin{definition} Let $\mathcal{B}\subseteq\mathcal{A}$ be a $\sigma$-algebra. Then we denote by $[\mathcal{B}]_k$,$[\mathcal{B}]_k^\times$ and $(\mathcal{B})_k^\times$ the $\sigma$-algebra generated by the functions $[F],[F]^\times$ and $(F)^\times$ where $F$  runs through all the function systems $\{f_v\}_{v\in\{0,1\}^k}$ in $L^\infty(\mathcal{B})$. 
\end{definition}

We have that $$[\mathcal{B}]^\times_k=\bigvee_{v\in K_k}\mathcal{B}\circ\psi_v~~~{\rm and}~~~(\mathcal{B})^\times_k=\bigvee_{v\in\{0,1\}^k}\mathcal{B}\circ\psi_v.$$

\begin{lemma}\label{convcontlift} Let $\mathcal{B}\subset\mathcal{A}$~,~$k\in\mathbb{N}$ and let $\mathcal{C}\subset\mathcal{A}$ be the unique $\sigma$-algebra with $\mathcal{C}\circ\psi_0=\mathcal{A}\circ\psi_0\wedge[\mathcal{B}]_k^\times$. Then $\mathcal{C}\subset[\mathcal{B}]_k$.
\end{lemma}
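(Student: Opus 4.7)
The strategy is to show that every bounded $\mathcal{C}$-measurable function on $\bA$ admits an $L^2$-approximation by finite linear combinations of convolutions $[F]$ with $F$ a system in $L^\infty(\mathcal{B})$, since such convolutions are by definition $[\mathcal{B}]_k$-measurable. Passing from this $L^2$-density to measurability then yields the desired containment $\mathcal{C}\subset[\mathcal{B}]_k$.

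Concretely, let $g\in L^\infty(\mathcal{C})$. By the defining property of $\mathcal{C}$, the lift $g\circ\psi_0$ is measurable in $[\mathcal{B}]_k^\times=\bigvee_{v\in K_k}\mathcal{B}\circ\psi_v$. I would apply Lemma \ref{siggen} to this generating family: for any $\epsilon>0$ there exist function systems $F_i=\{f_{v,i}\}_{v\in K_k}$ with $f_{v,i}\in L^\infty(\mathcal{B})$ and scalars $c_i$ such that
$$\Bigl\|g\circ\psi_0-\sum_i c_i[F_i]^\times\Bigr\|_2\leq\epsilon.$$
Here one uses that the class of products $\prod_{v\in K_k}f_v\circ\psi_v$ appearing in Lemma \ref{siggen} coincides with the class of rank-one functions $\{[F]^\times\}$, since $L^\infty(\mathcal{B})$ is closed under complex conjugation so the placement of conjugations required by the $(\cdot)^\con$ convention is free.

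Next, apply $\mathbb{E}(\,\cdot\mid\psi_0)$ to both sides. The left-hand side is fixed, since $g\circ\psi_0$ is already $\mathcal{A}\circ\psi_0$-measurable; on the right, equation (\ref{rankcon1}) gives $\mathbb{E}([F_i]^\times\mid\psi_0)=[F_i]\circ\psi_0$. Conditional expectation being an $L^2$-contraction, the $\epsilon$-approximation is preserved:
$$\Bigl\|g\circ\psi_0-\sum_i c_i[F_i]\circ\psi_0\Bigr\|_2\leq\epsilon.$$
Since $\psi_0$ is measure preserving, this descends to $\|g-\sum_i c_i[F_i]\|_2\leq\epsilon$ on $\bA$. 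Each $[F_i]$ lies in $L^\infty([\mathcal{B}]_k)$ by definition, and as $\epsilon>0$ was arbitrary, $g\in L^2([\mathcal{B}]_k)$; together with boundedness this forces $g\in L^\infty([\mathcal{B}]_k)$, hence $\mathcal{C}\subset[\mathcal{B}]_k$.

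The main conceptual ingredient is the identity (\ref{rankcon1}), which converts the rank-one ``$\times$'' products naturally produced by Lemma \ref{siggen} into genuine convolutions on $\bA$; the rest is assembly using that $\psi_0$ is measure preserving and that conditional expectation contracts the $L^2$ norm. The only mildly delicate point is verifying the cosmetic identification between arbitrary products $\prod_{v\in K_k}f_v\circ\psi_v$ and rank-one expressions $[F]^\times$, which amounts to bookkeeping on conjugations and does not affect the argument.
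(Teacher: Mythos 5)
Your proposal is correct and follows essentially the same route as the paper's own proof: approximate $g\circ\psi_0$ via Lemma \ref{siggen} by linear combinations of rank-one functions $[F_i]^\times$, project with $\mathbb{E}(\cdot\mid\psi_0)$ using (\ref{rankcon1}) and the contraction property, and descend through the measure-preserving map $\psi_0$. No gaps.
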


\begin{proof} Let $f\in L^\infty(\mathcal{C})$. We have by lemma \ref{siggen} that for every $\epsilon>0$ the function $f\circ\psi_0$ has an epproximation of the form $g=\sum_{i=1}^t[F^i]^\times$ with $L^2$ error at most $\epsilon$ where each $F^i$ is a function systme $\{f^i_v\}_{v\in K_k}$ in $L^\infty(\mathcal{B})$.  Then $g'=\sum_{i=1}^t[F^i]$ satisfies $g'\circ\psi_0=\mathbb{E}(g|\psi_0)$ and thus by $\epsilon\geq\|g-f\circ\psi_0\|_2\geq\|\mathbb{E}(g|\psi_0)-f\circ\psi_0\|_2=\|g'-f\|_2$ we have that $f$ has an arbitrary precise approximation in $L^2([\mathcal{B}]_k)$.
\end{proof}

\begin{definition}  Let $\mathcal{F}_k$ be the $\sigma$-algebra $[\mathcal{A}]_{k+1}$. We say that $\mathcal{F}_k$ is the {\bf $k$-th order Fourier $\sigma$-algebra}. 
\end{definition}

\begin{theorem}[Properties of $\mathcal{F}_k$]\label{propfk} Let $k$ be a natural number. Then
\begin{enumerate}
\item On the space $C^{k+1}(\bA)$ we have
$\mathcal{F}_k\circ\psi_0=\mathcal{A}\circ\psi_0\wedge [\mathcal{A}]_{k+1}^\times,$
\item $\|f\|_{U_{k+1}}=0$ holds if and only if $\mathbb{E}(f|\mathcal{F}_k)=0$,
\item $U_{k+1}$ is a norm on $L^\infty(\mathcal{F}_k)$,
\item $\mathcal{F}_k$ is shift invariant,
\item On the space $C^{k+1}(\bA)$ we have
$\mathcal{F}_k\circ\psi_0=\mathcal{A}\circ\psi_0\wedge [\mathcal{F}_k]_{k+1}^\times,$
\item $\mathcal{F}_0$ is the trivial $\sigma$-algebra and $\mathcal{F}_0\subset\mathcal{F}_1\subset\mathcal{F}_2\subset\dots$ is an increasing chain.
\end{enumerate}
\end{theorem}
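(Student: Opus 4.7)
The six items admit a natural dependency structure: (4) is standalone, (1) is the main structural claim, and the remaining four follow using (1), (4), Gowers--Cauchy--Schwartz, and Lemma~\ref{prodconv}. I would prove them in the order (4), (1), (2), (3), (5), (6). The main obstacle is the nontrivial direction of (1), which requires a shift-averaging argument on $C^{k+1}(\bA)$.

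\textbf{Properties (4) and (1).} For (4), a change of variables shows that translating $[F]$ by $t\in\bA$ produces $[G]$ for $G=\{f_v(\cdot - t)\}_{v\in K_{k+1}}$; shift-invariance of $\mathcal{A}$ makes $G$ a system in $L^\infty(\mathcal{A})$, so translates of $[F]$ stay in $L^\infty(\mathcal{F}_k)$, and since $[F]$'s generate $\mathcal{F}_k$, shift-invariance follows. For (1), the inclusion $\supseteq$ is Lemma~\ref{convcontlift} applied with $\mathcal{B}=\mathcal{A}$ and $k$ replaced by $k+1$. For the reverse inclusion it suffices to check that each generator satisfies $[F]\circ\psi_0 \in L^\infty([\mathcal{A}]_{k+1}^\times)$. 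The key observation is that $\mathcal{A}\circ\psi_0$ coincides with the sub-$\sigma$-algebra of $C^{k+1}_0(\bA)$-invariant sets in $C^{k+1}(\bA)$, because $\psi_0$ exhibits $C^{k+1}(\bA)$ as a principal $C^{k+1}_0(\bA)$-bundle over $\bA$. Hence conditional expectation given $\mathcal{A}\circ\psi_0$ is realized by averaging over the $C^{k+1}_0$-action, and by (\ref{rankcon1}) we have $[F]\circ\psi_0 = \int_{y\in C^{k+1}_0(\bA)} [F]^\times(\cdot + y)\, dy$. Each translate $[F]^\times(\cdot+y)$ lies in $L^\infty([\mathcal{A}]_{k+1}^\times)$ because $\mathcal{A}$ on $\bA$ is shift-invariant, and testing against any $g\in L^2([\mathcal{A}]_{k+1}^\times)^\perp$ with Fubini shows that the weak integral lies in the closed subspace $L^2([\mathcal{A}]_{k+1}^\times)$. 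Boundedness of $[F]\circ\psi_0$ then places it in $L^\infty([\mathcal{A}]_{k+1}^\times)$. This shift-averaging argument is the main technical hurdle.

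\textbf{Properties (2), (3), (5), (6).} For (2), the forward direction is Gowers--Cauchy--Schwartz (\ref{GCS}), which gives $(f,[F])=0$ for every system $F$; iterating Lemma~\ref{prodconv} writes any product $\prod_i [F_i]^{n_i}$ as an $L^2$-limit of averages of single convolutions, and the resulting polynomial algebra is $L^2$-dense in $L^2(\mathcal{F}_k)$ by Stone--Weierstrass, yielding $\mathbb{E}(f|\mathcal{F}_k)=0$. Conversely, $\|f\|_{U_{k+1}}^{2^{k+1}} = \mathbb{E}(f\cdot [f]_{k+1})$ and $[f]_{k+1}\in L^\infty(\mathcal{F}_k)$, so $\mathbb{E}(f|\mathcal{F}_k)=0$ forces $\|f\|_{U_{k+1}}=0$. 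Property (3) is immediate: if $f\in L^\infty(\mathcal{F}_k)$ and $\|f\|_{U_{k+1}}=0$, then (2) gives $f=\mathbb{E}(f|\mathcal{F}_k)=0$. For (5), multilinearity of $[F]$ in its arguments combined with the fact (from Gowers--Cauchy--Schwartz, applied to the extended system $\{f\}\cup F$) that $[F]=0$ whenever any $f_v$ has zero $U_{k+1}$ norm shows that $[F]=[F^{\mathrm{str}}]$ where $F^{\mathrm{str}}_v=\mathbb{E}(f_v|\mathcal{F}_k)$. Hence every generator of $\mathcal{F}_k$ is already a convolution of $\mathcal{F}_k$-measurable functions, so the shift-averaging argument of (1) applied with $\mathcal{F}_k$ in place of $\mathcal{A}$ (legitimate by (4)) yields $[F]\circ\psi_0 \in L^\infty([\mathcal{F}_k]_{k+1}^\times)$; the reverse inclusion is immediate from (1) since $[\mathcal{F}_k]_{k+1}^\times\subseteq[\mathcal{A}]_{k+1}^\times$. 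Finally (6): for $k=0$, $K_1=\{(1)\}$ and $[F](x)=\mathbb{E}_t\overline{f_{(1)}}(x+t)=\mathbb{E}\overline{f_{(1)}}$ is constant by translation invariance of the Haar measure, so $\mathcal{F}_0$ is trivial; the chain $\mathcal{F}_{k-1}\subseteq\mathcal{F}_k$ follows from $\|\cdot\|_{U_k}\leq\|\cdot\|_{U_{k+1}}$ via (2) applied at both levels, since the inclusion $\{g:\|g\|_{U_{k+1}}=0\}\subseteq\{g:\|g\|_{U_k}=0\}$ passes to the reverse inclusion of orthogonal complements $L^2(\mathcal{F}_{k-1})\subseteq L^2(\mathcal{F}_k)$.
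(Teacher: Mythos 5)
Your proof is correct and follows the same architecture as the paper's: one inclusion of (1) via Lemma~\ref{convcontlift}, the other by averaging $[F]^\times$ over the $C^{k+1}_0(\bA)$-action; (2) via orthogonality to rank-one/convolution functions; (5) by replacing each $f_v$ with $\mathbb{E}(f_v|\mathcal{F}_k)$ using multilinearity and the vanishing criterion; (3) and (4) identically. You differ in three local choices. In (1) you use the exact integral $\int_{y\in C^{k+1}_0}[F]^\times(\cdot+y)\,dy$ plus a duality/Fubini argument, where the paper uses Lemma~\ref{lowrank}, which approximates this average by \emph{finitely many} shifts (Lemma~\ref{shiftap}); the finite version is precisely the device that sidesteps justifying the interchange of integrals on an ultraproduct (where Fubini needs the non-standard version from \cite{ESz}), so you should either invoke Lemma~\ref{lowrank} directly or note that the interchange is the already-established identity (\ref{rankcon1}). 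In the forward direction of (2) you stay on $\bA$ and use Lemma~\ref{prodconv} to show the algebra generated by convolutions is $L^2$-dense in $L^2(\mathcal{F}_k)$, whereas the paper lifts to $C^{k+1}(\bA)$ and applies Lemma~\ref{siggen} to the product $\sigma$-algebra $[\mathcal{A}]_{k+1}^\times$; both work, and yours has the small virtue of not needing part (1) for that step. In (6) you derive $\mathcal{F}_{k-1}\subseteq\mathcal{F}_k$ from the monotonicity $\|\cdot\|_{U_k}\le\|\cdot\|_{U_{k+1}}$ together with (2), while the paper pads an order-$k$ convolution with constant-$1$ functions to exhibit it as an order-$(k+1)$ convolution; again both are valid, and your route makes the dependence on (2) explicit.
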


\begin{proof} 

Let $\mathcal{B}$ denote the $\sigma$-algebra whose pre-image under $\psi_0$ is equal to $\mathcal{A}\circ\psi_0\wedge[\mathcal{A}]_{k+1}^\times$. The first statement in the theorem says that $\mathcal{B}=\mathcal{F}_k$. By lemma \ref{convcontlift} we have that $\mathcal{B}\subseteq\mathcal{F}_k$.
For the statement $\mathcal{F}_k\subseteq\mathcal{B}$ it is enough to prove that $[F]\circ\psi_0$ is measurable in $[\mathcal{A}]_{k+1}^\times$ for every function system $F=\{f_v\}_{v\in K_{k+1}}$ in $L^\infty_u(\mathcal{A})$. This follows immediately from lemma \ref{lowrank}.

Let $R$ be the set of rank one functions on $C^{k+1}(\bA)$. We claim that $\mathbb{E}(f|\mathcal{F}_k)=0$ holds if and only if $f$ is orthogonal to every function in $R$. 
One direction is trivial since $L^\infty(\mathcal{F}_k)$ contains $R$. Assume that $f$ is orthogonal to $R$. Let $G=\{g_v\}_{v\in K_{k+1}}$ be a function system in $L^\infty_u(\bA)$ and let $g=[G]^\times$ be the corresponding rank one function. We have by (\ref{coincon}) that $0=(f,[G])=(f\circ \psi_0,g)$. This shows that $f\circ\psi_0$ is orthogonal to every rank one function and thus $\mathbb{E}(f\circ\psi_0|\mathcal{G})=0$. Using $\mathcal{F}_k\circ\psi_0\subseteq\mathcal{G}$ from the first part of the theorem we obtain $$0=\mathbb{E}(f\circ\psi_0|\mathcal{F}_k\circ\psi_0)=\mathbb{E}(f|\mathcal{F}_k)\circ\psi_0$$ showing that $\mathbb{E}(f|\mathcal{F}_k)=0$.

By (\ref{GCS}) and (\ref{coincon}) we have that $\|f\|_{U_{k+1}}=0$ if and only if $f$ is orthogonal to $R$. This proves the second statement. The third statement follows form the fact that if $f\in L^\infty(\mathcal{F}_k)$ then $f=\mathbb{E}(f|\mathcal{F}_k)$.

Statement four follows from the fact that convolution of shifts of functions (with a fix element $t\in\bA$) is the shift of the convolution. Thus the generating system of $\mathcal{F}_k$ is shift invariant.
 
For the fifth statement let $\mathcal{B}'\subset\mathcal{A}$ be the $\sigma$-algebra such that $\mathcal{B}'\circ\psi_0=\mathcal{A}\circ\psi_0\wedge[\mathcal{F}_k]_{k+1}^\times$. Our goal is to show that $\mathcal{F}_k=\mathcal{B}'$. It is clear from the first statement of the theorem that $\mathcal{B}'\subseteq\mathcal{F}_k$. To show the other inclusion it is enough to show that every convolution in $R$ is contained in $\mathcal{B}'$. Let $F=\{f_v\}_{v\in K_{k+1}}$ be a function system in $L^\infty_u(\bA)$. Let $G=\{g_v:=\mathbb{E}(f_v|\mathcal{F}_k)\}_{v\in K_{k+1}}$. By the second part of the theorem we get $\|f_v-g_v\|_{U_{k+1}}=0$ for every $v\in K_{k+1}$. Using the fact that convolution is linear in the components and lemma \ref{cornineq} we conclude that in a process, where we step by step replace the terms $f_v$ by $g_v$ in the system $F$, the convolution doesn't change. It follows that $[G]=[F]$. Now remark \ref{aptrans} and lemma \ref{lowrank} show that $[F]\circ\psi_0=[G]\circ\psi_0$ can be approximated by rank one functions using only translates of the functions $g_v$. Since the functions $g_v$ are all measurable in $\mathcal{F}_k$ and $\mathcal{F}_k$ is shift invariant we obtain that the function $[F]\circ\psi_0$ is measurable in $[\mathcal{F}_k]_{k+1}^\times$. Thus we get that $[F]$ is measurable in $\mathcal{B}'$. 

The last statement follows from the fact that every convolution of order $k$ is also a convolution of order $k+1$. this can be seen by using constant one functions in a function system on $K_{k+1}$ outside of a $k$ dimensional face containing the $0$ vector.
\end{proof}

\subsection{Identities for convolutions}

\begin{lemma}\label{fkconvfk} Let $n,k$ be natural numbers and let $F=\{f_v\}_{v\in K_n}$ be a system of functions in $L^\infty(\mathcal{F}_k)$. Then the function $[F]$ is measurable in $\mathcal{F}_k$.
\end{lemma}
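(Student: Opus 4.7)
\emph{Plan.} The strategy is to use the orthogonality characterization of $\mathcal{F}_k$ provided by part 2 of Theorem \ref{propfk}: a bounded function $g$ on $\bA$ lies in $L^\infty(\mathcal{F}_k)$ if and only if $g$ is orthogonal (in $L^2(\bA)$) to every bounded $h$ with $\|h\|_{U_{k+1}}=0$, since these are precisely the $h$ with $\mathbb{E}(h\,|\,\mathcal{F}_k)=0$. So it is enough to show that $(h,[F])=0$ for every such $h$, and to then conclude from boundedness of $[F]$ that $[F]=\mathbb{E}([F]\,|\,\mathcal{F}_k)$ is $\mathcal{F}_k$-measurable.

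Fix such an $h$. Unfolding $[F]$ via the defining formula (\ref{corner}) and applying Fubini to swap the $x$-integration with the integration over $t=(t_1,\ldots,t_n)\in\bA^n$ yields
\begin{equation*}
(h,[F])\;=\;\mathbb{E}_{t_1,\ldots,t_n}\,(h,g_t),
\qquad g_t(x)\;:=\;\prod_{v\in K_n} f_v^{\con}\!\left(x+\sum_{i=1}^n v_i t_i\right).
\end{equation*}
For each fixed $t$ the function $g_t$ is a pointwise product of shifts and complex conjugates of the functions $f_v\in L^\infty(\mathcal{F}_k)$. Part 4 of Theorem \ref{propfk} guarantees that $\mathcal{F}_k$ is shift invariant, and $L^\infty(\mathcal{F}_k)$ is trivially closed under complex conjugation and pointwise multiplication. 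Hence $g_t\in L^\infty(\mathcal{F}_k)$, so the orthogonality of $h$ to $L^\infty(\mathcal{F}_k)$ gives $(h,g_t)=0$ for every $t$, and integrating in $t$ yields $(h,[F])=0$.

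The only delicate point is the invocation of Fubini on $\bA\times\bA^n$, whose Borel $\sigma$-algebra is in general strictly larger than the product of the Borel $\sigma$-algebras on the factors. However the integrand $h(x)\,\overline{\prod_{v\in K_n} f_v^{\con}(x+\sum_i v_i t_i)}$ is bounded and arises as an ultra-limit of bounded Borel functions on $A_i\times A_i^n$, so the classical Fubini theorem on each $A_i\times A_i^n$ passes through the ultra-limit. Notice that the argument is uniform in $n$: no induction is needed, and every range of $n$ relative to $k$ is covered at once.
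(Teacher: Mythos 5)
Your proof is correct and follows essentially the same route as the paper: both reduce the claim, via part 2 of Theorem \ref{propfk}, to showing $[F]$ is orthogonal to every bounded $h$ with $\|h\|_{U_{k+1}}=0$, then unfold $[F]$ by formula (\ref{corner}) and appeal to the non-standard Fubini theorem of \cite{ESz}. The only difference is cosmetic: the paper fixes $t_{k+2},\dots,t_n$ and averages over the first $k+1$ shift parameters so that the inner expression becomes a $(k+1)$-th order convolution of $\mathcal{F}_k$-measurable functions (hence $\mathcal{F}_k$-measurable by the definition $\mathcal{F}_k=[\mathcal{A}]_{k+1}$), whereas you fix all of $t_1,\dots,t_n$ and use shift invariance of $\mathcal{F}_k$ together with the fact that $L^\infty(\mathcal{F}_k)$ is a conjugation-closed algebra; your variant is marginally more direct and handles all $n$ uniformly without the separate case $n\le k+1$.
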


\begin{proof} If $n\leq k+1$ then the claim is clear by the definition of $\mathcal{F}_k$. Assume that $n>k+1$. By theorem \ref{propfk} the statement is equivalent with the fact that $[F]$ is orthogonal to any function $g\in L^\infty(\bA)$ with $\|g\|_{U_{k+1}}=0$. Let $g_v=f_v^\con$ for $v\in K_n$. Then $[F](x)$ can be calculated by the formula (\ref{corner}). It is clear that for every fixed $t_{k+2},t_{k+3},\dots,t_n$ in $\bA$ the expected value of the expression in (\ref{corner}) according to $t_1,t_2,\dots,t_{k+2}$ is a $k+1$-th order convolution of functions in $\mathcal{F}_k$ and thus it is orthogonal to $g$. Then the non-standard version of Fubini's theorem \cite{ESz} finishes the proof.
\end{proof}

\begin{lemma}\label{szorzat} If $f\in L^\infty(\mathcal{F}_k)$ and $g\in L^\infty(\bA)$ satisfies $\|g\|_{U_{k+1}}=0$ the $\|fg\|_{U_{k+1}}=0$.
\end{lemma}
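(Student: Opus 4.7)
The plan is to reduce the conclusion to an orthogonality statement and then exploit the fact that multiplication preserves $\mathcal{F}_k$-measurability. Specifically, Theorem \ref{propfk}(2) tells us two things at once: the hypothesis $\|g\|_{U_{k+1}} = 0$ is equivalent to $\mathbb{E}(g \mid \mathcal{F}_k) = 0$, and the conclusion $\|fg\|_{U_{k+1}} = 0$ is equivalent to $\mathbb{E}(fg \mid \mathcal{F}_k) = 0$. So the problem transforms into: if $f$ is bounded and $\mathcal{F}_k$-measurable and $g$ has zero $\mathcal{F}_k$-conditional expectation, show $\mathbb{E}(fg \mid \mathcal{F}_k) = 0$.

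This is now immediate from the pull-out property of conditional expectation. Since $f \in L^\infty(\mathcal{F}_k)$ is bounded and $\mathcal{F}_k$-measurable, it can be pulled out of the conditional expectation, giving
\[
\mathbb{E}(fg \mid \mathcal{F}_k) \;=\; f \cdot \mathbb{E}(g \mid \mathcal{F}_k) \;=\; f \cdot 0 \;=\; 0.
\]
Equivalently (and without invoking pull-out explicitly), for any test function $h \in L^\infty(\mathcal{F}_k)$ the product $\bar f h$ again lies in $L^\infty(\mathcal{F}_k)$, so
\[
(fg, h) \;=\; (g, \bar f h) \;=\; 0
\]
by the orthogonality characterization of $\mathbb{E}(g \mid \mathcal{F}_k) = 0$; since this holds for all such $h$, $\mathbb{E}(fg \mid \mathcal{F}_k) = 0$.

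There is no real obstacle here beyond recognising that the $U_{k+1}$-seminorm is controlled by the single $\sigma$-algebra $\mathcal{F}_k$ in both directions — vanishing of the seminorm corresponds exactly to orthogonality against $L^\infty(\mathcal{F}_k)$ — and that $L^\infty(\mathcal{F}_k)$ is a subalgebra closed under complex conjugation. Once Theorem \ref{propfk}(2) is applied, the lemma collapses to a one-line measure-theoretic fact.
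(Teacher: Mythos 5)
Your proof is correct and is essentially identical to the paper's own one-line argument: apply Theorem \ref{propfk}(2) in both directions and use the pull-out property $\mathbb{E}(fg\mid\mathcal{F}_k)=f\,\mathbb{E}(g\mid\mathcal{F}_k)=0$. Nothing further is needed.
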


\begin{proof} Using theorem \ref{propfk} we have  $\mathbb{E}(fg|\mathcal{F}_k)=f\mathbb{E}(g|\mathcal{F}_k)=0$ and thus $\|fg\|_{U_{k+1}}=0$.
\end{proof}

\begin{lemma}\label{szomszed} Let $F=\{f_v\}_{v\in \{0,1\}^{k+1}}$ be a function system in $L^\infty(\mathcal{A})$ such that for some pair of two neighbouring vertices $w_1,w_2\in \{0,1\}^{k+1}$ we have $f_{w_1}\in L^\infty(\mathcal{F}_{k-1})$ and $\|f_{w_2}\|_{U_k}=0$. then $(F)=0$.
\end{lemma}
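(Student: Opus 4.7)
The plan is to apply the dimension-reduction identity (\ref{dimred}) to peel off the coordinate in which $w_1$ and $w_2$ differ. By permuting coordinates I may assume $w_1$ and $w_2$ differ in the $(k+1)$-st coordinate, and then (\ref{dimred}) expresses $(F)$ as $\mathbb{E}_t((\delta_{k+1,t}F))$, where $\delta_{k+1,t}F$ is a $k$-dimensional function system indexed by $Q=\{v\in\{0,1\}^{k+1}:v_{k+1}=0\}$ whose entry at each $u\in Q$ is $f_u(x)\overline{f_{u+e_{k+1}}(x+t)}$.

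The key observation is that at the position where $w_1$ and $w_2$ are glued together (say $u=w_1$ in the subcase $(w_1)_{k+1}=0$), the new function equals $f_{w_1}(x)\overline{f_{w_2}(x+t)}$. The first factor lies in $L^\infty(\mathcal{F}_{k-1})$; the second is a shifted conjugate of $f_{w_2}$ and therefore has vanishing $U_k$ seminorm, since $U_k$ is shift- and conjugation-invariant. Lemma \ref{szorzat}, applied with $k-1$ in place of $k$, then forces this product to have $U_k$ norm zero for every $t$. In the opposite subcase $(w_1)_{k+1}=1$ the roles of $w_1$ and $w_2$ in the formula swap, and I instead use the shift-invariance of $\mathcal{F}_{k-1}$ from Theorem \ref{propfk}(4) to keep the $\mathcal{F}_{k-1}$-measurable factor inside $\mathcal{F}_{k-1}$ after the shift; the conclusion is the same.

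Once one entry of the system $\delta_{k+1,t}F$ has vanishing $U_k$ norm for every $t$, the Gowers-Cauchy-Schwartz inequality (\ref{GCS}) gives $(\delta_{k+1,t}F)=0$ for every $t$, and integrating yields $(F)=\mathbb{E}_t((\delta_{k+1,t}F))=0$. The argument is essentially routine; the only points requiring care are the bookkeeping of conjugations and parities when unpacking (\ref{dimred}) and the small case split on which side of the $(k+1)$-st coordinate $w_1$ sits, neither of which causes real trouble because the $U_k$ seminorm is blind to shifts, conjugations, and coordinate permutations.
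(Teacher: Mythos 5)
Your proposal is correct and follows essentially the same route as the paper: reduce the dimension via (\ref{dimred}) in the coordinate where $w_1$ and $w_2$ differ, observe that the merged entry $f_{w_1}(x)\overline{f_{w_2}(x+t)}$ has vanishing $U_k$ seminorm by Lemma \ref{szorzat} (with $k-1$ in place of $k$), and conclude with the Gowers--Cauchy--Schwartz inequality. The extra care you take with the case split and shift-invariance is fine but not a departure from the paper's argument.
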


\begin{proof} Assume that $w_1$ and $w_2$ differ at the $i$-th coordinate. Then by lemma \ref{szorzat} we have that for every $t\in\bA$ the system $\delta_{i,t} F$ has a function (obtained from $f_{w_1}$ and $f_{w_2}$) with zero $U_k$-norm. Then by (\ref{GCS}) we have that $(\delta_{i,t}(F))=0$. and thus (\ref{dimred}) finishes the proof. 
\end{proof}

The next two lemmas are useful consequences of lemma \ref{szomszed}. 

\begin{lemma}\label{twofunct} Let $f,g$ be $L_\infty(\mathcal{A})$ functions such that $f=f_1+f_2$ where $f_1=\mathbb{E}(f|\mathcal{F}_{k-1})$ and $g=g_1+g_2$ where $g_1=\mathbb{E}(g|\mathcal{F}_{k-1})$. Then
\begin{equation}\label{twofun} \mathbb{E}_t\Bigl(\|f(x)\overline{g(x+t)}\|^{2^k}_{U_k}\Bigr)=\mathbb{E}_t\Bigl(\|f_1(x)\overline{g_1(x+t)}\|^{2^k}_{U_k}\Bigl)+\mathbb{E}_t\Bigr(\|f_2(x)\overline{g_2(x+t)}\|^{2^k}_{U_k}\Bigr).
\end{equation}
\end{lemma}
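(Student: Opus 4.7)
The plan is to realize the left--hand side of (\ref{twofun}) as a single Gowers inner product over the $(k+1)$-cube, expand that inner product by multilinearity using the decompositions $f=f_1+f_2$ and $g=g_1+g_2$, and kill the cross terms by Lemma \ref{szomszed} together with connectedness of the hypercube.

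First I would rewrite the quantity $\mathbb{E}_t\bigl(\|f(x)\overline{g(x+t)}\|^{2^k}_{U_k}\bigr)$ as $(H)$, where $H=\{h_w\}_{w\in\{0,1\}^{k+1}}$ is the function system defined by $h_{(v,0)}=f$ and $h_{(v,1)}=g$ for every $v\in\{0,1\}^k$. This is a direct unfolding: writing the outer $t$ as the last coordinate $t_{k+1}$, expression (\ref{ginner}) for $(H)$ matches the combined expectation $\mathbb{E}_{t,x,t_1,\dots,t_k}\prod_{v}(f(x+\sum v_it_i)\overline{g(x+t+\sum v_it_i)})^\con$, once one tracks how the $\con$ operator distributes across the product $f\cdot\overline{g(\cdot+t)}$.

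Next I would expand $(H)$ using multilinearity of $\xi$ (point~2 of Lemma \ref{xiprop}) via $f=f_1+f_2$, $g=g_1+g_2$. This yields a sum over all maps $\epsilon:\{0,1\}^{k+1}\to\{1,2\}$ of inner products $(H_\epsilon)$, where $H_\epsilon$ is obtained from $H$ by replacing each $f$ at $w$ with $f_{\epsilon(w)}$ and each $g$ at $w$ with $g_{\epsilon(w)}$. The two ``pure'' terms $\epsilon\equiv 1$ and $\epsilon\equiv 2$ recombine (by the same rewriting as in the first step, applied now to $(f_1,g_1)$ and to $(f_2,g_2)$) into the two summands on the right--hand side of (\ref{twofun}).

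The main step—and the only place where real content enters—is showing that every mixed $\epsilon$ contributes zero. For such an $\epsilon$, both preimages $\epsilon^{-1}(1)$ and $\epsilon^{-1}(2)$ are nonempty, and since the Hamming graph on $\{0,1\}^{k+1}$ is connected there exist neighbors $w_1\in\epsilon^{-1}(1)$, $w_2\in\epsilon^{-1}(2)$. By construction, $h_{w_1}$ equals $f_1$ or $g_1$, hence lies in $L^\infty(\mathcal{F}_{k-1})$, while $h_{w_2}$ equals $f_2$ or $g_2$. But $\mathbb{E}(f_2|\mathcal{F}_{k-1})=\mathbb{E}(g_2|\mathcal{F}_{k-1})=0$ by construction of the decompositions, so by point 2 of Theorem \ref{propfk} we get $\|h_{w_2}\|_{U_k}=0$. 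Lemma \ref{szomszed} then forces $(H_\epsilon)=0$, completing the proof. The obstacle I expect to pay attention to is only bookkeeping: verifying that the $\con$-convention does indeed turn $\|f\overline{g(\cdot+t)}\|_{U_k}^{2^k}$ into the $(k+1)$-dimensional Gowers inner product with exactly the $h_{(v,1)}=g$ assignment (rather than $\overline{g}$), so that $f_1,g_1\in\mathcal{F}_{k-1}$ and $\|f_2\|_{U_k}=\|g_2\|_{U_k}=0$ can be applied symmetrically.
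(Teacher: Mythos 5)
Your proposal is correct and follows essentially the same route as the paper: unfold the left-hand side as the $(k+1)$-dimensional Gowers inner product $(H)$ via (\ref{dimred}), expand by multilinearity over the decompositions $f=f_1+f_2$, $g=g_1+g_2$, and annihilate every mixed term with Lemma \ref{szomszed}. Your explicit connectedness argument for locating a neighboring pair $w_1\in\epsilon^{-1}(1)$, $w_2\in\epsilon^{-1}(2)$ is exactly the step the paper leaves implicit, and the $\con$-bookkeeping you flag is handled automatically by (\ref{dimred}).
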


\begin{proof} Let $H=\{h_v\}_{v\in\{0,1\}^{k+1}}$ be the function system such that $h_w=f$ if the last coordinate of $w$ is zero and $h_w=g$ if the last coordinate is one. It is clear by (\ref{dimred}) that the left hand side of (\ref{twofun}) is equal to $(H)$. By the linearity of $(H)$ in each coordinate we can decompose it into $2^{2^{k+1}}$ terms using $f=f_1+f_2$ and $g=g_1+g_2$. Then lemma \ref{szomszed} shows that only the two terms representing the right hand side of (\ref{twofun}) are not necessarily zero.
\end{proof}

\begin{lemma}\label{vetites1} Let $F=\{f_v\}_{v\in K_{k+1}}$ be a function system in $L^\infty(\bA)$. Furthermore let $G=\{g_v\}_{v\in K_{k+1}}$ with $g_v=\mathbb{E}(f_v|\mathcal{F}_{k-1})$. Then $[G]=\mathbb{E}([F]~|~\mathcal{F}_{k-1})$.
\end{lemma}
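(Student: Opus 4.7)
The plan is to decompose each $f_v$ into its $\mathcal{F}_{k-1}$-part and a remainder and then expand $[F]$ by multilinearity. Write $f_v = g_v + e_v$ where $g_v = \mathbb{E}(f_v\,|\,\mathcal{F}_{k-1})$ and $e_v = f_v - g_v$. Both are bounded, and $\mathbb{E}(e_v\,|\,\mathcal{F}_{k-1}) = 0$, which by the second part of Theorem~\ref{propfk} (applied with $k-1$ in place of $k$) is equivalent to $\|e_v\|_{U_k} = 0$. Since the corner convolution $[\cdot]$ is linear in each of its $2^{k+1}-1$ arguments, I expand
$$[F] = \sum_{S \subseteq K_{k+1}} [H^S],$$
where $H^S = \{h^S_v\}_{v\in K_{k+1}}$ has $h^S_v = e_v$ for $v \in S$ and $h^S_v = g_v$ for $v \notin S$. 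The term $S = \emptyset$ is exactly $[G]$, so it suffices to show $\mathbb{E}([H^S]\,|\,\mathcal{F}_{k-1}) = 0$ for every non-empty $S \subseteq K_{k+1}$.

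Fix such an $S$ and an arbitrary test function $\phi \in L^\infty(\mathcal{F}_{k-1})$; we need $(\phi,[H^S]) = 0$. To reach the setting of Lemma~\ref{szomszed}, I extend $H^S$ to a function system $\tilde H^S$ indexed by the full cube $\{0,1\}^{k+1}$ by putting $\tilde h^S_0 = \overline{\phi}$. Then identity (\ref{coincon}) gives
$$(\tilde H^S) = ([H^S],\,\overline{\tilde h^S_0}) = ([H^S], \phi) = \overline{(\phi, [H^S])},$$
so the task reduces to verifying $(\tilde H^S) = 0$.

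Now examine the labelling of the vertices of $\{0,1\}^{k+1}$ in $\tilde H^S$: the positions in $\{0,1\}^{k+1}\setminus S$ (which includes $0$) all carry functions measurable in $\mathcal{F}_{k-1}$ — namely $\overline{\phi}$ at $0$ and the $g_v$ at the other non-$S$ vertices — while the positions in $S$ all carry functions of vanishing $U_k$-seminorm. Because $S$ is a non-empty proper subset of the vertex set of the connected graph $\{0,1\}^{k+1}$, there must be an edge joining some $w_1 \notin S$ to some $w_2 \in S$. For this pair $(w_1,w_2)$ the hypothesis of Lemma~\ref{szomszed} is satisfied, and the lemma yields $(\tilde H^S) = 0$, which is what we needed.

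The only real subtlety is the bookkeeping of conjugates in the reduction $(\phi,[H^S]) = \overline{(\tilde H^S)}$ via (\ref{coincon}); once that is in place, the argument is essentially combinatorial and the entire statement reduces to a single invocation of Lemma~\ref{szomszed}. No Cauchy–Schwarz chain, low-rank approximation, or induction on $n$ is required, since the structural dichotomy between $\mathcal{F}_{k-1}$-measurable functions and functions with $\|\cdot\|_{U_k}=0$ is already encoded in Lemma~\ref{szomszed}.
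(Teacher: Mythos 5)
Your proof is correct and follows essentially the same route as the paper: decompose $f_v=g_v+e_v$ with $\|e_v\|_{U_k}=0$, expand by multilinearity, pass to the Gowers inner product via (\ref{coincon}) with the test function placed at $0$, and kill every cross term with Lemma~\ref{szomszed}. The only point you leave implicit is that $[G]$ itself is measurable in $\mathcal{F}_{k-1}$ (Lemma~\ref{fkconvfk}), which is needed to upgrade $\mathbb{E}([H^{\emptyset}]\,|\,\mathcal{F}_{k-1})$ from $\mathbb{E}([G]\,|\,\mathcal{F}_{k-1})$ to $[G]$.
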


\begin{proof} Since $[G]$ is measurable in $\mathcal{F}_{k-1}$ by lemma \ref{fkconvfk}, it is enough to prove that for an arbitrary $h\in L^\infty(\mathcal{F}_{k-1})$ we have $([G],h)=([F],h)$. Let us write $f_v=g_v+r_v$ for every $v\in K_{k+1}$ and notice that $\|r_v\|_{U_{k-1}}=0$. Let $F'=\{f'_v\}_{v\in\{0,1\}^{k+1}}$ be the function system with $f'_v=f_v$ if $v\neq 0$ and $f'_0=\overline{h}$. Then by (\ref{coincon}) we have that $([F],h)=(F')$.
The linearity of the Gowers inner product implies that we can decompose $(F')$ into $2^{2^{k+1}-1}$ terms according to the decompositions $f_v=g_v+r_v$. By lemma \ref{szomszed} we have that the only non zero term is the one where we use $g_v$ at every place. This term is equal to $([G],h)$.
\end{proof}

\medskip

In the rest of the chapter we study conditions that force convolutions of the form $[F]$ to be $0$. For example lemma \ref{cornineq} implies that if any function in the function system $F=\{f_v\}_{v\in K_n}$ has zero $U_n$-norm then $[F]$ is zero. We will need some notation.

For an element $v\in\{0,1\}^n$ we introduce the height $h(v)$ of $v$ as the coordinate sum of $v$. For $v,w\in\{0,1\}^n$ we say that $v\leq w$ if $w_i=0$ implies $v_i=0$ for every $i\in [n]$. In other words $v\leq w$ if ${\rm supp}(v)\subseteq{\rm supp}(w)$. A simplicial set $S\in\{0,1\}^n$ is a set such that $w\in S$ and $v\leq w$ implies $v\in S$. For $v\in S$ the degree $d(v)$ is defined as $\max\{h(w)|w\in S,v\leq w\}$. A maximal element $v$ in $S$ is an element with $h(v)=d(v)$.
A maximal face of $S$ is a set of the form $\{v|v\leq w\}$ where $w\in S$ is maximal.
The hight of $S$ is the maximum of the heights of its elements.
For a number $i\in [n]$ let us define the projection $p_i:\{0,1\}^n\rightarrow\{0,1\}^{n-1}$ given by deleting the $i$-th coordinate. It is clear that the image of a simplicial set $S$ under the projection $p_i$ is again simplicial.  

\begin{lemma}\label{simpzero} Let $S\subset\{0,1\}^n$ be a simplicial set and let $s,u\in S$ be two distinct elements. Let $K=\{0,1\}^n\setminus\{u\}$ and $k=d(s)$. Let $F=\{f_v\}_{v\in K}$ be a function system in $L^\infty(\bA)$ such that $\|f_s\|_{U_k}=0$ and $f_v=1_{\bA}$ if $v\in K\setminus S$.
Then the convolution $[F]$ (taken at $u$) is identically $0$. 
\end{lemma}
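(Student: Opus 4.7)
My plan is to induct on $n$, reducing to the base case $n = k$ where lemma \ref{cornineq} applies directly. First I would use remark \ref{convat} to apply a cube automorphism so that $u = 0$, which makes $K = K_n$ and $s$ a nonzero vertex of $S$. For the base case, the hypothesis $d(s) = n$ forces $(1,\ldots,1) \in S$. Taking any $j \in \supp(s)$ (nonempty since $s \neq 0$) and invoking lemma \ref{cornineq}, I would obtain
$$|[F](x)| \leq \prod_{v \in K_n,\, v_j = 0} \|f_v\|_\infty \cdot \prod_{v \in K_n,\, v_j = 1} \|f_v\|_{U_n};$$
the factor $\|f_s\|_{U_n} = \|f_s\|_{U_k} = 0$ then forces $[F] \equiv 0$.

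For the inductive step $n > k$, I would select a maximal extension $w \in S$ of $s$ (so $h(w) = k < n$) together with a coordinate $i \in [n] \setminus \supp(w)$; note that $s_i = w_i = 0$ since $s \leq w$. Applying formula (\ref{dimred}) at $i$ gives
$$[F](x) = \mathbb{E}_t\bigl[\overline{f_{e_i}}(x+t) \cdot \delta_{i, t}[F](x)\bigr],$$
so the problem reduces to showing $\delta_{i, t}[F] \equiv 0$ for every $t \in \bA$. The reduced system is indexed by $K_{n-1}$ via the projection $\pi$ that drops coordinate $i$; let $S^*$ be the projected simplicial set consisting of those $v' \in \{0,1\}^{n-1}$ for which at least one of the two lifts to $\{0,1\}^n$ lies in $S$. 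Then $s^* := \pi(s) \in S^*$ and $w^* := \pi(w) \in S^*$ witnesses $d_{S^*}(s^*) = k$. If additionally $s + e_i \notin S$, the reduced function at $s^*$ is exactly $f_s$ (the factor $\overline{f^t_{s+e_i}}$ collapses to $1$), preserving $U_k$-vanishing, and one checks case by case that the reduced functions outside $S^*$ remain identically $1$; the inductive hypothesis applied to the reduced system $\delta_{i, t}F$ then gives $\delta_{i, t}[F] \equiv 0$, hence $[F] \equiv 0$. The case $h(s) = k$ is covered this way: any $i \in [n] \setminus \supp(s)$ automatically satisfies $s + e_i \notin S$, because $h(s + e_i) > k = d(s)$.

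The main obstacle will be the degenerate situation $h(s) < k$ in which every coordinate $i \notin \supp(s)$ satisfies $s + e_i \in S$; then the reduced function at $s^*$ becomes $f_s \cdot \overline{f^t_{s + e_i}}$, whose $U_k$-norm is not forced to vanish. To handle this case I would view $[F](x)$ as an inner product of the form $(f_s, \overline{P})$, where $P(u)$ is the remainder obtained by conditioning on $T_s = u$ and integrating out the remaining directions, and aim to show $P \in L^\infty(\mathcal{F}_{k-1})$. Granting this, statement (2) of theorem \ref{propfk} together with $\|f_s\|_{U_k} = 0$ gives $\mathbb{E}(f_s \mid \mathcal{F}_{k-1}) = 0$, so $(f_s, \overline{P}) = 0$ and $[F] \equiv 0$. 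Establishing $P \in L^\infty(\mathcal{F}_{k-1})$ in full generality will likely require a secondary induction — for instance on $|S|$ — exploiting the characterisation $\mathcal{F}_{k-1} = [\mathcal{A}]_k$ to decompose $P$ into $k$-dimensional corner convolutions via lemmas \ref{lowrank} and \ref{prodconv}.
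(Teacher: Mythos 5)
Your argument is complete only in the regime where you can choose a coordinate $i$ with $s+e_i\notin S$; the remaining case, which you yourself flag as ``the main obstacle,'' is where the real content of the lemma lies, and your plan for it does not close the gap. Proving that the conditional remainder $P$ lies in $L^\infty(\mathcal{F}_{k-1})$ is a statement of essentially the same depth as the lemma itself (already for $S=\{0,1\}^n$ it amounts to showing that a weighted corner convolution of arbitrary bounded functions is $(k-1)$-structured), and the sketch via lemmas \ref{lowrank} and \ref{prodconv} is not an argument. The paper closes this case by a mechanism you are missing: choose $r$ with $s_r=0$ such that $s':=s+e_r\neq u$, and split the \emph{neighbouring} function $f_{s'}=g_1+g_2$ with $g_1=\mathbb{E}(f_{s'}|\mathcal{F}_{k-1})$, so that $\|g_2\|_{U_k}=0$ by theorem \ref{propfk}. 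By multilinearity $[F]=[F_1]+[F_2]$. The $g_2$-term is handled by the induction hypothesis with the distinguished vertex moved from $s$ to $s'$ --- which is exactly why the paper inducts on $n-h(s)$ rather than on $n$, since $h(s')=h(s)+1$ while $n$ stays fixed. For the $g_1$-term one applies $\delta_{r,t}$ just as you do, but now the reduced function at $p_r(s)$ is $f_s(x)\overline{g_1(x+t)}$ with $g_1$ measurable in the shift-invariant $\sigma$-algebra $\mathcal{F}_{k-1}$, so lemma \ref{szorzat} guarantees that its $U_k$-norm is still $0$. This absorption step is precisely what rescues the situation $s+e_r\in S$ that defeats your reduction.

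A secondary point: normalizing $u=0$ by a cube automorphism does not preserve the hypotheses. An automorphism carrying $u$ to $0$ flips the coordinates in $\supp(u)$, and the image of a simplicial set under such a flip need not be simplicial (take $S=\{0,e_1,e_2\}$ and $u=e_1$), so the degrees $d(\cdot)$ lose their meaning after your reduction. The paper avoids this by keeping $u$ general and working with the definition of the convolution at $u$ from remark \ref{convat}. Your base case and the branch with $s+e_i\notin S$ are otherwise correct, including the verification that $d_{S^*}(s^*)\le k$ so that monotonicity of the Gowers norms gives $\|f_s\|_{U_{d_{S^*}(s^*)}}=0$.
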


\begin{proof}
For the definition of $[F]$ in this case see remark \ref{convat}. If $h(s)=n$ or $h(u)=n$ then $S=\{0,1\}^n$ and thus $k=n$. In this this case lemma \ref{cornineq} shows that $[F]=0$. We prove the statement by induction on $n-h(s)$. The case $n-h(s)=0$ is now proved. We can assume that $h(s),h(u)<n$. 
Using the fact that neither of $s$ and $u$ is the all $1$ vector we get that there is a coordinate $r\in [n]$ such that $s_r=0$ and the vector $s'$ obtained from $s$ by changing the $r$-th coordinate to $1$ satisfies $s'\neq u$. Let us decompose $f_{s'}$ as $f_{s'}=g_1+g_2$ where $g_1=\mathbb{E}(f_{s'}|\mathcal{F}_{k-1})$and $\|g_2\|_{U_k}=0$. Similarly we introduce two function systems $F_1,F_2$ where $F_i$ is obtained from $F$ by replacing $f_{s'}$ by $g_i$.
By linearity of convolution we get that $[F]=[F_1]+[F_2]$. If $s'\notin S$ then $f_{s'}=1_{\bA}$~,~$g_2=0$ and so $[F_2]=0$. If $s'\in S$ then $d(s')=d(s)=k$ and so by induction $[F_2]=0$. It remains to show that $[F_1]=0$.
We use our induction step for the function system $\delta_{r,t} F_1$. Notice that by lemma \ref{szorzat} the function $f_s(x)\overline{f_{s'}(x+t)}$ of coordinate $p_r(s)$ in $\delta_{r,t} F_1$ has zero $U_k$ norm. It is clear that in the complement of $p_r(S)$ every function in $\delta_{r,t} F_1$ is $1_{\bA}$. Since $h(p_r(s))=h(s)$ and $d(p_r(s))\leq d(s)=k$ we have by induction that $[\delta_{r,t} F_1]=0$. By (\ref{dimred}) we obtain that $[F_1]=0$.
\end{proof}

\begin{corollary}\label{simpzerocor} Let $S\subset\{0,1\}^n$ be a simplicial set of hight at most $k$, and let $\{f_v\}_{v\in\{0,1\}^n}$ be a function system in $L^\infty(\bA)$ such that $f_v=1_{\bA}$ if $v\in\{0,1\}^n\setminus S$. Let $G=\{g_v:=\mathbb{E}(f_v|\mathcal{F}_{k-1})\}_{v\in\{0,1\}^n}$. Then $(F)=(G)$. 
\end{corollary}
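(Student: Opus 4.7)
The plan is to expand $(F)$ by decomposing each $f_v$ for $v \in S$ as $f_v = g_v + r_v$ with $r_v := f_v - g_v$. By construction $\mathbb{E}(r_v|\mathcal{F}_{k-1}) = 0$, so Theorem \ref{propfk}(2) gives $\|r_v\|_{U_k} = 0$, and by monotonicity of the Gowers norms we also have $\|r_v\|_{U_j} = 0$ for every $j \leq k$. Multilinearity of the Gowers inner product in each coordinate then yields
\[
(F) = \sum_{T \subseteq S} (F_T),
\]
where $F_T$ is the system that equals $r_v$ at each $v \in T$, $g_v$ at each $v \in S \setminus T$, and $1_{\bA}$ at each $v \notin S$. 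The $T = \emptyset$ term is exactly $(G)$, so it remains to prove $(F_T) = 0$ for every non-empty $T \subseteq S$.

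For such a $T$, I would fix any $s \in T$ and note that $d(s) \leq h(S) \leq k$, hence $\|r_s\|_{U_{d(s)}} = 0$ by the monotonicity just mentioned. Provided $|S| \geq 2$, pick any $u \in S$ with $u \neq s$, and apply Lemma \ref{simpzero} to the restricted function system $\{(F_T)_v\}_{v \in K}$ with $K = \{0,1\}^n \setminus \{u\}$: outside $S$ every entry is $1_{\bA}$, the entries $g_v, r_v$ at other points of $S$ are unconstrained, and the distinguished entry at $s$ has zero $U_{d(s)}$-norm. The lemma then forces the convolution $[F_T \setminus \{f_u\}]$ taken at $u$ to vanish identically. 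Since $(F_T)$ is obtained by pairing $f_u$ (suitably conjugated) against this convolution, via the identity (\ref{coincon}) transported to vertex $u$ using Remark \ref{convat}, we conclude $(F_T) = 0$.

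The only remaining cases are $S = \emptyset$, in which $F = G$ trivially, and $|S| = 1$; the latter forces $S = \{0\}$ since $S$ is simplicial and non-empty, and then the only non-empty $T$ is $T = \{0\}$, for which the system $F_T$ has $r_0$ at the origin and $1_{\bA}$ elsewhere, so $(F_T) = \mathbb{E}(r_0) = 0$ follows directly from $\mathbb{E}(r_0|\mathcal{F}_{k-1}) = 0$ and the fact that $\mathcal{F}_{k-1}$ contains constants. The main input is Lemma \ref{simpzero}, which is tailor-made for this expansion; the one thing to keep straight is that the degree $d(s)$ governing the Gowers order in the hypothesis of \ref{simpzero} need not equal $k$ itself, and monotonicity of the $U$-norms is what closes this gap.
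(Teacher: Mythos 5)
Your proof is correct and follows essentially the same route as the paper: decompose $f_v=g_v+r_v$ with $\|r_v\|_{U_k}=0$ via Theorem \ref{propfk}, expand $(F)$ by multilinearity, and kill every cross term by pairing the convolution taken at some $u\in S$ against $\overline{f_u}$ and invoking Lemma \ref{simpzero}. Your extra care about the gap between $d(s)$ and $k$ (closed by monotonicity of the $U$-norms) and the degenerate cases $|S|\leq 1$ is a welcome tightening of details the paper leaves implicit.
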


\begin{proof} By the multi linearity of $(F)$ it is enough to prove that if $\|f_v\|_{U_k}=0$ holds for some $v\in S$ then $(F)=0$. (Then the statement follows by decomposing each $f_v$ as $g_v+(f_v-g_v)$ where $\|f_v-g_v\|_{U_k}=0$.)
Let $u\neq v$ be some element in $S$. We have by (\ref{coincon}) that $(F)$ is the scalar product of the convolution of $F$ taken at $u$ with $\overline{f_u}$. Then lemma \ref{simpzero} finishes the proof. 
\end{proof}

\subsection{Higher order dual groups and Fourier decompositions}

Fourier analysis on a compact abelian group relies on the fact that $L^2(A)$ is the orthogonal sum of one dimensional shift invariant subspaces. This decomposition is unique and the one dimensional subspaces are forming an abelian group $\hat{A}$ under point wise multiplication. The group $\hat{A}$ is a discrete ablelian group called the {\bf dual group} of $A$. Each one dimensional shift invariant subspace is generated by a continuous homomorphism form $A$ to the unit circle in the complex plane. Consequently $\hat{A}$ is also the group of linear characters under pointwise multiplication. 
The decomposition 
\begin{equation}\label{mdecomp1}
L^2(A)=\bigoplus_{\chi\in\hat{A}}W_\chi
\end{equation}
give rise to the Fourier decomposition
\begin{equation}\label{fdecomp1}
f=\sum_{\chi\in\hat{A}}f_\chi
\end{equation}
converging in $L_2$ where $f_\chi$ is the projection of $f$ to $W_\chi$. 

\begin{remark} Let $X$ be an affine version of $A$ (see chapter \ref{cubes}). Then then the same decomposition as (\ref{mdecomp1}) holds for $L^2(X)$ where the one dimensional subspaces are again indexed by $\hat{A}$. This shows that the Fourier decomposition (\ref{fdecomp1}) can be uniquely defined on $X$ despite of the fact that linear characters are not uniquely defined on $X$ (they depend on a constant multiplicative factor).
\end{remark}

In this chapter we study similar decompositions in $L^2(\bA)$ for an ultra product group $\bA$. We will see that a new interesting phenomenon emerges in the ultra product setting which is a crucial part of our approach to higher order Fourier analysis.
Let $\hat{\bA}$ denote the set of one dimensional shift invariant subspaces of $L^2(\bA)$. The surprising fact is that $L^2(\bA)$ is not generated by the spaces in $\hat{\bA}$ and thus ordinary Fourier analysis is not enough to treat an arbitrary measurable function on $\bA$. In fact it turns out that the space spanned by the spaces in $\hat{\bA}$ is exactly $L^2(\mathcal{F}_1(\bA))$ where $\mathcal{F}_1$ is the first order Fourier $\sigma$-algebra on $\bA$.
This means that the use of ordinary Fourier analysis is restricted to functions that are measurable in $\mathcal{F}_1$.
We will need higher order generalizations of $\hat{\bA}$ to define the analogy of (\ref{mdecomp1}) and (\ref{fdecomp1}) for functions that are measurable in $\mathcal{F}_k$. 

\begin{definition} A {\bf module of order $k$} is a closed subspace $W\subset L^2(\bA)$ such that if $f\in L^\infty(\mathcal{F}_{k-1}(\bA))$ then $fW\subseteq W$ (using pointwise multiplication). For every set of elements $\{\phi_i\}_{i\in I}$ in $L^2(\bA)$ there is a unique smallest module $W$of order $k$ containing all of them. We say that $W$ is generated by the system $\{\phi_i\}_{i\in I}$. The {\bf rank} of $W$ is the smallest cardinality of a generating system.
\end{definition}

Note that modules of order one are just linear subspaces of $L^2(\bA)$. In this case rank is equal to the dimension of the subspace. Using the above definition we arrive to our main definition.

\begin{definition} A {\bf $k$-th order character} of $\bA$ is a function $\phi:\bA\rightarrow\mathbb{C}$ of absolute value one such that $\Delta_t\phi$ is measurable in $\mathcal{F}_{k-1}$ for every $t\in\bA$. The {\bf $k$-th order dual group} $\hat{\bA}_k$ of $\bA$ is the set of $k$-th order rank one modules generated by $k$-th order characters.
\end{definition}

Note that the definition implies that every element of $\hat{\bA}_k$ is a shift invariant rank one module of order $k$. We will see later that $\hat{\bA}_k$ could be equivalently defined as the set of shift invariant rank one modules of order $k$.
To justify the name ``higher order dual group'' we need to give a group structure to it. It is basically the point wise multiplication but we need to define it carefully. The product of two $L^2$ functions is not necessary in $L^2$. For this reason we define the product of $W_1,W_2\in\hat{\bA}_k$ as the $L_2$ closure of the set of products $f_1f_2$ where $f_1$ and $f_2$ are bounded functions from $W_1$ and $W_2$.
It is clear that $\hat{\bA}_k$ becomes an abelian group with this multiplication where the inverse of an element $W$ is obtained by conjugating the elements in $W$. 
Note that $\hat{\bA}_k$ is isomorphic to the group of $k$-th order characters factored out by the group of functions in $L^\infty(\mathcal{F}_{k-1}(\bA))$ of absolute value one.

\begin{lemma}\label{erosort} Let $\mathcal{B}\subseteq\mathcal{A}(\bA)$ be a shift invariant $\sigma$-algebra. Let $\phi:\bA\rightarrow\mathbb{C}$ be a function with $|\phi|=1$ such that $\Delta_t\phi$ is measurable in $\mathcal{B}$ for every $t\in\bA$. Then either $\mathbb{E}(\phi|\mathcal{B})$ is constant $0$ or $\phi\in L^\infty(\mathcal{B})$. In particular if $\phi$ is a $k$-th order character which is not in the trivial module then $\mathbb{E}(\phi|\mathcal{F}_{k-1})=0$.
\end{lemma}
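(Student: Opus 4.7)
The plan is to set $g=\mathbb{E}(\phi\mid\mathcal{B})$ and show that either $g\equiv 0$ or $\phi$ is a constant multiple of $g$, using the key identity $\phi(x)=\Delta_t\phi(x)\,\phi(x+t)$ which follows from $|\phi|=1$ (since $\overline{\phi(x+t)}\phi(x+t)=1$).

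First I would take conditional expectation with respect to $\mathcal{B}$ in this identity. Since $\Delta_t\phi$ is $\mathcal{B}$-measurable by hypothesis, it pulls out:
\[
g(x)=\Delta_t\phi(x)\,\mathbb{E}(\phi(\,\cdot\,+t)\mid\mathcal{B})(x).
\]
Because $\mathcal{B}$ and $\bm$ are shift invariant, the inner conditional expectation equals $g(x+t)$, so we obtain the functional equation
\[
g(x)=\Delta_t\phi(x)\,g(x+t)\qquad\text{for a.e. }x,\text{ for every }t\in\bA.
\]
Taking absolute values and using $|\Delta_t\phi|=1$ gives $|g(x)|=|g(x+t)|$ a.e.\ for every $t$, so $|g|$ is shift invariant. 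By the non-standard Fubini theorem on $\bA\times\bA$, $|g|$ is a.e.\ equal to a constant $c\geq 0$.

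If $c=0$ then $g=\mathbb{E}(\phi\mid\mathcal{B})=0$ and we are done. Otherwise $g$ never vanishes, so we may divide by it; rewriting the functional equation as $g(x)/\phi(x)=g(x+t)/\phi(x+t)$ (using $\Delta_t\phi(x)=\phi(x)\overline{\phi(x+t)}$ and $|\phi|=1$) shows that the function $h=g/\phi$ is shift invariant on $\bA\times\bA$. Applying Fubini again: for a.e.\ fixed $x$, the identity $h(x)=h(x+t)$ holds for a.e.\ $t$, so $h$ is a.e.\ constant, say $h\equiv\lambda$ with $|\lambda|=c$. Then $\phi=\lambda^{-1}g$ a.e., which is $\mathcal{B}$-measurable, so $\phi\in L^\infty(\mathcal{B})$.

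For the last sentence I would apply this to $\mathcal{B}=\mathcal{F}_{k-1}$, which is shift invariant by theorem \ref{propfk}(4), and note that the trivial module is exactly $L^2(\mathcal{F}_{k-1})$ (it is the $L^2$-closure of $L^\infty(\mathcal{F}_{k-1})\cdot 1$); a $k$-th order character $\phi$ with $|\phi|=1$ lies in it iff $\phi\in L^\infty(\mathcal{F}_{k-1})$. So if $\phi$ is not in the trivial module, the dichotomy forces $\mathbb{E}(\phi\mid\mathcal{F}_{k-1})=0$. The only delicate point is the two applications of the non-standard Fubini theorem turning the ``for every $t$, a.e.\ $x$'' statements into a.e.\ statements on $\bA\times\bA$; this is the main technicality, but it is the standard Fubini on ultra product spaces used throughout the paper.
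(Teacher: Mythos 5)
Your argument is correct and is essentially the paper's own proof: both take the conditional expectation of the identity relating $\phi(x)$, $\phi(x+t)$ and $\Delta_t\phi$, deduce that $\mathbb{E}(\phi|\mathcal{B})\,\overline{\phi}$ is translation invariant and hence a.e.\ constant, and then resolve the dichotomy (the paper uses that the projection is idempotent to force $c\in\{0,1\}$, while you split on whether $|g|\equiv 0$; these are interchangeable). The Fubini-on-ultraproducts point you flag is indeed the only technicality, and it is the same one the paper relies on implicitly when asserting that a translation-invariant function is constant.
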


\begin{proof} 
Let $f=\mathbb{E}(\phi|\mathcal{B})$. Then for every fixed $t$ we have $$f(x+t)=\mathbb{E}(\phi(x+t)|\mathcal{B})=\mathbb{E}(\phi\overline{\Delta_t\phi}|\mathcal{B})=f\overline{\Delta_t\phi}$$
and thus $f\overline\phi$ is translation invariant. We obtain that $f=c\phi$ for some constant $c$. Using that $f$ is the projection of $\phi$ to $L^2(\mathcal{B})$ we get that either $f=0$ or $f=\phi$. 
\end{proof}

\begin{lemma}\label{kisk} Every $k$-th order character (and thus every module in $\hat{\bA}_k$) is in $L^2(\mathcal{F}_k)$.
\end{lemma}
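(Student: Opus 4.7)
The plan is to invoke lemma~\ref{erosort} with $\mathcal{B}=\mathcal{F}_k$. By part~4 of theorem~\ref{propfk} the $\sigma$-algebra $\mathcal{F}_k$ is shift invariant; by the defining property of a $k$-th order character one has $\Delta_t\phi\in L^\infty(\mathcal{F}_{k-1})$, and part~6 of the same theorem gives $\mathcal{F}_{k-1}\subseteq\mathcal{F}_k$, so $\Delta_t\phi\in L^\infty(\mathcal{F}_k)$ for every $t\in\bA$. The hypothesis of the lemma is therefore met, and its conclusion produces a dichotomy: either $\phi\in L^\infty(\mathcal{F}_k)$, in which case we are immediately done, or $\mathbb{E}(\phi\mid\mathcal{F}_k)=0$. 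The rest of the argument consists of excluding this second alternative.

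By part~2 of theorem~\ref{propfk}, $\mathbb{E}(\phi\mid\mathcal{F}_k)=0$ is equivalent to $\|\phi\|_{U_{k+1}}=0$, so it suffices to establish the strict inequality $\|\phi\|_{U_{k+1}}>0$. The plan is to do this via the identity
$$\|\phi\|_{U_{k+1}}^{2^{k+1}}=\mathbb{E}_{t\in\bA}\,\|\Delta_t\phi\|_{U_k}^{2^k},$$
which is a direct rewriting of the definition (\ref{gowersnorm}) of the Gowers norm by peeling off the last coordinate. For each fixed $t$ the function $\Delta_t\phi$ is a \emph{nonzero} element of $L^\infty(\mathcal{F}_{k-1})$, because it is $\mathcal{F}_{k-1}$-measurable by hypothesis and satisfies $|\Delta_t\phi|\equiv 1$. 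Part~3 of theorem~\ref{propfk} (applied with $k$ replaced by $k-1$) says precisely that $U_k$ is a genuine norm on $L^\infty(\mathcal{F}_{k-1})$, so $\|\Delta_t\phi\|_{U_k}>0$ for every $t$.

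The map $t\mapsto\|\Delta_t\phi\|_{U_k}^{2^k}$ is Borel measurable (it is the integral in the remaining variables of a Borel function of all variables), takes values in $(0,1]$, and is strictly positive at every $t$, so its expectation is strictly positive. This yields $\|\phi\|_{U_{k+1}}>0$, and together with the dichotomy above forces $\phi\in L^\infty(\mathcal{F}_k)\subseteq L^2(\mathcal{F}_k)$. The parenthetical claim about modules then follows formally: any $W\in\hat\bA_k$ is the $L^2$-closure of $L^\infty(\mathcal{F}_{k-1})\cdot\phi$ for some $k$-th order character $\phi$, and since $\phi\in L^\infty(\mathcal{F}_k)$ and $L^\infty(\mathcal{F}_{k-1})\subseteq L^\infty(\mathcal{F}_k)$, every product $f\phi$ lies in $L^\infty(\mathcal{F}_k)\subseteq L^2(\mathcal{F}_k)$; since $L^2(\mathcal{F}_k)$ is closed in $L^2(\bA)$ the whole module is contained in it.

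The only substantive step is the strict positivity of the Gowers norm, and the crucial input making it painless is that $U_k$ has already been upgraded from a semi-norm to a genuine norm on $L^\infty(\mathcal{F}_{k-1})$ in theorem~\ref{propfk}; one does not need any finer information about $\Delta_t\phi$ than the bare fact that it is not almost-everywhere zero.
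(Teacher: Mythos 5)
Your proof is correct and follows essentially the same route as the paper's: the identity $\|\phi\|_{U_{k+1}}^{2^{k+1}}=\mathbb{E}_t\bigl(\|\Delta_t\phi\|_{U_k}^{2^k}\bigr)$ combined with the fact that $U_k$ is a norm on $L^\infty(\mathcal{F}_{k-1})$ gives $\|\phi\|_{U_{k+1}}>0$, hence $\mathbb{E}(\phi|\mathcal{F}_k)\neq 0$, and lemma \ref{erosort} applied with $\mathcal{B}=\mathcal{F}_k$ then forces $\phi\in L^\infty(\mathcal{F}_k)$. You spell out the measurability and strict-positivity-of-the-expectation step, and the extension to whole modules, slightly more explicitly than the paper does, but the argument is the same.
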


\begin{proof} Let $\phi$ be a $k$-th order character. We have by (\ref{dimred}) that $$\mathbb{E}_t(\|\Delta_t\phi\|_{U_k}^{2^k})=\|\phi\|_{U_{k+1}}^{2^{k+1}}.$$
Using that $\Delta_t\phi$ is measurable in $\mathcal{F}_{k-1}$ we get by theorem \ref{propfk} that $\|\Delta_t\phi\|_{U_k}>0$ holds for every $t$ and thus by the above formula $\|\phi\|_{U_{k+1}}>0$. This means by theorem \ref{propfk} that $\mathbb{E}(\phi|\mathcal{F}_k)\neq 0$. Since $\phi$ is also a $k+1$-th order character we obtain by lemma \ref{erosort} that $\phi$ has to be in the trivial module and so $\phi$ is measurable in $\mathcal{F}_k$. 
\end{proof}

\begin{lemma}\label{charort} Every two distinct modules $W_1,W_2$ in $\hat{\bA}_k$ are orthogonal to each other.
\end{lemma}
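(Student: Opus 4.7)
The plan is to reduce the orthogonality of $W_1$ and $W_2$ to a direct computation with the generating characters, using the characterization of the trivial module via Lemma \ref{erosort}.

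First I would show that for a $k$-th order character $\phi$, the rank one module it generates is exactly the $L^2$-closure of $\{g\phi : g\in L^\infty(\mathcal{F}_{k-1})\}$. One inclusion is that this set is a closed subspace invariant under multiplication by $L^\infty(\mathcal{F}_{k-1})$ (using $|\phi|=1$ and continuity of multiplication by a bounded function), and it contains $\phi$, so it contains the module. The other inclusion is from minimality. Consequently, by bilinearity and $L^2$-continuity of the inner product, to prove $(W_1,W_2)=0$ it suffices to prove $(g_1\phi_1,g_2\phi_2)=0$ for all $g_1,g_2\in L^\infty(\mathcal{F}_{k-1})$, where $\phi_i$ is a generating $k$-th order character of $W_i$.

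Next I would unpack this inner product as
\[
(g_1\phi_1,g_2\phi_2)=\int_\bA g_1\overline{g_2}\cdot\phi_1\overline{\phi_2}\,d\bm.
\]
The key observation is that $\phi_1\overline{\phi_2}$ is itself a $k$-th order character: $|\phi_1\overline{\phi_2}|=1$, and since $\Delta_t(fg)=\Delta_t f\cdot\Delta_t g$ and $\Delta_t\bar f=\overline{\Delta_t f}$, we have $\Delta_t(\phi_1\overline{\phi_2})=\Delta_t\phi_1\cdot\overline{\Delta_t\phi_2}\in L^\infty(\mathcal{F}_{k-1})$.

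Now I would split into two cases. Case 1: $\phi_1\overline{\phi_2}$ lies in the trivial module, i.e.\ $\phi_1\overline{\phi_2}=h\in L^\infty(\mathcal{F}_{k-1})$. Then $\phi_2=\bar h\,\phi_1$, so $\phi_2$ lies in the module generated by $\phi_1$, forcing $W_2\subseteq W_1$ and hence $W_1=W_2$ (the rank-one modules coincide because each generator sits in the other), contradicting distinctness. Case 2: $\phi_1\overline{\phi_2}$ is not in the trivial module. Then Lemma \ref{erosort} applies and yields $\mathbb{E}(\phi_1\overline{\phi_2}\,|\,\mathcal{F}_{k-1})=0$. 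Since $g_1\overline{g_2}\in L^\infty(\mathcal{F}_{k-1})$, the defining property of conditional expectation gives
\[
\int_\bA g_1\overline{g_2}\cdot\phi_1\overline{\phi_2}\,d\bm=\int_\bA g_1\overline{g_2}\cdot\mathbb{E}(\phi_1\overline{\phi_2}\,|\,\mathcal{F}_{k-1})\,d\bm=0,
\]
which is the desired orthogonality.

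I expect the only subtlety to be the book-keeping in Case 1: one has to be careful that equality of rank-one modules is equivalent to their generators differing by an $L^\infty(\mathcal{F}_{k-1})$-multiple of absolute value one. This is where the identification $W_i=\overline{L^\infty(\mathcal{F}_{k-1})\phi_i}$ from the first paragraph is used essentially, and it is really the only nontrivial point — everything else is bilinearity, density, and Lemma \ref{erosort}.
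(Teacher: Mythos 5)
Your proposal is correct and follows essentially the same route as the paper: approximate elements of $W_i$ by $f_i\phi_i$ with $f_i\in L^\infty(\mathcal{F}_{k-1})$, condition the inner product on $\mathcal{F}_{k-1}$, and apply Lemma \ref{erosort} to the $k$-th order character $\phi_1\overline{\phi_2}$. Your explicit Case 1 (showing the product character cannot lie in the trivial module when $W_1\neq W_2$) is left implicit in the paper, so your write-up is if anything slightly more complete.
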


\begin{proof} Let $g_1=f_1\phi_1\in W_1$ and $g_2=f_2\phi_2\in W_2$ be two elements where $\phi_1,\phi_2$ are $k$-th order characters and $f_1,f_2\in L^\infty(\mathcal{F}_{k-1}(\bA))$. We have that $$(g_1,g_2)=\mathbb{E}(\mathbb{E}(g_1\overline{g_2}|\mathcal{F}_{k-1}))=\mathbb{E}(f_1f_2\mathbb{E}(\phi_1\overline{\phi_2}|\mathcal{F}_{k-1})).$$
By lemma \ref{erosort} the right hand side is $0$. Since such elements $g_1$ and $g_2$ are $L^2$ dense in $W_1$ and $W_2$ which completes the proof.
\end{proof}

An important consequence of our main result, theorem \ref{main} is the following.

\begin{theorem}[Higher order Fourier decomposition]\label{hofdecomp} For every $1\leq k\in\mathbb{N}$ we have that
$$L^2(\mathcal{F}_k(\bA))=\bigoplus_{W\in\hat{\bA}_k}W$$
and so every function $f\in L^2(\mathcal{F}_k(\bA))$ has a unique decomposition
$$f=\sum_{W\in\hat{\bA}_k}f_W$$ converging in $L^2$ where $f_W$ is the projection of $f$ to the modul $W$.
\end{theorem}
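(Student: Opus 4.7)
Orthogonality of distinct modules in $\hat{\bA}_k$ is already settled by lemma \ref{charort}, and lemma \ref{kisk} tells us each $W \in \hat{\bA}_k$ sits inside $L^2(\mathcal{F}_k)$. So writing $V$ for the closed linear span of $\bigcup_{W \in \hat{\bA}_k} W$, the only substantive claim is that $V = L^2(\mathcal{F}_k(\bA))$. Suppose for contradiction this fails; then there exists a nonzero $f \in L^2(\mathcal{F}_k)$ with $f \perp V$. Since $f$ is supported on a separable sub $\sigma$-algebra of $\mathcal{F}_k$, by the main theorem \ref{main} (in its strong form, saying every separable sub $\sigma$-algebra of $\mathcal{F}_k$ is measurable in a fibre-surjective continuous morphism to a compact $k$-step nilspace) we may write $f = g \circ \phi$ for some morphism $\phi : \bA \to N$ and some $g \in L^2(N)$.

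Next I would push $g$ through the nilspace Fourier decomposition. By lemma \ref{nilfourdec}, $g = \sum_{\chi \in \hat{A}_k} g_\chi$ with $g_\chi \in W(\chi,N)$, where $A_k$ is the top structure group of $N$. By lemma \ref{nilspchar}, each $g_\chi$ factors as $g_\chi = \phi_\chi \cdot (h_\chi \circ \pi_{k-1})$ with $|\phi_\chi| \equiv 1$ and $h_\chi \in L^2(N_{k-1})$. Setting $\Phi_\chi := \phi_\chi \circ \phi$ and $H_\chi := h_\chi \circ \pi_{k-1} \circ \phi$ (and truncating $h_\chi$ to bounded functions before taking limits), one has $g_\chi \circ \phi = \Phi_\chi \cdot H_\chi$, so the whole sum $f = \sum_\chi \Phi_\chi H_\chi$ converges in $L^2(\bA)$.

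The key verification is that each $\Phi_\chi$ is a $k$-th order character of $\bA$ and $H_\chi \in L^\infty(\mathcal{F}_{k-1})$, for then $\Phi_\chi H_\chi$ lies in the rank-one module of order $k$ generated by $\Phi_\chi$, an element of $\hat{\bA}_k$, hence in $V$. The statement about $H_\chi$ is immediate by induction on $k$: $\pi_{k-1} \circ \phi$ is a continuous morphism from $\bA$ to the compact $(k{-}1)$-step nilspace $N_{k-1}$, so by theorem \ref{main} applied at level $k-1$ the pullback of any bounded Borel function is $\mathcal{F}_{k-1}$-measurable. For $\Phi_\chi$, clearly $|\Phi_\chi| \equiv 1$, and what must be checked is that $\Delta_t \Phi_\chi \in L^\infty(\mathcal{F}_{k-1})$ for every $t \in \bA$. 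The covariance relation $\phi_\chi(y+a) = \phi_\chi(y)\chi(a)$ for $a \in A_k$ implies that $\phi_\chi(y_1)\overline{\phi_\chi(y_2)}$ is invariant under the simultaneous $A_k$-action on both coordinates, hence depends only on the image of the pair $(y_1,y_2)$ in $N_{k-1} \times N_{k-1}$. Applying this with $(y_1,y_2) = (\phi(x), \phi(x+t))$ and using that $\phi$ is a cube morphism (so the edge $(x,x+t) \mapsto (\phi(x), \phi(x+t))$ is itself a 1-cube of $N$ whose projection to $N_{k-1}$ is a 1-cube there), one obtains that $\Delta_t \Phi_\chi$ factors through $\pi_{k-1} \circ \phi$, and is therefore $\mathcal{F}_{k-1}$-measurable.

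The main obstacle is precisely this last step: making rigorous the claim that $\Delta_t \Phi_\chi$ factors through the $(k{-}1)$-step factor of $N$. The covariance of $\phi_\chi$ is stated for translation by $A_k$ inside a single fibre of $\pi_{k-1}$, whereas translation by $t$ in $\bA$ moves $\phi(x)$ to $\phi(x+t)$, which generally lies in a different $\pi_{k-1}$-fibre. The rescue comes from the bundle structure (theorem \ref{bundec}): the difference $\phi(x+t) - \phi(x)$ in the $A_k$-torsor over $N_{k-1}$ depends only on $\pi_{k-1}(\phi(x))$ and $\pi_{k-1}(\phi(x+t))$, which can be extracted from the cube condition applied to the 1-dimensional cube $(x,x+t)$. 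Once this geometric point is cleanly stated, $f \perp V$ forces every $\Phi_\chi H_\chi = 0$, hence $g \circ \phi = 0$, contradicting $f \ne 0$. This completes the decomposition $L^2(\mathcal{F}_k) = \bigoplus_W W$; uniqueness of the Fourier expansion of $f \in L^2(\mathcal{F}_k)$ is then immediate from orthogonality.
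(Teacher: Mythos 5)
Your overall architecture coincides with the paper's: apply theorem \ref{main} to write the function as a pullback from a strong nilspace factor $N$, decompose on $N$ via lemmas \ref{nilfourdec} and \ref{nilspchar} into pieces $(h_\chi\circ\pi_{k-1})\cdot\phi_\chi$ with $|\phi_\chi|=1$, and pull back. But the step you yourself flag as ``the main obstacle'' is where the argument genuinely breaks. The function $(y_1,y_2)\mapsto\phi_\chi(y_1)\overline{\phi_\chi(y_2)}$ is invariant only under the \emph{diagonal} $A_k$-action on $N\times N$, and the quotient of $N\times N$ by that action is an $A_k$-bundle over $N_{k-1}\times N_{k-1}$, not $N_{k-1}\times N_{k-1}$ itself; so it does not descend to $N_{k-1}\times N_{k-1}$. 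Your proposed rescue --- that the ``difference'' $\phi(x+t)-\phi(x)$ is determined by $\pi_{k-1}(\phi(x))$ and $\pi_{k-1}(\phi(x+t))$ via the cube condition on the one-dimensional cube $(x,x+t)$ --- cannot work: by the ergodicity axiom $C^1(N)=N^{\{0,1\}}$, a one-dimensional cube carries no constraint at all, and points in distinct $\pi_{k-1}$-fibres have no well-defined $A_k$-difference. Moreover the intermediate claim is simply false: in the model case $k=2$ with $\phi_\chi\circ\phi$ behaving like $x\mapsto e(\alpha x^2)$ one gets $\Delta_t(\phi_\chi\circ\phi)(x)=e(-2\alpha tx)e(-\alpha t^2)$, which is measurable in $\mathcal{F}_1$ but, for generic $t\in\bA$, is \emph{not} measurable in the separable $\sigma$-algebra generated by $\pi_1\circ\phi$ (compare lemma \ref{charhom}: as $t$ varies these characters sweep out an uncountable set of modules).

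The paper's actual route (lemma \ref{homok}) avoids the first derivative entirely. By lemma \ref{nilspcharderiv} the $(k+1)$-fold derivative $(\phi_\chi)^\times_{k+1}$ \emph{does} descend to $C^{k+1}(N_{k-1})$ --- the alternating product over a full $(k+1)$-cube kills the $A_k$-dependence, by lemma \ref{dualker1} --- so $(\phi_\chi\circ\gamma)^\times_{k+1}$ is measurable in $(\mathcal{F}_{k-1})^\times_{k+1}$, i.e.\ $\phi_\chi\circ\gamma\in[\mathcal{F}_{k-1},k]^*$. One then descends from the $(k+1)$-st derivative back to $\Delta_t$ via lemma \ref{pureprop} and the conditional-independence argument of lemma \ref{charmes}; this is exactly the content of lemma \ref{charchar}. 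So the fix is to replace your one-cube argument by an appeal to lemma \ref{homok} (or reproduce its proof); the remaining parts of your write-up (orthogonality via lemma \ref{charort}, containment via lemma \ref{kisk}, measurability of $H_\chi$ in $\mathcal{F}_{k-1}$, and the contradiction framing, which is only cosmetically different from the paper's direct spanning argument) are sound.
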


In the rest of this chapter we focus on the measure theoretic properties of higher order characters.
The simplest examples for $k$-th order characters are functions $\phi:\bA\rightarrow\mathcal{C}$ such that
$$\Delta_{t_1,t_2,\dots,t_{k+1}}\phi(x)=1$$
for every $t_1,t_2,\dots,t_{k+1},x$ in $\bA$ or equivalently: $(\phi)_{k+1}^\times=1$.
Such functions could be be called {\bf pure} characters. 
Unfortunately for $k>1$ there are groups $\bA$ on which not every modul in $\hat{\bA}_k$ can be represented by a pure character.
This justifies the next definition.

\begin{definition}[Locally pure characters] Let $\mathcal{B}\subseteq\mathcal{A}(\bA)$ be any $\sigma$ algebra.
We denote by $[\mathcal{B},k]^*$ the set of functions $\phi:\bA\rightarrow\mathbb{C}$ of absolute value $1$ such that $(\phi)^\times_{k+1}$ is measurable in $(\mathcal{B})_{k+1}^\times$.  
\end{definition}

In case $\mathcal{B}$ is the trivial $\sigma$-algebra then $[\mathcal{B},k]^*$ is just the set of pure characters with $(\phi)_{k+1}^\times=1$.

\begin{lemma}\label{pureprop} Let $\mathcal{B}\subseteq\mathcal{A}(\bA)$ be a $\sigma$-algebra. Then: \begin{enumerate}
\item $[\mathcal{B},k]^*$ is an Abelian group with respect to point wise multiplication.
\item $[\mathcal{B},0]^*$ is the set of $\mathcal{B}$ measurable functions $f:\bA\rightarrow \mathbb{C}$ of absolute value $1$.
\item $[\mathcal{B},k]^*\subseteq [\mathcal{B},k+1]^*$
\item If $\mathcal{B}$ is shift invariant and $\phi\in[\mathcal{B},k]^*$ then $\Delta_t\phi\in[\mathcal{B},k-1]^*$ for every $t\in\bA$
\end{enumerate}
\end{lemma}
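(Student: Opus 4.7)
I will handle the four items in order; Item 4 is the main obstacle.

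Item 1 is immediate from the identities $(\phi\psi)_{k+1}^\times=(\phi)_{k+1}^\times(\psi)_{k+1}^\times$ and $(\bar\phi)_{k+1}^\times=\overline{(\phi)_{k+1}^\times}$, since the $(\mathcal{B})_{k+1}^\times$-measurable functions are closed under products and complex conjugation. For Item 2, one direction is the easy factorization $(\phi)_1^\times(c)=\phi(c(0))\overline{\phi(c(1))}$, which lies in $\mathcal{B}\circ\psi_0\vee\mathcal{B}\circ\psi_1$ whenever $\phi\in L^\infty(\mathcal{B})$. For the converse, my plan is to use Lemma~\ref{siggen} to approximate $F=(\phi)_1^\times\in L^2((\mathcal{B})_1^\times)$ on $C^1(\bA)\cong\bA\times\bA$ by kernels $F_n=\sum_i b_{n,i}\otimes c_{n,i}$ with $b_{n,i},c_{n,i}\in L^\infty(\mathcal{B})$; the corresponding integral operators $T_n, T$ on $L^2(\bA)$ then converge in Hilbert--Schmidt norm. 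Since each $T_ng$ is a finite linear combination of the $b_{n,i}$'s and hence lies in the closed subspace $L^2(\mathcal{B})$, taking $g=\phi$ and computing $T\phi=\phi\cdot\|\phi\|_2^2=\phi$ yields $\phi\in L^2(\mathcal{B})$; boundedness $|\phi|=1$ then upgrades this to $\phi\in L^\infty(\mathcal{B})$.

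For Item 3 I will split $\{0,1\}^{k+2}$ into its two $(k+1)$-dimensional halves by the last coordinate and consider the measure-preserving restriction maps $\rho_0,\rho_1\colon C^{k+2}(\bA)\to C^{k+1}(\bA)$. A direct parity computation yields
\[
(\phi)_{k+2}^\times=\bigl((\phi)_{k+1}^\times\circ\rho_0\bigr)\cdot\overline{(\phi)_{k+1}^\times\circ\rho_1},
\]
both factors being measurable in the respective pullbacks of $(\mathcal{B})_{k+1}^\times$ by hypothesis. Since $\rho_i^{-1}(\mathcal{B}\circ\psi_v)=\mathcal{B}\circ\psi_{(v,i)}$, these pullbacks together generate $(\mathcal{B})_{k+2}^\times$, giving the claim.

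For Item 4 my plan is to parametrize $C^{k+1}(\bA)\cong C^k(\bA)\times\bA$ via $(c_0,t)\mapsto c$ with $c(v,0)=c_0(v)$ and $c(v,1)=c_0(v)+t$; the key identity, verified by a parity count, is $(\phi)_{k+1}^\times(c_0,t)=(\Delta_t\phi)_k^\times(c_0)$. Under this identification the generators of $(\mathcal{B})_{k+1}^\times$ pull back to functions of the form $b(c_0(v))$ or $b(c_0(v)+t)$; when $t$ is held fixed the latter equals $(b\circ\mathrm{shift}_t)\circ\psi_v$, which is $\mathcal{B}\circ\psi_v$-measurable precisely because $\mathcal{B}$ is shift-invariant. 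Combining this with an $L^2$-approximation via Lemma~\ref{siggen} and Fubini yields $\Delta_t\phi\in[\mathcal{B},k-1]^*$ for almost every $t$. The main obstacle is then to upgrade this to every $t_0\in\bA$; for this I will use the cocycle
\[
\Delta_{t_0}\phi(x)=\Delta_{t_1}\phi(x)\cdot(\Delta_{t_0-t_1}\phi)(x+t_1)
\]
and pick $t_1$ so that both $t_1$ and $t_0-t_1$ lie in the good set (possible because the good set and its $t_0$-translate both have full measure in $\bA$, so their intersection is nonempty). A short variant of the parity argument used for Item 3 shows that shift-invariance of $\mathcal{B}$ passes to shift-invariance of $[\mathcal{B},k-1]^*$, so $(\Delta_{t_0-t_1}\phi)(\cdot+t_1)\in[\mathcal{B},k-1]^*$; Item~1 then gives $\Delta_{t_0}\phi\in[\mathcal{B},k-1]^*$, completing the proof.
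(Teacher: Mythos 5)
Your proof is correct and follows essentially the same route as the paper: for the nontrivial item 4 the paper likewise slices $C^{k+1}(\bA)\cong C^k(\bA)\times\bA$ via the identity $(\phi)_{k+1}^\times(c_0,t)=(\Delta_t\phi)_k^\times(c_0)$, uses shift-invariance of $\mathcal{B}$ plus Lemma \ref{siggen} to get the conclusion for almost every $t$, and then upgrades to all $t$ with the same cocycle identity $\Delta_t\phi(x)=\Delta_{t'}\phi(x)\Delta_{t-t'}\phi(x+t')$. Your explicit verification that $[\mathcal{B},k-1]^*$ is shift-invariant (needed for the second factor) is a point the paper leaves implicit, and your expanded treatments of items 1--3, which the paper dismisses as trivial, are sound.
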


\begin{proof} The first three properties are trivial. We show the last statement. Let $f=f(x,t_1,t_2,\dots,t_{k+1})$ be a bounded function on $C^{k+1}(\bA)$ using the parametrization $C^{k+1}\simeq\bA^{k+2}$ introduced in chapter \ref{cubes} such that $f$ is measurable in $(\mathcal{B})_{k+1}^\times$.  We claim that for almost every fixed value of $t_k$ the restriction of $f$ defined on $C^k(\bA)$ is measurable in $(\mathcal{B})_k^\times$. 
Using the shift invariance of $\mathcal{B}$ the statement is obviously true for functions in $\mathcal{R}(\{\mathcal{B}\circ\psi_v\}_{v\in\{0,1\}^{k+1}})$ (see notation in lemma \ref{siggen}). Then lemma \ref{siggen} shows the general case. 

By applying the claim for $(\phi)_{k+1}^\times$ we get that for almost every $t\in\bA$ the function $(\Delta_t\phi)_k^\times$ is measurable in $(\mathcal{B})_k^\times$ and thus for such $t$'s we have that $\Delta_t\phi\in [\mathcal{B},k-1]^*$. Now let $t\in\bA$ be an arbitrary fixed element. By the previous result both $\Delta_{t'}\phi$ and $\Delta_{t-t'}\phi$ are in $[\mathcal{B},k-1]^*$ for almost every $t'$ and by fixing one such element we obtain that $\Delta_t\phi(x)=\Delta_{t'}\phi(x)\Delta_{t-t'}\phi(x+t')$ is in $[\mathcal{B},k-1]^*$.
\end{proof}


\begin{lemma}\label{charmes} Every element in $[\mathcal{B},k]^*$ is measurable in the $\sigma$-algebra  $\mathcal{F}_k\vee\mathcal{B}$.
\end{lemma}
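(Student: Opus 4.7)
The plan is to transfer the claim to the cube space $C^{k+1}(\bA)$ via $\psi_0$ and then invoke theorem \ref{propfk}(1). Since $|\phi|=1$, the identity
\[
(\phi)^\times_{k+1}\ =\ \phi\circ\psi_0\cdot\prod_{v\in K_{k+1}}\phi^\con\circ\psi_v
\]
can be solved for $\phi\circ\psi_0$, giving
\[
\phi\circ\psi_0\ =\ (\phi)^\times_{k+1}\cdot\prod_{v\in K_{k+1}}\overline{\phi^\con}\circ\psi_v.
\]
The hypothesis $\phi\in[\mathcal{B},k]^*$ places $(\phi)^\times_{k+1}$ in $(\mathcal{B})^\times_{k+1}=(\mathcal{B}\circ\psi_0)\vee[\mathcal{B}]^\times_{k+1}\subseteq(\mathcal{B}\circ\psi_0)\vee[\mathcal{A}]^\times_{k+1}$, while the remaining factors are manifestly measurable in $[\mathcal{A}]^\times_{k+1}$. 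Combined with the tautological inclusion $\phi\circ\psi_0\in\mathcal{A}\circ\psi_0$, this yields
\[
\phi\circ\psi_0\ \in\ (\mathcal{A}\circ\psi_0)\wedge\bigl((\mathcal{B}\circ\psi_0)\vee[\mathcal{A}]^\times_{k+1}\bigr).
\]

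The key step is to establish that $\mathcal{A}\circ\psi_0$ and $[\mathcal{A}]^\times_{k+1}$ are conditionally independent on $C^{k+1}(\bA)$; note that by theorem \ref{propfk}(1) their meet is $\mathcal{F}_k\circ\psi_0$. I would verify this via the criterion recalled in the $\sigma$-algebras section: it suffices to check that $\mathbb{E}(f\circ\psi_0\mid[\mathcal{A}]^\times_{k+1})=0$ whenever $\mathbb{E}(f\mid\mathcal{F}_k)=0$. By lemma \ref{siggen}, $L^2([\mathcal{A}]^\times_{k+1})$ is spanned by products $\prod_{v\in K_{k+1}}h_v\circ\psi_v$, and using the parametrisation of $C^{k+1}(\bA)$ the inner product of $f\circ\psi_0$ with such a product reduces to $\int f(x)\,g(x)\,d\bm(x)$, where $g(x)=\mathbb{E}_{t_1,\dots,t_{k+1}}\prod_v h_v(x+v\cdot t)$ is precisely (up to conjugation) one of the convolutions $[F]$ generating $\mathcal{F}_k=[\mathcal{A}]_{k+1}$. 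Hence $g$ is $\mathcal{F}_k$-measurable and the integral vanishes by hypothesis.

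Given conditional independence, lemma \ref{siggen2} applied with the roles $\mathcal{B}=\mathcal{A}\circ\psi_0$, $\mathcal{C}=[\mathcal{A}]^\times_{k+1}$ and $\mathcal{B}_1=\mathcal{B}\circ\psi_0\subseteq\mathcal{A}\circ\psi_0$ collapses the meet:
\[
(\mathcal{A}\circ\psi_0)\wedge\bigl([\mathcal{A}]^\times_{k+1}\vee(\mathcal{B}\circ\psi_0)\bigr)\ =\ (\mathcal{F}_k\circ\psi_0)\vee(\mathcal{B}\circ\psi_0)\ =\ (\mathcal{F}_k\vee\mathcal{B})\circ\psi_0.
\]
Thus $\phi\circ\psi_0$ is $(\mathcal{F}_k\vee\mathcal{B})\circ\psi_0$-measurable; since $\psi_0$ is a surjective measure-preserving map, $\phi$ coincides almost everywhere with an $(\mathcal{F}_k\vee\mathcal{B})$-measurable function, which is the desired conclusion. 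The main obstacle is the conditional-independence verification in the middle paragraph; everything else is formal manipulation of $\sigma$-algebras on the cube space combined with the defining property of $\mathcal{F}_k$.
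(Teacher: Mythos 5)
Your argument is correct and follows essentially the same route as the paper: writing $\phi\circ\psi_0$ as $(\phi)^\times_{k+1}$ times the conjugate of $[\phi]^\times_{k+1}$ to place it in $(\mathcal{A}\circ\psi_0)\wedge\bigl((\mathcal{B}\circ\psi_0)\vee[\mathcal{A}]^\times_{k+1}\bigr)$, verifying conditional independence of $\mathcal{A}\circ\psi_0$ and $[\mathcal{A}]^\times_{k+1}$ (the paper invokes the Gowers--Cauchy--Schwartz inequality where you compute the pairing directly via the convolution identity, but these are the same observation), and collapsing the meet with Lemma \ref{siggen2} and Theorem \ref{propfk}. No gaps.
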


\begin{proof} Assume that $f\in[\mathcal{B},k]^*$. Let $\mathcal{C}=[\mathcal{A}]_{k+1}^\times$.
Let $g_1=[f]_{k+1}^\times$ and $g_2=f\circ\psi_0/g_1$ on $C^{k+1}(\bA)$.
We have that $g_1$ is measurable in $\mathcal{C}\vee\mathcal{B}\circ\psi_0$ and $g_2$ is measurable in $\mathcal{C}$. This means that $f\circ\psi_0=g_1g_2$ is measurable in $\mathcal{C}\vee\mathcal{B}\circ\psi_0$. On the other hand $f\circ\psi_0$ is measurable in $\mathcal{A}\circ\psi_0$ and thus it is measurable in
$(\mathcal{C}\vee\mathcal{B}\circ\psi_0)\wedge\mathcal{A}\circ\psi_0.$

We claim that $\mathcal{A}\circ\psi_0$ is conditionally independent from $\mathcal{C}$. By theorem \ref{propfk} we have that $\mathcal{A}\circ\psi_0\wedge\mathcal{C}=\mathcal{F}_k\circ\psi_0$. Assume that $h\in L^\infty(\bA)$ is orthogonal to $L^2(\mathcal{F}_k)$. Then $\|h\|_{U_{k+1}}=0$ and thus by (\ref{GCS}) we have that $h\circ\psi_0$ is orthogonal to every element in $\mathcal{R}(\{\mathcal{A}\circ\psi_v\}_{v\in\{0,1\}^{k+1}})$. Lemma \ref{siggen} shows that $h\circ\psi_0$ is orthogonal to $L^2(\mathcal{C})$ which is needed for conditional independence.

Now lemma \ref{siggen2} implies that $f\circ\psi_0$ is measurable in
$(\mathcal{C}\wedge\mathcal{A}\circ\psi_0)\vee\mathcal{B}\circ\psi_0$ which by theorem \ref{propfk} is equal to $\mathcal{F}_k\circ\psi_0\vee\mathcal{B}\circ\psi_0$.

\end{proof}


\begin{lemma}\label{charsep} If $\phi$ is a $k$-th order character then $\phi\in[\mathcal{F}_{k-1},k]^*$.
\end{lemma}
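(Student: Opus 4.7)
The goal is to show $(\phi)_{k+1}^\times = \prod_{v\in\{0,1\}^{k+1}} \phi^\con\circ\psi_v$ is measurable in $(\mathcal{F}_{k-1})_{k+1}^\times = \bigvee_v \mathcal{F}_{k-1}\circ\psi_v$ on $C^{k+1}(\bA)$. By lemma \ref{kisk} we have $\phi\in L^\infty(\mathcal{F}_k)$, but in general $\phi\notin L^\infty(\mathcal{F}_{k-1})$, so no individual factor $\phi^\con\circ\psi_v$ lies in $\mathcal{F}_{k-1}\circ\psi_v$; the measurability we seek must come from the joint cancellation across all $2^{k+1}$ factors, using the character hypothesis that $\Delta_t\phi\in L^\infty(\mathcal{F}_{k-1})$ for every $t\in\bA$.

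My starting identity is obtained by pairing vertices along the first coordinate: for each $w\in\{0,1\}^k$ the pair $\{(0,w),(1,w)\}$ contributes $\phi^{\con(0,w)}(\psi_{(0,w)})\phi^{\con(1,w)}(\psi_{(1,w)})$, which (using $|\phi|=1$ and a case check on the parity of $h(w)$) collapses to $(\Delta_{t_1}\phi)^{\con(w)}(\psi_w^{(k)})$, with $\con(w)$ based on the height of $w$ in $\{0,1\}^k$. This gives
\[
(\phi)_{k+1}^\times(x, t_1, \ldots, t_{k+1}) = (\Delta_{t_1}\phi)_k^\times(x, t_2, \ldots, t_{k+1}).
\]
Since $\Delta_{t_1}\phi\in L^\infty(\mathcal{F}_{k-1})$ for every $t_1$, for each fixed $t_1$ the right-hand side is the $k$-th order Gowers product of an $\mathcal{F}_{k-1}$-measurable function, hence measurable in $(\mathcal{F}_{k-1})_k^\times$ on $C^k(\bA)$. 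Under the identification $C^{k+1}(\bA)\cong \bA_{t_1}\times C^k(\bA)$ this establishes fibrewise measurability in the $t_1$-direction.

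The main obstacle is upgrading this fibrewise statement to joint measurability in $(\mathcal{F}_{k-1})_{k+1}^\times$ on all of $C^{k+1}(\bA)$. For this I would invoke the non-standard Fubini theorem from \cite{ESz} (already used in the proof of lemma \ref{fkconvfk}) together with the shift-invariance of $\mathcal{F}_{k-1}$ (theorem \ref{propfk}(4)). Since $\psi_{(1,w)}^{(k+1)} = t_1 + \psi_{(0,w)}^{(k+1)}$, the shift-invariance guarantees that the $t_1$-parameter appearing in the fibre description $(\Delta_{t_1}\phi)_k^\times$ is precisely what is captured by the extra projections $\{\psi_{(1,w)}^{(k+1)}\}_{w\in\{0,1\}^k}$ when passing from $(\mathcal{F}_{k-1})_k^\times$ to $(\mathcal{F}_{k-1})_{k+1}^\times$. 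Concretely, approximating $\Delta_{t_1}\phi$ in $L^2(\mathcal{F}_{k-1})$ by finite products of $\mathcal{F}_{k-1}$-measurable functions via lemma \ref{siggen}, shift-invariance lets these approximations be selected coherently in $t_1$ so that they assemble into $(\mathcal{F}_{k-1})_{k+1}^\times$-measurable approximations on $C^{k+1}(\bA)$, and non-standard Fubini then passes the $L^2$-limit to the joint function. The critical delicacy lies in this coherent-approximation step, since in standard measure theory fibrewise measurability in a proper sub-$\sigma$-algebra does not imply joint measurability; once it is carried out, the product identity immediately yields $(\phi)_{k+1}^\times\in L^\infty((\mathcal{F}_{k-1})_{k+1}^\times)$, i.e.\ $\phi\in[\mathcal{F}_{k-1},k]^*$ as desired.
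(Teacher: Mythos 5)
Your pairing identity $(\phi)_{k+1}^\times(x,t_1,\dots,t_{k+1})=(\Delta_{t_1}\phi)_k^\times(x,t_2,\dots,t_{k+1})$ is correct, but the proof stalls exactly where you flag the ``critical delicacy'': passing from measurability of each $t_1$-fibre in $(\mathcal{F}_{k-1})_k^\times$ to joint measurability in $(\mathcal{F}_{k-1})_{k+1}^\times$. Fibrewise measurability in a sub-$\sigma$-algebra does not imply joint measurability, and the proposed remedy is not a proof: shift-invariance of $\mathcal{F}_{k-1}$ gives no mechanism for choosing the lemma \ref{siggen} approximants of $\Delta_{t_1}\phi$ measurably in $t_1$, nor for controlling their $L^2$ errors uniformly enough for a Fubini-type argument to assemble them into a single $(\mathcal{F}_{k-1})_{k+1}^\times$-measurable approximation of $(\phi)_{k+1}^\times$. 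The non-standard Fubini theorem transfers integrals, not $\sigma$-algebra membership, so invoking it here does not close the gap. As written, the argument establishes only the (easy, and converse) direction that appears in the proof of lemma \ref{pureprop}.

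The missing idea is the dichotomy of lemma \ref{erosort}, applied not on $\bA$ but on the group $C^{k+1}(\bA)$. Your pairing computation, carried out for an arbitrary shift $t\in C^{k+1}(\bA)$ rather than only in the $t_1$-direction, gives $\Delta_t(\phi)_{k+1}^\times=\prod_{v}(\Delta_{t(v)}\phi)^\con\circ\psi_v$, and each factor is measurable in $\mathcal{F}_{k-1}\circ\psi_v$ because $\phi$ is a $k$-th order character; hence every $\Delta_t(\phi)_{k+1}^\times$ is measurable in the shift-invariant $\sigma$-algebra $\mathcal{B}=(\mathcal{F}_{k-1})_{k+1}^\times$. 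Lemma \ref{erosort} then forces either $(\phi)_{k+1}^\times\in L^\infty(\mathcal{B})$ --- which is the desired conclusion $\phi\in[\mathcal{F}_{k-1},k]^*$ --- or $\mathbb{E}((\phi)_{k+1}^\times|\mathcal{B})=0$. The second alternative is impossible because $\mathbb{E}((\phi)_{k+1}^\times)=\|\phi\|_{U_{k+1}}^{2^{k+1}}>0$: by lemma \ref{kisk} $\phi$ is measurable in $\mathcal{F}_k$, it is nonzero since $|\phi|=1$, and $U_{k+1}$ is a norm on $L^\infty(\mathcal{F}_k)$ by theorem \ref{propfk}. This all-or-nothing argument is what lets the paper avoid the fibrewise-to-joint measurability problem entirely; without it, or some equally strong substitute, your route does not go through.
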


\begin{proof} It is clear that for every $t\in C^{k+1}(\bA)$ we have that $\Delta_t (\phi)_{k+1}^\times$ is measurable in $\mathcal{B}=(\mathcal{F}_{k-1})_{k+1}^\times$. Using that $\mathcal{B}$ is shift invariant and lemma \ref{erosort} we obtain that either $(\phi)_{k+1}^\times$ is measurable in $\mathcal{B}$ or $E((\phi)_{k+1}^\times|\mathcal{B})=0$.
However the second possibility is impossible since the integral of $(\phi)_{k+1}^\times$ is equal to the $k+1$-th Gowers norm of $\phi$ which is positive because $\phi\in\mathcal{F}_k$.
\end{proof}

\begin{lemma}\label{charchar} A function $\phi:\bA\rightarrow\mathbb{C}$ is a $k$-th order character if and only if $\phi\in[\mathcal{F}_{k-1},k]^*$. 
\end{lemma}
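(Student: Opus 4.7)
The plan is to prove the two implications of the \emph{if and only if} separately, and observe that one direction is already stated as Lemma \ref{charsep}, so the real work is on the reverse direction.

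The forward direction (``$\phi$ is a $k$-th order character $\Rightarrow$ $\phi\in[\mathcal{F}_{k-1},k]^*$'') is exactly Lemma \ref{charsep}, so I would simply quote it. For the reverse direction, I would assume $\phi\in[\mathcal{F}_{k-1},k]^*$ and show the two defining properties of a $k$-th order character: (i) $|\phi|=1$, and (ii) $\Delta_t\phi$ is measurable in $\mathcal{F}_{k-1}$ for every $t\in\bA$. Property (i) is immediate from the definition of the set $[\mathcal{F}_{k-1},k]^*$, so only (ii) needs argument.

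The key step is to combine Lemma \ref{pureprop}(4) and Lemma \ref{charmes}. By Theorem \ref{propfk}, $\mathcal{F}_{k-1}$ is shift invariant, so Lemma \ref{pureprop}(4) applies and yields $\Delta_t\phi\in[\mathcal{F}_{k-1},k-1]^*$ for every $t\in\bA$. Next I invoke Lemma \ref{charmes} with $\mathcal{B}=\mathcal{F}_{k-1}$ and index $k-1$: it says every element of $[\mathcal{F}_{k-1},k-1]^*$ is measurable in $\mathcal{F}_{k-1}\vee\mathcal{F}_{k-1}=\mathcal{F}_{k-1}$. Consequently $\Delta_t\phi\in L^\infty(\mathcal{F}_{k-1})$ for each $t$, which is precisely condition (ii).

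There is essentially no obstacle here beyond correctly chaining the two lemmas: the content was already packaged in Lemmas \ref{pureprop}(4) and \ref{charmes}. The only point to double-check is that the shift invariance hypothesis required by Lemma \ref{pureprop}(4) is satisfied, which it is because Theorem \ref{propfk}(4) asserts shift invariance of every $\mathcal{F}_j$. Thus the whole proof is a two-line reduction, and no additional computation with convolutions or couplings is needed.
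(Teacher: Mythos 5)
Your proposal is correct and follows exactly the paper's argument: one direction is Lemma \ref{charsep}, and the converse chains Lemma \ref{pureprop}(4) (using shift invariance of $\mathcal{F}_{k-1}$) with Lemma \ref{charmes} applied to $\mathcal{B}=\mathcal{F}_{k-1}$ at index $k-1$. Nothing is missing.
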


\begin{proof} Lemma \ref{charsep} shows one implication. To see the other implication assume that $\phi\in[\mathcal{F}_{k-1},k]^*$. Then if $t\in\bA$ is arbitrary we have by lemma \ref{pureprop} that $\Delta_t\phi\in [\mathcal{F}_{k-1},k-1]^*$. By lemma \ref{charmes} we obtain that $\Delta_t\phi$ is measurable in $\mathcal{F}_{k-1}$ showing that $\phi$ is a $k$-th order character.
\end{proof}

\begin{lemma}\label{charconv} Let $n,k\in\mathbb{N}^+$. Let $\beta:K_n\rightarrow\hat{\bA}_k$ be a map such that $\beta\notin\hom(K_n,\mathcal{D}_{n-k-1}(\hat{\bA}_k))$ and let $F=\{f_v\}_{v\in K_n}$ be a system of bounded functions on $\bA$ such that $f_v$ is in $\beta(v)$. Then $[F]$ is identically $0$.
\end{lemma}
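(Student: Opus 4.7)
I split the proof into two regimes depending on whether $n \le k$ or $n > k$, handling the small-$n$ regime directly and reducing the large-$n$ regime to it by induction.

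\textbf{Regime $n \le k$.} Here $n-k-1 \le -1$, so $\mathcal{D}_{n-k-1}(\hat{\bA}_k)$ admits only the zero cube; hence the hypothesis says that $\beta(v_0) \neq 0$ for some $v_0 \in K_n$. Then $f_{v_0}$ lies in a non-trivial $k$-th order module, so lemma \ref{erosort} gives $\mathbb{E}(f_{v_0} \mid \mathcal{F}_{k-1}) = 0$, whence $\|f_{v_0}\|_{U_k} = 0$ by theorem \ref{propfk}(2); the monotonicity $\|\cdot\|_{U_j} \le \|\cdot\|_{U_{j+1}}$ then gives $\|f_{v_0}\|_{U_n} = 0$. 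To conclude $[F] \equiv 0$ it suffices to check $([F], h) = 0$ for every $h \in L^\infty(\bA)$. By identity (\ref{coincon}) this scalar product equals the Gowers inner product $(G)$ of the function system on $\{0,1\}^n$ obtained by adjoining $\overline{h}$ at $0$ to $F$; and by the Gowers--Cauchy--Schwarz inequality (\ref{GCS}), $|(G)| \le \prod_v \|g_v\|_{U_n} = 0$ since the factor at $v_0$ vanishes.

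\textbf{Regime $n > k$}, so $d := n-k-1 \ge 0$. I proceed by induction on $n$, grounded in the first regime. By multilinearity of $[F]$ in each argument together with $L^2$-density, reduce to $f_v = g_v \phi_v$ with $g_v \in L^\infty(\mathcal{F}_{k-1})$ and $\phi_v$ a $k$-th order character generating $\beta(v)$. Using the derivative identity (\ref{dimred}) in a chosen direction $i$, $[F](x) = \mathbb{E}_t \overline{f_{e_i}(x+t)} \cdot [\delta_{i,t} F](x)$; the system $\delta_{i,t} F$, indexed by $K_{n-1}$, has each member in the module $\beta_i(v) := \beta(v^{(i=0)}) \cdot \overline{\beta(v^{(i=1)})}$ (using shift invariance of modules, since translating a $k$-th character alters it only by an $\mathcal{F}_{k-1}$-measurable factor). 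If some $i$ has $\beta_i \notin \hom(K_{n-1}, \mathcal{D}_{d-1}(\hat{\bA}_k))$, then the induction hypothesis (or the first regime when $n-1 \le k$) applied to $\delta_{i,t} F$ gives $[\delta_{i,t} F] \equiv 0$ for almost every $t$, whence $[F] \equiv 0$. It remains to produce such an $i$.

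\textbf{The combinatorial lemma.} For any abelian group $A$ and $d \ge 0$, a map $\beta: K_n \to A$ extends to a degree-$d$ cube on $\{0,1\}^n$ if and only if every derivative $\beta_i: K_{n-1} \to A$ extends to a degree-$(d-1)$ cube on $\{0,1\}^{n-1}$. The forward direction is immediate from the fact that the derivative of a degree-$d$ polynomial map is a degree-$(d-1)$ polynomial map. For the reverse direction, every $(d+1)$-fold iterated discrete difference of a hypothetical extension $\tilde\beta$ necessarily involves at least one direction $i$ and hence equals a $d$-fold iterated difference of the corresponding extension $\tilde\beta_i$ of $\beta_i$, which vanishes by hypothesis. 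The subtle step is checking consistency: the value $\tilde\beta(0^n) = \tilde\beta_i(0^{n-1}) + \beta(e_i)$ extracted from each $\beta_i$-extension must agree across $i$. This reduces to comparing alternating sums on $(d+2)$-dimensional faces of $\{0,1\}^n$ that contain $0^n$ and span any two given directions; the agreement follows from the alternating-sum vanishings on the $(d+1)$-dimensional sub-faces of such a $(d+2)$-face that do not contain $0^n$, which are already guaranteed by the extendability of the $\beta_i$'s. I expect this consistency verification --- a finite but slightly intricate linear-algebra argument on polynomial maps between faces of $\{0,1\}^n$ --- to be the main obstacle in the proof; granted it, the contrapositive supplies the required direction $i$ for the induction step.
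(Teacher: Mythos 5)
Your analytic skeleton coincides with the paper's: the base case comes from orthogonality of distinct modules (Lemma \ref{erosort}), and the inductive step differentiates the system with $\delta_{i,t}$, notes that each member of $\delta_{i,t}F$ lies in the module $\beta(v^{(i=0)})\beta(v^{(i=1)})^{-1}$ by shift invariance, and concludes via (\ref{dimred}). (Minor point: for the literal ``identically $0$'' conclusion in the base regime, Lemma \ref{cornineq} gives the pointwise bound $|[F](x)|\leq\prod_{v_j=1}\|f_v\|_{U_n}\prod_{v_j=0}\|f_v\|_\infty$ directly, which is cleaner than testing against all $h$.) The only place you diverge is the combinatorial step of producing the direction $i$, and that is exactly where you leave a hole: you set up a full equivalence (``$\beta$ extends to a degree-$d$ cube iff every $\beta_i$ extends to a degree-$(d-1)$ cube'') whose hard direction requires the consistency verification you only sketch and yourself flag as the main obstacle. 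That detour is unnecessary. Membership in $\hom(K_n,\mathcal{D}_{n-k-1}(\hat{\bA}_k))$ amounts to the vanishing of the alternating sum of $\beta$ over every $(n-k)$-dimensional face contained in $K_n$; extendability to the full cube, including the vertex $0$, is a consequence (this is the content of Lemma \ref{dualker2}), not the definition. So the hypothesis hands you a single witness: a face $S\subseteq K_n$ of dimension $n-k\geq 1$ with nonvanishing alternating sum. Choose any direction $i$ parallel to $S$; then $p_i(S)$ is an $(n-k-1)$-dimensional face contained in $K_{n-1}$ (it avoids $0^{n-1}$ because $S$ avoids both $0^n$ and $e_i$), and the alternating sum of $\beta_i$ over $p_i(S)$ equals that of $\beta$ over $S$, hence is nonzero, so $\beta_i\notin\hom(K_{n-1},\mathcal{D}_{(n-1)-k-1}(\hat{\bA}_k))$. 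This one-line substitution is the paper's argument and it closes the only incomplete step in your proposal; everything else is correct.
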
 

\begin{proof} We go by induction on $n$. If $n=1$ then lemma \ref{erosort} shows the claim. Assume that the statement holds for $n-1\geq 1$.
Let $S\subset K_n$ be a face such that $\sum_{v\in S}\beta(v)\neq 0$ and assume that $i\in [n]$ is a direction parallel to $S$. Then $\delta_{i,t} F$ satisfies the condition for $n-1$ and thus by induction we have that $[\delta_{i,t} F]$ is identically $0$. The statement for $n$ follows from (\ref{dimred}). 
\end{proof}

\subsection{Topological nilspace factors of ultra product groups}

\bigskip

\begin{definition} A continuous surjective function $\gamma:\bA\rightarrow T$ into a compact Hausdorff space $T$ is a {\bf $k$-step nilspace factor} of $\bA$ if the cubespace structure on $T$ (obtained by composing cubes in $\bA$ with $f$) is a $k$-step nilspace. Equivalently, a $k$-step nilspace factor of $\bA$ is given by a continuous nilspace morphism $\gamma:\bA\rightarrow N$ into a compact $k$-step nilspace such that the induced maps $\gamma^n:C^n(\bA)\rightarrow C^n(N)$ are surjective for every $n$. 
\end{definition}

Note that nilspace factors of ultra product groups are automatically compact nilspaces.
Throughout this chapter $N$ is a $k$-step nilspace with structure groups $A_1,A_2,\dots,A_k$ and $i$-step factors $\pi_i:N\rightarrow N_i$ where $0\leq i\leq k$.
We will need the following lemma.

\begin{lemma}\label{homok} Let $\gamma:\bA\rightarrow N$ be a $k$-step nilspace factor of $\bA$. Then $\gamma$ is measurable in $\mathcal{F}_k(\bA)$ and there are homomorphism $\tau_k:A_i\rightarrow\hat{\bA}_i$ for $1\leq i\leq k$ such that if $\chi\in\hat{A_i}$ and $f\in W(\chi,N_i)$ then $f\circ\pi_i\circ\gamma$ is in the module $\tau_i(\chi)$. 
\end{lemma}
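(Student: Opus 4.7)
I would proceed by induction on $k$. For $k=0$ the statement is vacuous ($N$ is a point), and for $k=1$ the nilspace $N$ coincides with the abelian group $A_1$, so the continuous nilspace morphism $\gamma:\bA\to A_1$ is simply an affine homomorphism. For each $\chi\in\hat{A_1}$, the function $\chi\circ\gamma$ is a linear character of $\bA$ up to a multiplicative constant, so it generates a one-dimensional shift-invariant subspace of $L^2(\bA)$, i.e.\ an element of $\hat{\bA}_1$; I define $\tau_1(\chi)$ to be this subspace. Homomorphism and coverage properties are immediate. For the inductive step I first observe that $\pi_{k-1}\circ\gamma:\bA\to N_{k-1}$ is a $(k-1)$-step nilspace factor (continuity and cube-surjectivity pass to the bundle quotient), so the induction hypothesis supplies $\tau_1,\dots,\tau_{k-1}$ that simultaneously serve $\gamma$, since $\pi_i\circ\gamma=\pi_i\circ\pi_{k-1}\circ\gamma$ for $i<k$, and in particular guarantees that $\pi_{k-1}\circ\gamma$ is $\mathcal{F}_{k-1}$-measurable.

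The heart of the argument is the construction of $\tau_k$. For $\chi\in\hat{A_k}$ I use lemma \ref{nilspchar} to pick $\phi\in W(\chi,N)$ with $|\phi|=1$, and I propose to set $\tau_k(\chi)$ to be the rank-one module of order $k$ generated by $\phi\circ\gamma$. The main obstacle, and the real content of the lemma, is to verify that $\phi\circ\gamma$ is actually a $k$-th order character of $\bA$. By lemma \ref{charchar} this reduces to showing $\phi\circ\gamma\in[\mathcal{F}_{k-1},k]^*$, i.e.\ that $(\phi\circ\gamma)_{k+1}^{\times}$ is measurable in $(\mathcal{F}_{k-1})_{k+1}^{\times}$. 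Evaluated at a cube $c\in C^{k+1}(\bA)$, this expression equals $(\phi)_{k+1}^{\times}$ at the image cube $\gamma\circ c\in C^{k+1}(N)$, which by lemma \ref{nilspcharderiv} factors as $\beta\circ\Pi$, where $\Pi:C^{k+1}(N)\to C^{k+1}(N_{k-1})$ is induced by $\pi_{k-1}$ and $\beta$ is Borel. The resulting map $c\mapsto\beta\bigl((\pi_{k-1}\circ\gamma\circ\psi_v(c))_{v\in\{0,1\}^{k+1}}\bigr)$ has each coordinate measurable in $\mathcal{F}_{k-1}\circ\psi_v$ by the induction hypothesis, hence the whole is measurable in $\bigvee_v\mathcal{F}_{k-1}\circ\psi_v=(\mathcal{F}_{k-1})_{k+1}^{\times}$, as required.

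Once $\phi\circ\gamma$ is known to be a $k$-th order character, the remaining properties are routine. Independence of $\phi$: by lemma \ref{nilspchar} any two choices differ by $h\circ\pi_{k-1}$ with $|h|=1$, and the induction hypothesis places $h\circ\pi_{k-1}\circ\gamma$ in $L^\infty(\mathcal{F}_{k-1})$, so the generated modules agree. Coverage: for arbitrary $f\in W(\chi,N)$, lemma \ref{nilspchar} writes $f=\phi\cdot(h\circ\pi_{k-1})$, so $f\circ\gamma=(\phi\circ\gamma)(h\circ\pi_{k-1}\circ\gamma)\in\tau_k(\chi)$. Multiplicativity follows by choosing $\phi_{\chi_1\chi_2}=\phi_{\chi_1}\phi_{\chi_2}$. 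Finally, for $\mathcal{F}_k$-measurability of $\gamma$ itself, I apply lemma \ref{nilfourdec} to an arbitrary bounded Borel $f:N\to\mathbb{C}$, writing $f=\sum_\chi f_\chi$ in $L^2(N)$; each $f_\chi\circ\gamma$ lies in $\tau_k(\chi)\subseteq L^2(\mathcal{F}_k)$ by lemma \ref{kisk}, and an approximation argument (passing to a pointwise-a.e.\ convergent subsequence of partial sums and using that $\gamma$ pushes Haar to the nilspace measure on $N$) yields $f\circ\gamma\in L^\infty(\mathcal{F}_k)$; applied to indicators this gives $\gamma^{-1}(S)\in\mathcal{F}_k$ for every Borel $S\subseteq N$.
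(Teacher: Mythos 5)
Your proposal is correct and follows essentially the same route as the paper's proof: induction on $k$, defining $\tau_k(\chi)$ via a unimodular $\phi\in W(\chi,N)$ from lemma \ref{nilspchar}, verifying the $k$-th order character property through the identity $(\phi)_{k+1}^{\times}\circ\gamma^{k+1}=(\phi\circ\gamma)_{k+1}^{\times}$ together with lemmas \ref{nilspcharderiv} and \ref{charchar}, and deducing $\mathcal{F}_k$-measurability of $\gamma$ from lemmas \ref{nilfourdec} and \ref{kisk}. You simply spell out the well-definedness, coverage, and measurability steps that the paper leaves implicit.
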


\begin{proof} By lemma \ref{kisk} and lemma \ref{nilfourdec} the calim that $\gamma$ is measurable in $\mathcal{F}_k$ follows from the existence of the map $\tau_k$. By induction on $k$ we can assume that the maps $\{\tau_i\}_{i=1}^{k-1}$ exist as required by the lemma. To see the existence of $\tau_k$ let $\chi\in\hat{A}_k$ be an arbitrary character and let $\phi\in V(\chi,N)$ be a function of absolute value $1$ (guaranteed by lemma \ref{nilspchar}). We have that $(\phi)_{k+1}^\times\circ\gamma^{k+1}=(\phi\circ\gamma)_{k+1}^\times$.
From our induction hypothesis, lemma \ref{nilspcharderiv}, lemma \ref{charchar} we obtain that $\phi\circ\gamma$ is a $k$-th order character that we denote by $\tau_k(\bA)$. Then lemma \ref{nilspchar} and our induction assumption will guarantee that $\tau_k(\chi)$ depends only on $\chi$ and thus $\tau_k:\hat{A}_k\rightarrow\hat{\bA}_k$ is well defined.
The fact that $\tau_k$ is a homomorphism is clear from the definitions.
\end{proof}

We introduce the following four properties for nilspace factors.

\medskip

\noindent{\bf 1.)~measure preserving:}~We say that the nilspace factor $\xi:\bA\rightarrow N$ is measure preserving if all the maps $\gamma^n:C^n(\bA)\rightarrow C^n(N)$ are measure preserving.

\medskip

\noindent{\bf 2.)~Rooted measure preserving:}~We say that $\gamma$ is rooted measure preserving if for every $a\in\bA$ and natural number $n\in\mathbb{N}$ the map $\gamma^n_a:C^n_a(\bA)\rightarrow C^n_{\gamma(a)}(N)$ induced by $\gamma$ is measure preserving.

\medskip

\noindent{\bf 3.)~Character preserving:}~The homomorphisms $\tau_i:\hat{A}_k\rightarrow\hat{\bA}_k$ defined in lemma \ref{homok} are all injective.

\medskip

\noindent{\bf 4.)~Factor consistent:}~We say that $\gamma:\bA\rightarrow N$ is factor consistent if for every $1\leq i\leq k$ and bounded measurable function $f:N\rightarrow\mathbb{C}$ the function $\mathbb{E}(f\circ\gamma|\mathcal{F}_i(\bA))$ is measurable in the $\sigma$-algebra of $\pi_i\circ\gamma$.

\medskip

\begin{theorem}\label{charpres} Let $\gamma:\bA\rightarrow N$ be a $k$-step nilspace factor. Then the following statements are equivalent.
\begin{enumerate}
\item $\gamma$ is character preserving,
\item $\gamma$ is rooted measure preserving,
\item $\gamma$ is measure preserving,
\item $\gamma$ is factor consistent.
\end{enumerate}
\end{theorem}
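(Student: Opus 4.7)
The plan is to establish the four conditions equivalent via the cyclic implications $(2) \Rightarrow (3) \Rightarrow (1) \Rightarrow (4) \Rightarrow (2)$, with each step exploiting a different structural tool: Fubini disintegration, cubic character analysis, Fourier decomposition on the top structure group, and a bundle-theoretic induction on $k$. The implication $(2) \Rightarrow (3)$ is immediate from disintegration: the measures on $C^n(\bA)$ and $C^n(N)$ disintegrate along $\psi_0$ into the uniform measures on the rooted fibers $C^n_a(\bA)$ and $C^n_y(N)$, and since the $n=1$ case of rooted measure preservation specializes to $\gamma$ itself being measure preserving, averaging the measure-preserving rooted maps $\gamma^n_a$ over $a$ yields measure preservation of $\gamma^n$.

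For $(3) \Rightarrow (1)$ I would argue by contradiction. Suppose some nontrivial $\chi \in \hat{A}_i$ lies in the kernel of $\tau_i$, and pick $\phi \in W(\chi, N_i)$ of modulus one via lemma \ref{nilspchar}; then by construction the pullback $\Phi := \phi \circ \pi_i \circ \gamma$ generates the trivial element of $\hat{\bA}_i$, and hence by lemma \ref{erosort} it is $\mathcal{F}_{i-1}(\bA)$-measurable. On $N_i$, lemma \ref{nilspcharderiv} and the orthogonality of the spaces $W(\chi', N_i)$ across $\chi'$ in lemma \ref{nilfourdec} produce a cube integral (a Gowers inner product against a test system in $W(\bar\chi, N_i)$-translates) that does not vanish because it detects $\chi$ faithfully. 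The pullback of this integral to $C^{i+1}(\bA)$ can be computed for $\Phi$ using $\Phi \in L^\infty(\mathcal{F}_{i-1})$ combined with the vanishing result of lemma \ref{charconv}: nontrivial-character contributions from strictly lower levels vanish, forcing a different value. Measure preservation of $\gamma^{i+1}$, item $(3)$, then forces the two integrals to coincide, yielding the desired contradiction and showing $\tau_i$ is injective for every $i$.

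For $(1) \Rightarrow (4)$, fix a bounded measurable $f : N \to \mathbb{C}$ and expand $f = \sum_{\chi \in \hat{A}_k} f_\chi$ via the nilspace Fourier decomposition (lemma \ref{nilfourdec}), with $f_\chi \in W(\chi, N)$. For each nontrivial $\chi$, lemma \ref{homok} places $f_\chi \circ \gamma$ inside the module $\tau_k(\chi)$, which is nontrivial by character preservation; hence lemma \ref{erosort} gives $\mathbb{E}(f_\chi \circ \gamma \mid \mathcal{F}_{k-1}) = 0$. The remaining $\chi = 0$ component equals $\mathbb{E}(f \mid \pi_{k-1}) \circ \gamma$, so summing yields $\mathbb{E}(f \circ \gamma \mid \mathcal{F}_{k-1})$ measurable through $\pi_{k-1} \circ \gamma$, which is factor consistency at level $k-1$. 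Applying the same argument to the inherited $(k-1)$-step character-preserving factor $\pi_{k-1} \circ \gamma : \bA \to N_{k-1}$ and iterating downward delivers consistency at all levels $i \le k-1$.

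The main obstacle is $(4) \Rightarrow (2)$, which I would prove by induction on the step $k$, the case $k = 0$ being trivial. For the inductive step, the composition $\pi_{k-1} \circ \gamma$ inherits factor consistency at all levels $\le k-1$, so by induction it is rooted measure preserving. Theorem \ref{bundec} presents $C^n_a(N)$ as a $C^n(\mathcal{D}_k(A_k))$-bundle over $C^n_{\pi_{k-1}(\gamma(a))}(N_{k-1})$, and the inductive hypothesis handles measure preservation on the base. The extra content is to verify that $\gamma^n_a$ sends the conditional distribution on each bundle fiber to the uniform measure on the corresponding $C^n(\mathcal{D}_k(A_k))$-orbit. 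By Fourier decomposition on $A_k$, this reduces to showing that for every nontrivial $\chi \in \hat{A}_k$ the pullback of a modulus-one generator of $W(\chi, N)$ has zero conditional expectation onto $\mathcal{F}_{k-1}$, which is exactly factor consistency at level $k-1$ applied to that character. Combining base and fiber measure preservation closes the induction and the cycle.
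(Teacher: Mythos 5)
Your cycle $(2)\Rightarrow(3)\Rightarrow(1)\Rightarrow(4)\Rightarrow(2)$ differs from the paper's $(1)\Rightarrow(2)\Rightarrow(3)\Rightarrow(4)\Rightarrow(1)$, and two of your steps are sound: $(2)\Rightarrow(3)$ by disintegrating over the root is equivalent to the paper's face-restriction argument, and $(1)\Rightarrow(4)$ via $L^2(N)=\bigoplus_\chi W(\chi,N)$ and lemma \ref{erosort} is a legitimate variant of the paper's $U_{i+1}$-norm argument. The gaps are in the two remaining implications, which is where the real content of the theorem sits. In $(4)\Rightarrow(2)$ you assert that the vanishing $\mathbb{E}(\phi\circ\gamma\,|\,\mathcal{F}_{k-1})=0$, for a modulus-one $\phi\in W(\chi,N)$ with $\chi$ nontrivial, ``is exactly factor consistency at level $k-1$.'' It is not: factor consistency only says this conditional expectation is measurable in $\sigma(\pi_{k-1}\circ\gamma)$, and by lemma \ref{erosort} the alternative to vanishing is that $\phi\circ\gamma$ is itself $\mathcal{F}_{k-1}$-measurable, hence equal to $h\circ\pi_{k-1}\circ\gamma$. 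Excluding that case is precisely the paper's $(4)\Rightarrow(1)$ argument, which needs the rooted measure preservation of $\gamma_{k-1}$ together with the convolution identity forcing $[h\circ\pi_{k-1}]_{k+1}=[\phi]_{k+1}$, contradicted because $[\phi]_{k+1}$ is a nonzero element of $W(\overline{\chi},N)$. Moreover, even granting the vanishing (which is equivalent to character preservation), your ``Fourier decomposition on $A_k$'' reduction of fiber uniformity to single characters skips the essential work: one must show $[F\circ\gamma](x)=0$ for every system $\{f_v\in W(\chi_v,N)\}$ with $v\mapsto\chi_v$ not in $\hom(K_n,\mathcal{D}_{n-k-1}(\hat{A_k}))$, which is lemma \ref{charconv} applied through the injectivity of $\tau_k$, and one must handle the complementary case by the Gray-code identity of lemma \ref{dualker2}, with Stone--Weierstrass supplying density of the test functions. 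None of this appears in your sketch.

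The mechanism you describe for $(3)\Rightarrow(1)$ also fails as stated. If $\tau_i(\chi)$ is trivial then $\Phi=\phi\circ\pi_i\circ\gamma$ lies in the \emph{trivial} module, so lemma \ref{charconv} (which requires a system of modules not forming a dual cube) yields no vanishing; and the natural nonvanishing integral on $N_i$, namely $\|\phi\|_{U_{i+1}}^{2^{i+1}}>0$, pulls back to $\|\Phi\|_{U_{i+1}}^{2^{i+1}}>0$, which is entirely consistent with $\Phi\in L^\infty(\mathcal{F}_{i-1})$ — no contradiction results. The implication is true, but the correct short argument runs the other way: by lemma \ref{gownilproj}, $\|\phi\|_{U_i}=\|\mathbb{E}(\phi|\pi_{i-1})\|_{U_i}=0$ on $N_i$ since $\chi$ is nontrivial; measure preservation of the induced map on $C^i$ gives $\|\Phi\|_{U_i}=0$; but $\Phi\in L^\infty(\mathcal{F}_{i-1})$ and $U_i$ is a norm on $L^\infty(\mathcal{F}_{i-1})$ by theorem \ref{propfk}, so $\Phi=0$, contradicting $|\Phi|=1$. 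With that substitution, and with $(4)\Rightarrow(2)$ replaced by the paper's two-stage route through character preservation, your cycle would close; as written it does not.
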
 

\begin{definition} A nilspace factor $\gamma$ satisfying the equivalent conditions in theorem \ref{charpres} is called a {\bf strong} nilspace factor. 
\end{definition}

\bigskip

\noindent{\it Proof of theorem \ref{charpres}:} We prove the statement by induction on $k$. If $k$ is $0$ then everything is trivial. Assume that the statement is true for $k-1$. This means that the factor $\gamma_{k-1}$ satisfies all the conditions simultaneously.

\noindent~$(1)\Rightarrow (2)$:~~
Let $x\in\bA$ be an arbitrary element and let $y=\gamma(x)$.
Let $\nu$ denote the probability distribution on $C_y^n(N)$ obtained by composing the uniform distribution on $C_x^n(\bA)$ by $\gamma$. By theorem \ref{bundec} $C_y^n(N)$ is a $C^n_0(\mathcal{D}_k(A_k))$-bundle over $C^n_{\pi_{k-1}(y)}(N_{k-1})$. Thus by induction it is enough to show that $\nu$ is invariant under the natural action of $C_0^n(\mathcal{D}_k(A_k))$ on $C_y^n(N)$.
This invariance can be proved by showing for a function system $U$ which linearly spans an $L^1$-dense set in $L^\infty(C_y^n(N))$ that for every $r\in C_0^n(\mathcal{D}_k(A_k))$ and $u\in U$ the equation $\mathbb{E}_\nu(u)=\mathbb{E}_\nu(u^r)$ holds. (The shift $u^r$ of $u$ is defined as the function satisfying $u^r(y)=u(y+r)$.)
We define $U$ as the collection of all functions $[F]^\times$ 
on $C_y^n(N)$ where $\{\chi_v\in\hat{A_k}\}_{v\in K_n}$ is a system of characters and $f_v\in W{\chi_v,N)}$ is a continuous function for every $v\in K_n$.
The set $U$ is closed under multiplication and contains a separating system of continuous functions. It follows from the Stone-Weierstrass theorem that every function in $L^\infty(C_y^n(N))$ can be approximated by some finite linear combination of elements from $U$.
We have for $r\in C_0^n(\mathcal{D}_k(A_k))$ that   
\begin{equation}\label{chmult}
([F]^\times)^r=[F]^\times\prod_{v\in K_n}\chi_v(r_v).
\end{equation}
where $r_v=\psi_v(r)$ is the component of $r$ at $v$.
There are two cases.
In the first case the function $\beta: v\rightarrow \chi_v$ is not in  $\hom(K_n,\mathcal{D}_{n-k-1}(\hat{A_k}))$. In this case by the character preserving property of $\gamma$ and by lemma \ref{charconv} we get that $\mathbb{E}_\nu([F]^\times)=\mathbb{E}_\mu([F\circ\gamma]^\times)=0$ and so by (\ref{chmult}) we have $\mathbb{E}_\nu([F]^\times)=0=\mathbb{E}_\nu(([F]^\times)^r)$.
In the second case $\beta\in\hom(K_n,\mathcal{D}_{n-k-1}(\hat{A}_k))$. Then we have by lemma \ref{dualker2} that $\prod_{v\in K_n}\chi_v(r_v)=1$.
\bigskip

\noindent$(2)\Rightarrow(3):$~~ Notice that a random element in $C_x^{n+1}(\bA)$ (resp. $C_{\gamma(x)}^{n+1}(N)$) restricted to an $n$ dimensional face of $\{0,1\}^{n+1}$ not containing $0^{n+1}$ is $C^n(\bA)$ (resp. $C^n(N)$) with the uniform distribution. 

\bigskip

\noindent$(3)\Rightarrow(4):$~~ Let $f:N\rightarrow\mathbb{C}$ be anarbitrary measurable function and let $f_1=\mathbb{E}(f|\pi_i)~,~f_2=f-f_1$. We have by lemma \ref{homok} that $f_1\circ\gamma$ is measurable in $\mathcal{F}_i$. By lemma \ref{gownilproj} we have that $\|f_2\|_{U_{i+1}}=\|\mathbb{E}(f_2|\pi_i)\|_{U_{i+1}}=0$. By the measure preserving property of $\gamma$ we get that $$0=\|f_2\|_{U_{i+1}}^{2^{i+1}}=\mathbb{E}((f_2)^\times)=\mathbb{E}((f_2\circ\gamma)^\times)=\|f_2\circ\gamma\|_{U_{i+1}}^{2^{i+1}}.$$ It follows from theorem \ref{propfk} that $f_1\circ\gamma=\mathbb{E}(f\circ\gamma|\mathcal{F}_i)$.

\bigskip

\noindent$(4)\Rightarrow(1):$~~ Let $f:N\rightarrow\mathbb{C}$ be a function in $W(\chi,N)$ of absolute value $1$.  
We show that if $\chi$ is non-trivial then $f\circ\gamma$ is non-trivial in $\hat{\bA}_k$. This is equivalent with saying that $f\circ\gamma$ is not measurable in $\mathcal{F}_{k-1}(\bA)$.
Assume by contradiction that $g=f\circ\gamma$ is measurable in $\mathcal{F}_{k-1}$. We have by factor consistency that there is a function $h:N_{k-1}\rightarrow\mathbb{C}$ such that $f\circ\gamma=h\circ\pi_{k-1}\circ\gamma$ almost everywhere on $\bA$. Our induction hypothesis implies that the factor $\pi_{k-1}\circ\gamma$ is rooted measure preserving. This means by lemma \ref{nilspcharderiv} that for every fix $x\in\bA$ we have $$[f\circ\gamma]_{k+1}(x)=\mathbb{E}([f\circ\gamma]_{k+1}^\times(x))=\mathbb{E}([f]_{k+1}^\times(\gamma(x)))=[f]_{k+1}(\gamma(x)).$$
By $([f\circ\gamma]_{k+1},\overline{f})=\|f\circ\gamma\|^{2^{k+1}}_{U_{k+1}}\neq 0$ we have that $[f\circ\gamma]_{k+1}$ is not identically $0$.  
Furthermore by the induction hypothesis we have that
$$[h]_{k+1}(\pi_{k-1}(\gamma(x)))=[h\circ\pi_{k-1}\circ\gamma]_{k+1}(x)=[f\circ\gamma]_{k+1}(x).$$
It follwos that $[h\circ\pi_{k-1}]_{k+1}=[f]_{k+1}$ holds on $N$. This is a contradiction since by lemma \ref{nilspcharderiv} we have that $[f]_{k+1}\in W(\overline{\chi},N)$ and so it does not factor through $\pi_{k-1}$.

\medskip

\begin{lemma}\label{strongcirc} Let $\gamma:\bA\rightarrow N$ be a strong $k$-step nilspace factor and let $\phi:N\rightarrow M$ be a fibre surjective morphism. Then $\phi\circ\gamma$ is a strong nilspace factor.
\end{lemma}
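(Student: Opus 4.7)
The plan is to verify the rooted measure preserving condition for $\phi\circ\gamma$ and then invoke Theorem \ref{charpres} to conclude strongness. First I would confirm that $\phi\circ\gamma$ is indeed a nilspace factor of $\bA$: continuity is automatic since $\phi$ is a continuous morphism of compact nilspaces and $\gamma$ is continuous by hypothesis; surjectivity of the induced map $(\phi\circ\gamma)^n = \phi^n\circ\gamma^n$ on $n$-cubes follows because $\gamma^n$ is surjective (part of $\gamma$ being a nilspace factor) and $\phi^n$ is surjective (a standard property of fibre surjective morphisms, established in \cite{NP} and implicit in the measure preservation statement recalled in the excerpt just before Theorem \ref{inverselimit}).

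Next, I would verify rooted measure preservation. Fix $a\in\bA$ and $n\in\mathbb{N}$, and factor the induced map as
\[
C^n_a(\bA)\xrightarrow{\gamma^n_a}C^n_{\gamma(a)}(N)\xrightarrow{\phi^n_{\gamma(a)}}C^n_{\phi(\gamma(a))}(M).
\]
The first arrow is measure preserving because $\gamma$ is strong, hence rooted measure preserving by Theorem \ref{charpres}. The second arrow is measure preserving by the characterizing property of fibre surjective morphisms recalled just before Theorem \ref{inverselimit}: such morphisms are measure preserving on every rooted cube set $C^n_x(N)\to C^n_{\phi(x)}(M)$. Composition of measure preserving maps is measure preserving, so $(\phi\circ\gamma)^n_a$ is measure preserving for every $a$ and $n$, which is exactly rooted measure preservation of $\phi\circ\gamma$.

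Applying Theorem \ref{charpres} then immediately yields that $\phi\circ\gamma$ is a strong nilspace factor. The main obstacle here is essentially bookkeeping rather than mathematical: all the real content is bundled into (i) Theorem \ref{charpres} (equivalence of the four characterizations of strongness) and (ii) the measure preservation of fibre surjective morphisms on rooted cube sets quoted from \cite{NP}. The only thing that requires a moment of thought is the surjectivity of $\phi^n$ needed to make $\phi\circ\gamma$ a nilspace factor in the first place; this follows from the fact that $\phi^n$ is a continuous, measure preserving map between compact Hausdorff spaces carrying probability measures of full support, so its image is closed and of full measure, hence equal to the target.
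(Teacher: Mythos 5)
Your proof is correct and follows essentially the same route as the paper: the paper composes the (unrooted) measure preserving maps induced by $\gamma$ and by the fibre surjective $\phi$ and then invokes Theorem \ref{charpres}, while you do the same with the rooted versions, which is an immaterial difference given the equivalences in that theorem. Your extra care about surjectivity of the induced cube maps is a reasonable addition but not a divergence in method.
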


\begin{proof} Using the fact that $\phi$ induces measure preserving maps from $C^n(N)$ to $C^n(M)$ it follows that $\phi\circ\gamma$ is measure preserving. Then theorem \ref{charpres} shows the claim.
\end{proof}

\section{The main theorem}

We are ready to state the main theorem of this paper which is our crucial tool to describe higher order Fourier analysis.
The result is a decomposition theorem of an arbitrary bounded measurable function on $\bA$ into a $k$-th ordered structured part and a $k$-th order random part where randomness is measured by the Gowers norm $U_{k+1}$.

\begin{theorem}[Main theorem]\label{main} Let $f:\bA\rightarrow\mathbb{C}$ be an arbitrary bounded measurable function and $k\in\mathbb{N}$ be a natural number. Then there is a strong $k$-step nilspace factor $\gamma:\bA\rightarrow N$ and a Borel measurable function $h:N\rightarrow\mathbb{C}$ such that $\|f-h\circ\gamma\|_{U_{k+1}}=0$.
\end{theorem}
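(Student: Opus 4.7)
The plan is to prove the equivalent reformulation that every $g \in L^\infty(\mathcal{F}_k(\bA))$ coincides almost surely with $h \circ \gamma$ for some strong $k$-step nilspace factor $\gamma: \bA \to N$ and Borel $h: N \to \mathbb{C}$. The theorem follows by taking $g = \mathbb{E}(f \mid \mathcal{F}_k)$, since by theorem \ref{propfk} we then have $\|f - g\|_{U_{k+1}} = 0$ while $g$ itself is represented as $h \circ \gamma$.

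I would proceed by induction on $k$. The base case $k = 0$ is trivial since $\mathcal{F}_0$ is the trivial $\sigma$-algebra, so $N$ is a point and $h$ is the constant $\mathbb{E}(g)$. For the inductive step, I would first apply the hypothesis to $\mathbb{E}(g \mid \mathcal{F}_{k-1})$ together with a countable reservoir of auxiliary $\mathcal{F}_{k-1}$-measurable functions (to be specified as the construction unfolds, in particular the derivatives $\Delta_t \phi$ appearing below), then glue the resulting factors via theorem \ref{inverselimit} into a single strong $(k-1)$-step factor $\gamma_{k-1}: \bA \to N_{k-1}$ through which all of them factor. The central step is to climb from $\mathcal{F}_{k-1}$ to $\mathcal{F}_k$ via $k$-th order characters: by lemma \ref{charchar} these are exactly the elements of $[\mathcal{F}_{k-1}, k]^*$, and by lemma \ref{charmes} each lies in $L^\infty(\mathcal{F}_k)$. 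The critical structural claim, which is the heart of the argument, is that the $\sigma$-algebra generated by $\gamma_{k-1}$ together with all $k$-th order characters coincides with $\mathcal{F}_k$ up to null sets.

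Granting this density claim, one selects a countable family $\{\phi_i\}$ of $k$-th order characters such that $g$ is measurable in the $\sigma$-algebra that they jointly generate with $\gamma_{k-1}$. For each $i$ and $t \in \bA$, factor consistency of $\gamma_{k-1}$ (equivalent to strongness by theorem \ref{charpres}) writes $\Delta_t \phi_i$ as a Borel pullback from $N_{k-1}$, producing a continuous cocycle over $N_{k-1}$. Let $A_k$ be the compact abelian group dual to the countable discrete abelian group generated by the classes of the $\phi_i$ in $\hat{\bA}_k$ modulo pullbacks from $N_{k-1}$, and define $N = \bA / {\sim}$ where $x \sim y$ iff $\gamma_{k-1}(x) = \gamma_{k-1}(y)$ and $\phi_i(x) = \phi_i(y)$ for every $i$, equipped with the push-forward cubespace structure from $\bA$. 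The cocycle data makes $N$ an $A_k$-bundle over $N_{k-1}$, so by theorem \ref{bundec} the $k$-step nilspace property reduces to verifying that $C^n(N)$ is a $C^n(\mathcal{D}_k(A_k))$-bundle over $C^n(N_{k-1})$; this follows from lemma \ref{dualker2} applied to $\{\phi_i\}$. The choice of $A_k$ is arranged precisely so that the homomorphism $\tau_k: \hat{A}_k \to \hat{\bA}_k$ of lemma \ref{homok} is injective, making $\gamma$ character preserving and therefore strong by theorem \ref{charpres}.

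The main obstacle is the density claim that $k$-th order characters together with $\gamma_{k-1}$ generate all of $\mathcal{F}_k$; this is essentially the higher order Fourier decomposition of theorem \ref{hofdecomp} and must be proved by a separate spectral argument. I would establish it by showing that any $h \in L^2(\mathcal{F}_k)$ orthogonal both to $L^2(\mathcal{F}_{k-1})$ and to every module $W \in \hat{\bA}_k$ must vanish, using the identity $\mathcal{F}_k \circ \psi_0 = \mathcal{A} \circ \psi_0 \wedge [\mathcal{F}_k]_{k+1}^\times$ from theorem \ref{propfk}(5) together with the low rank approximation of lemma \ref{lowrank} to expand $h \circ \psi_0$ as a combination of rank one functions built from $\mathcal{F}_k$-measurable data; a maximality argument then forces the production of a new $k$-th order character witnessing $h \neq 0$, contradicting that the chosen character family was maximal.
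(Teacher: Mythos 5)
Your high-level architecture inverts the logical order of the paper and this creates two genuine gaps. First, your ``density claim'' --- that $\mathcal{F}_k$ is generated by a strong $(k-1)$-step factor together with the $k$-th order characters --- is precisely theorem \ref{hofdecomp}, which in the paper is \emph{derived from} theorem \ref{main} (one decomposes $h$ on the nilspace $N$ via lemma \ref{nilfourdec} and lemma \ref{nilspchar} and pulls back). You acknowledge this needs an independent proof, but the sketch you give does not supply one: lemma \ref{lowrank} and theorem \ref{propfk}(5) only tell you that $h\circ\psi_0$ is approximable by rank one functions $[G]^\times=\prod_v g_v^\con\circ\psi_v$ with $g_v\in L^\infty(\mathcal{F}_k)$, and there is no mechanism in your outline for extracting from such products a function $\phi$ with $|\phi|=1$ and $\Delta_t\phi$ measurable in $\mathcal{F}_{k-1}$. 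Producing characters from the convolution $\sigma$-algebra is exactly what the whole nilspace-factor construction is for; a ``maximality argument'' does not substitute for it.

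Second, even granting the density claim, your verification that $N=\bA/{\sim}$ is a compact $k$-step nilspace is far too quick. Lemma \ref{dualker2} only gives the \emph{invariance} direction --- that the putative $C^n(\mathcal{D}_k(A_k))$-action is compatible with products of characters over cubes; it does not show that the fibres of $C^n(N)\to C^n(N_{k-1})$ are \emph{full} orbits, i.e.\ the completion/unique-gluing axiom. That existence statement is the technical heart of the paper's proof and occupies Steps 3--5 of its outline: the complete dependence of the coupling $\mathcal{B}\circ\Psi^{k+1}_x$ (lemma \ref{compdep}), the uniqueness lemma \ref{uclos} for closures of supports of convolutions, and the fact that every cube is positive (proposition \ref{cubepos}). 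Your proposal has no substitute for any of these. There is also a topological issue you pass over: the characters $\phi_i$ are only $L^\infty$ objects, so the quotient by their level sets is not a priori a Hausdorff topological factor; the paper avoids this by generating the topology of $N$ from the \emph{continuous} convolutions $[F]$ of a separable nil $\sigma$-algebra (lemma \ref{embednil} plus proposition \ref{topologization}), which is a genuinely different, and not interchangeable, construction.
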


\begin{remark} Note that in the theorem \ref{main} the decomposition $f=f_s+f_r$ with $f_s=h\circ\gamma$ and $\|f_r\|_{U_{k+1}}=0$ is unique since $f_s=\mathbb{E}(f|\mathcal{F}_k)$. This follows from the fact that $f_s$ is measurable in $\mathcal{F}_k$ and $f_r$ is orthogonal to $L^2(\mathcal{F}_k)$.
\end{remark}

An equivalent formulation of the main theorem is the following version of it.

\begin{theorem}[Second version of the main theorem]\label{main2} A function $f:\bA\rightarrow\mathbb{C}$ is measurable in $\mathcal{F}_k$ if and only if $f=h\circ\gamma$ for some strong $k$-step nilspace factor $\gamma:\bA\rightarrow N$ and Borel measurable function $h:N\rightarrow\mathbb{C}$.
\end{theorem}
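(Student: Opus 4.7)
The plan is to proceed by induction on $k$, where the base case $k=0$ is trivial since $\mathcal{F}_0$ is the trivial $\sigma$-algebra by Theorem \ref{propfk}(6), so $\mathbb{E}(f|\mathcal{F}_0)$ is a constant and $N$ can be taken to be a point. For the inductive step, I apply the hypothesis to obtain a strong $(k-1)$-step factor $\gamma_{k-1}:\bA\to N_{k-1}$ with $\mathbb{E}(f|\mathcal{F}_{k-1}) = h_{k-1}\circ\gamma_{k-1}$, and I aim to refine this to a strong $k$-step factor $\gamma:\bA\to N$ capturing $\mathbb{E}(f|\mathcal{F}_k)$. Once built, taking $h:N\to\mathbb{C}$ to be the Borel descent of $\mathbb{E}(f|\mathcal{F}_k)$ gives $f - h\circ\gamma = f - \mathbb{E}(f|\mathcal{F}_k)$, which has vanishing $U_{k+1}$ norm by Theorem \ref{propfk}(2).

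The construction of $\gamma$ proceeds in two stages. First, I select the character data. By Theorem \ref{propfk}(1), $\mathcal{F}_k\circ\psi_0 = \mathcal{A}\circ\psi_0 \wedge [\mathcal{A}]_{k+1}^\times$, and Lemma \ref{lowrank} together with Lemma \ref{siggen} show that every function in $L^2(\mathcal{F}_k)$ is approximable in $L^2$ by finite linear combinations of products of an $\mathcal{F}_{k-1}$-measurable function with a $k$-th order character (using Lemma \ref{charmes} to recognise the factors). A standard diagonal argument on the separable $\sigma$-algebra generated by $\mathbb{E}(f|\mathcal{F}_k)$ then produces a countable family $\{\phi_i\}_{i\ge 1}$ of $k$-th order characters such that this $\sigma$-algebra is contained in $\sigma(\gamma_{k-1})\vee\sigma(\phi_i:i\ge 1)$. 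Second, let $\Lambda\le\hat{\bA}_k$ be the countable subgroup generated by the modules of the $\phi_i$ and put $A_k := \widehat{\Lambda}$, a compact metrizable abelian group. I define $\gamma:\bA\to N_{k-1}\times\mathbb{T}^\Lambda$ by $\gamma(a) = (\gamma_{k-1}(a),(\phi(a))_{\phi\in\Lambda})$, and take $N$ to be the closure of the image; continuity of $\gamma$ for the $\sigma$-topology on $\bA$ and countable compactness make $N$ compact Hausdorff. The natural translation action of $A_k$ on $\mathbb{T}^\Lambda$ restricts to a free continuous action on each fibre of $N\to N_{k-1}$, turning $N$ into an $A_k$-bundle over $N_{k-1}$ with cubespace structure induced from $\bA$.

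To verify that $N$ is genuinely a $k$-step nilspace I appeal to Theorem \ref{bundec}: it suffices to show that $C^n(N)$ is a $C^n(\mathcal{D}_k(A_k))$-bundle over $C^n(N_{k-1})$ for every $n$. The critical input is Lemma \ref{charsep}: each $\phi\in\Lambda$ lies in $[\mathcal{F}_{k-1},k]^*$, so $(\phi)_{k+1}^\times$ is measurable in $(\mathcal{F}_{k-1})_{k+1}^\times$, which forces the $(k+1)$-dimensional cube relations on the character-coordinates in $\gamma$ to be determined by the coordinates in $N_{k-1}$; this translates into the required degree-$k$ polynomial relation on the $A_k$-cocycle. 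Once the nilspace structure is established, I verify strongness by showing character preservation: any nontrivial $\chi\in\hat{A_k}$, viewed as a character of $\Lambda$, pulls back through $\gamma$ to a nontrivial element of $\hat{\bA}_k$ by the construction of $A_k$; hence Theorem \ref{charpres} gives strongness.

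The main obstacle is verifying the bundle and gluing axioms simultaneously with the closedness of $C^n(N)\subseteq N^{\{0,1\}^n}$. Concretely, I need not only that the cubes lying over a fixed cube in $N_{k-1}$ form a $C^n(\mathcal{D}_k(A_k))$-torsor, but also that the extension in the gluing axiom is \emph{unique} in dimension $k+1$; for the latter I would invoke Lemma \ref{dualker2} applied to characters in $\Lambda$, which converts the defining $(k+1)$-th order identity of a $k$-th order character into a pointwise uniqueness statement on $N$. The closure step that enlarges $\gamma(\bA)$ to $N$ then has to be reconciled with these identities, which I would handle by showing that the relevant cube relations descend from $\bA$ (where they hold almost everywhere) to $N$ continuously, using that they are defined by continuous functions on the target. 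Finally, Theorem \ref{main2} follows at once: one direction is Theorem \ref{main}, and the converse is Lemma \ref{homok}, which shows that any strong $k$-step nilspace factor is $\mathcal{F}_k$-measurable.
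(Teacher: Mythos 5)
Your closing observation is the easy part and is correct: given Theorem \ref{main}, the ``if'' direction of Theorem \ref{main2} is Lemma \ref{homok}, and the ``only if'' direction follows because $f$ and $h\circ\gamma$ are both measurable in $\mathcal{F}_k$ and differ by a function of zero $U_{k+1}$-norm, on which $U_{k+1}$ is a norm by Theorem \ref{propfk}. The body of your proposal, however, is an attempt to reprove Theorem \ref{main} itself by a different route, and there it contains a circularity. Your first stage asserts that every function in $L^2(\mathcal{F}_k)$ is $L^2$-approximable by finite linear combinations of products of an $\mathcal{F}_{k-1}$-measurable function with a $k$-th order character, citing Lemmas \ref{lowrank}, \ref{siggen} and \ref{charmes}. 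That assertion is precisely Theorem \ref{hofdecomp} combined with Lemma \ref{nilspchar}, and the paper proves Theorem \ref{hofdecomp} \emph{from} the main theorem: one first obtains the strong nilspace factor $\gamma$, then decomposes $h$ on $N$ via Lemmas \ref{nilfourdec} and \ref{nilspchar}, and pulls back. The lemmas you cite do not deliver it: Lemma \ref{lowrank} approximates $[F]\circ\psi_0$ by rank one functions living on $C^{k+1}(\bA)$, not by character products on $\bA$, and Lemma \ref{charmes} runs in the opposite direction (characters are $\mathcal{F}_k$-measurable, not: $\mathcal{F}_k$-measurable functions are generated by characters). A priori it is not even clear that nontrivial $k$-th order characters exist in sufficient supply; producing them is a large part of what the main theorem accomplishes.

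Two further gaps would remain even if the character supply were granted. First, $k$-th order characters are merely measurable, so your map $\gamma(a)=(\gamma_{k-1}(a),(\phi(a))_{\phi\in\Lambda})$ need not be continuous for the $\sigma$-topology on $\bA$; without continuity the image is not countably compact and $N$ need not be a compact Hausdorff factor. This is exactly why the paper's topologization step generates the factor from convolutions $[F]$, which are automatically continuous, rather than from characters, whose continuity (after correction) is again an output of the theorem. Second, the hardest portion of the paper's argument --- complete dependence of the couplings $\mathcal{B}\circ\Psi^{k+1}_x$ (Lemma \ref{compdep}), the uniqueness statement for closures of supports of convolutions (Lemma \ref{uclos}), and the fact that every cube is positive (Proposition \ref{cubepos}), which together yield the unique gluing axiom and the bundle structure --- is compressed in your sketch into the remark that the cube relations ``descend from $\bA$ (where they hold almost everywhere) to $N$ continuously.'' Almost-everywhere identities on $\bA$ do not automatically hold everywhere on the closure of the image; upgrading them is the content of the support-of-measure machinery, and no substitute for it is offered.
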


It is easy to see that in this statement we don't need to assume that $f$ is bounded. Various strengthenings of the main theorem can also be formulated. These statements have essentially the same proof.

\begin{theorem}[Third version of the main theorem]\label{main3} For every separable $\sigma$-algebra $\mathcal{G}\subset\mathcal{F}_k$ there is a separable $k$-step nilspace factor $\gamma:\bA\rightarrow N$ such that $\mathcal{G}$ is contained in the $\sigma$-algebra generated by $\gamma$.
\end{theorem}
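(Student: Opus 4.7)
The plan is to reduce Theorem \ref{main3} to Theorem \ref{main2} by encoding the separable $\sigma$-algebra $\mathcal{G}$ into a single bounded $\mathcal{F}_k$-measurable function, and then passing from the resulting nilspace factor to a separable quotient.

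Since $\mathcal{G}$ is separable, fix a countable family $\{S_n\}_{n=1}^\infty\subseteq\mathcal{G}$ generating $\mathcal{G}$ modulo $\bm$-null sets, and define $f=\sum_{n=1}^\infty 3^{-n}1_{S_n}$. The ternary expansion of the values of $f$ recovers every $1_{S_n}$, so $\sigma(f)\supseteq\mathcal{G}$ up to null sets. Since $\mathcal{G}\subseteq\mathcal{F}_k$, the function $f$ is $\mathcal{F}_k$-measurable, and Theorem \ref{main2} produces a strong $k$-step nilspace factor $\gamma:\bA\to N$ together with a Borel function $h:N\to\mathbb{C}$ satisfying $f=h\circ\gamma$ almost everywhere.

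To reduce to a separable target, apply Theorem \ref{inverselimit} to express $N$ as an inverse limit of finite dimensional (and hence second countable) $k$-step nilspaces $N_\alpha$ with fibre surjective projections $q_\alpha:N\to N_\alpha$. The Borel $\sigma$-algebra of $N$ is generated by the pullbacks of the Borel $\sigma$-algebras of the $N_\alpha$ via the $q_\alpha$, so (by the countable variant of Lemma \ref{sepsiggen} applied to the separable $\sigma$-algebra $\sigma(h)$) there exists a countable subfamily $\{\alpha_j\}_{j=1}^\infty$ such that $h$ is measurable in the $\sigma$-algebra generated by the pullbacks via $q_{\alpha_j}$. Let $q:N\to\prod_j N_{\alpha_j}$ denote the diagonal of $\{q_{\alpha_j}\}$, let $N'$ be the image of $N$ under $q$, and give $N'$ the cubespace structure induced by composing $\gamma':=q\circ\gamma:\bA\to N'$ with cubes in $\bA$. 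The product $\prod_j N_{\alpha_j}$ is a second countable compact Hausdorff space, and countable compactness of $\bA$ in its $\sigma$-topology together with Hausdorff separation shows that $N'$ is closed in the product. A direct verification using that each coordinate projection $N'\to N_{\alpha_j}$ is a nilspace morphism then shows that $N'$ is a compact separable $k$-step nilspace and that the restricted map $q:N\to N'$ is fibre surjective (inherited from fibre surjectivity of each $q_{\alpha_j}$). Lemma \ref{strongcirc} then yields that $\gamma'$ is a strong nilspace factor. Finally $h$ factors as $h'\circ q$ for some Borel $h':N'\to\mathbb{C}$, so $f=h'\circ\gamma'$ and $\mathcal{G}\subseteq\sigma(\gamma')$ modulo null sets.

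The principal obstacle is verifying that the image $N'\subseteq\prod_j N_{\alpha_j}$, with its induced cubespace structure, is indeed a compact $k$-step nilspace on which the diagonal $q:N\to N'$ is fibre surjective. This requires matching the iterated abelian bundle decomposition of $N$ given by Theorem \ref{bundec} with the coordinate fibre surjectivity of the $q_{\alpha_j}$, and is the step where the passage from the possibly non-separable factor supplied by Theorem \ref{main2} to the separable one demanded by Theorem \ref{main3} actually takes place.
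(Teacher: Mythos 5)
Your argument is circular as a proof of Theorem \ref{main3} in the context of this paper. You derive \ref{main3} from Theorem \ref{main2}, but \ref{main2} is not an independently established result here: the paper's entire proof apparatus (Lemma \ref{embednil}, Proposition \ref{topologization}, and the supporting material on nil $\sigma$-algebras, complete dependence of the couplings $\mathcal{B}\circ\Psi^{k+1}_x$, convolutions of open sets, and positivity of cubes) is explicitly organized as a proof of \ref{main3}, from which \ref{main} and \ref{main2} are read off. The actual mathematical content of \ref{main3} is the construction of the nilspace factor: one first enlarges $\mathcal{G}$ to a separable nil $\sigma$-algebra $\mathcal{B}$ (Lemma \ref{embednil}), then takes the topological factor $\gamma$ generated by the $(k+1)$-th order convolutions $[F]$ with $F$ a system in $L^\infty(\mathcal{B})$, and then verifies --- and this is where all the work lies --- that the resulting compact quotient satisfies the unique gluing axiom and is a strong nilspace factor. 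None of this appears in your proposal; what you have shown is only that two statements the paper already declares to have ``essentially the same proof'' imply one another in one direction. That said, your encoding device is correct and worth recording: $f=\sum_n 3^{-n}1_{S_n}$ has only digits $0,1$ in its ternary expansion, so each $1_{S_n}$ is a Borel function of $f$ and $\mathcal{G}\subseteq\sigma(f)$ modulo null sets; this is exactly how one would deduce \ref{main3} from \ref{main2} if \ref{main2} had an independent proof, and conversely it shows the two formulations are genuinely equivalent.

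The second half of your argument is both unnecessary and the place where you yourself flag a gap. In this paper a compact nilspace is by definition second countable (``compact, second countable, Hausdorff''), and a topological factor of $\bA$ is by definition a map onto a countably based Hausdorff space; so the factor $\gamma:\bA\to N$ produced by \ref{main2} is already separable, and $\mathcal{G}\subseteq\sigma(f)\subseteq\sigma(\gamma)$ finishes the reduction with no need to pass to an image $N'$ inside a countable product of finite dimensional quotients. The ``principal obstacle'' you identify --- verifying that $N'$ with the induced cubespace structure is a compact $k$-step nilspace and that the diagonal map is fibre surjective --- is therefore a red herring, while the genuine obstacle (producing any strong nilspace factor from a separable sub-$\sigma$-algebra of $\mathcal{F}_k$ in the first place) is not addressed. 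To repair the proposal you would need to supply the content of Steps 1--5 of the paper's outline, or an independent proof of \ref{main2}.
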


\begin{remark}[Affine invariance]  In theorem \ref{main3} one can also assume that the topological factor $\gamma$ is invariant under a countable set of prescribed invertible affine transformations of the form $\alpha:\bA\rightarrow\bA$ defined by $\alpha(x)=nx+a,~n\in\mathbb{N},a\in\bA$. This helps in connecting our results with classical ergodic theory. 
\end{remark}


\bigskip

\subsection{Outline of the proof}\label{sketch}

In this chapter we list the major components of the proof of theorem \ref{main3}.

\bigskip

\noindent{\bf Step 1. (Finding the $\sigma$-algabra)}~Before finding $\gamma$ we construct a weaker object namely the $\sigma$-algebra generated by $\gamma$. The idea is to mimic the properties of $\mathcal{F}_k$ by a separable sub $\sigma$-algabra containing $\mathcal{G}$. 

\begin{definition} Let $\mathcal{B}\subset\mathcal{A}$ be a $\sigma$-algebra. We say that $\mathcal{B}$ is a {\bf nil $\sigma$-algebra} (of order $k$) if $\mathcal{B}=[\mathcal{B}]_{k+1}$ and for every $2\leq i\leq k+1$ we have that on $C^i(\bA)$ 
\begin{equation}\label{rhsnil}
[\mathcal{B}]_i\circ\psi_0=\mathcal{A}\circ\psi_0\wedge[[\mathcal{B}]_i]_i^\times
\end{equation}
\end{definition}

With the help of the next lemma we extend $\mathcal{G}$ into a separable nil $\sigma$-algebra $\mathcal{B}$.

\begin{lemma}\label{embednil} Every separable sigma algebra in $\mathcal{F}_k$ is contained in a separable nil $\sigma$-algebra of order $k$.
\end{lemma}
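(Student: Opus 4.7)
\noindent{\it Proof plan.}~The plan is a standard L\"owenheim--Skolem style construction. I will build $\mathcal{B}$ as the union of an increasing chain
\[
\mathcal{G} \subseteq \mathcal{B}_0 \subseteq \mathcal{B}_1 \subseteq \mathcal{B}_2 \subseteq \cdots
\]
of separable sub-$\sigma$-algebras of $\mathcal{F}_k$, each obtained from the previous by closing, in a separability-preserving way, under every operation that the nil axioms require. Since $\mathcal{F}_k$ is shift invariant (Theorem \ref{propfk}(4)), I may choose $\mathcal{B}_0 \supseteq \mathcal{G}$ separable and shift invariant.

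Given a separable shift-invariant $\mathcal{B}_n \subseteq \mathcal{F}_k$, let $\mathcal{B}_{n+1}$ be the separable $\sigma$-algebra generated by $\mathcal{B}_n$ together with (i) a countable dense set of its $\bA$-shifts; (ii) for each $2 \leq i \leq k+1$, a countable $L^2$-dense family of convolutions $[F]$ with $F$ ranging over function systems in $L^\infty(\mathcal{B}_n)$ indexed by $K_i$; and (iii) for each $2 \leq i \leq k+1$, the $\sigma$-algebra $\mathcal{D}_i^n$ defined by $\mathcal{D}_i^n \circ \psi_0 = \mathcal{A}\circ\psi_0 \wedge [[\mathcal{B}_n]_i]_i^\times$. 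The algebra $\mathcal{D}_i^n$ is separable: Lemma \ref{convcontlift}, applied with $[\mathcal{B}_n]_i$ in place of $\mathcal{B}$, places $\mathcal{D}_i^n$ inside the separable $\sigma$-algebra $[[\mathcal{B}_n]_i]_i$. Lemma \ref{fkconvfk} together with Theorem \ref{propfk} keeps every addition inside $\mathcal{F}_k$. Setting $\mathcal{B} = \bigvee_n \mathcal{B}_n$ produces a separable shift-invariant sub-$\sigma$-algebra of $\mathcal{F}_k$ containing $\mathcal{G}$.

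To check the nil axioms I need the equalities $\mathcal{B} = [\mathcal{B}]_{k+1}$ and $[\mathcal{B}]_i \circ \psi_0 = \mathcal{A} \circ \psi_0 \wedge [[\mathcal{B}]_i]_i^\times$ for every $2 \leq i \leq k+1$. One direction---convolutions stay inside $\mathcal{B}$ and the meets factor through $\psi_0$---follows by $L^2$-continuity of $[\,\cdot\,]$: every $[F]$ with $F\in L^\infty(\mathcal{B})^{K_i}$ is the $L^2$-limit of its $\mathcal{B}_n$-truncates, which step (ii) already placed in $\mathcal{B}_{n+1}$, while the meet side is handled by the rank-one approximation of Lemma \ref{lowrank}. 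The reverse direction is Lemma \ref{convcontlift}, applied at each finite stage (producing the $\mathcal{D}_i^n$'s inside $\mathcal{B}_{n+1}$) and then passed to the limit.

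The main technical obstacle is commuting the operations $\mathcal{C} \mapsto [\mathcal{C}]_i$ and $\mathcal{C} \mapsto \mathcal{A}\circ\psi_0 \wedge [[\mathcal{C}]_i]_i^\times$ with the countable directed union $\mathcal{B} = \bigvee_n \mathcal{B}_n$. Shift invariance of each $\mathcal{B}_n$, preserved by step (i), is essential: it lets me invoke Remark \ref{aptrans} inside the rank-one expansion of Lemma \ref{lowrank} and thereby keep the approximants inside $[\mathcal{B}_n]_i^\times$, which is precisely what is needed for the $\supseteq$ direction of the meet equalities to survive in the limit.
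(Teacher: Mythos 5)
Your chain construction and your use of Lemma \ref{convcontlift} for the inclusion $\mathcal{C}_i\subseteq[\mathcal{B}]_i$ match the paper, but the step you yourself call essential --- making each $\mathcal{B}_n$ shift invariant by adjoining ``a countable dense set of its $\bA$-shifts'' --- cannot be carried out. The ultraproduct $\bA$ admits no countable set of shifts that is dense in the relevant sense: for a bounded measurable $f=\lim_\omega f_i$ the orbit $\{f_t\}_{t\in\bA}$ is in general a non-separable subset of $L^2(\bA)$ (take $f_i$ quasirandom $\pm1$-valued, so that distinct shifts are pairwise almost orthogonal and hence at $L^2$-distance about $\sqrt{2}$ from one another). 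Consequently no separable $\sigma$-algebra containing such an $f$ is invariant under all shifts, and closing under countably many shifts does not give invariance under the remaining ones. Since your argument for the hard inclusion $[\mathcal{B}]_i\circ\psi_0\subseteq[[\mathcal{B}]_i]_i^\times$ rests entirely on this invariance (to keep the rank-one approximants of Lemma \ref{lowrank} inside the lifted $\sigma$-algebra), the proof collapses here. There is also a secondary slip: even granting shift invariance, Remark \ref{aptrans} only places the approximants in $[\mathcal{B}_n]_i^\times=\bigvee_{v}\mathcal{B}_n\circ\psi_v$, whereas the nil axiom demands the smaller $\sigma$-algebra $[[\mathcal{B}]_i]_i^\times=\bigvee_v(\mathcal{B}\cap\mathcal{F}_{i-1})\circ\psi_v$; one must first replace each $f_v$ by $\mathbb{E}(f_v|\mathcal{F}_{i-1})$, which leaves $[F]$ unchanged, as in the proof of the fifth part of Theorem \ref{propfk}. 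Your step (iii) does not repair any of this: adjoining $\mathcal{D}_i^n$ with $\mathcal{D}_i^n\circ\psi_0=\mathcal{A}\circ\psi_0\wedge[[\mathcal{B}_n]_i]_i^\times$ only feeds the easy inclusion, which Lemma \ref{convcontlift} together with Lemma \ref{weaknilprop} already gives for free once $[\mathcal{B}]_{k+1}\subseteq\mathcal{B}$.

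The paper never makes $\mathcal{B}$ shift invariant. At each stage it invokes the identity $\mathcal{F}_{i-1}\circ\psi_0=\mathcal{A}\circ\psi_0\wedge[\mathcal{F}_{i-1}]_i^\times$ from Theorem \ref{propfk} (whose proof does use shift invariance, but of the canonical, non-separable $\sigma$-algebra $\mathcal{F}_{i-1}$) and then applies Lemma \ref{sepsiggen} to extract a \emph{separable} witness $\mathcal{D}_i\subset\mathcal{F}_{i-1}$ with $[\mathcal{G}_n]_i\circ\psi_0\subseteq[[\mathcal{D}_i]_i]_i^\times$; adjoining these $\mathcal{D}_i$ to the chain forces $\mathcal{D}_i\subseteq\mathcal{B}\cap\mathcal{F}_{i-1}=[\mathcal{B}]_i$ in the limit, and finite-index approximation then yields $[\mathcal{B}]_i\circ\psi_0\subseteq[[\mathcal{B}]_i]_i^\times$. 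If you want to stay closer to your route, replace step (i) by adjoining, for each member of a countable dense family of systems in $L^\infty([\mathcal{G}_n]_i)$ and each rational $\epsilon$, the finitely many shifted functions $(f_v)_{\psi_v(y_j)}$ produced by Lemma \ref{lowrank}: this is a countable addition, and these particular shifts lie in $\mathcal{F}_{i-1}$, which is what the target $\sigma$-algebra actually requires.
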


In the rest of the proof it will be enough to show that each separable nil $\sigma$-algebras is generated by a strong nilspace factor. 

\bigskip

\noindent{\bf Step 2.~(Topologization)}~In this step we construct a topological factor from a separable nil $\sigma$-algebra $\mathcal{B}\subset\mathcal{F}_k$. Let $\gamma:\bA\rightarrow N$ denote the topological factor generated by all convolutions $[F]$ where $\{f_v\}_{v\in K_{k+1}}$ is a function system in $L^\infty(\mathcal{B})$. 
We say that a function $f$ is continuous (resp. a set $S\subseteq\bA$ is open) in $\gamma$ if $f=h\circ\gamma$ (resp. $S=\gamma^{-1}(S')$) for some continuous function $h$ (resp. open set $S'$) on $N$.
Note that $N$ has an inherited cubic structure $\{C^n(N)\}_{n=0}^\infty$ which arises by composing cubes in $\bA$ with $\gamma$.
It will be a useful point of view that $\gamma$ is the topological factor generated by the single map $x\rightarrow \mathcal{B}\circ\Psi^{k+1}_x$ in the coupling topology.
To prove theorem \ref{main3} it is enough to show the following proposition.

\begin{proposition}[topologization]\label{topologization}If $\mathcal{B}$ is a separable nil $\sigma$-algebra of order $k$ then the corresponding topological factor $\gamma:\bA\rightarrow N$ (generated by $k+1$-th order convolutions) is a $k$-step strong nilspace factor. 
\end{proposition}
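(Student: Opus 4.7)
The plan is to proceed by induction on $k$. The base case $k = 0$ is immediate: the only nil $\sigma$-algebra of order $0$ is trivial, so $N$ is a point and a $0$-step nilspace. For the inductive step, the first task is to verify that $[\mathcal{B}]_k$ is itself a nil $\sigma$-algebra of order $k-1$. This requires checking $[\mathcal{B}]_k = [[\mathcal{B}]_k]_k$ together with the nil identities at levels $2 \le i \le k$; both reduce to manipulations of convolutions of $\mathcal{B}$-measurable functions via the low-rank approximation (Lemma~\ref{lowrank}), the chain of inclusions $[\mathcal{B}]_j \subseteq [\mathcal{B}]_{j+1}$, and the hypothesis that $\mathcal{B}$ is already closed under $(k{+}1)$-th order convolution.

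By induction, the topological factor $\gamma_{k-1}:\bA \to N_{k-1}$ generated by $k$-th order convolutions of $L^\infty([\mathcal{B}]_k)$ function systems is a strong $(k{-}1)$-step nilspace factor. Since $[\mathcal{B}]_k \subseteq \mathcal{B}$, the factor $\gamma$ refines $\gamma_{k-1}$ through a continuous surjection $\pi_{k-1}:N \to N_{k-1}$ compatible with cube structures. Next I would identify the $k$-th structure group $A_k$: the translation action of $\bA$ on $L^2(\mathcal{B})$ preserves $L^2([\mathcal{B}]_k)$, and a spectral analysis of the induced unitary representation on the orthogonal complement, combined with Lemmas~\ref{charsep}--\ref{charchar}, exhibits this complement as a direct sum of shift-equivariant rank-one modules indexed by the characters of a compact second countable abelian group $A_k$. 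The action of $\bA$ on fibres of $\pi_{k-1}$ then factors through $A_k$, giving $N$ the structure of an $A_k$-bundle over $N_{k-1}$; separability of $\mathcal{B}$ keeps $A_k$ second countable.

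With the bundle in place, the inherited cube structure $C^n(N) := \gamma^n(C^n(\bA))$ automatically satisfies composition and ergodicity. The crucial claim is that $C^n(N)$ is a $C^n(\mathcal{D}_k(A_k))$-bundle over $C^n(N_{k-1})$, so that $N$ is a $k$-degree bundle in the sense of the definition preceding Theorem~\ref{bundec}. This is precisely where the nil identity
$$[\mathcal{B}]_{k+1} \circ \psi_0 \;=\; \mathcal{A} \circ \psi_0 \,\wedge\, [[\mathcal{B}]_{k+1}]_{k+1}^\times$$
enters: it says that the $\mathcal{B}$-type of a cube at $0^n$ is determined by the $(k{+}1)$-th order convolutions of the cube read from the other vertices, up to exactly the $A_k$-action. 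Combined with the dual-side vanishing in Lemma~\ref{dualker2}, this yields the bundle structure on $C^n(N)$, and then Theorem~\ref{bundec} upgrades $N$ to a $k$-step compact nilspace, with the gluing and uniqueness axioms as a formal consequence.

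Finally, strength of $\gamma$ is verified via Theorem~\ref{charpres}: by construction every nontrivial character $\chi \in \hat{A}_k$ lifts to a function in $L^2(\mathcal{B}) \ominus L^2([\mathcal{B}]_k)$, which by Lemma~\ref{charchar} is genuinely a $k$-th order character of $\bA$ and so represents a nontrivial module in $\hat{\bA}_k$; this shows $\gamma$ is character-preserving at level $k$, while the induction hypothesis handles the lower levels. The main obstacle, I expect, lies in the step that extracts $A_k$ cleanly from the nil identity, i.e.\ showing that the orthogonal complement $L^2(\mathcal{B}) \ominus L^2([\mathcal{B}]_k)$ is spanned by shift-equivariant rank-one modules and that the resulting shift action on fibres of $\pi_{k-1}$ is simultaneously free and transitive. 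The nil identity at $i = k+1$ is what controls the bundle structure and the gluing axiom at once, so extracting the abelian group action from it is the heart of the argument.
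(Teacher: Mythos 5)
Your proposal diverges from the paper's argument at the decisive point, and the divergence hides a genuine gap. The paper does not construct the structure group $A_k$ and the bundle structure first and then invoke Theorem~\ref{bundec}; it verifies the nilspace axioms for $N$ directly. Composition and ergodicity are automatic, and essentially the entire proof is devoted to the gluing axiom and its uniqueness for $n=k+1$. The difficulty is that the nil identity $[\mathcal{B}]_{i}\circ\psi_0=\mathcal{A}\circ\psi_0\wedge[[\mathcal{B}]_i]_i^\times$ is a statement about $\sigma$-algebras, i.e.\ it holds only up to null sets, whereas the gluing axiom is a pointwise topological statement about every corner in $N$. Bridging ``almost every cube'' to ``every cube'' is exactly what the paper's support-of-measure machinery does: complete dependence of the couplings $\mathcal{B}\circ\Psi_x^{k+1}$ (Lemma~\ref{compdep}), the uniqueness statement for closures of supports of convolutions of open sets (Lemma~\ref{uclos}), positivity of every non-empty $\gamma$-open set (Lemma~\ref{openpos}), the extension lemma for simplicial sets (Lemma~\ref{simpglpos}), and finally Proposition~\ref{cubepos} that \emph{every} cube is positive. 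Your assertion that the nil identity ``combined with the dual-side vanishing in Lemma~\ref{dualker2} yields the bundle structure on $C^n(N)$'' and that the gluing axioms then follow ``as a formal consequence'' replaces all of this with a single unsupported sentence; as stated, the step would fail, because nothing in your argument rules out a corner over a null set of configurations that admits no completion, or two completions.

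There is also a circularity in your construction of $A_k$. Decomposing $L^2(\mathcal{B})\ominus L^2([\mathcal{B}]_k)$ into shift-equivariant rank-one modules indexed by the dual of a compact group is, in substance, the higher-order Fourier decomposition (Theorem~\ref{hofdecomp}), which the paper proves \emph{from} the main theorem, i.e.\ from the proposition you are trying to establish. Lemmas~\ref{charsep}--\ref{charchar} give you properties of individual $k$-th order characters but not the spanning statement, and no off-the-shelf spectral theorem applies to the shift representation of $\bA$, which carries only a $\sigma$-topology and is not locally compact. Your final paragraph on character preservation is fine in spirit (the paper closes by observing factor consistency and invoking Theorem~\ref{charpres}), but it rests on the unproved bundle structure. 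To repair the proof you would need to supply, in some form, the positive-cube analysis that occupies the paper's Steps 3--5.
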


An important observation is that the first two nilspace axioms automatically hold in $N$.
The proof of proposition \ref{topologization} deals with the checking of the third nilspace axiom. 
(It will be clear that $\gamma$ is factor consistent and thus it gives a strong nilspace factor.)

We fix a natural number $k$ and by induction we assume that proposition \ref{topologization} is true for $k-1$.
Note that the statement is trivial for $k=0$. Let us introduce the notation  $\mathcal{B}_i:=[\mathcal{B}]_{i+1}=\mathcal{B}\cap\mathcal{F}_i$ for $1\leq i\leq k+1$.
It will be crucial that, by induction, the topological factor $\gamma_{k-1}:\bA\rightarrow N_{k-1}$ corresponding to $\mathcal{B}_{k-1}$ is a strong $k-1$ step nilspace factor. Since $\gamma_{k-1}$ generates a courser topology than $\gamma$ we have a natural projection $\pi_{k-1}:N\rightarrow N_{k-1}$.

\bigskip

\noindent{\bf Step 3.~(Local properties of $\mathcal{B}$)}~ We prove the following measure theoretic analogy of the unique gluing aximom in $k$-step nilspaces. 

\begin{lemma}\label{compdep} Let $\mathcal{B}$ be a nil-$\sigma$-algebra of order $k$ and $x\in\bA$. Then the coupling $\mathcal{B}\circ\Psi^{k+1}_x$ is completely dependent. 
\end{lemma}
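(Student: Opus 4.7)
The statement is the measure-theoretic analog of the unique gluing axiom for $k$-step nilspaces: the value of a $(k+1)$-cube at any vertex should be determined by the values at the other vertices. My plan is to establish this by combining the nil-$\sigma$-algebra condition (\ref{rhsnil}) at level $i=k+1$ with the $(\mathbb{Z}/2)^{k+1}$-symmetry of the cube $\{0,1\}^{k+1}$. Concretely I will show that for every $v\in K_{k+1}$,
\[ \mathcal{B}\circ\psi_v \subseteq \bigvee_{w\in K_{k+1}\setminus\{v\}} \mathcal{B}\circ\psi_w \quad \text{on } C^{k+1}_x(\bA), \]
which is the gluing required for complete dependence.

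First I would rewrite (\ref{rhsnil}) at $i=k+1$ as $\mathcal{B}\circ\psi_0 = \mathcal{A}\circ\psi_0 \wedge [\mathcal{B}]_{k+1}^\times$ on $C^{k+1}(\bA)$, using $\mathcal{B}=[\mathcal{B}]_{k+1}$ and noting that $[\mathcal{B}]_{k+1}^\times = \bigvee_{w\in K_{k+1}}\mathcal{B}\circ\psi_w$. In particular $\mathcal{B}\circ\psi_0 \subseteq \bigvee_{w\in K_{k+1}} \mathcal{B}\circ\psi_w$, the gluing at the base vertex $0$. Next I would extend this to every base vertex by cube symmetry: for each $u\in\{0,1\}^{k+1}$, the translation $\alpha_u\colon w\mapsto w+u\pmod 2$ is a cube morphism (the restriction to $\{0,1\}^{k+1}$ of an affine bijection of $\mathbb{Z}^{k+1}$), so $c\mapsto c\circ\alpha_u$ is a measure-preserving bijection of $C^{k+1}(\bA)$ which interchanges $\psi_w$ with $\psi_{w+u}$. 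Applied to the inclusion above, this yields
\[ \mathcal{B}\circ\psi_u \subseteq \bigvee_{w\in\{0,1\}^{k+1}\setminus\{u\}} \mathcal{B}\circ\psi_w \quad\text{on } C^{k+1}(\bA) \]
for every $u\in\{0,1\}^{k+1}$. Restricting to the fibre $C^{k+1}_x(\bA)$ of $\psi_0$ over $x$, on which $\psi_0\equiv x$ is constant and $\mathcal{B}\circ\psi_0$ is trivial, I would obtain the desired inclusion for every $u\in K_{k+1}$.

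The main technical obstacle is that the global inclusion on $C^{k+1}(\bA)$ only descends by disintegration to the fibre $C^{k+1}_x$ for almost every $x$, whereas the lemma requires it for every $x$. To bridge this gap I would exploit that, because $\mathcal{B}=[\mathcal{B}]_{k+1}$, the $\sigma$-algebra $\mathcal{B}$ is densely generated (via lemma \ref{siggen}) by the convolutions $[F]$, which are everywhere-defined continuous functions on $\bA$ (as noted after the continuity lemma for $\xi$). Combined with the continuity of the map $x\mapsto\Psi^{k+1}_x$ into the coupling space established in Section \ref{cubes}, and the low-rank approximation lemma \ref{lowrank}, one can choose continuous representatives for the identities that witness the containment; using shift invariance of $\mathcal{B}$ (which follows from its generation by convolutions), the identities persist for every $x$, upgrading the a.e.-$x$ statement to the for-every-$x$ statement asserted by the lemma.
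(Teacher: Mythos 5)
Your first two steps are sound: condition (\ref{rhsnil}) at $i=k+1$, together with $\mathcal{B}=[\mathcal{B}]_{k+1}$, does give $\mathcal{B}\circ\psi_0\subseteq\bigvee_{v\in K_{k+1}}\mathcal{B}\circ\psi_v$ on $C^{k+1}(\bA)$, and translation by $u$ is a measure-preserving cube automorphism of $C^{k+1}(\bA)$, so the analogous containment holds at every vertex $u$ \emph{on the full space}. The gap is the passage to the fibre, and it is not a technicality but the entire content of the lemma. The fibre $C^{k+1}_x(\bA)$ is a null set in $C^{k+1}(\bA)$; a containment of $\sigma$-algebras on the full space is witnessed only by $L^2$-approximations, which disintegrate to the fibres only for almost every $x$, with an exceptional null set depending on the function being approximated. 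The lemma is needed (and is used, in the proof of lemma \ref{uclos}) for \emph{every} $x$. Your proposed repair does not close this: complete dependence is not a closed condition in the coupling topology (a sequence of completely dependent couplings can converge weakly to an independent coupling), so the continuity of $x\mapsto\Psi^{k+1}_x$ cannot upgrade an almost-everywhere statement to an everywhere statement; and choosing continuous representatives of convolutions does not help, because the functions $f\circ\psi_z$ and their approximants on the fibre are not convolutions and are not continuous.

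The paper's proof solves exactly this problem by a different device. It works in the doubled cube $T=\{0,1\}^{[k+1]\times[2]}$ and the rooted space $\hom_{0\mapsto x}(T,\bA)$: by lemma \ref{cupis1} and lemma \ref{cupis}, an injective cube morphism $\phi:\{0,1\}^{k+1}\rightarrow T$ whose image avoids the root induces a sub-coupling that is \emph{exactly equivalent} to the unrooted coupling $\Psi^{k+1}$ on $C^{k+1}(\bA)$, and this equivalence holds for every $x$, not just almost every $x$; this is how (\ref{rhsnil}) is applied pointwise. The price is that the resulting gluing of $\mathcal{B}\circ\psi_w$ initially involves auxiliary vertices of $T$ lying outside the original cube, and eliminating them (Claims 1--3 of the paper's proof) requires an induction on $k$ (complete dependence of the lower-order algebras $\mathcal{B}_{d-1}$ on smaller faces), the conditional-independence lemma \ref{siggen2}, and the vanishing lemma \ref{simpzero}. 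None of this machinery --- in particular, no induction on $k$ --- appears in your proposal, which is a strong indication that the short argument cannot be completed as stated.
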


\bigskip

\noindent{\bf Step 4.~(Convolutions of open sets)}~For a set $S\subset \bA$ let $\cl(S)$ denote the closure of $S$ in the topolgy generated by $\gamma$. We prove the next topological statement which is a preparation for the proof of the unique gluing axiom for $N$. 

\begin{lemma}\label{uclos} Let $\varrho:K_{k+1}\rightarrow\bA$ be some function. Then there is at most one element $z
\in N$ with the following property. For every system of $\gamma$-open neighborhoods $U(v)$ of $\varrho(v)$ where $v$ runs through $K_{k+1}$ we have that $\gamma^{-1}(z)\subset\cl({\rm supp}([F]))$ where $F=\{1_{U(v)}\}_{v\in K_{k+1}}$. 
\end{lemma}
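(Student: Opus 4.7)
The plan is to reformulate the lemma's closure condition as a support-membership condition for a natural measure on $N^{\{0,1\}^{k+1}}$, and then to extract uniqueness from the defining equation of a nil $\sigma$-algebra. Let $\mu$ denote the pushforward of the Haar measure on $C^{k+1}(\bA)$ under the continuous map $c\mapsto(\gamma(c(v)))_{v\in\{0,1\}^{k+1}}$, a Borel probability measure on the compact Hausdorff space $N^{\{0,1\}^{k+1}}$. For $F_V=\{1_{\gamma^{-1}(V(v))}\}_{v\in K_{k+1}}$ with each $V(v)$ an open neighborhood of $\gamma(\varrho(v))$ in $N$, the convolution $[F_V]$ is $\mathcal{B}$-measurable by lemma \ref{fkconvfk} and factors as $g_V\circ\gamma$ for a continuous $g_V:N\to[0,1]$; moreover $\mu\bigl(W\times\prod_{v\in K_{k+1}}V(v)\bigr)=\int_{\gamma^{-1}(W)}[F_V]\,d\bm$. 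The first step is to unpack the condition $\gamma^{-1}(z)\subset\cl(\{[F_V]>0\})$ as $z$ lying in the $N$-closure of $\{g_V>0\}$, and hence as $\mu(W\times\prod V(v))>0$ for every open $W\ni z$ in $N$. Ranging over all choices of $W$ and $V$, the property on $z$ becomes $(z,(\gamma(\varrho(v)))_{v\in K_{k+1}})\in\supp(\mu)$, and the lemma reduces to showing that the projection $\pi:\supp(\mu)\to N^{K_{k+1}}$ dropping the $0$-coordinate is injective.

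For this injectivity I plan to use the defining equation (\ref{rhsnil}) of a nil $\sigma$-algebra at $i=k+1$. Since $[\mathcal{B}]_{k+1}=\mathcal{B}$, that equation reads
\[
\mathcal{B}\circ\psi_0\;=\;\mathcal{A}\circ\psi_0\;\wedge\;\bigvee_{v\in K_{k+1}}\mathcal{B}\circ\psi_v
\]
on $C^{k+1}(\bA)$; in particular $\mathcal{B}\circ\psi_0\subseteq\bigvee_{v\in K_{k+1}}\mathcal{B}\circ\psi_v$. Consequently, for any continuous $h:N\to\mathbb{C}$ the $\mathcal{B}\circ\psi_0$-measurable function $h\circ\gamma\circ\psi_0$ admits a representation $G_h\circ(\gamma\circ\psi_v)_{v\in K_{k+1}}$ holding $\bm$-almost surely on $C^{k+1}(\bA)$, for some Borel $G_h:N^{K_{k+1}}\to\mathbb{C}$. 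Pushing this identity through $\gamma^{k+1}$ gives $h(z_0)=G_h((z_v))$ for $\mu$-almost every $(z_0,(z_v))\in N^{\{0,1\}^{k+1}}$. If this identity could be promoted to hold at every point of $\supp(\mu)$, then any two putative points $(z_1,(z_v)),(z_2,(z_v))\in\supp(\mu)$ would force $h(z_1)=h(z_2)$ for every continuous $h$, and the Hausdorff property of $N$ would give $z_1=z_2$.

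The main obstacle, as I anticipate, is precisely this final upgrade from a $\mu$-almost-everywhere identity to a pointwise identity on $\supp(\mu)$, since a merely Borel $G_h$ could have discontinuities on a $\mu$-null set that intersects the support. The plan to overcome this is to exhibit a continuous representative of $G_h$ by exploiting that the topology on $N$ is generated by exactly the continuous convolutions $[F]$ from which $\gamma$ is built: combining lemma \ref{lowrank} (approximation of convolutions by averages of rank-one functions built from continuous test systems) with lemma \ref{prodconv} (control on products of convolutions), I plan to realize $G_h$ as a uniform limit of continuous functions on $N^{K_{k+1}}$, and hence itself continuous. Once $G_h$ is continuous, $\{(z_0,(z_v)):h(z_0)\neq G_h((z_v))\}$ becomes an open $\mu$-null set and hence disjoint from $\supp(\mu)$, closing the argument.
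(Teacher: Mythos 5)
Your reduction of the lemma to injectivity of the coordinate-forgetting projection on $\supp(\mu)$ is a reasonable reformulation, but the argument has a genuine gap exactly where you locate the "main obstacle": the continuity of $G_h$. From (\ref{rhsnil}) at $i=k+1$ you only get that $h\circ\gamma\circ\psi_0$ lies in $L^2\bigl(\bigvee_{v\in K_{k+1}}\mathcal{B}\circ\psi_v\bigr)$, i.e.\ it is an $L^2$-limit of finite sums of rank one functions $\prod_v f_v\circ\psi_v$ with $f_v\in L^\infty(\mathcal{B})$; replacing each $f_v$ by a $\gamma$-continuous approximant still only yields $G_h$ as an $L^2(\mu')$-limit of continuous functions on $N^{K_{k+1}}$, which does not make $G_h$ continuous. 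Lemma \ref{lowrank} and lemma \ref{prodconv} are purely $L^2$ statements and give no uniform control, so they cannot "promote" the almost-everywhere identity $h(z_0)=G_h((z_v))$ to a pointwise identity on $\supp(\mu)$. This upgrade is precisely the hard content of the lemma, and the paper attacks it by an entirely different mechanism: it encodes the closure condition as a limit coupling $\Upsilon$ indexed by the doubled cube $T=\{0,1\}^{[k+1]\times[2]}$, and proves (Claims 1 and 2) that any coupling with the prescribed sub-couplings $\mathcal{B}\circ\Psi^{k+1}_{\varrho(v)}$ and the independence-over-$\mathcal{B}_{i-1}$ property is unique, using lemma \ref{compdep} (complete dependence of $\mathcal{B}\circ\Psi^{k+1}_x$) as the key input. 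Your proposal bypasses lemma \ref{compdep} entirely, which is a strong sign that the essential difficulty has not been addressed.

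A secondary problem is circularity in your very first step. To pass from "$z\in\cl_N(\{g_V>0\})$ for all $V$" to "$\mu(W\times\prod_v V(v))>0$ for all open $W\ni z$" you need that every nonempty $\gamma$-open set has positive measure; that is lemma \ref{openpos}, whose proof in the paper goes through lemma \ref{opcontconv}, which in turn invokes lemma \ref{uclos}. So either you must prove positivity of open sets independently, or you must rephrase the uniqueness argument directly in terms of the closure condition without appealing to support membership.
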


Note that the only application of lemma \ref{compdep} is in the proof of lemma \ref{uclos}.
These two lemmas together form the most technical part of the proof.

\bigskip

\noindent{\bf Step 5.~(Support of measure)}~To prove the unique gluing axiom for $\gamma$ we have to check that if $\varrho:K_{k+1}\rightarrow\bA$ is a morphism then in lemma \ref{uclos} the set $\cl({\rm supp}([F]))$ is not empty.
This will follow by analyzing the support of the measure in the topology generated by $\gamma$ on $C^n(\bA)$ and $C_x^n(\bA)$. 

\begin{definition}[Positive cubes]\label{defposcube} Let $n\in\mathbb{N},x\in\bA$ arbitrary. We say that $c\in C^n(\bA)$ (resp $c\in C^n_x(\bA)$) is positive if it is in the support of the uniform measure on $C^n(\bA)$ (resp. $C^n_x(\bA)$) with respect to the topology generated by $\gamma$. 
\end{definition}

Note that it is not clear from the above definition that if $c\in C^n_x(\bA)$ is positive then $c$ is also positive in the space $C^n(\bA)$. To avoid confusion we will always emphasize the space in which $c$ is positive.
The main result in this part of the proof is that every cube is positive.

\subsection{Nil $\sigma$-algabras}

\begin{lemma}\label{weaknilprop} Let $\mathcal{B}\subset\mathcal{A}$ be a $\sigma$-algebra such that $[\mathcal{B}]_n\subseteq\mathcal{B}$. Then for every $1\leq i\leq j\leq n$ we have the following statements.
\begin{enumerate}
\item For every $f\in L^\infty(\mathcal{B})$ we have $\mathbb{E}(f|[\mathcal{B}]_i)=\mathbb{E}(f|\mathcal{F}_{i-1})$,
\item $[\mathcal{B}]_i=\mathcal{B}\cap\mathcal{F}_{i-1}$~,
\item $[[\mathcal{B}]_i]_j=[\mathcal{B}]_j$.
\end{enumerate}
\end{lemma}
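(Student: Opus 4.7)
The plan is to prove (1) via induction on $i$, and derive (2) and (3) as formal consequences. The key tools are theorem \ref{propfk}, lemma \ref{fkconvfk}, lemma \ref{convcontlift}, lemma \ref{vetites1} and lemma \ref{szomszed}. First I record two easy inclusions. Any $i$-th corner convolution $[F]$ with $F$ on $K_i$ in $L^\infty(\mathcal{B})$ can be rewritten as an $n$-th corner convolution by padding the missing coordinates with $1_{\bA}$ (the extra averaging variables integrate out trivially), so $[\mathcal{B}]_i\subseteq[\mathcal{B}]_n\subseteq\mathcal{B}$ by hypothesis. Second, $[\mathcal{B}]_i\subseteq[\mathcal{A}]_i=\mathcal{F}_{i-1}$ straight from the definition of $\mathcal{F}_{i-1}$. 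Combining, $[\mathcal{B}]_i\subseteq\mathcal{B}\wedge\mathcal{F}_{i-1}$, which is the easy half of (2).

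For (1) I induct on $i$; the base case $i=1$ is trivial since $[\mathcal{B}]_1$ and $\mathcal{F}_0$ are both trivial and each side equals $\mathbb{E}(f)$. For $i\geq 2$ I lift to $C^i(\bA)$ via $\psi_0$: theorem \ref{propfk}(1) gives $\mathcal{F}_{i-1}\circ\psi_0=\mathcal{A}\circ\psi_0\wedge[\mathcal{A}]_i^\times$, and the conditional independence of $\mathcal{A}\circ\psi_0$ from $[\mathcal{A}]_i^\times$ (established inside the proof of lemma \ref{charmes}) yields the identity $\mathbb{E}(f\mid\mathcal{F}_{i-1})\circ\psi_0=\mathbb{E}(f\circ\psi_0\mid[\mathcal{A}]_i^\times)$. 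The crucial claim is that this projection actually lives in $L^2([\mathcal{B}]_i^\times)$; once this is shown, lemma \ref{convcontlift} applied with $\mathcal{B}$ at index $i$ pushes the conclusion down to $\mathbb{E}(f\mid\mathcal{F}_{i-1})\in[\mathcal{B}]_i$, which combined with the trivial inclusion $[\mathcal{B}]_i\subseteq\mathcal{F}_{i-1}$ gives (1). I verify the claim by testing against rank-one generators $h=\prod_{v\in K_i}g_v\circ\psi_v$ with $\mathbb{E}(h\mid[\mathcal{B}]_i^\times)=0$, rewriting $(f\circ\psi_0,h)$ as the $i$-dimensional Gowers inner product of $\{f\}\cup\{g_v\}_{v\in K_i}$, and then decomposing each $g_v=\mathbb{E}(g_v\mid\mathcal{F}_{i-2})+r_v$ (whose residuals satisfy $\|r_v\|_{U_{i-1}}=0$) so that lemma \ref{vetites1} and lemma \ref{szomszed} together with the inductive hypothesis of (1) at index $i-1$ collapse the resulting expansion into terms that all vanish under the assumption on $h$.

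Statement (2) then follows immediately: for $h\in L^\infty(\mathcal{B}\wedge\mathcal{F}_{i-1})$, $h=\mathbb{E}(h\mid\mathcal{F}_{i-1})=\mathbb{E}(h\mid[\mathcal{B}]_i)$ by (1), so $h\in[\mathcal{B}]_i$. For (3), I observe that $\mathcal{C}:=[\mathcal{B}]_i$ inherits the hypothesis of the lemma: $[[\mathcal{B}]_i]_n\subseteq\mathcal{F}_{i-1}$ by lemma \ref{fkconvfk} applied to $[\mathcal{B}]_i\subseteq\mathcal{F}_{i-1}$, and $[[\mathcal{B}]_i]_n\subseteq[\mathcal{B}]_n\subseteq\mathcal{B}$ by monotonicity of $[\,\cdot\,]_n$, so $[[\mathcal{B}]_i]_n\subseteq\mathcal{B}\wedge\mathcal{F}_{i-1}=[\mathcal{B}]_i$ by (2). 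Applying (2) to $\mathcal{C}$ at index $j$ then gives $[[\mathcal{B}]_i]_j=[\mathcal{B}]_i\wedge\mathcal{F}_{j-1}$, which reduces to the right-hand side of (3) by using the monotonicity of $\mathcal{F}_k$ together with $[\mathcal{B}]_i\subseteq\mathcal{F}_{i-1}$. The main obstacle throughout is the rank-one orthogonality step inside the inductive proof of (1): the Gowers inner product bookkeeping must be arranged so that the hypothesis $\mathbb{E}(h\mid[\mathcal{B}]_i^\times)=0$ genuinely cancels every surviving term in the expansion, and this is the place where the inductive hypothesis at index $i-1$ really needs to be fed in to handle the $\mathcal{F}_{i-2}$-measurable parts of the $g_v$.
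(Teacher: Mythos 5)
Your reductions of (2) and (3) to (1) are correct and essentially coincide with the paper's (your observation that $[[\mathcal{B}]_i]_n\subseteq\mathcal{B}\cap\mathcal{F}_{i-1}=[\mathcal{B}]_i$, so that the hypothesis of the lemma passes to $[\mathcal{B}]_i$ with the same $n$, is the right way to set up (3)). The problem is part (1), where your argument is both far heavier than necessary and gapped at its central step. To conclude that $\mathbb{E}(f\circ\psi_0\mid[\mathcal{A}]_i^\times)$ lies in $L^2([\mathcal{B}]_i^\times)$ you must show that $f\circ\psi_0$ is orthogonal to the entire orthocomplement of $L^2([\mathcal{B}]_i^\times)$ inside $L^2([\mathcal{A}]_i^\times)$. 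The functions you test against --- rank-one functions $h$ with $\mathbb{E}(h\mid[\mathcal{B}]_i^\times)=0$ --- lie in that orthocomplement but do not densely span it; the spanning family is $\{h-\mathbb{E}(h\mid[\mathcal{B}]_i^\times)\}$ over all rank-one $h$, and for a general rank-one $h$ the projection $\mathbb{E}(h\mid[\mathcal{B}]_i^\times)$ is neither rank one nor computable coordinatewise (the maps $\psi_v$, $v\in K_i$, are not independent on $C^i(\bA)$, so conditional expectation onto $\bigvee_v\mathcal{B}\circ\psi_v$ does not factor through the product). Moreover, the cancellation mechanism you sketch (splitting each $g_v$ along $\mathcal{F}_{i-2}$ and invoking lemma \ref{vetites1} and lemma \ref{szomszed}) only exploits the relation between $\mathcal{F}_{i-2}$ and $\mathcal{F}_{i-1}$; nothing in it brings the actual hypothesis $\mathbb{E}(h\mid[\mathcal{B}]_i^\times)=0$, which is a statement about $\mathcal{B}$, into the computation. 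Your closing sentence concedes exactly this point, and it is the whole content of the statement.

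The intended argument is short and needs neither the lift to $C^i(\bA)$ nor induction on $i$. Set $f'=\mathbb{E}(f\mid\mathcal{F}_{i-1})$ and $g=f-\mathbb{E}(f'\mid[\mathcal{B}]_i)$. Since $[\mathcal{B}]_i\subseteq\mathcal{F}_{i-1}$, for every $h\in L^\infty([\mathcal{B}]_i)$ one has $(g,h)=(\mathbb{E}(g\mid\mathcal{F}_{i-1}),h)=(f'-\mathbb{E}(f'\mid[\mathcal{B}]_i),h)=0$, so $g\perp L^2([\mathcal{B}]_i)$. By your own inclusion $[\mathcal{B}]_i\subseteq[\mathcal{B}]_n\subseteq\mathcal{B}$ the function $g$ is still in $L^\infty(\mathcal{B})$, hence $[g]_i\in L^\infty([\mathcal{B}]_i)$ by the very definition of $[\mathcal{B}]_i$. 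Now (\ref{coincon}) gives $\|g\|_{U_i}^{2^i}=(g,\overline{[g]_i})=0$, so $\mathbb{E}(g\mid\mathcal{F}_{i-1})=0$ by theorem \ref{propfk}; that is, $f'=\mathbb{E}(f'\mid[\mathcal{B}]_i)$, so $f'$ is $[\mathcal{B}]_i$-measurable and (1) follows. The identity $\|g\|_{U_i}^{2^i}=(g,\overline{[g]_i})$ is the tool your proposal never uses, and it is precisely what lets the hypothesis $[\mathcal{B}]_n\subseteq\mathcal{B}$ do its work.
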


\begin{proof} Let $f'=\mathbb{E}(f|\mathcal{F}_{i-1})$ and Let $g=f-\mathbb{E}(f'|[\mathcal{B}]_i)$. Using the fact that $[\mathcal{B}]_i\subseteq\mathcal{F}_{i-1}$ we obtain that if $h\in L^\infty([\mathcal{B}]_i)$ then $$(g,h)=(\mathbb{E}(g|\mathcal{F}_{i-1}),h)=(f'-\mathbb{E}(f'|[\mathcal{B}]_i),h)=0.$$ We have by $[g]_i\in L^\infty([\mathcal{B}]_i)$ that $\|g\|_{U_i}^{2^i}=(g,\overline{[g]})=0$. It follows that $0=\mathbb{E}(g|\mathcal{F}_{i-1})$ and thus $f'=\mathbb{E}(f'|[\mathcal{B}]_i)$ implying that $f'\in L^\infty([\mathcal{B}]_i)$. Using again that $[\mathcal{B}]_i\subseteq\mathcal{F}_{i-1}$ the proof of the first statement is complete. The second statement follows immediately from the first one.
To see the third statement observe that $[[\mathcal{B}]_i]_i\subseteq [\mathcal{B}]_i$ and thus the second statement applied for $[\mathcal{B}]_i$ we obtain that $[[\mathcal{B}]_i]_j=[\mathcal{B}]_i\cap\mathcal{F}_{j-1}=(\mathcal{B}\cap\mathcal{F}_{i-1})\cap\mathcal{F}_{j-1}
=\mathcal{B}\cap\mathcal{F}_{j-1}=[\mathcal{B}]_j$. 
\end{proof}

\medskip

An immediate corollary of lemma \ref{weaknilprop} is the following,

\begin{corollary}\label{nilsubnil} If $\mathcal{B}\subset\mathcal{A}$ is a nil $\sigma$-algebra of order $k$ and $1\leq i\leq k+1$ then $[\mathcal{B}]_{i+1}=\mathcal{B}\cap\mathcal{F}_i$ is a nil $\sigma$-algebra of order $i$.
\end{corollary}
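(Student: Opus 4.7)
Write $\mathcal{C}:=[\mathcal{B}]_{i+1}$. The plan is to verify the two defining axioms of a nil $\sigma$-algebra of order $i$ for $\mathcal{C}$, namely (a) $\mathcal{C}=[\mathcal{C}]_{i+1}$ and (b) for every $2\le j\le i+1$,
$$[\mathcal{C}]_j\circ\psi_0 \;=\; \mathcal{A}\circ\psi_0 \wedge [[\mathcal{C}]_j]_j^\times$$
on $C^j(\bA)$. The whole argument is a bookkeeping exercise that reduces both axioms for $\mathcal{C}$ to the corresponding axioms for $\mathcal{B}$ via lemma \ref{weaknilprop}.

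The first step is to check the hypothesis of lemma \ref{weaknilprop} for $\mathcal{C}$ with $n=i+1$, i.e.\ $[\mathcal{C}]_{i+1}\subseteq\mathcal{C}$. This is immediate: $[\mathcal{C}]_{i+1}$ is generated by $(i+1)$-st order convolutions of functions in $\mathcal{C}=[\mathcal{B}]_{i+1}\subseteq\mathcal{B}$, and any such convolution is in turn an $(i+1)$-st order convolution of functions in $\mathcal{B}$, hence lies in $[\mathcal{B}]_{i+1}=\mathcal{C}$. Now lemma \ref{weaknilprop} (item 2) applied to $\mathcal{C}$ yields $[\mathcal{C}]_{i+1}=\mathcal{C}\wedge\mathcal{F}_i$, and since $\mathcal{C}=[\mathcal{B}]_{i+1}\subseteq\mathcal{F}_i$ we get $[\mathcal{C}]_{i+1}=\mathcal{C}$, which is axiom (a).

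For axiom (b) I would first identify $[\mathcal{C}]_j$ with $[\mathcal{B}]_j$ for every $2\le j\le i+1$. Using lemma \ref{weaknilprop} item 2 once for $\mathcal{C}$ and once for $\mathcal{B}$ (the latter applies because $\mathcal{B}$ is a nil $\sigma$-algebra of order $k$, so $[\mathcal{B}]_{k+1}\subseteq\mathcal{B}$), we compute
$$[\mathcal{C}]_j \;=\; \mathcal{C}\wedge\mathcal{F}_{j-1} \;=\; \mathcal{B}\wedge\mathcal{F}_i\wedge\mathcal{F}_{j-1} \;=\; \mathcal{B}\wedge\mathcal{F}_{j-1} \;=\; [\mathcal{B}]_j,$$
where the middle equality uses $j-1\le i$. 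In particular $[[\mathcal{C}]_j]_j^\times=[[\mathcal{B}]_j]_j^\times$. By lemma \ref{weaknilprop} item 3 applied to $\mathcal{B}$ we also have $[[\mathcal{B}]_j]_j=[\mathcal{B}]_j$, so the nil $\sigma$-algebra axiom of $\mathcal{B}$ at level $j$ reads $[\mathcal{B}]_j\circ\psi_0=\mathcal{A}\circ\psi_0\wedge[[\mathcal{B}]_j]_j^\times$, which upon substituting the identifications above becomes exactly the desired axiom (b) for $\mathcal{C}$ at level $j$.

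There is no real obstacle here; the only thing to be careful about is the index range in lemma \ref{weaknilprop} (outer index at least as large as inner) and the fact that the equality $[\mathcal{C}]_j=[\mathcal{B}]_j$ relies crucially on $j-1\le i$, which is why the conclusion is the nil $\sigma$-algebra property only up to order $i$ and not higher. Once these identifications are made, both axioms for $\mathcal{C}$ follow immediately from the corresponding axioms for $\mathcal{B}$.
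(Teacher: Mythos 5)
Your argument is correct and is exactly the unpacking the paper intends: the corollary is stated with no written proof beyond the phrase ``an immediate corollary of Lemma \ref{weaknilprop}'', and your reduction of both axioms for $\mathcal{C}=[\mathcal{B}]_{i+1}$ to those of $\mathcal{B}$ via items 2 and 3 of that lemma (after checking $[\mathcal{C}]_{i+1}\subseteq\mathcal{C}$ so the lemma applies to $\mathcal{C}$) is precisely that unpacking. The one point your proof does not cover is the endpoint $i=k+1$: there the required identity at level $j=i+1=k+2$ is not among the stated axioms of $\mathcal{B}$, which only run up to $j=k+1$, so that degenerate case (where $\mathcal{C}=\mathcal{B}$) would need a separate argument along the lines of Lemmas \ref{convcontlift} and \ref{lowrank} as in Theorem \ref{propfk}; for $1\leq i\leq k$, which is the only range the paper actually uses, your proof is complete.
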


\medskip

\noindent{\it Proof of lemma \ref{embednil}}~Starting with a separable $\sigma$-algebra $\mathcal{G}\subset\mathcal{F}_k$ we construct an increasing sequence of separable $\sigma$-algebras $\mathcal{G}=\mathcal{G}_0\subset\mathcal{G}_1\subset\mathcal{G}_2\subset\dots$ in $\mathcal{F}_k$ in the following way. Assume that $\mathcal{G}_n$ is already constructed. 
Theorem \ref{propfk} together with lemma \ref{sepsiggen} imply that for every $2\leq i\leq k+1$ there is a separable $\sigma$-algebra $\mathcal{D}_i\subset\mathcal{F}_{i-1}$ such that $[\mathcal{G}_n]_i\circ\psi_0\subset [[\mathcal{D}_i]_i]_i^\times$. We define  $\mathcal{G}_{n+1}=(\vee_{i=2}^{k+1}\mathcal{D}_i)\vee\mathcal{G}_n\vee[\mathcal{G}_n]_{k+1}$. Let $\mathcal{B}=\vee_{n=1}^\infty\mathcal{G}_n$. We claim that $\mathcal{B}$ is a nil $\sigma$-algebra. 

Since $\{\mathcal{G}_i\}_{i=0}^\infty$ is a chain we have that for every set $S$ in $\mathcal{B}$ and $\epsilon>0$ there is an index $j$ and set $S'\in\mathcal{G}_j$ such that $\bm(S\triangle S')\leq\epsilon$. Furthermore for every function in $f\in L^\infty_u(\mathcal{B})$ and $\epsilon>0$ there is an index $j$ and function $f'\in L_u^\infty(\mathcal{G}_j)$ such that $\|f-f'\|_2\leq\epsilon$. We say that $S'$ (resp. $f'$) is a finite index $\epsilon$-approximation of $S$ (resp. $f$).

Let $F=\{f_v\}_{v\in K_{k+1}}$ be a function system in $L^\infty_u(\mathcal{B})$. Then the convolution $[F]$ can be approximated arbitrarily well by a convolution of finite index $\epsilon$-approximations of the function system. On the other hand such convolutions are contained in some memeber of the chain $\{\mathcal{G}_n\}_{n=1}^\infty$. It follows that  $[\mathcal{B}]_{k+1}\subseteq\mathcal{B}$. Since $\mathcal{B}\subset\mathcal{F}_k$ we have by lemma \ref{weaknilprop} that $[\mathcal{B}]_{k+1}=\mathcal{B}\cap\mathcal{F}_k=\mathcal{B}$.
Let $\mathcal{C}_i$ be the unique sigma algebra on $\bA$ with $\mathcal{C}_i\circ\psi_0=\mathcal{A}\circ\psi_0\cap[[\mathcal{B}]_i]_i^\times$.
Similarly to the case of convolutions, by considering finite index approximations, we have for $2\leq i\leq k+1$ that $[\mathcal{B}]_i
\subseteq\mathcal{C}_i$. On the other hand we have by lemma \ref{convcontlift} that $\mathcal{C}_i\subset[\mathcal{B}]_i$.

\subsection{Local properties of $\mathcal{B}$}

In this chapter we prove lemma \ref{compdep}. By induction we assume that the statement is true for $k-1$. We start by proving the following statement using the induction hypothesis.

\medskip

\noindent{\bf Claim 1.}~~{\it Assume that $S_1,S_2\subset\{0,1\}^n$ are simplicial sets such that $S_1$ has dimension at most $k$. Let $w\in S_2\setminus S_1$. On $C^n_x(\bA)$ let
$$\mathcal{D}_1=\bigvee_{v\in (S_1\cup S_2)\setminus\{0,w\}}\mathcal{B}\circ\psi_v~~~~~~{\it and}~~~~~~\mathcal{D}_2=\bigvee_{v\in S_2\setminus\{0,w\}}\mathcal{B}\circ\psi_v.$$
Assume that on $C^n_x(\bA)$ that the $\sigma$-algebra $\mathcal{B}\circ\psi_w$ is contained in $\mathcal{D}_1$ then it is also contained in $\mathcal{D}_2$.}

\medskip

We prove the statement by induction on the size of $S_1\setminus S_2$. If $S_1\subset S_2$ then there is nothing to prove. Assume that $S_1\nsubseteq S_2$ and $u$ is a maximal element of $S_1$ which is not contained in $S_2$. Then $d=d(u)=h(u)\leq k$.
On the space $C^n_x(\bA)$ let $$\mathcal{D}_3=\bigvee_ {(S_1\cup S_2)\setminus\{0,u\}}\mathcal{B}\circ\psi_v~~~~~~{\rm and}~~~~~~\mathcal{D}_4=\bigvee_ {(S_1\cup S_2)\setminus\{0,u,w\}}\mathcal{B}\circ\psi_v.$$
Our induction hypothesis of the lemma on $k$ guarantees that $\mathcal{B}_{d-1}\circ\psi_u$ is contained in $\bigvee_{0\neq v<u}\mathcal{B}_{d-1}\circ\psi_v$ which is contained in $\mathcal{D}_4$.

Assume that $f\in L^\infty(\mathcal{B})$ has the property that $\mathbb{E}(f|\mathcal{B}_{d-1})=0$. Then by lemma \ref{weaknilprop} we have that $\mathbb{E}(f|\mathcal{F}_{d-1})=0$ and thus $\|f\|_{U_d}=0$. It follows by lemma \ref{simpzero} that $f\circ\psi_u$ is orthogonal to the $\sigma$-algebra $\mathcal{D}_3$.
This means that $\mathcal{B}\circ\psi_u$ and $\mathcal{D}_3$ are conditionally independent. Furthermore $\mathcal{B}\circ\psi_0\wedge\mathcal{D}_3=\mathcal{B}_{d-1}\circ\psi_u\subset\mathcal{D}_4$.
We get by using lemma \ref{siggen2} that $$\mathcal{B}\circ\psi_w\subset\mathcal{D}_1\wedge\mathcal{D}_3=(\mathcal{B}\circ\psi_u\vee\mathcal{D}_4)\wedge\mathcal{G}_3=(\mathcal{B}\circ\psi_u \wedge\mathcal{D}_3)\vee\mathcal{D}_4=\mathcal{D}_4.$$ 
Since $S_1\setminus\{u\}$ is a simplicial set, by induction we have that $\mathcal{B}\circ\psi_w$ is contained in $\mathcal{D}_2$.

\bigskip

Now we switch to the proof of the lemma. Let $z\in K_{k+1}$ be arbitrary. Our goal is to show that on $C^{k+1}_x(\bA)$ the $\sigma$-algebra $\mathcal{B}\circ\psi_z$ is generated by the system $\{\mathcal{B}\circ\psi_v\}_{z\neq v\in K_{k+1}}$. Let $T=\{0,1\}^{[k+1]\times[2]}$. 
We will need the following special subsets of $T$. 
$$S_1=\{v~|~v_{i,1}=0~{\rm for}~1\leq i\leq k+1\} ~~,~~S_2=\{v~|~v_{i,2}=0~{\rm for}~1\leq i\leq k+1\},$$
$$S_3=\{v~|~v_{k+1,2}=0~,~h(v)\leq k~,~v_{i,1}v_{i,2}=0~{\rm for}~1\leq i\leq k\},$$
$$S_4=\{v~|~v_{k+1,2}=0~,~v_{i,1}v_{i,2}=0~{\rm for}~1\leq i\leq k\}.$$
It is clear that all the sets $S_1,S_2,S_3,S_4$ are simplicial. 
Without loss of generality we can assume that the last coordinate of $z$ is $1$.
Let $w$ be the vector such that $w_{i,1}=0$ and $w_{i,2}=z_i$ for $1\leq i\leq k+1$.
On $\hom_{0\mapsto x}(T,\bA)$ let $\mathcal{G}_1=\bigvee_{v\in S_1\setminus\{w,0\}}\mathcal{B}\circ\psi_v$ and for $2\leq i\leq 4$ let
$\mathcal{G}_i=\bigvee_{v\in S_i\setminus\{0\}}\mathcal{B}\circ\psi_v$.
The statement of the lemma is equivalent with the fact that $\mathcal{B}\circ\psi_w$ is contained in $\mathcal{G}$. 

\medskip

\noindent{\bf Claim 2.}~$\mathcal{B}\circ\psi_w\subseteq \mathcal{G}_1\vee\mathcal{G}_4$

\medskip

Let $\phi:\{0,1\}^{k+1}\rightarrow T$ be defined such that $\phi(v)_{i,1}=0~,~\phi(v)_{i,2}=v_i$ if $1\leq i\leq k$ and $\phi(v)_{k+1,1}=1-v_{k+1}~,~\phi(v)_{k+1,2}=v_{k+1}$. We have that $\phi(z)=w$. Let $K=\{0,1\}^{k+1}\setminus\{z\}$. If $v\in K$ then $\phi(v)\in S_1\cup S_4$ and thus $\mathcal{B}\circ\psi_{\phi(v)}\in\mathcal{G}_1\vee\mathcal{G}_4$.
Since by lemma \ref{cupis1} and lemma \ref{cupis} the coupling $\{\psi_{\phi(v)}\}_{v\in\{0,1\}^{k+1}}$ is the same as $\Psi^{k+1}$ by (\ref{rhsnil}) applied for $i=k+1$ we get that $\mathcal{B}\circ\psi_w$ is generated by $\bigvee_{v\in K}\mathcal{B}\circ\psi_{\phi(v)}$. This shows the claim. 

\medskip

\noindent{\bf Calim 3.}~ $\mathcal{G}_2\vee\mathcal{G}_3=\mathcal{G}_4$.

\medskip

The containment $\subseteq$ is trivial. For a vector $v$ in $T$ let $h^*(v)=\sum_{i=1}^{k+1}v_{i,2}$.
We prove by induction on $h^*(v)$ that if $v\in S_4$ then $\mathcal{B}\circ\psi_v$ is in $\mathcal{G}_2\vee\mathcal{G}_3$. If $h^*(v)=0$ then $v\in S_2$ and the statement is trivial.
Assume that the statement holds for every $v\in S_4$ with $h^*(v)\leq n-1$ (where $n\geq 1$)  and that $h^*(b)=n$ for some $b\in S_4$.
If $h(b)\leq k$ then $b\in S_3$ and the statement is trivial. We can assume that $h(b)=k+1$.
This means that $b_{i,1}+b_{i,2}=1$ for every $1\leq i\leq k+1$. Let $\phi:\{0,1\}^{k+1}\rightarrow S_4$ be defined such that $\phi(v)_{i,1}=v_i~,~\phi(v)_{i,2}=0$ if $b_{i,2}=0$ and $\phi(v)_{i,1}=1-v_i~,~\phi(v)_{i,2}=v_i$ if $b_{i,2}=1$. It is clear that $\phi$ is an injective cube morphism and that the image of $\phi$ does not contain $0$. Let $K=\{0,1\}^{k+1}\setminus\{1\}$. For every $v\in K$ we have that either $h^*(\phi(v))<n$ or $\phi(v)\in S_3$. Using the induction hypothesis, in both cases $\mathcal{B}\circ\psi_{\phi(v)}$ is in $\mathcal{G}_2\vee\mathcal{G}_3$. Since by lemma \ref{cupis1} and lemma \ref{cupis} the coupling $\{\psi_{\phi(v)}\}_{v\in\{0,1\}^{k+1}}$ is the same as $\Psi^{k+1}$ we obtain by (\ref{rhsnil}) and $b=\phi(1)$ the $\mathcal{B}\circ\psi_b$ is generated by $\bigvee_{ v\in K}\mathcal{B}\circ\psi_{\phi(v)}$ which is conatined in $\mathcal{G}_2\vee\mathcal{G}_3$. This finishes the proof of the claim.

\medskip

By claim 2. and claim 3. we obtain that $\mathcal{B}\circ\psi_w\subseteq\mathcal{G}_1\vee\mathcal{G}_2\vee\mathcal{G}_3$. Now claim 1. shows that $\mathcal{G}_3$ can be omitted and thus $\mathcal{B}\circ\psi_w\subseteq\mathcal{G}_1\vee\mathcal{G}_2$. On the other hand since $\mathcal{G}_2$ is independent from $\mathcal{G}_5=\mathcal{B}\circ\psi_w\vee\mathcal{G}_1$ we obtain from lemma \ref{siggen2} that $$\mathcal{B}\circ\psi_w\subset (\mathcal{G}_2\vee\mathcal{G}_1)\wedge\mathcal{G}_5=
(\mathcal{G}_2\wedge\mathcal{G}_5)\vee\mathcal{G}_1=\mathcal{G}_1.$$

\subsection{Convolutions of open sets}\label{conopen}

In this chapter we prove lemma \ref{uclos}.
Assume that there exists such an element $z$ and that $z=\gamma(x)$ for some $x\in\bA$.
The statement of the lemma is equivalent with saying that the coupling $\mathcal{B}\circ\Psi_x^{k+1}$ is uniquely determined by the couplings $\mathcal{B}\circ\Psi_{\varrho(v)}^{k+1}$ where $v$ runs through $K_{k+1}$. 
Our strategy is to put all these couplings into one big coupling $\Upsilon$ as sub-couplings and then we do the calculations in $\Upsilon$. The first part of the proof deals with the construction of $\Upsilon$.

\medskip

\noindent{\bf Construction of $\Upsilon$:}~~For every $v\in K_{k+1}$ let us choose a decreasing sequence $\{U_i(v)\}_{i=1}^\infty$ of $\gamma$ neighborhoods of $\varrho(v)$ which forms a neighborhood basis.
Let $F_i=\{1_{U_i(v)}\}_{v\in K_{k+1}}$ and $\{x_i\}_{i=1}^\infty$ be a sequence such that $\lim_{i\to\infty}\gamma(x_i)=z$ and $x_i\in{\rm supp}([F_i])$.
Let $T=\{0,1\}^{[k+1]\times[2]}$ and let $\tilde{T}$ be the set of vectors $v$ in $T$ with $v_{i,1}v_{i,2}=0$ for $1\leq i\leq k+1$.  Let $S=\{v~|~v_{i,1}=0~{\rm for}~1\leq i\leq k+1\}$. Let $t\in T$ be the vector with $t_{i,1}=0,t_{i,2}=1$ for $1\leq i\leq k+1$. Let $\tau:\{0,1\}^{k+1}\rightarrow S$ be the map such that $\tau(v)_{i,1}=0$ and $\tau(v)_{i,2}=1-v_i$. In particular $\tau(0)=t$.
Let $Q_i$ be the probability space of cubes $c\in\hom_{t\mapsto x_i}(T,\bA)$ conditioned on the event that $c(\tau(v))\in U_i(v)$ holds for every $v\in K_{k+1}$. The fact that $[F_i](x_i)\neq 0$ guarantees that we condition on a positive probability event. We will use $Q_i$ to define a coupling $\Upsilon_i$ on copies of $\mathcal{B}$ indexed by $T\setminus S$. 

We will see that the system $\{\psi_v\}_{v\in T\setminus S}$ restricted to $Q_i$ is a coupling of copies of $(\bA,\mathcal{A},\mu)$ and $\Upsilon_i$ will be defined as the factor coupling according to $\mathcal{B}\subset\mathcal{A}$. 
In order to show this we establish $\Upsilon_i$ as a convex combination of couplings.
Let $Q_i'$ be the probability space of cubes $c\in\hom(S,\bA)$ with $c(t)=x_i$ conditioned on the event that $c(\tau(v))\in U_i(v)$. Let $r:Q_i\rightarrow Q_i'$ be the restriction map to $S$. It is clear that $r$ is measure preserving and the preimage $r^{-1}(c)$ for $c\in Q_i'$ is the set $\hom_c(T,\bA)$. Since the restriction of $\{\psi_v\}_{v\in T\setminus S}$ to each set $\hom_c(T,\bA)$ is a coupling of copies of $(\bA,\mathcal{A},\mu)$ the restriction of $\{\psi_v\}_{v\in T\setminus S}$ to $Q_i$ is the convex combination of these couplings with distribution given by $Q_i'$.

The final step is to obtain $\Upsilon$ as the limit of some convergent subsequence from $\{\Upsilon_i\}_{i=1}^\infty$ in the coupling topology. 


\medskip

We will need the following notation. For every $w\in\{0,1\}^{k+1}$ let $\phi_w:\{0,1\}^{k+1}\rightarrow \tilde{T}$ such that $\phi_w(v)_{i,1}=v_i$ and $\phi_w(v)_{i,2}=(1-w_i)(1-v_i)$ holds for $i\in [k+1]$. It is clear that for every fixed $w$ the map $\phi_w$ is a cube morphism. For $1\leq i\leq k+1$ we denote by $T_i\subset T$ the set of vectors $v\in T$ whose coordinate sum $h(v)$ is at most $i$ and let $\tilde{T}_i=T_i\cap\tilde{T}$. We can think of $T_i$ as the $i$-dimensional frame of $T$.

\medskip

\noindent{\bf Claim 1.}~~{\it The coupling $\Upsilon$ has the following two properties.
\begin{enumerate}
\item The sub coupling of $\Upsilon$ induced by  $\phi_w:K_{k+1}\rightarrow T\setminus S$ is equal to $\mathcal{B}\circ\Psi_{\varrho(w)}^{k+1}$ for every $w\in K_{k+1}$ and is equal to $\mathcal{B}\circ\Psi_x^{k+1}$ if $w=0$.
\item Let $\Upsilon^i$ denote the sub-coupling of $\Upsilon$ induced by $T_i\setminus S\rightarrow T\setminus S$. Then $\Upsilon_i$ is independent over its $\mathcal{B}_{i-1}$ factor for $1\leq i\leq k+1$.
\end{enumerate}
}

\medskip

The first property follows from lemma \ref{cupis1}, lemma \ref{cupis} and the way $\Upsilon_i$ is obtained as a convex combination of couplings. 

To see the second property this let $\Upsilon^i_j$ denote the subcoupling of $\Upsilon_j$ induced by the map $T_i\setminus S\rightarrow T\setminus S$. It is obviously enough to show that $\Upsilon^i_j$ is independent over its $\mathcal{B}_{i-1}$ factor.
Let $F=\{f_v\}_{v\in T_i\setminus S}$ be a bounded $\mathcal{B}$ measurable function system on $\bA$ and assume that 
$\mathbb{E}(f_{v'}|\mathcal{B}_{i-1})=0$ for some $v'\in T_i\setminus S$. By lemma \ref{weaknilprop} we obtain that $\mathbb{E}(f_{v'}|\mathcal{F}_{i-1})=0$ and so $\|f_{v'}\|_{U_i}=0$. We have to show that $\xi(\Upsilon^i_j,F)=0$.
Let $G=\{g_v\}_{v\in T\setminus\{t\}}$ be the extended function system defined as follows. If $v\in T_i\setminus S$ then $g_v=f_v$, if $v\in T\setminus (T_i\cup S)$ then $g_v=1$ and if $v\in {S\setminus\{t\}}$ then $g_v$ is the characteristic function of $U_i(\tau^{-1}(v))$. By definition we have that $\xi(\Upsilon^i_j,F)=[G^\con](x_i)$ where the convolution is taken at $t$. Using that $S\cup T_i$ is simplicial and that $d(v')\leq i$ we get by lemma \ref{simpzero} that $[G^\con](x_i)=0$.

\medskip

\noindent{\bf Claim 2.}~~{\it There is at most one self coupling $\Theta$ of $\mathcal{B}$ with index set $\tilde{T}\setminus S$ with the following two properties.
\begin{enumerate}
\item The sub coupling of $\Theta$ induced by  $\phi_w:K_{k+1}\rightarrow \tilde{T}\setminus S$ is equal to $\mathcal{B}\circ\Psi_{\varrho(w)}^{k+1}$ whenever $w\neq 0$,
\item Let $\Theta^i$ denote the sub-coupling of $\Theta$ induced by $\tilde{T}_i\setminus S\rightarrow \tilde{T}\setminus S$. $\Theta_i$ is independent over its $\mathcal{B}_{i-1}$ factor for $1\leq i\leq k+1$.
\end{enumerate}
}

Assume that there is such a coupling $\Theta=\{\theta_v\}_{v\in\tilde{T}\setminus S}$. We prove by induction on $i$ that by the conditions on the claim the coupling $\Theta_i$ is uniquely determined. For $i=1$ the coupling $\Theta_i$ is independent and so it is a unique object. Assume that the uniqueness is verified for $i-1$ and $i>1$. The second condition implies that it is enough to prove the uniqueness of the $\mathcal{B}_{i-1}$ factor of $\Theta_i$. We use a second induction to show this.

For a vector $v$ in $T$ let $h^*(v)=\sum_{i=1}^{k+1}v_{i,2}$ and let $\tilde{T}_{i,n}=\tilde{T}_{i-1}\cup\{z|h^*(z)\leq n~,~z\in\tilde{T}_i\}$.
We prove by induction on $n=h^*(v)$ that the $\mathcal{B}_{i-1}$ factor of the sub-coupling on $\tilde{T}_{i,n}$ is uniquely determined by the $\mathcal{B}_{i-1}$ factor of the sub-coupling on $\tilde{T}_{i,n-1}$. If $n=0$ then from $v\in \tilde{T}_i\setminus \tilde{T}_{i-1}$ we have that $h(v)=i$ and every vector $v'< v$ is in $\tilde{T}_{i-1}$. Since by lemma \ref{compdep} the sub-coupling of $\mathcal{B}_{i-1}$ on $\{v'|v'\leq v,v'\neq 0\}$ is completely dependent and is isomorphic to $\mathcal{B}_{i-1}\circ\Psi_{\varrho(1)}^i$ the statement is clear.  

Assume by induction that the statement holds for $n-1$ and $h^*(v)=n$.  Let $v',w\in\{0,1\}^{k+1}$ be the vectors with $v'_i=v_{i,1}+v_{i,2}$ and $w_i=1-v_{i,2}$. Let $Q=\{z|z\neq 0~,~z\leq v'\}\subset\{0,1\}^{k+1}$ and let $\alpha:Q\rightarrow\tilde{T}$ be the restriction on $\phi_w$ to $Q$. It is clear that every element in the image of $\alpha$ which is not equal to $v$ is either in $\tilde{T}_{i-1}$ or has $h^*$ value at most $n-1$. By the first property of $\Theta$ we have that the restriction of $\Theta$ to the image of $\alpha$ is $\Psi_{\varrho(w)}^i$. Using lemma \ref{compdep} and our induction step we obtain the induction statement for $v$.

\medskip

Let $\tilde{\Upsilon}$ be the sub-coupling of $\Upsilon$ on $\tilde{T}$. We get from our two claims that $\tilde{\Upsilon}$ is uniquely determined by the function $\gamma\circ\varrho$.  Since $\mathcal{B}\circ\Psi_x^{k+1}$ is  sub-coupling of $\tilde{\Upsilon}$ we get that $\gamma(x)$ is uniquely determined and so the proof of the lemma is complete.

\medskip

\subsection{Support of measure}

In this chapter we prove statements related to definition \ref{defposcube}.
Notice that $c\in C^n_x(\bA)$ if and only for every system of open sets $\{U(v)\}_{v\in K_n}$ with $c(v)\in U(v)$ we have that if $F=\{1_{U(v)}\}_{v\in K_n}$ then $[F](x)>0$. General properties of supports of measures on compact spaces imply the next lemma. 

\begin{lemma}\label{ascubepos} Let $x\in\bA~,~n\in\mathbb{N}$ be arbitrary. Then almost every $c\in C^n(\bA)$ ($c\in C^n_x(\bA)$) is positive. Furthermore positive cubes in $C^n(\bA)$ (resp. $C_x^n(\bA)$) form a closed set in the topology generted by $\gamma$.
\end{lemma}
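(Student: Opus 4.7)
\smallskip\noindent\textit{Proof proposal.} I plan to reduce both assertions to the standard fact that the support of a Borel probability measure on a second-countable compact Hausdorff space is closed and has full measure. The vehicle is the pushforward of the cube measure under the map $\Gamma : c \mapsto \gamma \circ c$. I describe the argument for $C^n_x(\bA)$; the argument for $C^n(\bA)$ is word-for-word the same, with the conditioning point dropped.

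First I would unravel the definition of the $\gamma$-topology on $C^n_x(\bA)$: it is by construction the one generated by the sets $\bigcap_{v\in K_n}\psi_v^{-1}(\gamma^{-1}(U(v)))$ for open $U(v)\subseteq N$, i.e. the pullback via $\Gamma$ of the product topology on $C^n_{\gamma(x)}(N)$. A basic $\gamma$-neighborhood of $c$ therefore has the form $\Gamma^{-1}\!\bigl(\prod_{v\in K_n}U(v)\bigr)$ with $U(v)\ni\gamma(c(v))$, and its $\mu$-measure (where $\mu$ is the uniform probability measure on $C^n_x(\bA)$) is precisely $[F](x)$ for $F=\{1_{\gamma^{-1}(U(v))}\}_{v\in K_n}$. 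By the definition of positivity in \ref{defposcube}, the cube $c$ is positive iff every basic open neighborhood of $\Gamma(c)$ in $C^n_{\gamma(x)}(N)$ has positive pushforward measure $\nu:=\Gamma_\ast\mu$.

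Next, since $\mathcal{B}$ is separable the topology on $N$ is second countable, and hence so is $C^n_{\gamma(x)}(N)$ (as a closed subspace of a finite product of copies of $N$). For Borel probability measures on second countable spaces, the support $\mathrm{supp}(\nu)$ is closed, and its complement, being the union of all open $\nu$-null sets, can be taken as a countable union from a fixed countable base; thus $\nu(\mathrm{supp}(\nu))=1$. Combined with the previous paragraph, the set of positive cubes equals $\Gamma^{-1}(\mathrm{supp}(\nu))$, which is closed in the $\gamma$-topology (as preimage of a closed set under a $\gamma$-continuous map) and has full $\mu$-measure.

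The only point that requires care, rather than being outright routine, is the identification of the $\gamma$-topology on $C^n_x(\bA)$ as the pullback of a product topology on $C^n_{\gamma(x)}(N)$, and the second-countability of $N$ (equivalently $C^n_{\gamma(x)}(N)$) needed to conclude that $\mathrm{supp}(\nu)$ has full $\nu$-mass; both are immediate from the way $\gamma$ was constructed from a separable nil $\sigma$-algebra, so no substantial obstacle is anticipated.
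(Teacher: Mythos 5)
Your proof is correct and is essentially the argument the paper has in mind: the paper simply asserts the lemma follows from "general properties of supports of measures on compact spaces," and your write-up supplies exactly that — identifying the $\gamma$-topology on $C^n_x(\bA)$ as the pullback under $c\mapsto\gamma\circ c$ and invoking that the support of a Borel probability measure on a second-countable space is closed and conull. No gap; you have just made explicit what the paper leaves implicit.
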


\begin{lemma}\label{opcontconv} Let $x\in\bA$. Then for every $\gamma$-open set $U$ containing $x$ there is a system of $\gamma$-open sets $\{U(v)\}_{v\in K_{k+1}}$ such that if $F=\{1_{U(v)}\}_{v\in K_{k+1}}$ then $x\in{\rm supp}([F])\subseteq U$.
\end{lemma}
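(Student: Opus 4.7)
The plan is to apply lemma \ref{uclos} to derive a contradiction. By lemma \ref{ascubepos} the set of positive cubes in $C^{k+1}_x(\bA)$ has full measure, so in particular it is nonempty; fix a positive $c\in C^{k+1}_x(\bA)$ and put $\varrho=c|_{K_{k+1}}$. Positivity of $c$ says precisely that for every system of $\gamma$-open neighborhoods $U(v)\ni c(v)$ with $v\in K_{k+1}$, the convolution $[F]$ with $F=\{1_{U(v)}\}$ satisfies $[F](x)>0$, and hence $x\in{\rm supp}([F])$. Since each $1_{U(v)}$ is $\mathcal{B}$-measurable, $[F]$ is $\gamma$-continuous by the very definition of the topological factor $\gamma$, so its support is $\gamma$-saturated and agrees with its $\gamma$-closure. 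Consequently $\gamma^{-1}(\gamma(x))\subseteq\cl({\rm supp}([F]))$, showing that $\gamma(x)$ satisfies the property featured in lemma \ref{uclos} with respect to $\varrho$.

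Write the given neighborhood as $U=\gamma^{-1}(V)$ with $V\ni\gamma(x)$ open in $N$, and suppose for contradiction that no system $\{U(v)\ni c(v)\}$ yields ${\rm supp}([F])\subseteq U$. Since $N$ is compact Hausdorff and second countable, hence metrizable, fix a decreasing countable neighborhood base $\{W_n(v)\}_n$ of $\gamma(c(v))$ in $N$ and set $U_n(v)=\gamma^{-1}(W_n(v))$. For each $n$ there is a point $y_n\in{\rm supp}([F_n])\setminus U$ with $F_n=\{1_{U_n(v)}\}$, and then $\gamma(y_n)\in N\setminus V$. By compactness of $N\setminus V$, passing to a subsequence we may assume $\gamma(y_n)\to z$ for some $z\in N\setminus V$.

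The key step is to verify that $z$ also satisfies the property of lemma \ref{uclos} for $\varrho$. Given an arbitrary system of $\gamma$-open $W(v)\ni c(v)$, write $W(v)=\gamma^{-1}(\tilde W_v)$; the basis property gives $W_n(v)\subseteq\tilde W_v$ for all sufficiently large $n$, hence $U_n(v)\subseteq W(v)$, and monotonicity of convolutions of nonnegative functions yields $[F_n]\le[G]$ pointwise for $G=\{1_{W(v)}\}$. Thus ${\rm supp}([F_n])\subseteq{\rm supp}([G])=\gamma^{-1}(T_G)$ for some closed $T_G\subseteq N$, so $\gamma(y_n)\in T_G$ eventually; closedness of $T_G$ then forces $z\in T_G$, i.e., $\gamma^{-1}(z)\subseteq\cl({\rm supp}([G]))$. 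Both $\gamma(x)$ and $z$ therefore satisfy the hypothesis of lemma \ref{uclos}, whence its uniqueness clause gives $z=\gamma(x)$, contradicting $\gamma(x)\in V$ and $z\notin V$. The main obstacle is precisely this uniform-in-$W(v)$ check for the limit point $z$, which rests on the monotonicity $[F_n]\le[G]$, on the $\gamma$-continuity of $(k{+}1)$-th order convolutions of $\mathcal{B}$-measurable functions built into the definition of $\gamma$, and on the $U_n(v)$ forming a genuine $\gamma$-neighborhood base of the $c(v)$.
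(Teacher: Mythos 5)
Your proof is correct and follows essentially the same route as the paper's: pick a positive cube $c\in C^{k+1}_x(\bA)$, run a descending $\gamma$-neighborhood basis around the points $c(v)$, and if the conclusion failed, extract a limit point outside $U$ that also satisfies the hypothesis of Lemma \ref{uclos}, contradicting its uniqueness clause. The only cosmetic difference is that you take a convergent subsequence of $\gamma(y_n)$ in the metrizable space $N$, whereas the paper intersects the descending chain of nonempty $\gamma$-closed sets $(\bA\setminus U)\cap\cl(\supp([F_i]))$ directly; these are the same compactness argument.
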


\begin{proof} Let $c\in C^{k+1}_x$ be positive. For every $i\in\mathbb{N}$ let $\{U_i(v)\}_{v\in K_{k+1}}$ be a system of open sets such that $\{U_i(v)\}_{i=1}^\infty$ is a descending neighborhood basis for $c(v)$. Let $F_i=\{1_{U_i(v)}\}_{v\in K_{k+1}}$. Assume by contradiction that $(\bA\setminus U)\cap{\rm supp}([F_i])\neq\emptyset$. Then since $B_i=(\bA\setminus U)\cap\cl({\rm supp}([F_i]))$ is a descending chain of $\gamma$-closed sets we have that there is an element $y\in\cap_{i=1}^\infty B_i$. Lemma \ref{uclos} implies that $\gamma(x)=\gamma(y)$ which contradicts the fact that $x$ and $y$ are separated by the $\gamma$-open set $U$. 
\end{proof}

\begin{definition} Let $x\in\bA$. We say that $f:\bA\rightarrow\mathbb{R}$ is a convolution neighborhood of $x$ if $f(x)\neq 0$ and $f=[F]$ where $F=\{f_v\}_{v\in K_{k+1}}$ is a function system consisting of $0-1$ valued $\mathcal{B}$-measurable functions. 
\end{definition}

Note that the values of a convolution neighborhood $f$ of $x$ are non-negative and by lemma \ref{opcontconv} for every open neighborhood $U$ of $x$ there is a convolution neighborhood $f$ of $x$ such that ${\rm supp}(f)\subseteq U$.

\begin{lemma}\label{convnproj} Let $f$ be a convolution neighborhood of $x\in\bA$. Then there is a $\gamma_{k-1}$ continuous non-negative function $g$ such that $g=\mathbb{E}(f|\mathcal{F}_{k-1})$ (almost everywhere) and $g(x)>0$.
\end{lemma}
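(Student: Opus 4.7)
The plan is to identify $g$ explicitly using Lemma \ref{vetites1}, lift it to a corner convolution on the $(k-1)$-step factor $N_{k-1}$ via the inductive version of Proposition \ref{topologization}, and then verify $\gamma_{k-1}$-continuity and positivity at $x$ by separate arguments.

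First I would set $g_v:=\mathbb{E}(f_v|\mathcal{F}_{k-1})$ and $G=\{g_v\}_{v\in K_{k+1}}$; Lemma \ref{vetites1} then gives $[G]=\mathbb{E}([F]|\mathcal{F}_{k-1})=\mathbb{E}(f|\mathcal{F}_{k-1})$ almost everywhere. Since $f_v\in L^\infty(\mathcal{B})$, the first statement of Lemma \ref{weaknilprop} shows $g_v=\mathbb{E}(f_v|[\mathcal{B}]_k)=\mathbb{E}(f_v|\mathcal{B}_{k-1})$, so each $g_v$ is $\mathcal{B}_{k-1}$-measurable. By the inductive hypothesis, Proposition \ref{topologization} holds for the nil $\sigma$-algebra $\mathcal{B}_{k-1}$ (of order $k-1$ by Corollary \ref{nilsubnil}), so $\gamma_{k-1}$ is a strong $(k-1)$-step nilspace factor whose generated Borel $\sigma$-algebra on $\bA$ coincides with $\mathcal{B}_{k-1}$; hence $g_v=h_v\circ\gamma_{k-1}$ almost everywhere for Borel functions $h_v:N_{k-1}\to\mathbb{C}$. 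Define $g:=[H]\circ\gamma_{k-1}$ with $H=\{h_v\}_{v\in K_{k+1}}$ and $[H]$ the corner convolution on the compact nilspace $N_{k-1}$.

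Next I would prove $g=[G]$ pointwise on $\bA$. Rooted measure preservingness of $\gamma_{k-1}$ makes the composition map $C^{k+1}_x(\bA)\to C^{k+1}_{\gamma_{k-1}(x)}(N_{k-1})$ measure preserving for every $x\in\bA$; combined with the measure preservingness of each $\psi_v$ on $C^{k+1}_x(\bA)$ (for $v\in K_{k+1}$), the $\bA$-null set $\{g_v\neq h_v\circ\gamma_{k-1}\}$ pulls back to a null set in $C^{k+1}_x(\bA)$. Thus almost surely $\prod_v g_v(\psi_v(c))=\prod_v h_v(\gamma_{k-1}\circ\psi_v(c))$, and integrating together with the pushforward identification on the right gives $[G](x)=[H](\gamma_{k-1}(x))=g(x)$ for every $x$. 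Since corner convolutions of bounded measurable function systems on a compact nilspace are continuous (the compact-nilspace analogue of the corollary to the cubic-coupling discussion for abelian groups), $[H]$ is continuous on $N_{k-1}$ and $g$ is $\gamma_{k-1}$-continuous.

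For positivity at $x$, observe $f\geq 0$ and each $g_v\geq 0$ because the $f_v$ are $\{0,1\}$-valued. Setting $E_v=\{g_v=0\}\in\mathcal{F}_{k-1}$, the identity $\int_{E_v}f_v=\int_{E_v}g_v=0$ and $f_v\geq 0$ force $f_v=0$ almost everywhere on $E_v$. Transferring through the measure-preserving $\psi_v$, almost every $c\in C^{k+1}_x(\bA)$ satisfies the implication $g_v(c(v))=0\Rightarrow f_v(c(v))=0$ for every $v\in K_{k+1}$. The hypothesis $f(x)=\mathbb{E}_c\prod_v f_v(c(v))>0$ together with $f_v\in\{0,1\}$ produces a positive-measure set of cubes $c$ with $f_v(c(v))=1$ for all $v$; on such $c$ every $g_v(c(v))$ is strictly positive, so $\prod_v g_v(c(v))>0$ on a positive-measure set and $g(x)=[G](x)>0$. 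The main technical obstacle is securing the pointwise identity $[G]=[H]\circ\gamma_{k-1}$ at the prescribed point $x$ rather than merely almost everywhere, and this is exactly where the rooted measure preservingness of both $\gamma_{k-1}$ and the cube couplings on $\bA$ is essential.
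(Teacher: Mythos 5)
Your proposal is correct and follows essentially the same route as the paper's proof: project each $f_v$ to $\mathcal{F}_{k-1}$, apply Lemma \ref{vetites1} to identify $[G]$ with $\mathbb{E}(f|\mathcal{F}_{k-1})$, use Lemma \ref{weaknilprop} and the induction hypothesis that $\gamma_{k-1}$ is a strong nilspace factor to realize $[G]$ as a continuous convolution on $N_{k-1}$, and deduce positivity at $x$ from the fact that $g_v>0$ almost everywhere on $f_v^{-1}(1)$. The extra care you take with the pointwise (rather than a.e.) identity $[G]=[H]\circ\gamma_{k-1}$ via rooted measure preservation is a legitimate filling-in of a step the paper leaves implicit, not a different argument.
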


\begin{proof} Assume that $f=[F]$ with a $0-1$ valued $\mathcal{B}$ measurable function system $F=\{f_v\}_{v\in K_{k+1}}$. Let $G=\{g_v\}_{v\in K_{k+1}}$ where $g_v=\mathbb{E}(f_v|\mathcal{F}_{k-1})$ and $g=[G]$. Note that by lemma \ref{weaknilprop} each $g_v$ is measurable in $\mathcal{B}_{k-1}$. Then by lemma \ref{vetites1} we have that $g=\mathbb{E}(f|\mathcal{F}_{k-1})$. Furthermore, since $g_v$ is almost surely positive on $f^{-1}_v(1)$ we have that $[G](x)>0$. Finally, using our induction hypothesis that $\gamma_{k-1}$ is a strong nilspace factor we get that $g=[G']\circ\gamma$ where $G'=\{g'_v\}_{v\in K_{k+1}}$ is a function system on $N_{k-1}$ such that $g_v=g'_v\circ\gamma_{k-1}$. We obtain that $g$ is $\gamma_{k-1}$-continuous. 
\end{proof}

\begin{lemma}\label{openpos} Every non empty $\gamma$-open set has positive measure.
\end{lemma}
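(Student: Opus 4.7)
The plan is to proceed by induction on $k$, running in parallel with the overarching induction used to prove Proposition \ref{topologization}. When $k=0$ the factor $\gamma$ maps onto a one-point space and the statement is trivial, so I may assume inductively that every non-empty $\gamma_{k-1}$-open subset of $\bA$ has positive measure.

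Fix an arbitrary $x$ in the non-empty $\gamma$-open set $U$. First I would apply Lemma \ref{opcontconv} to produce a system of $\gamma$-open sets $\{U(v)\}_{v\in K_{k+1}}$ whose indicator functions $F = \{1_{U(v)}\}_{v \in K_{k+1}}$ yield a convolution $f := [F]$ with $x \in {\rm supp}(f) \subseteq U$. Because the $1_{U(v)}$ are real-valued, the conjugation operator $\con$ appearing in the defining formula \eqref{corner} for $[F]$ acts trivially, so each factor in the integrand is non-negative and consequently $f \geq 0$ pointwise. Since $x \in {\rm supp}(f)$ the set $\{f > 0\}$ is non-empty, so I can pick any $y \in \{f > 0\}$; note $y \in U$ since ${\rm supp}(f) \subseteq U$.

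Next I would invoke Lemma \ref{convnproj} at the point $y$: this produces a $\gamma_{k-1}$-continuous non-negative function $g$ with $g = \mathbb{E}(f|\mathcal{F}_{k-1})$ almost everywhere and $g(y) > 0$. By the $\gamma_{k-1}$-continuity of $g$, the set $V := \{g > 0\}$ is a non-empty $\gamma_{k-1}$-open set, and the inductive hypothesis now gives $\bm(V) > 0$, hence $\int g \, d\bm > 0$. Since conditional expectation preserves integrals, $\int f\, d\bm = \int g\, d\bm > 0$, and because $f \geq 0$ is supported inside $U$ this forces $\bm(U) > 0$.

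There is essentially no serious obstacle beyond correctly setting up the induction on $k$; the substantive technical work is already packaged in Lemmas \ref{opcontconv} and \ref{convnproj}. The only subtlety worth flagging is that $x \in {\rm supp}([F])$ does not automatically imply $[F](x) > 0$, which is why I pass to an auxiliary point $y$ inside $\{f > 0\}$ before invoking Lemma \ref{convnproj} to obtain the strictly positive value of $g$ that drives the argument.
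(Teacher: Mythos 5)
Your proof is correct and follows essentially the same route as the paper: induction on $k$, reduction to showing $\int [F]\,d\bm>0$ for a convolution neighborhood supported in $U$, projection to $\mathcal{F}_{k-1}$ via Lemma \ref{convnproj}, and the inductive hypothesis applied to the $\gamma_{k-1}$-open set where the projection is positive. The extra care in passing to an auxiliary point $y$ with $[F](y)>0$ is harmless (and in fact unnecessary, since $\mathrm{supp}$ here denotes the non-vanishing set, so Lemma \ref{opcontconv} already gives $[F](x)>0$).
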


\begin{proof} We prove the statement by induction on $k$. For $k=0$ it is clear and we assume that it is true for $\gamma_{k-1}$. Let $U$ be a non-empty $\gamma$-open set, $x\in U$ and $f$ be a convolution neighborhood of $x$ such that ${\rm supp}(f)\subseteq U$. It is enough to show that $\mathbb{E}(f)>0$. On the other hand the function $g$ satisfying the conditions of lemma \ref{convnproj} has the property that $\mathbb{E}(g)=\mathbb{E}(f)$. Furthermore since $g$ is not identically $0$ and non-negative we have by our induction hypothesis that $\mathbb{E}(g)>0$.
\end{proof}

\begin{lemma}\label{pospreshef} Let $c\in C^n(\bA)$ be a positive cube and let $\phi:\{0,1\}^m\rightarrow\{0,1\}^n$ be an injective cube morphism. Then $c_2=c\circ\phi\in C^m(\bA)$ is also a positive cube. 
\end{lemma}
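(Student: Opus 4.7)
The plan is to exhibit a continuous, surjective, measure-preserving map $\hat\phi : C^n(\bA) \to C^m(\bA)$ defined by $\hat\phi(c) = c\circ\phi$, and then read off the conclusion from a routine support-transfer argument. Recall that the $\gamma$-topology on $C^n(\bA)$ is generated by the evaluation maps $c\mapsto\gamma(c(v))$ for $v\in\{0,1\}^n$; in particular a basic $\gamma$-open set in $C^m(\bA)$ has the form $\{c'\in C^m(\bA) : \gamma(c'(v))\in V_v\text{ for all }v\in\{0,1\}^m\}$ for $\gamma$-open $V_v\subseteq N$. Since $\gamma(\hat\phi(c)(v)) = \gamma(c(\phi(v)))$, the preimage of such a basic open set under $\hat\phi$ is the basic $\gamma$-open subset of $C^n(\bA)$ obtained by placing the constraint $\gamma(c(w))\in V_{\phi^{-1}(w)}$ at each $w\in\phi(\{0,1\}^m)$ and no constraint elsewhere; hence $\hat\phi$ is continuous.

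The second input I would collect is that $\hat\phi$ is a surjective continuous group homomorphism between the compact abelian groups $C^n(\bA)$ and $C^m(\bA)$, whence it is automatically measure preserving with respect to the Haar (uniform) measures. Surjectivity is where the injectivity of $\phi$ enters. Using the parametrization from chapter \ref{cubes}, write a cube in $C^n(\bA)$ as $c_{x,t_1,\dots,t_n}(e) = x + \sum_i t_i e_i$. The combinatorial description of cube morphisms shows that $\phi(e')_j = a_j + \sum_i M_{ji} e'_i$ with $a_j\in\{0,1\}$ and $M_{ji}\in\{-1,0,1\}$, having at most one nonzero entry in each row. Plugging this into the parametrization yields $\hat\phi(c_{x,t_1,\dots,t_n})$ as $c_{x',t'_1,\dots,t'_m}$ with $t'_i = \sum_j t_j M_{ji}$ and $x'$ a corresponding affine combination. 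Injectivity of $\phi$ means that for every $i\in[m]$ some $M_{ji}$ is nonzero (otherwise flipping the $i$-th input leaves $\phi$ unchanged), so $M$ has full column rank and arbitrary targets $(x',t'_1,\dots,t'_m)$ are hit. Alternatively one can invoke lemma \ref{cupis1} (with the trivial zero face) together with the group decomposition $C^n(\bA)\simeq\bA\times C_0^n(\bA)$.

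With continuity, surjectivity, and measure preservation of $\hat\phi$ in hand, the conclusion is immediate. Let $U$ be any $\gamma$-open neighborhood of $c_2 = \hat\phi(c)$ in $C^m(\bA)$. Then $\hat\phi^{-1}(U)$ is a $\gamma$-open neighborhood of $c$ in $C^n(\bA)$; by positivity of $c$, it has positive uniform measure, and by measure preservation we have $\bm(U) = \bm(\hat\phi^{-1}(U)) > 0$. Since $U$ was an arbitrary $\gamma$-open neighborhood of $c_2$, this element lies in the support of the uniform measure on $C^m(\bA)$, i.e.\ $c_2$ is positive.

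I do not expect a serious obstacle here: the content of the lemma is the compatibility of the $\gamma$-topology with composition by injective cube morphisms, and the only nontrivial ingredient is surjectivity of $\hat\phi$ which is a direct consequence of the structure of cube morphisms. In particular one does not need to invoke the harder lemmas \ref{compdep} or \ref{uclos} from the previous chapters.
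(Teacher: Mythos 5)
Your argument is correct and is essentially the paper's proof: the paper pulls the system of $\gamma$-open neighborhoods of $c_2$ back along $\phi$ (placing no constraint off the image of $\phi$) and invokes lemmas \ref{cupis1} and \ref{cupis} to equate the two measures, which is exactly your statement that $\hat{\phi}$ is continuous for the $\gamma$-topologies and measure preserving. The only small caveat is that $C^n(\bA)$ is an ultraproduct group rather than a compact one, so "surjective continuous homomorphism of compact groups" is not literally the right justification for measure preservation; this is repaired either by taking ultra limits of the surjective homomorphisms $C^n(A_i)\rightarrow C^m(A_i)$ or, as you note yourself, by citing lemma \ref{cupis1} directly.
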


\begin{proof} Let $\{U(v)\}_{v\in\{0,1\}^m}$ be a system of $\gamma$-open sets with $c_2(v)\in U(v)$. Let $\{W(v)\}_{v\in\{0,1\}^n}$ be defined such that $W(v)=U(\phi^{-1}(v))$ if $v\in\im(\phi)$ and $W(v)=\bA$ otherwise. It is clear by lemma \ref{cupis1} and lemma \ref{cupis} that $\cap_{v\in\{0,1\}^m}\psi_v^{-1}(U(v))$ has the same measure as $\cap_{v\in\{0,1\}^n}\psi_v^{-1}(W(v))$ and thus by the positivity of $c$ we obtain the positivity of $c_2$.
\end{proof}

\subsection{Every cube is positive}

The main result of this chapter is the following.

\begin{proposition}\label{cubepos} Every cube in $\bA$ is positive with respect to $\gamma$.
\end{proposition}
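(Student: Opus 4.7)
The plan is to identify the set $P_n \subseteq C^n(\bA)$ of positive cubes as a translation-invariant subset of the compact abelian group $C^n(\bA) \cong \bA^{n+1}$. Since $P_n$ has full measure by lemma \ref{ascubepos}, in particular it is non-empty; and since the translation action of $C^n(\bA)$ on itself is transitive, translation invariance will force $P_n = C^n(\bA)$.

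The first step will be to verify that the $\gamma$-topology on $\bA$ is shift invariant, i.e.\ that for every $a \in \bA$ the translation $T_a \colon x \mapsto x + a$ descends to a continuous self-map $\phi_a$ of $N$. This will rest on the identity $[F](\cdot + a) = [F_a]$, where $F_a = \{f_v(\cdot + a)\}_{v \in K_{k+1}}$ is the shifted function system, together with the shift invariance of $\mathcal{F}_k \supseteq \mathcal{B}$ supplied by theorem \ref{propfk} (applied after arranging that $\mathcal{B}$ is itself closed under a countable set of shifts, which can be done without loss of generality when constructing $\mathcal{B}$ via lemma \ref{embednil}, since the countable union of separable shift-extensions of the intermediate $\mathcal{G}_n$ is still separable). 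A Stone--Weierstrass argument, using that convolutions separate points of $N$, then upgrades this to continuity of $\phi_a$, so that translates of $\gamma$-open sets in $\bA$ (and coordinatewise translates of $\gamma^n$-open sets in $C^n(\bA)$) are again open.

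The second step will deduce translation invariance of $P_n$ from the first. Fixing $c \in P_n$ and $c' \in C^n(\bA)$, for any system of $\gamma$-open neighborhoods $U(v) \ni (c + c')(v)$ the translates $V(v) = U(v) - c'(v)$ are $\gamma$-open and contain $c(v)$; positivity of $c$ then gives $\mu(\bigcap_v \psi_v^{-1}(V(v))) > 0$, and since translation by $c'$ is a measure-preserving bijection of $C^n(\bA)$ sending this set to $\bigcap_v \psi_v^{-1}(U(v))$, the latter has positive measure, whence $c + c' \in P_n$. Non-emptiness of $P_n$ combined with the transitive translation action of $C^n(\bA)$ on itself then yields $P_n = C^n(\bA)$, and an analogous argument with translation by the subgroup $C^n_0(\bA)$ on the affine space $C^n_x(\bA)$ handles the rooted version. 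The main obstacle is the first step---establishing shift invariance of the $\gamma$-topology, which is the only place where one must look past the abstract nil $\sigma$-algebra structure of $\mathcal{B}$ and use its embedding in the shift-invariant $\mathcal{F}_k$; once this is in hand, the translation-of-measures argument is essentially bookkeeping.
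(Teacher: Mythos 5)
Your reduction (full measure $+$ translation invariance $+$ transitivity $\Rightarrow$ every cube is positive) would work if its hypothesis held, but the hypothesis is false: the $\gamma$-topology is \emph{not} shift invariant, and cannot be made so. The topology is generated by convolutions $[F]$ with $F$ a function system in $L^\infty(\mathcal{B})$, and $[F](\cdot+a)=[F_a]$ with $F_a=\{f_v(\cdot+a)\}$; for this translate to remain a generator one needs $\mathcal{B}$ itself to be invariant under the shift by $a$. Your transitivity argument requires this for \emph{every} $a\in\bA$ (the coordinates $c'(v)$ of the translating cube range over all of $\bA$), not for a countable set, so the ``without loss of generality close $\mathcal{B}$ under countably many shifts'' device does not reach the statement you need. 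Worse, for $k\geq 2$ no separable nil $\sigma$-algebra containing a genuine $k$-th order character $\phi$ can be invariant under all shifts: by lemma \ref{charhom} the dual supports $S_{k-1}(\Delta_t\phi)$ sweep out the cosets $h(t)$ of a countable subgroup $T\leq\hat{\bA}_{k-1}$ as $t$ ranges over $\bA$, and the image of $h$ is in general uncountable, so the uncountably many functions $\phi(\cdot+t)=\phi\cdot\Delta_t\phi$ cannot all be measurable in a separable $\mathcal{B}$. Since the $\sigma$-algebra generated by $\gamma$ is exactly $\mathcal{B}$, shift invariance of the topology would force shift invariance of $\mathcal{B}$, which is unavailable; separability of $\mathcal{B}$ is not negotiable, as it is what makes $N$ a genuine (second countable, compact) topological factor.

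The paper's proof goes by an entirely different route that never moves open sets around by translation. It places the given cube $c\in C^{k+1}(\bA)$ as a face of a configuration on the doubled cube $T=\{0,1\}^{[k+1]\times[2]}$, uses the measure-preserving restriction maps (lemmas \ref{cupis1} and \ref{cupis}) together with lemma \ref{ascubepos} to find a companion configuration $c_2$ all of whose relevant rooted faces are positive, then invokes lemma \ref{simpglpos} (which rests on the induction hypothesis that $\gamma_{k-1}$ is already a strong nilspace factor, plus corollary \ref{simpzerocor}) to produce a positive cube agreeing with the configuration on a low simplicial set, and finally uses the uniqueness lemma \ref{uniqclos1} and lemma \ref{pospreshef} to transfer positivity back to $c$. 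If you want to salvage a translation argument, you would first have to prove that translations of $\bA$ descend to $N$ --- but that is essentially a consequence of the full structure theorem being established, not a tool available at this stage.
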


In this chapter we assume by induction on $k$ that proposition \ref{cubepos} is true for $\gamma_{k-1}$.
We will need the following lemmas.

\begin{lemma}\label{uniqclos1} Let $c_1,c_2\in C^{k+1}(\bA)$ be two positive cubes such that $\gamma\circ c_1$ agrees with $\gamma\circ c_2$ on $K_{k+1}$. Then $\gamma\circ c_1=\gamma\circ c_2$.
\end{lemma}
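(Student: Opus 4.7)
The plan is to apply Lemma \ref{uclos} to $\varrho := c_1|_{K_{k+1}}$ (which, as a function into $\bA$, agrees with $c_2|_{K_{k+1}}$ after composition with $\gamma$) and to show that both $z_1 := \gamma(c_1(0))$ and $z_2 := \gamma(c_2(0))$ satisfy the uniqueness property in that lemma. Lemma \ref{uclos} then forces $z_1 = z_2$, which combined with the hypothesis yields $\gamma\circ c_1=\gamma\circ c_2$.

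To verify the property for $z_1$, fix any system $\{U(v)\}_{v\in K_{k+1}}$ of $\gamma$-open neighborhoods of $\varrho(v)=c_1(v)$ and set $F=\{1_{U(v)}\}_{v\in K_{k+1}}$. I must show that every $y\in\gamma^{-1}(z_1)$ lies in $\cl(\supp([F]))$. Given such a $y$ and an arbitrary $\gamma$-open neighborhood $V$ of $y$, note that $V$ is also a $\gamma$-open neighborhood of $c_1(0)$, since $\gamma(y)=z_1=\gamma(c_1(0))$. Consider the enlarged system $W(v)=U(v)$ for $v\in K_{k+1}$ and $W(0)=V$. Because $c_1$ is positive in $C^{k+1}(\bA)$, the intersection $\bigcap_{v\in\{0,1\}^{k+1}}\psi_v^{-1}(W(v))$ has positive measure. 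Using the parametrization $C^{k+1}(\bA)\simeq\bA^{k+2}$ by $(x,t_1,\dots,t_{k+1})$ with $\psi_0(x,t)=x$ and recalling from (\ref{corner}) that $[F](x)$ is the $t$-measure of cubes rooted at $x$ with $c(v)\in U(v)$ for $v\in K_{k+1}$, Fubini's theorem (in its ultra-product form, cf.\ \cite{ESz}) produces a positive-measure set of $x\in V$ with $[F](x)>0$. In particular $V\cap\{[F]>0\}\neq\emptyset$, hence $V\cap\supp([F])\neq\emptyset$; since $V$ was arbitrary, $y\in\cl(\supp([F]))$.

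The argument for $z_2$ is identical: the hypothesis $\gamma\circ c_1=\gamma\circ c_2$ on $K_{k+1}$ ensures that any $\gamma$-open neighborhood of $c_1(v)$ is also a $\gamma$-open neighborhood of $c_2(v)$ and conversely, so the systems $\{U(v)\}_{v\in K_{k+1}}$ considered in the two arguments are the same, and positivity of $c_2$ lets us repeat the Fubini step with $c_2$ in place of $c_1$. Thus $z_2$ satisfies the same property in Lemma \ref{uclos} relative to the function $\gamma\circ\varrho$, and the uniqueness clause of that lemma gives $z_1=z_2$.

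All of this is essentially a formal consequence of Lemma \ref{uclos} together with the definition of positivity; the only non-bookkeeping step is the Fubini passage from positivity of the full cube to positivity of $[F]$ on a non-empty open subset of base points. The hard work (Lemma \ref{uclos} itself, built on Lemma \ref{compdep} and the construction of the auxiliary coupling $\Upsilon$) is already in hand, so no further technical obstacles are expected.
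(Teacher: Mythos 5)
Your proof is correct, and it rests on the same key ingredient as the paper's --- Lemma \ref{uclos} --- but it is organized differently. The paper argues by contradiction: it separates $c_1(0)$ from $c_2(0)$ by a $\gamma$-open set $U$ with $c_2(0)\notin\cl(U)$, invokes Lemma \ref{opcontconv} to produce a system $\{U(v)\}_{v\in K_{k+1}}$ of neighborhoods of the $c_1(v)$ with $\supp([F])\subseteq U$, and then observes that the cylinder set determined by these $U(v)$ together with $U(0)=\bA\setminus\cl(U)$ has measure $\int_{U(0)}[F]\,d\bm=0$, contradicting the positivity of $c_2$. You instead verify, directly from the definition of positivity plus the Fubini identity $\bm\bigl(\bigcap_v\psi_v^{-1}(W(v))\bigr)=\int_{W(0)}[F]\,d\bm$, that both $\gamma(c_1(0))$ and $\gamma(c_2(0))$ satisfy the defining property in Lemma \ref{uclos} for $\varrho=c_1|_{K_{k+1}}$, and then quote its uniqueness clause. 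Your route is slightly more self-contained in that it bypasses Lemma \ref{opcontconv} (whose own proof routes back through Lemma \ref{uclos} and needs a positive cube in $C^{k+1}_x(\bA)$ from Lemma \ref{ascubepos}), and it makes explicit that the positivity of both $c_1$ and $c_2$ is being used symmetrically; the paper's version buys a reuse of Lemma \ref{opcontconv}, which it needs elsewhere anyway. Both arguments are sound.
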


\begin{proof} Assume by contradiction that $\gamma(c_1(0))\neq \gamma(c_2(0))$. Let $U$ be a $\gamma$-open set containing $c_1(0)$ such that $c_2(0)\notin\cl(U)$. Let furthermore $\{U(v)\}_{v\in K_{k+1}}$ be a system of open sets with $c_1(v)\in U(v)$ for every $v\in K_{k+1}$ such that ${\rm supp}([F])\subseteq U$ holds for the function system $F=\{1_{U(v)}\}_{v\in K_{k+1}}$. The existence of such a system of open sets is guaranteed by lemma \ref{opcontconv}. Notice that by the condition of the lemma, $c_2(v)\in U(v)$ holds for every $v\in K_{k+1}$. Let us define $U(0)$ as the complement of $\cl(U)$. Then we have that the measure of $\cap_{v\in\{0,1\}^{k+1}}\psi_v^{-1}(U(v))$ is equal to the integral of $[F]$ on $U(0)$ and thus it is $0$. This contradicts the positivity of $c_2$.
\end{proof}

\begin{lemma}\label{simpglpos} let $S\subset\{0,1\}^n$ be a simplicial set of hight at most $k$. Let $q:S\rightarrow \bA$ be a map such that the restriction of $q$ to any maximal face composed with $\gamma_{k-1}$ is a cube in $N_{k-1}$. Then there is a positive cube $c\in C^n(\bA)$ such that $\gamma\circ c$ restricted to $S$ is equal to $\gamma\circ q$. 
\end{lemma}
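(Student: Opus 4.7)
I would induct on $r := 2^n - |S|$, the number of vertices of $\{0,1\}^n$ outside $S$. The outer induction on $k$ (through which Proposition~\ref{topologization} is being proved) supplies two crucial facts: $\gamma_{k-1}:\bA\to N_{k-1}$ is a strong $(k-1)$-step nilspace factor, and Proposition~\ref{cubepos} holds for $\gamma_{k-1}$, i.e.\ every cube in $\bA$ is $\gamma_{k-1}$-positive.

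\textbf{Inductive step ($r\ge 1$).} I would choose $w\in\{0,1\}^n\setminus S$ minimal in the coordinate-wise order. By simpliciality of $S$, every $v<w$ lies in $S$, and by minimality $w$ is maximal in $S':=S\cup\{w\}$, which is simplicial of height $\le k$ provided $h(w)\le k$ (otherwise $w$ imposes no new maximal-face condition and one may set $q(w)$ freely). Let $F_w:=\{v:v\le w\}$, of dimension $d:=h(w)\le k$. For each index $i$ with $w_i=1$, the $(d-1)$-dimensional subface $\{v\le w:v_i=0\}\subset S$ is contained in an old maximal face of $S$ (namely any maximal face of $S$ containing $w-e_i$), so $\gamma_{k-1}\circ q$ restricts to a cube there by the composition axiom. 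The gluing axiom in the $(k-1)$-step nilspace $N_{k-1}$ then extends $\gamma_{k-1}\circ q$ from $F_w\setminus\{w\}$ to a cube on $F_w$, producing a value $z\in N_{k-1}$. I set $q(w)$ to be any point of $\gamma_{k-1}^{-1}(z)$; then $q$ satisfies the hypothesis on $S'$, and the inductive hypothesis applied to $S'$ yields the desired positive cube.

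\textbf{Base case ($r=0$).} Here $S=\{0,1\}^n$ with $n\le k$, and $\gamma_{k-1}\circ q\in C^n(N_{k-1})$. Because $\gamma_{k-1}$ is strong, the induced map $C^n(\bA)\to C^n(N_{k-1})$ is measure-preserving (Theorem~\ref{charpres}), so there exists $c_0\in C^n(\bA)$ with $\gamma_{k-1}\circ c_0=\gamma_{k-1}\circ q$, and $c_0$ is $\gamma_{k-1}$-positive by the inductive hypothesis of Proposition~\ref{cubepos}. To upgrade to a cube $c$ with $\gamma\circ c=\gamma\circ q$ and $\gamma$-positivity, I would fix any system of $\gamma$-open neighborhoods $\{U(v)\}_{v\in\{0,1\}^n}$ of $q(v)$, choose convolution neighborhoods $f_v$ of $q(v)$ with ${\rm supp}(f_v)\subseteq U(v)$ via Lemma~\ref{opcontconv}, and project them by Lemma~\ref{convnproj} to $\gamma_{k-1}$-continuous nonnegative functions $g_v=\mathbb{E}(f_v\mid\mathcal{F}_{k-1})$ with $g_v(q(v))>0$. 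Since $\gamma_{k-1}(c_0(v))=\gamma_{k-1}(q(v))$ we have $g_v\circ\psi_v>0$ at $c_0$ in the factor topology, so $\gamma_{k-1}$-positivity of $c_0$ combined with Lemma~\ref{openpos} forces $\bigcap_v\psi_v^{-1}(U(v))$ to have positive measure. Intersecting over a shrinking neighborhood basis and using the closed-set assertion of Lemma~\ref{ascubepos} produces a positive cube $c$ with $\gamma\circ c=\gamma\circ q$ on $S$.

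\textbf{Main obstacle.} The inductive step is essentially routine once one checks height bookkeeping carefully; the real difficulty is the base case, where one must convert $\gamma_{k-1}$-matching into a full $\gamma$-matching while staying inside $C^n(\bA)$ and retaining positivity. The subtlety is that the $\gamma$-fibres within a $\gamma_{k-1}$-fibre need not behave like an abelian bundle action until the nilspace structure of $N$ is established---which is exactly what we are proving---so the matching cannot be done vertex-by-vertex naively. Instead one is forced to extract $c$ as a limit through a compactness argument, and the key technical input making this work is the uniqueness Lemma~\ref{uclos} (which rests on the complete-dependence Lemma~\ref{compdep}) guaranteeing that nested $\gamma$-closed supports of convolutions around $q$ contract to a single $\gamma$-equivalence class.
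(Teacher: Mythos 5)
Your proposal has two genuine gaps, and it misses the one identity that carries the whole proof.

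First, the reduction by induction on $2^n-|S|$ cannot work as stated. Adding vertices $w$ with $h(w)\leq k$ only grows $S$ to $\{v: h(v)\leq k\}$; to reach your base case $S=\{0,1\}^n$ when $n>k$ you must adjoin vertices of height $>k$, and your escape hatch (``set $q(w)$ freely'') is false. Once $S'$ contains a $(k+1)$-dimensional face, the value of $\gamma\circ c$ at its top vertex is \emph{determined} by the values at the other $2^{k+1}-1$ vertices (this is exactly Lemma \ref{uniqclos1}), so a free choice of $q(w)$ inside a $\gamma_{k-1}$-fibre is in general not realizable by any positive cube, and the enlarged instance of the lemma you want to invoke is simply not an instance of the lemma (its hypothesis requires height $\leq k$). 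The paper does no such induction: it works with $S$ directly and pads the function system with constant-$1$ functions outside $S$.

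Second, in your base case the decisive step is asserted rather than proved. From $\gamma_{k-1}$-positivity of $c_0$ you correctly get that $\bigcap_v\psi_v^{-1}(\{g_v>0\})$ has positive measure, where $g_v=\mathbb{E}(f_v|\mathcal{F}_{k-1})$; but $\{g_v>0\}$ is a union of $\gamma_{k-1}$-fibres and is much larger than ${\rm supp}(f_v)\subseteq U(v)$, so this does not imply that $\bigcap_v\psi_v^{-1}(U(v))$ has positive measure. Lemma \ref{openpos} is about open subsets of $\bA$, not of $C^n(\bA)$, and cannot bridge this; Lemma \ref{uclos} is a uniqueness statement and produces nothing. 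The missing ingredient is Corollary \ref{simpzerocor}: for a function system supported (up to constant-$1$ padding) on a simplicial set of height at most $k$, the Gowers inner product is unchanged under projection to $\mathcal{F}_{k-1}$, i.e.\ $(F')=(G)$. This identity is where the height hypothesis is actually used, it converts $(G)>0$ into $(F')>0$, and hence into positive measure for $T=\bigcap_{v\in S}\psi_v^{-1}(U(v))$; a positive cube in $T$ then exists by Lemma \ref{ascubepos}, and the closedness of the set of positive cubes finishes the argument over a neighborhood basis. Your construction of the cube $c_3$ matching $\gamma_{k-1}\circ q$ should also go through the simplicial extension property for morphisms into the nilspace $N_{k-1}$ (as the paper does, citing \cite{NP}) rather than through a vertex-by-vertex gluing whose termination is the problem noted above.
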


\begin{proof} Using the fact that positive cubes are forming a closed set, it is enough to show the following. Let $\{U(v)\}_{v\in S}$ be an arbitrary system of $\gamma$-open sets such that $q(v)\in U(v)$ holds for every $v\in S$. Then there is a positive cube $c_2\in C^n(\bA)$ such that $c_2(v)\in U(v)$ for every $v\in S$.
 To see this we choose a function system $F=\{f_v\}_{v\in S}$ such that $f_v$ is a convolution neighborhood of $q(v)$ and ${\rm supp}(f_v)\in U(v)$ for every $v\in S$. Let $F'=\{f_v\}_{v\in\{0,1\}^n}$ be the function system where for $v\in \{0,1\}^n\setminus S$ the function $f_v$ is identically $1$. Let $G=\{g_v\}_{v\in\{0,1\}^n}$ be the function such that $g_v$ is the continuous projection of $f_v$ guaranteed by lemma \ref{convnproj}. Note that if $v\in\{0,1\}^n\setminus S$ then $g_v$ is also constant $1$.
Let $c_3:\{0,1\}^n\rightarrow\bA$ be a cube in $C^n(\bA)$ such that $\gamma_{k-1}\circ c_3=\gamma_{k-1}\circ q$ on $S$. The existence of $c_3$ follows from the assumption that $\gamma_{k-1}$ defines a nilspace factor: It is an easy consequence of the nilspace axioms (see \cite{NP}) that morphisms of simplicial sets in $\{0,1\}^n$ into nilspaces can always be extended to the full cube.  
We have that $c_3(v)\in{\rm supp}(g_v)$ holds for every $v\in\{0,1\}^m$. Since $c_3$ is positive in $\gamma_{k-1}$ we have that $(G)>0$ holds. On the other hand by corollary \ref{simpzerocor} we have that $(F')=(G)$ and thus the measure of $T=\cap_{v\in S}\psi_v^{-1}(U(v))$ on $C^n(\bA)$ is positive. By lemma \ref{ascubepos} we have that $T$ contains a positive cube. 
\end{proof}

\medskip

We are ready to prove proposition \ref{cubepos}. Let $c\in C^{k+1}(\bA)$ be an arbitrary cube.
Let $T,S$ be defined as in chapter \ref{conopen}. Let $\phi:\{0,1\}^{k+1}\rightarrow S$ be defined by $\phi(v)_{i,1}=0$ and $\phi(v)_{i,2}=v_i$ and for $w\in\{0,1\}^{k+1}$ let $\phi_w:\{0,1\}^{k+1}\rightarrow T$ be defined by $\phi_w(v)_{i,1}=v_i$ and $\phi_w(v)_{i,2}=w_i(1-v_i)$. Let $\hat{\phi}_w:\Hom_{c\circ\phi^{-1}}(T,A)\rightarrow C_{c(w)}^{k+1}(\bA)$ be the map given by composing elements from $\Hom_{c\circ\phi^{-1}}(T,A)$ by $\phi_w$.  By lemma \ref{cupis1} we obtain that $\hat{\phi}_w$ is measure preserving. This fact combined with lemma \ref{ascubepos} implies that for almost every element in $c_2\in\Hom_{c\circ\phi^{-1}}(T,A)$ we have that for every $w\in\{0,1\}^{k+1}$ the cube $\hat{\phi}_w(c_2)$ is positive in $\Psi_{c(w)}^{k+1}$. Let $c_2$ be a fixed cube with this property. 
Let $T_2\subset T$ be the subset of elements $v$ such that  $v_{i,1}v_{i,2}=0$ holds for every $1\leq i\leq k+1$ and let $T_3\subset T_2$ be the set of vectors in $T_2$ in which the coordinate sum is at most $k$. Both $T_2$ and $T_3$ are simplicial sets. 
We define the bijection $\tau:T_2\rightarrow T_2$ by $\tau(v)_{i,1}=1-v_{i,1}-v_{i,2}$ and $\tau(v)_{i,2}=v_{i,2}$. Let $c_3=c_2\circ\tau$. The maximal faces of $T_2$ are the images of the maps $\tau^{-1}\circ\psi_w$. It follows that $c_3$ restricted to every maximal face is a positive cube.
The restriction of $c_3$ to $T_3$ satisfies the conditions of lemma \ref{simpglpos} and thus there is a positive cube $c_4\in\Hom(T,\bA)$ such that the restriction of $c_4$ to $T_3$ is equal to the restriction of $c_3$ to $T_3$.
The restriction of $c_4$ to every maximal face of $T_2$ is positive and thus by lemma \ref{uniqclos1} we have that $c_4$ restricted to $T_2$ is equal to $c_3$. Then $c=c_3\circ\phi_1$ is positive by lemma \ref{pospreshef}

\medskip 

\subsection{Verifying the nilspace structure}

In this chapter we finish the proof of proposition \ref{topologization}.
It is clear that $N$ satisfies the first two axioms. We focus on the last axiom. Let $K=\{0,1\}^n\setminus\{1^n\}$. Let $q:K\rightarrow\bA$ be a map such that the restriction of $q$ to every maximal face of $K$ composed with $\gamma$ is a cube in $N$.
In order to verify the third nilpsace axiom we have to show that there is a cube $c\in C^n(\bA)$ such that $\gamma\circ c=\gamma\circ q$ on $K$.  
Let $S$ be the set of vectors in $\{0,1\}^n$ of hight at most $k$.  It is clear that the restriction of $q$ to $S$ satisfies the conditions of lemma \ref{simpglpos}. Using the lemma we get that there is a positive cube $c\in C^n(\bA)$ such that $\gamma\circ c$ is equal to $\gamma\circ q$ on the set $S$. 
We claim that $\gamma\circ c$ is equal to $\gamma\circ q$ on $K$.
Let $F$ be an arbitrary maximal face in $K$. The condition on $q$ guarantees that there is a cube $c_2:F\rightarrow\bA$ such that $\gamma\circ c_2=\gamma\circ q$ on $F$. In particular $\gamma\circ c_2=\gamma\circ c$ holds on $F\cap S$.
The claim is equivalent with $\gamma\circ c_2=\gamma\circ c$ on $F$. 
We prove this by contradiction. Let $v\in F$ be an element of minimal hight for which $\gamma\circ c_2\neq\gamma\circ c$. Then there is a face $F'\subseteq F$ of dimension $k+1$ whose maximal element is $v$. Since $\gamma\circ c_2=\gamma\circ c$ holds on $F'\setminus\{v\}$ lemma \ref{uniqclos1} together with proposition \ref{cubepos} shows the contradiction. 

It is clear from lemma \ref{weaknilprop} that $\gamma$ is factor consistent. By theorem \ref{charpres} we obtain that $\gamma$ is a strong nilspace factor.

\medskip

\section{Higher order dual groups}\label{chap:higherdual}

The goal of this part of the paper is to analyze the structure of $\hat{\bA}_k$. We start with the prof of theorem \ref{hofdecomp}.

\medskip

\noindent{\it Proof of theorem \ref{hofdecomp}}.~~Let $f:\bA\rightarrow\mathbb{C}$ be an arbitrary bounded function measurable in $\mathcal{F}_k$. Then according to theorem \ref{main} there is a strong $k$-step nilspace factor $\gamma:\bA\rightarrow N$ and a Borel measurable function $h:N\rightarrow\mathbb{C}$ such that $f$ is equal to $h\circ\gamma$. Using lemma \ref{nilfourdec} and lemma \ref{nilspchar} we can decompose $h$ as $\sum_{\chi\in\hat{A_k}}g_\chi h_\chi$ (converging in $L^2$) where $h_\chi\in W(\chi,N)$~,~$|h_\chi|=1$ and $g_\chi$ is measurable in the $k-1$ step factor of $N$. We obtain that $f=\sum_{\chi\in\hat{A_k}}(g_\chi\circ\gamma)( h_\chi\circ\gamma)$. The terms $g_\chi\circ\gamma$ are measurable in $\mathcal{F}_{k-1}$ and the terms $h_\chi\circ\gamma$ are $k$-th order characters by lemma \ref{homok}. It follows that $L^2(\mathcal{F}_k)$ is spanned by the modules in $\hat{A}_k$ and thus by lemma \ref{charort} the proof is complete.

\medskip

\begin{lemma}\label{highconv} Let $f,g$ be two functions in $L^\infty(\mathcal{F}_k)$. For $a\in\hat{\bA}_k$ let us denote the component of $f$ and $g$ in $a$ by $f_a$ and $g_a$. Then the component of $fg$ in $c\in\hat{\bA}_k$ is equal to
$$\sum_{ab=c}f_ag_b$$
where the above sum has only countable many non zero term and the sum is convergent in $L^2$.
\end{lemma}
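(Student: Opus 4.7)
My plan is to lift the problem to a common nilspace model, apply the Fourier decomposition on the nilspace, and exploit the pointwise transformation law defining $W(\chi,N)$.

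First I would produce a common nilspace factor. Since $f,g$ are bounded and measurable in $\mathcal{F}_k$, the $\sigma$-algebra they generate is separable and contained in $\mathcal{F}_k$, so Theorem~\ref{main3} yields a strong $k$-step nilspace factor $\gamma\colon\bA\to N$ with $f=F\circ\gamma$ and $g=G\circ\gamma$ for bounded Borel functions $F,G\colon N\to\mathbb{C}$. By Lemma~\ref{homok} and the character-preserving property of strong factors (Theorem~\ref{charpres}), the induced homomorphism $\tau_k\colon\hat{A}_k\to\hat{\bA}_k$ is injective. Lemma~\ref{nilfourdec} on $N$ then gives $F=\sum_\chi F_\chi$ and $G=\sum_\chi G_\chi$ in $L^2(N)$ with $F_\chi,G_\chi\in W(\chi,N)$. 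Composing with $\gamma$, the function $F_\chi\circ\gamma$ is the component of $f$ in the module $\tau_k(\chi)\in\hat{\bA}_k$, and $f$ has no components outside $\tau_k(\hat{A}_k)$ (and similarly for $g$).

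Second, I would carry out the multiplication on $N$. The crucial pointwise fact is that $W(\chi_1,N)\cdot W(\chi_2,N)\subseteq W(\chi_1\chi_2,N)$, which is immediate from the transformation rule $u(x+b)=\chi_i(b)u(x)$ for $b\in A_k$. Since $F\in L^\infty(N)$, multiplication by $F$ is continuous on $L^2(N)$, so $FG=\sum_{\chi_2}F\,G_{\chi_2}$ in $L^2$. The projection $\Pi_{\chi_0}$ onto $W(\chi_0,N)$ is the bounded operator $H\mapsto\int_{A_k}H(\cdot+b)\overline{\chi_0(b)}\,db$; using $G_{\chi_2}(x+b)=\chi_2(b)G_{\chi_2}(x)$ one computes pointwise
\[
\Pi_{\chi_0}(F\,G_{\chi_2})(x)=G_{\chi_2}(x)\int_{A_k}F(x+b)\,\overline{(\chi_0\chi_2^{-1})(b)}\,db=F_{\chi_0\chi_2^{-1}}(x)\,G_{\chi_2}(x).
\]
Continuity of $\Pi_{\chi_0}$ then yields
\[
(FG)_{\chi_0}=\sum_{\chi_2}F_{\chi_0\chi_2^{-1}}\,G_{\chi_2}=\sum_{\chi_1\chi_2=\chi_0}F_{\chi_1}G_{\chi_2}
\]
in $L^2(N)$, with only countably many nonzero summands since only countably many of the $G_{\chi_2}$ are nonzero.

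Third, I would transfer back to $\bA$. Composing the identity above with $\gamma$ and setting $a=\tau_k(\chi_1)$, $b=\tau_k(\chi_2)$, $c=\tau_k(\chi_0)$ produces $(fg)_c=\sum_{ab=c}f_ag_b$ when $c\in\tau_k(\hat{A}_k)$; for $c$ outside this subgroup both sides vanish, since $fg=(FG)\circ\gamma$ has all Fourier components in $\tau_k(\hat{A}_k)$ and $\tau_k(\hat{A}_k)$ is a subgroup. The main technical point requiring care is the interchange of $\Pi_{\chi_0}$ with the infinite sum in $\chi_2$, but this is justified by the boundedness of $\Pi_{\chi_0}$ as an orthogonal projection together with the $L^2$-convergence $FG=\sum_{\chi_2}F\,G_{\chi_2}$, which itself uses only that $F\in L^\infty$. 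The genuinely deep ingredients—existence of a common strong nilspace factor and injectivity of $\tau_k$—are supplied directly by Theorem~\ref{main3} and Theorem~\ref{charpres}, so beyond those the argument is essentially formal.
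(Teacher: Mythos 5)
Your argument is correct, but it takes a genuinely different route from the paper's. The paper proves Lemma \ref{highconv} entirely on the ultraproduct side, with no recourse to a nilspace model: it first observes that if $g$ lies in a single rank one module $b$, then each product $f_ag$ lies in the module $ab$ and the expansion $fg=\sum_a f_ag$ converges in $L^2$ with terms in pairwise distinct (hence orthogonal) modules, so it must be the decomposition of $fg$; the case of finitely many components of $g$ follows by linearity, and the general case by approximating $g$ in $L^2$ by finite sub-sums $g_\epsilon$ of its components and using $\|f(g-g_\epsilon)\|_2\le\|f\|_\infty\epsilon$. Your proof instead pulls $f$ and $g$ back to a common strong nilspace factor via Theorem \ref{main3}, uses Lemma \ref{homok}, the injectivity of $\tau_k$ from Theorem \ref{charpres}, and the explicit projection $\Pi_{\chi_0}(H)=\mathbb{E}_{b\in A_k}H(\cdot+b)\overline{\chi_0(b)}$ from Lemma \ref{nilfourdec} to compute the convolution identity pointwise on $N$, then transports it back along the measure-preserving map $\gamma$. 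Both are sound (and neither is circular, since Theorem \ref{hofdecomp} is already available at this point); what your version buys is a concrete pointwise formula for the components and a transparent reason why the group law appears, at the cost of invoking the full main theorem and the character-preservation machinery, whereas the paper's proof is softer and uses only the abstract module calculus --- orthogonality of distinct modules, the definition of the module product, and the boundedness of $f$. One point you should make explicit if you keep your route: you need the factor supplied by Theorem \ref{main3} to be \emph{strong} (so that $\tau_k$ is injective and $\gamma$ is measure preserving); this is established in the proof of Proposition \ref{topologization} even though the bare statement of Theorem \ref{main3} does not say the word ``strong''.
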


\begin{proof} First of all we observe that if $g$ is contained in a single rank one module then
the $k$-th order decomposition of $fg$ is $\sum_{a\in\hat{\bA}_k} f_ag$ since it converges in $L^2$ and the terms $f_ag$ are from distinct rank one modules.
From this observation we also get the statement if $g$ has finitely many non zero components.
If $g$ has infinitely many components then for an arbitrary $\epsilon$ we can approximate $g$ with precision $\epsilon$ in $L^2$ by a sub-sum $g_\epsilon$ of its components . Then $fg=fg_\epsilon+f(g-g_\epsilon)$. Here the $\|f(g-g_\epsilon)\|_2\leq \|f\|_\infty\epsilon$. So as $\epsilon$ goes to $0$ the $L^2$ error we make also goes to $0$.
\end{proof}

\begin{definition} Let $f$ be a function in $L^2(\mathcal{F}_k)$. We say that the $k$-th dual-support $S_k(f)\subseteq\hat{\bA}_k$ of $f$ is the set of rank one modules that are not orthogonal to $f$.
It is clear that $S_k(f)$ is a countable set. 
\end{definition}

The next two statement follows from lemma \ref{highconv}.

\begin{lemma}\label{prodsup} If $f,g\in L^\infty(\mathcal{F}_k)$ then $S_k(fg)\subseteq S_k(f)S_k(g)$ and $S_k(f+g)\subseteq S_k(f)\cup S_k(g)$. Furthermore if $t\in\bA$ is fixed then the function $f_t(x)=f(x+t)$ satisfies $S_k(f_t)=S_k(f)$.
\end{lemma}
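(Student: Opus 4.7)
The plan is to deduce all three parts directly from lemma \ref{highconv} together with the shift-invariance of $\mathcal{F}_{k-1}$ guaranteed by theorem \ref{propfk}.

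First I would handle the two inclusions. For $S_k(f+g)\subseteq S_k(f)\cup S_k(g)$, the uniqueness of the higher order Fourier decomposition in theorem \ref{hofdecomp} immediately gives $(f+g)_c=f_c+g_c$ for every $c\in\hat{\bA}_k$, so if $(f+g)_c\neq 0$ then at least one of $f_c,g_c$ is non-zero. For $S_k(fg)\subseteq S_k(f)S_k(g)$, lemma \ref{highconv} expresses $(fg)_c$ as a convergent sum $\sum_{ab=c}f_ag_b$; for this to be non-zero, at least one summand $f_ag_b$ must be non-zero, which forces $a\in S_k(f)$ and $b\in S_k(g)$, hence $c=ab\in S_k(f)S_k(g)$.

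For the shift statement $S_k(f_t)=S_k(f)$, the main observation is that every module $W\in\hat{\bA}_k$ is shift-invariant. Given a $k$-th order character $\phi$ generating $W$, I would compute $\phi_t(x)=\phi(x+t)=\phi(x)\cdot\overline{\Delta_t\phi(x)}$. Since $\phi$ is a $k$-th order character, $\Delta_t\phi$ lies in $L^\infty(\mathcal{F}_{k-1})$, and its conjugate does too. Hence $\phi_t=\phi\cdot h$ for some $h\in L^\infty(\mathcal{F}_{k-1})$ with $|h|=1$, which puts $\phi_t$ in the same rank one module $W$. More generally, any element of $W$ has the form $u\phi$ with $u\in L^2(\mathcal{F}_{k-1})$; its shift is $u_t\phi_t=(u_t h)\phi$, and $u_t\in L^2(\mathcal{F}_{k-1})$ because $\mathcal{F}_{k-1}$ is shift-invariant (theorem \ref{propfk}, part 4). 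So $W$ is closed under the shift $x\mapsto x+t$.

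Applying this to the decomposition $f=\sum_{W}f_W$, I would use uniqueness to conclude $(f_t)_W=(f_W)_t$ for every $W\in\hat{\bA}_k$. Since the shift is an $L^2$-isometry, $(f_W)_t=0$ if and only if $f_W=0$, yielding $S_k(f_t)=S_k(f)$. No step here presents a genuine obstacle; the only point that needs a moment of care is verifying shift-invariance of the individual modules, which reduces cleanly to the identity $\phi_t=\phi\cdot\overline{\Delta_t\phi}$ together with theorem \ref{propfk}.
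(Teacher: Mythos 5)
Your proposal is correct and follows essentially the route the paper intends: the paper simply remarks that the lemma "follows from lemma \ref{highconv}", and your argument supplies exactly the missing details — linearity/uniqueness of the decomposition for the sum, the convolution formula of lemma \ref{highconv} for the product, and shift-invariance of each rank one module (via $\phi_t=\phi\cdot\overline{\Delta_t\phi}$ with $\Delta_t\phi\in L^\infty(\mathcal{F}_{k-1})$) for the last claim. No gaps.
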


For a subgroup $T\leq\hat{\bA}_k$ we denote by $\mathfrak{H}_k(T)$ the collection of sets $U$ that are measurable in $\mathcal{F}_k$ and $S(1_U)\subset T$. It follows from lemma \ref{prodsup} that $\mathfrak{H}_k(T)$ is a shift invariant $\sigma$-algebra such that $\mathcal{F}_{k-1}\subseteq\mathfrak{H}_k(T)\subseteq\mathcal{F}_k$. If $\mathcal{B}\subset\mathcal{F}_k$ is a separable $\sigma$-algebra and $\{U_i\}_{i=1}^\infty$ is a generating system of $\mathcal{B}$ then for the countable group $T$ generated by $\{S_k(1_{U_i})\}_{i=1}^\infty$ we have that $\mathcal{B}\subseteq\mathfrak{H}_k(T)$.

\begin{lemma}\label{secder} If $\phi$ is a $k$-th order character then there is a countable subgroup $T<\hat{\bA}_{k-1}$ such that $\Delta_{t_1,t_2}\phi$ is measurable in $\mathfrak{H}_{k-1}(T)$ for every pair $t_1,t_2\in\bA$. 
\end{lemma}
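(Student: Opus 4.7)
The plan is to capture $\phi$ by a separable strong $k$-step nilspace factor. By theorem \ref{main3} applied to the separable $\sigma$-algebra generated by $\phi$, write $\phi=h\circ\gamma$ for a strong $k$-step nilspace factor $\gamma:\bA\to N$ with $N$ separable and $h:N\to\mathbb{C}$ Borel. Since $N$ is separable, the discrete group $\hat{A}_{k-1}$ is countable, so the image $T:=\tau_{k-1}(\hat{A}_{k-1})<\hat{\bA}_{k-1}$ of the homomorphism $\tau_{k-1}$ from lemma \ref{homok} (associated with the strong $(k-1)$-step factor $\gamma_{k-1}:=\pi_{k-1}\circ\gamma$) is a countable subgroup, and I claim this $T$ works. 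The bookkeeping is cleanest when performed against the shift-closure $\tilde{\mathcal{B}}:=\bigvee_{s\in\bA}\sigma(\gamma_{k-1}(\cdot+s))$, which is a shift-invariant sub-$\sigma$-algebra of $\mathcal{F}_{k-1}$. Combining lemmas \ref{homok}, \ref{nilfourdec}, and \ref{prodsup} with the shift-invariance of $S_{k-1}$ (lemma \ref{prodsup}) shows that every $\tilde{\mathcal{B}}$-measurable function has $(k-1)$-dual support inside $T$; every $\mathcal{F}_{k-2}$-measurable function trivially does too. Hence $\mathcal{F}_{k-2}\vee\tilde{\mathcal{B}}\subseteq\mathfrak{H}_{k-1}(T)$, and it suffices to place $\Delta_{t_1,t_2}\phi$ inside $\mathcal{F}_{k-2}\vee\tilde{\mathcal{B}}$.

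The main step exploits the nilspace structure of $h$. By lemma \ref{nilfourdec}, expand $h=\sum_{\chi\in\hat{A}_k}h_\chi$ with $h_\chi\in W(\chi,N)$, and by lemma \ref{nilspchar} factor each nonzero piece as $h_\chi=\phi^*_\chi\cdot(g_\chi\circ\pi_{k-1})$ with $|\phi^*_\chi|=1$ and $g_\chi$ bounded on $N_{k-1}$. For the diagonal contribution $h_\chi\circ\gamma=(\phi^*_\chi\circ\gamma)\cdot(g_\chi\circ\gamma_{k-1})$, the factor $g_\chi\circ\gamma_{k-1}$ is $\tilde{\mathcal{B}}$-measurable under any double difference. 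To handle the $\phi^*_\chi\circ\gamma$ factor I use lemma \ref{nilspcharderiv}: the $(k{+}1)$-th convolution $(\phi^*_\chi)_{k+1}^\times$ factors through $C^{k+1}(N)\to C^{k+1}(N_{k-1})$, so $(\phi^*_\chi\circ\gamma)_{k+1}^\times$ is measurable in $(\sigma(\gamma_{k-1}))_{k+1}^\times\subseteq(\tilde{\mathcal{B}})_{k+1}^\times$, whence $\phi^*_\chi\circ\gamma\in[\tilde{\mathcal{B}},k]^*$. Two applications of lemma \ref{pureprop}(4), using that $\tilde{\mathcal{B}}$ is shift-invariant, yield $\Delta_{t_1,t_2}(\phi^*_\chi\circ\gamma)\in[\tilde{\mathcal{B}},k-2]^*$, and lemma \ref{charmes} then places this in $\mathcal{F}_{k-2}\vee\tilde{\mathcal{B}}\subseteq\mathfrak{H}_{k-1}(T)$. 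Combined with the analogous (easier) observation for the $g_\chi\circ\gamma_{k-1}$ factor, the diagonal term lies in $\mathcal{F}_{k-2}\vee\tilde{\mathcal{B}}$ as required.

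The main obstacle is the bookkeeping of mixed cross-terms produced by expanding $\Delta_{t_1,t_2}\phi=\prod_v(\sum_\chi h_\chi\circ\gamma)^\star(z_v)$ into contributions indexed by tuples $(\chi_v)_{v\in\{0,1\}^2}$. Because $\Delta_{t_1,t_2}\phi\in\mathcal{F}_{k-1}$, the unbalanced cross-terms (those with $\chi_1\bar{\chi}_2\bar{\chi}_3\chi_4\ne 1$) lie in nontrivial modules of $\hat{\bA}_k$ and must cancel in the full sum by the uniqueness of the decomposition in theorem \ref{hofdecomp}, so only the balanced cross-terms survive, and each must be placed in $\mathfrak{H}_{k-1}(T)$. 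I expect each to succumb to the same recipe as the diagonal case: the balance condition is precisely what ensures that the $A_k^{\{0,1\}^{k+1}}$-transformation character of the mixed product $\prod_v(\phi^*_{\chi_v})^\star$ factors through the map $a\mapsto\sum_v(-1)^{h(v)}a_v$, which annihilates all of $C^{k+1}(\mathcal{D}_k(A_k))$, so the extended analogue of lemma \ref{nilspcharderiv} still applies, producing the same $[\tilde{\mathcal{B}},k]^*\to[\tilde{\mathcal{B}},k-2]^*\to\mathcal{F}_{k-2}\vee\tilde{\mathcal{B}}$ chain. Verifying this mixed-character version of lemma \ref{nilspcharderiv} carefully enough to land the balanced cross-terms in $\mathfrak{H}_{k-1}(T)$ rather than in some larger subgroup generated by the $\chi_v$ is the delicate point of the proof.
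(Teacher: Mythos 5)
Your argument has a genuine gap at its crux, and you have flagged it yourself: the ``mixed-character version of lemma \ref{nilspcharderiv}'' needed to place the balanced cross-terms in $\mathfrak{H}_{k-1}(T)$ is not proved, and it is not a routine verification. On top of that, the expansion of $\Delta_{t_1,t_2}\phi$ into a quadruple sum over tuples $(\chi_v)$ is itself problematic: the decomposition $h=\sum_\chi h_\chi$ of lemma \ref{nilfourdec} converges only in $L^2$, and a product of four such series cannot simply be rearranged into diagonal and cross-terms without an argument (one has $\sum_\chi\|h_\chi\|_2^2<\infty$ but no absolute summability). The cancellation of unbalanced terms via theorem \ref{hofdecomp} can probably be salvaged (each cross-term does sit in the single module $\tau_k(\chi_1\bar\chi_2\bar\chi_3\chi_4)$, and $\tau_k$ is injective since $\gamma$ is character preserving), but as written the proof stops exactly where the work begins.

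The paper's proof shows that all of this machinery is unnecessary, and in fact you already have the right chain of lemmas in hand --- you are just applying it to the wrong objects. Instead of running $[\,\cdot\,,k]^*\xrightarrow{\ref{pureprop}}[\,\cdot\,,k-2]^*\xrightarrow{\ref{charmes}}$ measurability on each piece $\phi^*_\chi\circ\gamma$, apply it once to $\phi$ itself: lemma \ref{charsep} gives $\phi\in[\mathcal{F}_{k-1},k]^*$; lemma \ref{sepsiggen} lets you replace $\mathcal{F}_{k-1}$ by a \emph{separable} $\mathcal{B}\subset\mathcal{F}_{k-1}$ with $\phi\in[\mathcal{B},k]^*$; separability of $\mathcal{B}$ yields (by the remark preceding the lemma, via the dual supports of a countable generating family) a countable subgroup $T<\hat{\bA}_{k-1}$ with $\mathcal{B}\subseteq\mathfrak{H}_{k-1}(T)$, hence $\phi\in[\mathfrak{H}_{k-1}(T),k]^*$; since $\mathfrak{H}_{k-1}(T)$ is shift invariant, two applications of lemma \ref{pureprop}(4) give $\Delta_{t_1,t_2}\phi\in[\mathfrak{H}_{k-1}(T),k-2]^*$, and lemma \ref{charmes} places $\Delta_{t_1,t_2}\phi$ in $\mathcal{F}_{k-2}\vee\mathfrak{H}_{k-1}(T)=\mathfrak{H}_{k-1}(T)$. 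No nilspace factor, no Fourier expansion of $h$, and no cross-terms are needed; the uniformity in $(t_1,t_2)$ comes for free because the single $\sigma$-algebra $\mathfrak{H}_{k-1}(T)$ controls all derivatives at once.
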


\begin{proof} Lemma \ref{charsep} and lemma \ref{sepsiggen} implies that there is a separable $\sigma$-algebra $\mathcal{B}\subset\mathcal{F}_{k-1}$ such that $\phi\in [\mathcal{B},k]^*$. Let $T\subset\hat{\bA}_{k-1}$ be a countable subgroup such that $\mathcal{B}\subseteq\mathfrak{H}_{k-1}(T)$. 
Then $\phi\in[\mathfrak{H}_{k-1}(T),k]^*$. Lemma \ref{pureprop} implies that $\Delta_{t_1,t_2}\phi\in[\mathfrak{H}_{k-1}(T),k-2]^*$ for every $t_1,t_2\in\bA$.
Using lemma \ref{charmes} we obtain that $\Delta_{t_1,t_2}\phi$ is measurable in $\mathfrak{H}_{k-1}(T)\vee\mathcal{F}_{k-2}=\mathfrak{H}_{k-1}(T)$.
\end{proof}

\begin{lemma}\label{nozer} Let $f,g$ be $L^\infty(\mathcal{A})$ functions such that non of $\mathbb{E}(f|\mathcal{F}_{k-1})$ and $\mathbb{E}(g|\mathcal{F}_{k-1})$ is the $0$ function.  Then  $\mathbb{E}_t\Bigl(\|f(x)\overline{g(x+t)}\|_{U_k}^{2^k}\Bigr)>0$.
\end{lemma}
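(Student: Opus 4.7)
The strategy is to reduce to the $\mathcal{F}_{k-1}$-measurable part of $f$ and $g$ via lemma \ref{twofunct}, and then exploit the fact from theorem \ref{propfk} that $U_k$ is a genuine norm on $L^\infty(\mathcal{F}_{k-1})$.

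First I would set $f_1 := \mathbb{E}(f|\mathcal{F}_{k-1})$, $f_2 := f-f_1$, and analogously $g_1 := \mathbb{E}(g|\mathcal{F}_{k-1})$, $g_2 := g-g_1$; by hypothesis $f_1$ and $g_1$ are nonzero in $L^2$. Applying lemma \ref{twofunct} splits the target expectation as
$$\mathbb{E}_t\|f(x)\overline{g(x+t)}\|_{U_k}^{2^k} = \mathbb{E}_t\|f_1(x)\overline{g_1(x+t)}\|_{U_k}^{2^k} + \mathbb{E}_t\|f_2(x)\overline{g_2(x+t)}\|_{U_k}^{2^k}.$$
Both summands are expectations of non-negative quantities (each integrand is a $2^k$-th power of a Gowers semi-norm), so it is enough to establish strict positivity of the first summand.

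Next I would use that $\mathcal{F}_{k-1}$ is shift-invariant (theorem \ref{propfk}(4)), which places $x\mapsto f_1(x)\overline{g_1(x+t)}$ in $L^\infty(\mathcal{F}_{k-1})$ for every fixed $t\in\bA$. Since the $U_k$ norm is a bona fide norm on $L^\infty(\mathcal{F}_{k-1})$ by theorem \ref{propfk}(3), the Gowers norm $\|f_1(x)\overline{g_1(x+t)}\|_{U_k}$ is strictly positive whenever the product is nonzero as an $L^2$ function of $x$. A Fubini argument using translation-invariance of the Haar measure $\bm$ on $\bA$ gives
$$\bm\otimes\bm\bigl(\{(x,t) : f_1(x)\neq 0,\ g_1(x+t)\neq 0\}\bigr) = \bm(\{f_1\neq 0\})\cdot\bm(\{g_1\neq 0\})>0,$$
so the set of $t$ for which $x\mapsto f_1(x)\overline{g_1(x+t)}$ is nonzero in $L^2$ has positive measure. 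For each such $t$ the integrand $\|f_1(x)\overline{g_1(x+t)}\|_{U_k}^{2^k}$ is strictly positive, and since it is a non-negative function of $t$ which is positive on a set of positive measure, its $t$-expectation is strictly positive, completing the argument.

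I do not foresee any real obstacle here: the whole proof is essentially an unpacking of the decomposition supplied by lemma \ref{twofunct} and the norm property of $U_k$ on $L^\infty(\mathcal{F}_{k-1})$ already proved in theorem \ref{propfk}, together with an elementary Fubini/translation-invariance step.
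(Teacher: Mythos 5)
Your proof is correct and follows essentially the same route as the paper: decompose via lemma \ref{twofunct}, use Fubini and translation invariance to find a positive-measure set of $t$ for which the supports of $f_1$ and $g_1(\cdot+t)$ overlap on a positive-measure set, and invoke the fact that $U_k$ is a norm on $L^\infty(\mathcal{F}_{k-1})$. The only difference is that you spell out the shift-invariance of $\mathcal{F}_{k-1}$ needed to place $x\mapsto f_1(x)\overline{g_1(x+t)}$ in $L^\infty(\mathcal{F}_{k-1})$, which the paper leaves implicit.
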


\begin{proof} The support of both $f_1=\mathbb{E}(f|\mathcal{F}_{k-1})$ and $g_1=\mathbb{E}(g|\mathcal{F}_{k-1})$ has positive measure. This means that for a positive measure set of $t$'s the supports of $f_1$ and $g_1(x+t)$ intersect each other in a positive measure set. (By Fubini's theorem, the expected value of the measure of the intersection is the product of the measures of the supports.) Since $U_k$ is a norm on $L^\infty(\mathcal{F}_{k-1})$ we get that in (\ref{twofun}) the right hand side is not $0$. By lemma \ref{twofunct} the proof is complete.
\end{proof}

\begin{proposition}\label{fixtype} Let $\phi$ be a $k$-th order character. Then there is a countable subgroup $T\leq\hat{\bA}_{k-1}$ such that $S_{k-1}(\Delta_t\phi)S_{k-1}^{-1}(\Delta_t\phi)\subseteq T$ for every fixed $t\in \bA$.
\end{proposition}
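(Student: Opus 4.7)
The plan is to show that the countable subgroup $T \leq \hat{\bA}_{k-1}$ produced by Lemma \ref{secder} (applied to $\phi$) does the job. That lemma gives a $T$ with $\Delta_{t_1,t_2}\phi \in L^\infty(\mathfrak{H}_{k-1}(T))$ for every $t_1,t_2 \in \bA$. So for any fixed $t$, setting $f := \Delta_t \phi$, we have $\Delta_{t_1} f = \Delta_{t_1,t}\phi$ measurable in $\mathfrak{H}_{k-1}(T)$ for every $t_1$, hence $S_{k-1}(\Delta_{t_1}f) \subseteq T$. It suffices to prove: for every $a_0, b_0 \in S_{k-1}(f)$, the element $c := a_0 b_0^{-1}$ lies in $T$.

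I would decompose $f = \sum_a f_a$ via Theorem \ref{hofdecomp} with $f_a$ in the rank-one module $a \in \hat{\bA}_{k-1}$, and then apply Lemma \ref{highconv} to the product $\Delta_{t_1} f = f \cdot \overline{f_{t_1}}$. Using Lemma \ref{prodsup} (shifts preserve modules) and that conjugation of module $b$ is module $b^{-1}$, the $c$-component of $\Delta_{t_1}f$ is
\[
(\Delta_{t_1} f)_c(x) = \sum_{b'} f_{cb'}(x)\,\overline{f_{b'}(x+t_1)}\qquad\text{in } L^2(\bA_x),
\]
for every fixed $t_1$. Assume toward contradiction that $c \notin T$; then the left-hand side is $0$ in $L^2(\bA_x)$ for every $t_1$.

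The crux is to promote this pointwise-in-$t_1$ vanishing to a genuine orthogonal decomposition. Define $u_{b'}(x,t_1) := f_{cb'}(x)\,\overline{f_{b'}(x+t_1)}$ as elements of $L^2(\bA \times \bA)$. By Fubini, shift-invariance of Haar measure, and orthogonality of distinct rank-one modules (Lemma \ref{charort}),
\[
\langle u_{b'_1}, u_{b'_2}\rangle_{L^2(\bA\times\bA)} \;=\; \langle f_{cb'_1}, f_{cb'_2}\rangle \cdot \langle f_{b'_2}, f_{b'_1}\rangle,
\]
which vanishes whenever $b'_1 \neq b'_2$. A simple bound $\sum_{b'}\|u_{b'}\|_{L^2(\bA\times\bA)}^2 \le \|f\|_\infty^2\sum_{b'}\|f_{b'}\|_2^2 < \infty$ gives that $\sum_{b'} u_{b'}$ converges in $L^2(\bA\times\bA)$ to the function $(x,t_1)\mapsto (\Delta_{t_1}f)_c(x)$, which is identically $0$ by assumption on $c$. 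Orthogonality forces $u_{b'} = 0$ for every $b'$; taking $b' = b_0$ yields $\|f_{a_0}\|_2\,\|f_{b_0}\|_2 = 0$, contradicting $a_0, b_0 \in S_{k-1}(f)$. Hence $c \in T$.

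The main obstacle I expect is the bookkeeping in the orthogonality computation: one must correctly track how module labels transform under conjugation and translation so the cross inner products split into a product of two module-level inner products that each vanish by Lemma \ref{charort}. Once this is set up, the fact that $T$ is already a subgroup (by Lemma \ref{secder}) and independent of $t$ gives the full conclusion $S_{k-1}(\Delta_t\phi)\,S_{k-1}^{-1}(\Delta_t\phi) \subseteq T$ for every $t \in \bA$.
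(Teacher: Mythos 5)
Your proposal is correct, and it reaches the conclusion by a genuinely different route at the key technical step. Both arguments start from the same $T$ supplied by Lemma \ref{secder} and both decompose the first derivative $\Delta_t\phi$ into its rank-one components, so in each case the task is to convert the inclusion $S_{k-1}(\Delta_{t_1}\Delta_t\phi)\subseteq T$, valid for every $t_1$, into the statement that quotients of modules in $S_{k-1}(\Delta_t\phi)$ lie in $T$. The paper handles one pair of modules $(\lambda_i,\lambda_j)$ at a time: it multiplies $\Delta_t\phi$ by conjugates of representative characters and invokes Lemma \ref{nozer} (hence Lemma \ref{twofunct}, a Gowers-inner-product expansion) to produce a \emph{single} good shift for which the $\lambda_i\lambda_j^{-1}$-component of the second derivative is nonzero. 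You instead average over all $t_1$ at once: the summands $u_{b'}(x,t_1)=f_{cb'}(x)\overline{f_{b'}(x+t_1)}$ are pairwise orthogonal in the joint variable because $x$ and $x+t_1$ are independent and distinct modules are orthogonal (Lemma \ref{charort}), so the vanishing of the $c$-component for (almost) every $t_1$ forces each summand, in particular $f_{a_0}(x)\overline{f_{b_0}(x+t_1)}$, to vanish. What this buys is independence from Lemmas \ref{nozer} and \ref{twofunct}; what it costs is a small amount of measure theory that you should make explicit: the function $(x,t_1)\mapsto f_{b'}(x+t_1)$ lives naturally on $C^1(\bA)\cong\prod_\omega(A_i\times A_i)$ rather than on the product $\sigma$-algebra $\mathcal{A}\otimes\mathcal{A}$, so the factorization $\langle u_{b'_1},u_{b'_2}\rangle=\langle f_{cb'_1},f_{cb'_2}\rangle\langle f_{b'_2},f_{b'_1}\rangle$ and the passage from ``zero for each fixed $t_1$'' to ``zero in $L^2$ of the joint space'' rest on the independence of the coupling $\{\psi_0,\psi_1\}$ on $C^1(\bA)$ and on the nonstandard Fubini theorem from \cite{ESz}. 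Since the paper uses exactly these devices elsewhere (e.g.\ in Lemma \ref{fkconvfk}), this is a caveat rather than a gap.
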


\begin{proof}  Let $T$ be the subgroup of $\hat{\bA}_{k-1}$ guaranteed by lemma \ref{secder}. We have that $S_{k-1}(\Delta_{t_1,t_2}\phi)\subseteq T$ for every $t_1,t_2$. Let $t_1\in\bA$ be an arbitrary fixed element and let $\Delta_{t_1}\phi=f_1+f_2+\dots$ be the unique $k-1$-th order Fourier decomposition of $\Delta_{t_1}\phi$ into non zero functions. Assume that $\lambda_i\in\hat{\bA}_{k-1}$ is the module containing $f_i$ for every $i$. We have to show that $\lambda_i\lambda_j^{-1}\in T$ for every pair of indices $i,j$.
Let us choose a $k-1$-th order character $\phi_i$ from every module $\lambda_i$ and let $g_i$ denote $(\Delta_{t_1}\phi)\overline{\phi_i}$.
We have that $\mathbb{E}(g_i|\mathcal{F}_{k-2})$ is not $0$.
This means by lemma \ref{nozer} and theorem \ref{propfk} that for a positive measure of $t_2$'s $\mathbb{E}(g_i(x)\overline{g_j(x+t_2)}|\mathcal{F}_{k-2})$ is not the $0$ function. On the other hand $g_i(x)\overline{g_j(x+t_2)}=(\Delta_{t_1,t_2}\phi(x))\overline{\phi_i(x)}\phi_j(x+t_2)$.
Here $\overline{\phi_i(x)}\phi_j(x+t_2)$ is an element from the module $\lambda_j\lambda_i^{-1}$.
If $\mathbb{E}(g_i(x)\overline{g_j(x+t_2)}|\mathcal{F}_{k-2})$ is not $0$ for some $t_2$ then the $\lambda_i\lambda_j^{-1}$ component of $\Delta_{t_1,t_2}\phi$ is not zero. It shows that $\lambda_i\lambda_j^{-1}\in T$.
\end{proof}

\begin{lemma}\label{charhom} For every $k$-th order character $\phi$ there is a countable subgroup $T\subset\hat{\bA}_{k-1}$ and a homomorphism $h:\bA\rightarrow\hat{\bA}_{k-1}/T$ such that $S_{k-1}(\Delta_t\phi)$ is contained in the coset $h(t)$.
\end{lemma}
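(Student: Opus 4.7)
The plan is to piece together the lemma directly from Proposition \ref{fixtype} and the cocycle identity for $\Delta_t$. Concretely, take $T \leq \hat{\bA}_{k-1}$ to be the countable subgroup supplied by Proposition \ref{fixtype}, so that for every $t \in \bA$ the set $S_{k-1}(\Delta_t \phi) \cdot S_{k-1}(\Delta_t \phi)^{-1}$ is contained in $T$. This means that $S_{k-1}(\Delta_t \phi)$ is contained in a single coset of $T$ in $\hat{\bA}_{k-1}$, provided it is nonempty. Nonemptiness follows from the fact that $\phi$ is a $k$-th order character, so $\Delta_t \phi$ is measurable in $\mathcal{F}_{k-1}$ (by Lemma \ref{charmes} applied to $\Delta_t\phi \in [\mathcal{F}_{k-2},k-1]^*$, which is obtained from Lemma \ref{pureprop} and Lemma \ref{charsep}), and $|\Delta_t \phi| = 1$ so it is not the zero function. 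By Theorem \ref{hofdecomp} its Fourier decomposition into elements of $\hat{\bA}_{k-1}$ contains at least one nonzero summand.

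Having established this, I define $h: \bA \to \hat{\bA}_{k-1}/T$ by letting $h(t)$ be the unique coset of $T$ containing $S_{k-1}(\Delta_t \phi)$. It remains to verify that $h$ is a group homomorphism. The key identity is the cocycle formula
\[ \Delta_{t_1+t_2} \phi(x) = \phi(x)\overline{\phi(x+t_1+t_2)} = \Delta_{t_1}\phi(x) \cdot (\Delta_{t_2}\phi)(x+t_1). \]
Writing $g_s(x) := (\Delta_{t_2}\phi)(x+t_1)$ for the translate, Lemma \ref{prodsup} gives $S_{k-1}(g_s) = S_{k-1}(\Delta_{t_2}\phi)$ (shift invariance) and $S_{k-1}(\Delta_{t_1+t_2}\phi) \subseteq S_{k-1}(\Delta_{t_1}\phi) \cdot S_{k-1}(\Delta_{t_2}\phi)$. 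Passing to cosets modulo $T$ yields $h(t_1+t_2) = h(t_1) h(t_2)$. The identity $h(0) = T$ is immediate since $\Delta_0\phi \equiv 1$ belongs to the trivial module.

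Since every nontrivial ingredient is already in place (Proposition \ref{fixtype} does the work of fixing the subgroup $T$, and Lemma \ref{prodsup} handles the algebra of dual supports), there is no serious obstacle; the argument reduces to assembling these pieces and checking well-definedness. The one point that deserves a sentence of care is the nonemptiness of $S_{k-1}(\Delta_t\phi)$, which guarantees $h(t)$ is defined by exactly one coset rather than by an empty set.
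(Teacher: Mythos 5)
Your proof is correct and follows essentially the same route as the paper: take $T$ from Proposition \ref{fixtype}, define $h(t)$ as the coset containing $S_{k-1}(\Delta_t\phi)$, and verify the homomorphism property via the cocycle identity $\Delta_{t_1+t_2}\phi(x)=\Delta_{t_1}\phi(x)\,\Delta_{t_2}\phi(x+t_1)$ together with Lemma \ref{prodsup}. Your explicit check that $S_{k-1}(\Delta_t\phi)$ is nonempty (so that $h(t)$ is a well-defined single coset) is a worthwhile point of care that the paper leaves implicit; note only that the measurability of $\Delta_t\phi$ in $\mathcal{F}_{k-1}$ is already the definition of a $k$-th order character, so the detour through Lemmas \ref{pureprop}, \ref{charsep} and \ref{charmes} is unnecessary.
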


\begin{proof} Proposition \ref{fixtype} implies that there is a countable subgroup $T\subset\hat{\bA}_{k-1}$ such that $S_{k-1}(\Delta_t\phi)$ is contained in a coset of $T$ for every element $t\in\bA$. We denote this coset by $h(t)$. We have to show that $h$ is a homomorphism. This follwos from 
$\Delta_{t_1+t_2}\phi(x)=\Delta_{t_2}\phi(x+t_1)\Delta_{t_1}\phi(x)$
together with lemma \ref{prodsup}.
\end{proof}

\begin{lemma}\label{countriv} Let $k\geq 2$ and $\phi$ be a $k$-th order character such that there is a countable subgroup $T\subseteq \hat{\bA}_{k-1}$ with the property that $\Delta_t\phi$ is measurable in $\mathfrak{H}_{k-1}(T)$ for every $t\in\bA$. Then $\phi$ is measurable in $\mathcal{F}_{k-1}$ (or in other words $\phi$ represents the trivial module).
\end{lemma}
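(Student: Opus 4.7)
\medskip

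\noindent\emph{Proof plan.} The strategy combines the dichotomy of lemma \ref{erosort} with the measurable structure of the homomorphism produced by lemma \ref{charhom}. First observe that $\mathcal{B}:=\mathfrak{H}_{k-1}(T)$ is a shift-invariant sub-$\sigma$-algebra of $\mathcal{F}_{k-1}$ in which every $\Delta_t\phi$ is measurable by hypothesis. Apply lemma \ref{erosort} with this $\mathcal{B}$: either $\phi\in L^\infty(\mathcal{B})\subseteq L^\infty(\mathcal{F}_{k-1})$, in which case the conclusion is immediate, or $\mathbb{E}(\phi\mid\mathcal{B})=0$. The plan is to rule out the second alternative under the assumption (for contradiction) that $\phi$ represents a nontrivial module of $\hat{\bA}_k$.

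In that second case, invoke lemma \ref{charhom} for $\phi$ to produce a countable subgroup $T_0\subseteq\hat{\bA}_{k-1}$ and a homomorphism $\alpha:\bA\to\hat{\bA}_{k-1}/T_0$ with $S_{k-1}(\Delta_t\phi)\subseteq\alpha(t)$ for every $t$. After replacing $T$ by $T+T_0$ (still countable), the hypothesis $S_{k-1}(\Delta_t\phi)\subseteq T$ forces $\alpha(t)\cap T\ne\emptyset$, so the image of $\alpha$ is contained in the countable group $T/T_0$.

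Now the plan is to promote ``countable image'' to ``finite image''. The map $\alpha$ is Borel measurable, because for each coset $C\in T/T_0$ the preimage $\alpha^{-1}(C)$ is the countable intersection, over representatives $\chi$ of cosets in $T/T_0$ different from $C$, of the measurable sets $\{t\in\bA:(\Delta_t\phi,\chi)=0\}$. Since each nonempty $\alpha^{-1}(C)$ is a coset of the measurable subgroup $\ker\alpha$, all such cosets have a common $\bm$-measure by shift invariance of Haar measure on $\bA$; countable additivity over the partition $\bA=\bigsqcup_C\alpha^{-1}(C)$ then forces $\mathrm{Im}(\alpha)$ to be finite.

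The last and hardest step is to convert this finiteness into a contradiction with the assumed nontriviality of $\phi$ in $\hat{\bA}_k$. My plan is to realize $\phi=h\circ\gamma$ via theorem \ref{main3} on a strong $k$-step nilspace factor $\gamma:\bA\to N$ with $h\in W(\chi_0,N)$ for a nontrivial $\chi_0\in\hat{A_k}$ (as supplied by lemma \ref{nilfourdec} and \ref{nilspchar}), and then argue that the free action of $A_k$ on $N$, combined with nontriviality of $\chi_0$, forces the map $t\mapsto$ (class of $\Delta_t(h\circ\gamma)$ in $\hat{\bA}_{k-1}/T_0$) to take infinitely many values, contradicting the finiteness of $\mathrm{Im}(\alpha)$. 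I expect this final reduction to be the main obstacle, since it requires translating shifts on the ultraproduct group $\bA$ into well-controlled motions along the $A_k$-bundle of $N$, in a way that registers as distinct cosets in the higher-order dual.
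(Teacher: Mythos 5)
There is a genuine gap, and it sits exactly where you flag it yourself: the final reduction. After replacing $T$ by $T+T_0$, your hypothesis $S_{k-1}(\Delta_t\phi)\subseteq T$ already says that the homomorphism $\alpha$ of lemma \ref{charhom} is \emph{constant} modulo the countable group $T$; your countable-to-finite measure argument (which is fine as far as it goes, and is close in spirit to a step the paper also takes) therefore buys nothing beyond what the hypothesis gives for free. The entire content of the lemma is the implication ``$\alpha$ lands in a fixed countable subgroup $\Rightarrow$ $\phi$ is trivial,'' i.e.\ precisely the injectivity statement of theorem \ref{dualemb} that the paper \emph{deduces from} this lemma. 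Your plan to establish it by writing $\phi=h\circ\gamma$ on a strong nilspace factor and arguing that the free $A_k$-action forces infinitely many cosets is not an argument but a restatement of the difficulty: you would have to show that a nontrivial fibre of the $A_k$-bundle registers as infinitely many distinct classes in $\hat{\bA}_{k-1}/T$ under $t\mapsto S_{k-1}(\Delta_t\phi)$, and no mechanism for that is given. (A smaller issue: orthogonality of $\Delta_t\phi$ to a rank-one module $a$ is not detected by a single inner product $(\Delta_t\phi,\chi)$ with one representative $\chi$; one must test against the whole module, e.g.\ via $\|\overline{\phi'}\Delta_t\phi\|_{U_{k-1}}$.)

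The paper's route avoids the dual-group machinery entirely and is worth internalizing. Since $\Delta_t\phi$ is a nonzero element of $L^\infty(\mathfrak{H}_{k-1}(T))$ for every $t$, the measurable sets $Q_a=\{t:a\in S_{k-1}(\Delta_t\phi)\}$, $a\in T$, cover $\bA$, so by countable additivity some $Q_a$ has positive measure. Fixing a character $\phi'$ representing $a$ and setting $f=\overline{\phi'}\phi$, the identity (\ref{dimred}) gives
$$(H)=\mathbb{E}_t\bigl(\|\overline{\phi'}\,\Delta_t\phi\|_{U_{k-1}}^{2^{k-1}}\bigr)>0$$
for the function system $H$ with $h_{(w,0)}=f$, $h_{(w,1)}=\phi$, because the integrand is strictly positive on the positive-measure set $Q_a$. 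The Gowers--Cauchy--Schwarz inequality (\ref{GCS}) then forces $\|\phi\|_{U_k}>0$, hence $\mathbb{E}(\phi|\mathcal{F}_{k-1})\neq 0$, and lemma \ref{erosort} (applied with $\mathcal{B}=\mathcal{F}_{k-1}$, legitimate since $\phi$ is a $k$-th order character) concludes that $\phi$ is measurable in $\mathcal{F}_{k-1}$. Your opening application of lemma \ref{erosort} with $\mathcal{B}=\mathfrak{H}_{k-1}(T)$ is correct but idle: the second alternative $\mathbb{E}(\phi|\mathfrak{H}_{k-1}(T))=0$ is never used downstream, and the work all remains to be done.
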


\begin{proof} For $a\in T$ let $Q_a=\{t|t\in\bA,a\in S_{k-1}(\Delta_t\phi)\}$. First we claim that $Q_a$ is measurable. Indeed, if $\phi'$ is a fixed character representing $a$ then $Q_a$ is the set of $t$'s for which the measurable function $t\mapsto\|\overline{\phi'}\Delta_t\phi\|_{U_{k-1}}$ is not zero. Using $\sigma$-additivity and $\cup_{a\in T}Q_a=\bA$ we obtain that there is a fixed $a\in T$ such that $Q_a$ has positive measure.  Assume that $\phi'$ represents $a$ and let $f(x)=\overline{\phi'(x)}\phi(x)$. Let $H=\{h_v\}_{v\in\{0,1\}^k}$ be the function system with $h_{(w,0)}=f$ and $h_{(w,1)}=\phi$ for $w\in\{0,1\}^{k-1}$. Then we have by (\ref{dimred}) that  $$(H)=\mathbb{E}_t(\|f(x)\overline{\phi(x+t)}\|^{2^{k-1}}_{U_{k-1}})=\mathbb{E}_t(\|\overline{\phi'}\Delta_t\phi\|^{2^{k-1}}_{U_{k-1}})>0.$$ By (\ref{GCS}) we obtain that $\|\phi\|_{U_k}>0$ and so lemma \ref{erosort} implies that $\phi$ represent the trivial module.
\end{proof}

\subsection{Various Hom-sets}

In this chapter we use multiplicative notation for Abelian groups. For two Abelian groups $A_1$ and $A_2$ we denote by $\hom(A_1,A_2)$ the set of all homomorphism from $A_1$ to $A_2$. The set $\hom(A_1,A_2)$ is an Abelian group with respect to the point wise multiplication. Let $\aleph_0(A_2)$ denote the set of countable subgroups in $A_2$. The groups $\hom(A_1,A_2/T)$ where $T\in\aleph_0(A_2)$ are forming a direct system with the natural homomorphisms $\hom(A_1,A_2/T_1)\rightarrow\hom(A_1,A_2/T_2)$ defined when $T_1\subseteq T_2$.

\begin{definition} $\hom^*(A_1,A_2)$ is the direct limit of the direct system $$\{\hom(A_1,A_2/T)\}_{T\in\aleph_0(A_2)}$$
with the homomorphisms induced by embeddings on $\aleph_0(A_2)$.
\end{definition}

We describe the elements of $\hom^*(A_1,A_2)$.
Let $H(A_1,A_2)$ be the disjoint union of all the sets $\hom(A_1,A_2/T)$ where $T$ runs through the countable subgroups of $A_2$. If $h_1\in\hom(A_1,A_2/T_1)$ and $h_2\in\hom(A_1,A_2/T_2)$ are two elements in $H(A_1,A_2)$ then we say that $h_1$ and $h_2$ are equivalent if there is a countable subgroup $T_3$ of $A_2$ containing both $T_1$ and $T_2$ such that $h_1$ composed with $A_2/T_1\rightarrow A_2/T_3$ is the same as $h_2$ composed with $A_2/T_2\rightarrow A_2/T_3$.
The equivalence classes in $H(A_1,A_2)$ are forming an Abelian group that is the same as $\hom^*(A_1,A_2)$.

For two abelian groups let $\hom^c(A_1,A_2)$ denote the set of homomorphisms whose image is countable. We denote by $\hom^0(A_1,A_2)$ the factor $\hom(A_1,A_2)/\hom^c(A_1,A_2)$. If $T$ is a countable subgroup of $A_2$ then there is a natural embedding of $\hom^0(A_1,A_2/T)$ into $\hom^*(A_1,A_2)$ in the following way.
The set $\hom(A_1,A_2/T)$ is a subset of $H(A_1,A_2)$. It is easy to see that $\phi_1,\phi_2\in\hom(A_1,A_2/T)$ are equivalent if and only if they are contained in the same coset of $\hom^c(A_1,A_2)$. From the definitions it follows that

\begin{equation}\label{homstarcup} \hom^*(A_1,A_2)=\bigcup_{T\in\aleph_0(A_2)} \hom^0(A_1,A_2/T).
\end{equation}

From (\ref{homstarcup}) we obtain the next lemma.

\begin{lemma}\label{exp} If $A_1$ is of exponent $n$ then so is $\hom^*(A_1,A_2)$.
\end{lemma}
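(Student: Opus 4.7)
The plan is very short because the statement reduces immediately to a property at the level of ordinary $\hom$-groups and then passes through the direct limit trivially.

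First I would unwind the definition of $\hom^*(A_1, A_2)$ using the description via $H(A_1, A_2)$ provided in the text: every element of $\hom^*(A_1, A_2)$ is the equivalence class of some honest homomorphism $\phi : A_1 \to A_2/T$ for a countable subgroup $T \leq A_2$. The group law on $\hom^*(A_1, A_2)$ is induced from pointwise multiplication on each $\hom(A_1, A_2/T)$, and the identity element is the equivalence class of the trivial homomorphism.

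Next I would observe that for any abelian group $B$, the group $\hom(A_1, B)$ inherits exponent $n$ from $A_1$: if $\phi \in \hom(A_1, B)$ then for every $a \in A_1$, using multiplicative notation, $\phi(a)^n = \phi(a^n) = \phi(1) = 1$, so $\phi^n$ is the trivial homomorphism. Applying this with $B = A_2/T$ shows that every $\phi \in \hom(A_1, A_2/T)$ satisfies $\phi^n = 1$ already in $\hom(A_1, A_2/T)$, hence \emph{a fortiori} in the direct limit $\hom^*(A_1, A_2)$.

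Combining these two observations, any element of $\hom^*(A_1, A_2)$ is represented by some $\phi \in \hom(A_1, A_2/T)$ for some $T \in \aleph_0(A_2)$, and its $n$-th power is represented by $\phi^n = 1$, which is the identity of $\hom^*(A_1, A_2)$. Therefore $\hom^*(A_1, A_2)$ has exponent dividing $n$, proving the lemma. There is no real obstacle here; the only mild subtlety is making sure the group operation on $\hom^*(A_1, A_2)$ is indeed the one induced coordinatewise from the $\hom(A_1, A_2/T)$'s, which is immediate from the direct-limit construction.
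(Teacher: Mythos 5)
Your proof is correct and is essentially the paper's argument: the paper deduces the lemma in one line from the decomposition $\hom^*(A_1,A_2)=\bigcup_{T\in\aleph_0(A_2)}\hom^0(A_1,A_2/T)$, which is exactly your observation that every class is represented by an honest homomorphism $\phi:A_1\rightarrow A_2/T$ with $\phi^n$ trivial since $\phi(a)^n=\phi(a^n)=1$. No gaps.
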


Note that an abelian group is said to be of exponent $n$ if the $n$-th power of every element is the identity.
It is easy to see that If $p$ is a prime number and $A_2$ is of exponent $p$ then $\hom^*(A_1,A_2)=\hom^0(A_1,A_2)$.

\begin{definition} We say that an Abelian group is essentially torsion free if there are at most countably many finite order elements in it.
\end{definition}

\begin{lemma}\label{estors} If $A$ is essentially torsion free then $A/T$ is essentially torsion free whenever $T\in\aleph_0(A)$.
\end{lemma}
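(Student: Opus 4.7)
The plan is to identify the torsion subgroup of $A/T$ explicitly and show it is a countable union of countable sets. Let me switch to additive notation for clarity, since the statement is purely algebraic.

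First, I observe that an element $a+T \in A/T$ has finite order precisely when $na \in T$ for some positive integer $n$. So the torsion part of $A/T$ corresponds to the set
\[
B = \{a \in A : \exists\, n \geq 1,\ na \in T\} = \bigcup_{n \geq 1}\bigcup_{t \in T}\{a \in A : na = t\},
\]
and the torsion subgroup of $A/T$ is $B/T$. To show $|B/T| \leq \aleph_0$ it is enough (and easier) to show $|B| \leq \aleph_0$.

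Next, fix $n \geq 1$ and $t \in T$. The fibre $\{a \in A : na = t\}$ is either empty or a coset of the $n$-torsion subgroup $A[n] = \{a \in A : na = 0\}$. Since every element of $A[n]$ has finite order, and $A$ is essentially torsion free, $A[n]$ is countable. Hence each non-empty fibre is countable.

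Finally, $T$ is countable by hypothesis and the range of $n$ is countable, so $B$ is a countable union of countable sets, hence countable. Therefore the torsion subgroup $B/T$ of $A/T$ is countable, proving that $A/T$ is essentially torsion free. I do not anticipate any serious obstacle: this is a one-step counting argument using only the definition of essential torsion-freeness and the countability of $T$.
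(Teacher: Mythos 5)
Your proof is correct and is essentially the paper's argument: the paper runs the same decomposition over pairs $(n,t)$ by contradiction, noting that an uncountable fibre $S=\{x: x^n=t\}$ translates (by a fixed $y\in S$) to an uncountable set of torsion elements of $A$, which is exactly your observation that each non-empty fibre is a coset of $A[n]$. The only difference is presentational (direct counting versus pigeonhole-and-contradiction).
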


\begin{proof} Assume by contradiction that there are uncountably many finite order elements in $A/T$. Then there is a natural number $n$ and element $t\in T$ such that the set
$S=\{x~|~x\in A,~x^n=t\}$ is uncountable. Then for a fixed element $y\in S$ the set $Sy^{-1}$ is an uncountable set of finite order elements in $A$ which is a contradiction.
\end{proof}

\begin{lemma}\label{nullfree} If $A_2$ is essentially torsion free then $\hom^0(A_1,A_2)$ is torsion free.
\end{lemma}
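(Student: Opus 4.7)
The plan is to unwind the definitions and reduce to counting finite-order elements in a quotient of $A_2$. Let $\phi \in \hom(A_1,A_2)$ and suppose the class of $\phi$ is an $n$-torsion element in $\hom^0(A_1,A_2)$, i.e. $\phi^n \in \hom^c(A_1,A_2)$. I want to show $\phi \in \hom^c(A_1,A_2)$, meaning the image $B := \phi(A_1) \subseteq A_2$ is countable.

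First I would introduce the subgroup $B^n := \{b^n : b \in B\} = \phi^n(A_1) \leq A_2$. By hypothesis $B^n$ is countable, so $B^n \in \aleph_0(A_2)$. By Lemma \ref{estors}, the quotient $A_2/B^n$ is essentially torsion free. Now consider the image of $B$ inside $A_2/B^n$; since $B^n \subseteq B$, this image is exactly $B/B^n$. Every element of $B/B^n$ satisfies $x^n = 1$ (because $b^n \in B^n$ for all $b \in B$), so every element of the image is of finite order.

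Because $A_2/B^n$ is essentially torsion free, it contains at most countably many finite-order elements, and therefore $B/B^n$ is countable. Combining with the fact that $B^n$ itself is countable, we conclude that $B$ is countable, hence $\phi \in \hom^c(A_1,A_2)$, as required.

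The argument is essentially a one-line reduction; the only step that requires any thought is recognizing that $B/B^n$ sits inside $A_2/B^n$ as a subgroup of exponent dividing $n$, which is the bridge that lets us apply the ``essentially torsion free'' hypothesis through Lemma \ref{estors}. I do not foresee any technical obstacle here.
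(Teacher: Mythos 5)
Your proof is correct and rests on the same underlying idea as the paper's: an uncountable image with countable $n$-th power forces uncountably many torsion elements, which is forbidden. The only cosmetic difference is that you cleanly route the argument through the quotient $A_2/B^n$ and invoke Lemma \ref{estors}, whereas the paper repeats the translation/fiber-counting trick of \ref{estors} inline inside $A_2$; both versions are fine.
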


\begin{proof} Assume by contradiction that there is an element $\tau\in\hom(A_1,A_2)$ and $n\in\mathbb{N}$ such that $\tau(A_1)$ is uncountable but $\tau^n(A_1)$ is countable.
Similarly to the proof of lemma \ref{estors} this means that there is a fixed element $t\in\tau^n(A_1)$ whose pre image under the map $x\mapsto x^n$ is uncountable which is a contradiction.
\end{proof}

\begin{lemma}\label{estfree} If $A_2$ is essentially torsion free then $\hom^*(A_1,A_2)$ is torsion free.
\end{lemma}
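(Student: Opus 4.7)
The plan is to combine the description of $\hom^*(A_1,A_2)$ in (\ref{homstarcup}) with lemma \ref{estors} and lemma \ref{nullfree}. Suppose $\phi\in\hom^*(A_1,A_2)$ satisfies $\phi^n=1$ for some positive integer $n$; I will show that $\phi$ is already the identity element of $\hom^*(A_1,A_2)$.

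First, using (\ref{homstarcup}), I would pick a representative $\phi_0\in\hom(A_1,A_2/T)$ for some countable $T\in\aleph_0(A_2)$. The assumption $\phi^n=1$ in $\hom^*(A_1,A_2)$ means, by the definition of equivalence in $H(A_1,A_2)$, that there exists a countable subgroup $T'\in\aleph_0(A_2)$ with $T\subseteq T'$ such that $\phi_0^n$ composed with the projection $A_2/T\to A_2/T'$ is trivial. In other words, the image of $\phi_0^n$ is contained in $T'/T$, which is countable. Hence $\phi_0^n\in\hom^c(A_1,A_2/T)$, so the class of $\phi_0$ in $\hom^0(A_1,A_2/T)$ is a torsion element.

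Next, by lemma \ref{estors} applied to $A_2$ and the countable subgroup $T$, the quotient $A_2/T$ is essentially torsion free. Therefore lemma \ref{nullfree} applies and gives that $\hom^0(A_1,A_2/T)$ is torsion free. Combined with the previous paragraph this forces the class of $\phi_0$ in $\hom^0(A_1,A_2/T)$ to be trivial, i.e. $\phi_0\in\hom^c(A_1,A_2/T)$, so the image of $\phi_0$ is a countable subgroup $S\leq A_2/T$.

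Finally, I would lift $S$ to the preimage $T''\leq A_2$ under the projection $A_2\to A_2/T$; then $T''$ is countable (as an extension of $T$ by $S$) and contains $T$, and $\phi_0$ composed with $A_2/T\to A_2/T''$ is trivial. By the definition of the direct limit, this means $\phi=1$ in $\hom^*(A_1,A_2)$. There is no real obstacle here; the only thing to be a bit careful about is unpacking the equivalence relation defining $\hom^*$ so that the torsion condition in $\hom^*$ becomes a torsion condition on some fixed $\hom^0(A_1,A_2/T)$, after which lemma \ref{nullfree} finishes the argument.
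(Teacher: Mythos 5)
Your proof is correct and follows essentially the same route as the paper: reduce via (\ref{homstarcup}) to the torsion-freeness of the groups $\hom^0(A_1,A_2/T)$, which follows from lemma \ref{estors} combined with lemma \ref{nullfree}. The only difference is that you explicitly unpack the equivalence relation defining $\hom^*$ to verify that the embedding of $\hom^0(A_1,A_2/T)$ transfers torsion both ways, a step the paper leaves implicit.
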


\begin{proof} By (\ref{homstarcup}) it is enough to prove that for every $T\in\aleph_0(A_2)$ the group $\hom^0(A_1,A_2/T)$ is torsion free. Lemma \ref{estors} implies that $A_2/T$ is essentially torsion free. Lemma \ref{nullfree} finishes the proof.
\end{proof}

\subsection{On the structure of the higher order dual groups}

We return to the structure of $\hat{\bA}_k$.
Let us start with $\hat{\bA}_1$.

\begin{lemma}\label{firstdual} The group $\hat{\bA}_1$ is isomorphic to $\prod_\omega\hat{A_i}$.
\end{lemma}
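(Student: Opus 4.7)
The plan is to exhibit the isomorphism explicitly via the map $\Theta:\prod_\omega \hat{A_i}\to\hat{\bA}_1$ sending $[\chi_i]$ to the rank-one module generated by the ultra-limit character $\phi:=\lim_\omega\chi_i$ on $\bA$. I will first verify that $\Theta$ is well-defined and a homomorphism. Each $\chi_i$ is a continuous homomorphism $A_i\to S^1$, so $\phi$ is a continuous function on $\bA$ with $|\phi|=1$, and a direct computation using $\phi(x+t)=\lim_\omega\chi_i(x_i)\chi_i(t_i)=\phi(x)\phi(t)$ yields $\Delta_t\phi(x)=\overline{\phi(t)}$, a constant in $x$. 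Since $\mathcal{F}_0$ is the trivial $\sigma$-algebra, this makes $\phi$ a first-order character. Well-definedness on ultra-product equivalence classes and multiplicativity follow because ultra-limits commute with coordinatewise operations.

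For injectivity, suppose $\Theta([\chi_i])$ is the trivial module, so $\phi$ is almost everywhere equal to a constant $c$ of modulus one. By the identity relating integrals on $\bA$ to ultra-limits of integrals,
\[
c=\int_\bA\phi\,d\bm=\lim_\omega\int_{A_i}\chi_i\,d\mu_i.
\]
Each $\int_{A_i}\chi_i$ equals $1$ if $\chi_i$ is trivial and $0$ otherwise, so $c\in\{0,1\}$; combined with $|c|=1$ this forces $c=1$ and hence $\chi_i=1$ for $\omega$-most $i$, so $[\chi_i]$ is the identity of $\prod_\omega\hat{A_i}$.

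Surjectivity is the main content. Given a first-order character $\phi$, the constancy of $\Delta_t\phi(x)=c(t)$ in $x$ rearranges to $\phi(x+t)=\overline{c(t)}\phi(x)$, which shows $\phi$ is almost-everywhere equal to a constant multiple of the measurable homomorphism $\overline{c}:\bA\to S^1$. Since constants represent the trivial module, I may reduce to the case when $\phi$ itself is a measurable homomorphism. I then represent $\phi$ as an ultra-limit $\lim_\omega F_i$ with $|F_i|=1$ (applying the fact that every bounded measurable function on $\bA$ is a.e.\ an ultra-limit, and then renormalizing, using $|\phi|=1$ a.e.\ to control the $L^2$-error along $\omega$). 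The multiplicativity of $\phi$ transfers, via the ultra-limit identity for integrals, to
\[
\lim_\omega\int_{A_i\times A_i}\bigl|F_i(x+y)-F_i(x)F_i(y)\bigr|^2\,d\mu_i^2=0.
\]

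The main obstacle is then to extract an honest character $\chi_i\in\hat{A_i}$ from each approximate homomorphism $F_i$. The plan is Fourier analysis on each $A_i$: expanding $F_i=\sum_{\chi\in\hat{A_i}}\hat{F_i}(\chi)\,\chi$ in its Fourier series and using Parseval and orthogonality of characters gives
\[
\int_{A_i\times A_i}\bigl|F_i(x+y)-F_i(x)F_i(y)\bigr|^2\,d\mu_i^2=2-2\sum_{\chi\in\hat{A_i}}|\hat{F_i}(\chi)|^2\,\Re\,\hat{F_i}(\chi).
\]
Since $\sum_\chi|\hat{F_i}(\chi)|^2=1$, the right-hand sum is bounded by $\max_\chi\Re\,\hat{F_i}(\chi)$, so the vanishing limit forces $\max_\chi\Re\,\hat{F_i}(\chi)\to 1$ along $\omega$. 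Choosing any $\chi_i\in\hat{A_i}$ realising this maximum (which exists since $\hat{F_i}\in\ell^2(\hat{A_i})$ tends to zero at infinity) gives $\|F_i-\chi_i\|_2^2=2-2\,\Re\,\hat{F_i}(\chi_i)\to 0$ along $\omega$. The $L^2$-distance between the corresponding ultra-limits is then zero, so $\phi=\lim_\omega\chi_i$ a.e., which proves $\Theta([\chi_i])=[\phi]$ and completes surjectivity.
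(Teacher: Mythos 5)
Your proof is correct, and the surjectivity argument is genuinely different from the paper's. The paper also identifies $\prod_\omega\hat{A_i}$ with the set $H$ of ultra-limit characters, but it proves $H=\hat{\bA}_1$ by contradiction: a first-order character $\phi=\lim_\omega\phi_i$ orthogonal to all of $H$ would have $\lim_\omega\|\hat{\phi_i}\|_\infty=0$, hence $\|\phi\|_{U_2}=0$ by the formula (\ref{u2norm}), contradicting Lemma \ref{kisk} (which rests on Theorem \ref{propfk}). You instead first observe that a first-order character is, up to a unimodular constant, a measurable homomorphism $\bA\to S^1$, and then prove an $L^2$-stability statement for approximate homomorphisms on each $A_i$ via the identity $\int\!\!\int|F(x+y)-F(x)F(y)|^2=2-2\sum_\chi|\hat F(\chi)|^2\,\Re\,\hat F(\chi)$; this constructively produces the $\chi_i$ and avoids any appeal to Gowers norms or to Lemma \ref{kisk}, at the cost of the explicit Fourier computation. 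Both routes ultimately rely on classical Fourier analysis on the groups $A_i$, and your injectivity step (integrating $\phi$ and using that $\int_{A_i}\chi_i\in\{0,1\}$) is a clean substitute for the paper's orthogonality observation. Two points you pass over quickly but which are standard in this framework and worth a sentence each if written up: the transfer of a.e.-multiplicativity to $\lim_\omega\int_{A_i\times A_i}|F_i(x+y)-F_i(x)F_i(y)|^2=0$ needs the non-standard Fubini theorem on $\prod_\omega(A_i\times A_i)$ (whose Borel $\sigma$-algebra strictly contains $\mathcal{A}\otimes\mathcal{A}$), which the paper invokes elsewhere; and the reduction to a multiplicative $\phi$ requires a short Fubini/null-set argument to pass from ``$\Delta_t\phi$ is a.e. constant for each $t$'' to ``$\phi$ is a.e. a constant times a homomorphism.''
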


\begin{proof}
We have that $\bA$ is the ultra product of a sequence $\{A_i\}_{i=1}^\infty$ of compact abelian groups.
Let $H\subseteq\hat{\bA}_1$ denote the set of those characters that are ultra limits of characters on $\{A_i\}_{i=1}^\infty$.
Let $\lambda_i:A_i\rightarrow\mathbb{C}$ and $\chi_i:A_i\rightarrow\mathbb{C}$ be two sequences of linear characters. Let furthermore $\lambda$ be the ultra limit of $\{\lambda_i\}_{i=1}^\infty$ and $\chi$ be the ultra limit of $\{\chi_i\}_{i=1}^\infty$.
If $\lambda_i$ differs from $\chi_i$ on an index set which is in the ultra filter then they are orthogonal at this index set and so they are orthogonal in the limit. This implies that $\lambda=\chi$ if and only if the sequences $\{\lambda_i\}_{i=1}^\infty$ and $\{\mu_i\}_{i=1}^\infty$ agree on a set from the ultra filter. In other words $H$ is isomorphic to the ultra product of the dual groups of $\prod_\omega\hat{A_i}$. 
We show that $H=\hat{\bA}_1$. Assume by contradiction that $H$ is strictly smaller than $\bA_1$. Then there is a character $\phi\in\hat{\bA}_1$ which is orthogonal to every character in $H$.
We have that $\phi=\lim_\omega\phi_i$ where $\phi_i:A_i\rightarrow\mathbb{C}$ is a measurable function of absolute value $1$. For $i\in\mathbb{N}$ let $a_i$ denote the $L^\infty$ norm of the Fourier transform of $\phi_i$. 
We have that $\lim_\omega a_i=0$. Equation (\ref{u2norm}) implies that $\|\phi_i\|_{U_2}\leq \sqrt{a_i}$ and thus $\|\phi\|_{U_2}=\lim_\omega\|\phi_i\|_{U_2}=0$ which is a contradiction by lemma \ref{kisk}.
\end{proof}

\medskip

By lemma \ref{charhom} every $k$-th order character $\phi$ induces a homomorphism from $\bA$ to $\hat{\bA}_{k-1}/T$ for some countable subgroup. This homomorphism represents an element in $\hom^*(\bA,\hat{\bA}_{k-1})$. We denote this element by $q_k(\phi)$. If $q_k(\phi_1)=q_k(\phi_2)$ then Lemma \ref{countriv} shows that if $k\geq 2$ then $\phi_1$ and $\phi_2$ belong to the same rank one module.
This implies the following theorem.

\begin{theorem}\label{dualemb} If $k\geq 2$ then $q_k:\hat{\bA}_k\rightarrow\hom^*(\bA,\hat{\bA}_{k-1})$ is an injective homomorphism.
\end{theorem}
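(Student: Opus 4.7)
The plan is to first build $q_k$ at the level of individual $k$-th order characters, verify it is a group homomorphism into $\hom^*(\bA,\hat{\bA}_{k-1})$, then check it descends to $\hat{\bA}_k$ and is injective there; the hypothesis $k\geq 2$ will enter only in the last step, via Lemma \ref{countriv}. Given a $k$-th order character $\phi$, Lemma \ref{charhom} provides a countable $T_\phi\leq\hat{\bA}_{k-1}$ and a homomorphism $h_\phi:\bA\to\hat{\bA}_{k-1}/T_\phi$ with $S_{k-1}(\Delta_t\phi)$ contained in the coset $h_\phi(t)$ for every $t$. I define $q_k(\phi)$ to be the class of $h_\phi$ in $\hom^*(\bA,\hat{\bA}_{k-1})$; any two admissible choices of $T_\phi$ are reconciled by composing with the projection to a common countable enlargement, so this class is unambiguous. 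The homomorphism property $q_k(\phi_1\phi_2)=q_k(\phi_1)q_k(\phi_2)$ then follows from the identity $\Delta_t(\phi_1\phi_2)=\Delta_t\phi_1\cdot\Delta_t\phi_2$ combined with Lemma \ref{prodsup}: passing to any $T\supseteq T_{\phi_1}\cup T_{\phi_2}$, the dual support $S_{k-1}(\Delta_t(\phi_1\phi_2))$ sits inside $h_{\phi_1}(t)\cdot h_{\phi_2}(t)\subseteq\hat{\bA}_{k-1}/T$, so $h_{\phi_1\phi_2}=h_{\phi_1}h_{\phi_2}$ in this quotient.

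To check that $q_k$ descends to $\hat{\bA}_k$, I first observe that every character $f\in L^\infty(\mathcal{F}_{k-1})$ (i.e., every character in the trivial module) has trivial image: the dual support $S_{k-1}(f)$ is countable, and by Lemma \ref{prodsup} together with the shift-invariance of $S_{k-1}$, the dual support of $\Delta_tf=f\cdot\overline{f_t}$ sits inside the countable subgroup generated by $S_{k-1}(f)$, so $h_f$ is trivial in that quotient. If now $\phi_1,\phi_2$ generate the same rank one module $W\in\hat{\bA}_k$, then because $W$ is the $L^2$-closure of $L^\infty(\mathcal{F}_{k-1})\phi_1$, the ratio $f:=\phi_1\overline{\phi_2}$ lies in $L^2(\mathcal{F}_{k-1})$ with $|f|=1$, hence in $L^\infty(\mathcal{F}_{k-1})$; the previous observation gives that $q_k(f)$ is the identity, and combining this with the homomorphism property yields $q_k(\phi_1)=q_k(\phi_2)$.

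For injectivity, suppose $q_k(\phi)$ is the identity in $\hom^*(\bA,\hat{\bA}_{k-1})$ for some $k$-th order character $\phi$. The direct-limit description of $\hom^*$ supplies a single countable $T\supseteq T_\phi$ in $\hat{\bA}_{k-1}$ such that the composition of $h_\phi$ with the projection $\hat{\bA}_{k-1}/T_\phi\to\hat{\bA}_{k-1}/T$ is identically the zero coset; this yields $S_{k-1}(\Delta_t\phi)\subseteq T$ for every $t\in\bA$, which is exactly the condition driving the proof of Lemma \ref{countriv}. Since $k\geq 2$, that lemma forces $\phi\in L^\infty(\mathcal{F}_{k-1})$, i.e., $\phi$ represents the trivial element of $\hat{\bA}_k$, establishing injectivity.

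The main obstacle is the bookkeeping through $\hom^*(\bA,\hat{\bA}_{k-1})$: every equality, composition, and vanishing statement has to be interpreted after passing to a sufficiently large common countable subgroup of $\hat{\bA}_{k-1}$, and the enlargements produced by the repeated uses of Lemmas \ref{charhom} and \ref{prodsup} must be coordinated consistently. All of the substantive mathematical content, however, is concentrated in Lemma \ref{countriv}, which supplies precisely the rigidity needed to distinguish non-trivial elements of $\hat{\bA}_k$ from the trivial module by the behaviour of first derivatives of their representing characters.
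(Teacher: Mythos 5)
Your proposal is correct and follows essentially the same route as the paper: $q_k$ is defined exactly via Lemma \ref{charhom}, and injectivity is reduced to Lemma \ref{countriv} by observing that a character with trivial image has all dual supports $S_{k-1}(\Delta_t\phi)$ inside one countable subgroup. The paper states this in two sentences and omits the verifications you supply (well-definedness over the choice of $T$, the homomorphism property via Lemma \ref{prodsub}-style support estimates, and the descent to rank one modules), so your write-up is a faithful, more detailed version of the intended argument.
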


Note that if $k=1$ then $\hat{\bA}_1$ is embedded into $\hom(\bA,\hat{\bA}_0)$ where $\hat{\bA}_0$ is defined as the complex unit circle with multiplication.
The next theorem follows immediately from theorem \ref{dualemb}

\begin{theorem}[Structure of the dual groups]\label{dualstruct} $\hat{\bA}_k$ is isomorphic to a subgroup in
$$\hom^*(\bA,\hom^*(\bA,\dots,\hom^*(\bA,\hat{\bA}_1))\dots)$$
where the number of $\hom^*$-s is $k-1$,
\end{theorem}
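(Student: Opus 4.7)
The plan is to proceed by induction on $k$. For $k=1$ the statement is vacuous: the nested expression with zero copies of $\hom^*$ is just $\hat{\bA}_1$, and the identity embeds it into itself. For $k \geq 2$ theorem \ref{dualemb} directly supplies the injective homomorphism $q_k:\hat{\bA}_k \hookrightarrow \hom^*(\bA,\hat{\bA}_{k-1})$, while the inductive hypothesis produces an injection of $\hat{\bA}_{k-1}$ into the $(k-2)$-fold nested expression $\hom^*(\bA,\ldots,\hom^*(\bA,\hat{\bA}_1)\ldots)$. To compose these into an embedding of $\hat{\bA}_k$ into the $(k-1)$-fold nested expression, the only thing I need is that $\hom^*(\bA,-)$ preserves injective homomorphisms.

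The key lemma I would establish is therefore: any injective homomorphism $\iota:H \hookrightarrow G$ of abelian groups induces an injective homomorphism $\iota_*:\hom^*(\bA,H) \to \hom^*(\bA,G)$. On representatives $\phi \in \hom(\bA,H/T)$ with $T \in \aleph_0(H)$, the map is defined by post-composition with the injection $H/T \hookrightarrow G/\iota(T)$ (noting $\iota(T) \in \aleph_0(G)$). Well-definedness on equivalence classes is clear because if $\phi_1$ and $\phi_2$ agree modulo a common countable refinement $T_3 \leq H$, their images agree modulo $\iota(T_3) \leq G$. Injectivity is the real content: using the identification (\ref{homstarcup}) $\hom^*(\bA,-) = \bigcup_T \hom^0(\bA,-/T)$, two representatives are equivalent precisely when their difference has countable image. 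Since $\iota$ is injective, a homomorphism $\psi:\bA \to H/(T_1T_2)$ has countable image if and only if $\iota \circ \psi$ does, so $\iota_*\phi_1 \sim \iota_*\phi_2$ in $\hom^*(\bA,G)$ forces $\phi_1 \sim \phi_2$ in $\hom^*(\bA,H)$.

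With this functoriality in hand, applying $\hom^*(\bA,-)$ iteratively $k-2$ times to the inductive embedding $\hat{\bA}_{k-1} \hookrightarrow \hom^*(\bA,\ldots,\hat{\bA}_1)\ldots)$ and then precomposing with $q_k$ yields the desired embedding. The base-case and iteration are routine; the one point that demands care is verifying that $\iota_*$ really is injective (rather than merely well-defined), and this is precisely where the characterization of equivalence in $\hom^*$ by countability of the image is essential. No other step in the argument is nontrivial, which is why the excerpt flags the theorem as an immediate corollary of theorem \ref{dualemb}.
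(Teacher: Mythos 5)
Your proposal is correct and follows essentially the same route as the paper: iterate the embedding $q_k$ from Theorem \ref{dualemb} and use that an injection $A_2\hookrightarrow A_3$ induces an injection $\hom^*(\bA,A_2)\hookrightarrow\hom^*(\bA,A_3)$, which is exactly the monotonicity fact the paper invokes without proof. Your verification of the injectivity of $\iota_*$ via countability of the image of the difference is a correct filling-in of that omitted step, not a different argument.
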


\begin{proof} The proof follows directly from Lemma \ref{dualemb} and the fact that $\hom^*(A_1,A_2)\subseteq \hom^*(A_1,A_3)$ whenever $A_2\subseteq A_3$.
\end{proof}

Theorem \ref{dualstruct} has the next two useful consequences.

\begin{lemma}\label{duexp} Let $e$ be a natural number and assume that the groups $\{A_i\}_{i=1}^\infty$ have exponent $e$. Then $\hat{\bA}_k$ has exponent $e$ for every $k\geq 1$.
\end{lemma}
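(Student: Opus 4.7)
The plan is to deduce the result from the structural Theorem \ref{dualstruct} together with Lemma \ref{exp}, after verifying two elementary facts: that the ultraproduct $\bA$ itself has exponent $e$, and that the base case $\hat{\bA}_1$ has exponent $e$.

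First I would observe that $\bA = \prod_\omega A_i$ has exponent $e$: for any $x = \lim_\omega x_i$ with $x_i \in A_i$, we have $x^e = \lim_\omega x_i^e = \lim_\omega 1 = 1$, since $x_i^e = 1$ for $\omega$-almost every $i$. Likewise each Pontryagin dual $\hat{A_i}$ has exponent $e$, because for any character $\chi \in \hat{A_i}$ and any $a \in A_i$ we have $\chi(a)^e = \chi(a^e) = \chi(1) = 1$, so $\chi^e$ is the trivial character.

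For the base case $k = 1$, Lemma \ref{firstdual} yields $\hat{\bA}_1 \cong \prod_\omega \hat{A_i}$, and the same ultraproduct argument as for $\bA$ shows that this group has exponent $e$. For $k \geq 2$, Theorem \ref{dualstruct} embeds $\hat{\bA}_k$ as a subgroup of the iterated Hom-group
\[
\hom^*(\bA, \hom^*(\bA, \dots, \hom^*(\bA, \hat{\bA}_1))\dots)
\]
with $k-1$ applications of $\hom^*$. Applying Lemma \ref{exp} to the outermost $\hom^*$, whose first argument is $\bA$ (of exponent $e$), we conclude that the entire iterated group has exponent $e$. Since any subgroup of a group of exponent $e$ also has exponent $e$, this gives the claim for $\hat{\bA}_k$.

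There is really no hard step here; the work has been done in Lemma \ref{firstdual}, Lemma \ref{exp}, and Theorem \ref{dualstruct}. The only conceptual point worth emphasizing is that Lemma \ref{exp} extracts the exponent from the \emph{first} argument of $\hom^*$, and that in the iterated expression this first argument is always $\bA$, so the exponent bound propagates all the way from the outside regardless of the (possibly larger) exponents appearing in the inner levels.
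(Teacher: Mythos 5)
Your proof is correct and follows essentially the same route as the paper: establish the base case $\hat{\bA}_1\cong\prod_\omega\hat{A_i}$ via Lemma \ref{firstdual}, then combine Lemma \ref{exp} with the embedding of Theorem \ref{dualstruct}. Your explicit remark that Lemma \ref{exp} draws the exponent from the first argument $\bA$ (which is of exponent $e$ as an ultraproduct) is a helpful clarification of a point the paper leaves implicit, but it is not a different argument.
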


\begin{proof} We have by lemma \ref{firstdual} that $\hat{\bA}_1$ has exponent $e$. Then lemma \ref{exp} and theorem \ref{dualstruct} finish the proof.
\end{proof}

\begin{lemma}\label{charzero} Let  $\{A_i\}_{i=1}^\infty$ be a sequence of groups such that for every natural number $n>1$  there are only finitely many indices $i$ such that $\hat{A_i}$ has an element of order $n$. Then $\hat{\bA}_k$ is torsion free for every $k\geq 1$. 
\end{lemma}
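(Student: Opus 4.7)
The plan is to combine the structural embedding of Theorem \ref{dualstruct} with the closure property of Lemma \ref{estfree} via induction on $k$, after first settling the base case $k=1$ by a direct ultrafilter argument.

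For the base case, Lemma \ref{firstdual} identifies $\hat{\bA}_1$ with the ultra product $\prod_\omega \hat{A_i}$. The characteristic-zero hypothesis asserts that for each $n > 1$ and each proper divisor $d > 1$ of $n$, only finitely many indices $i$ carry an element of order $d$ in $\hat{A_i}$; taking the union over the (finitely many) divisors $d > 1$ of $n$, the set
\[
I_n := \{ i : \hat{A_i}[n] = \{0\} \}
\]
is cofinite and hence lies in the non-principal ultrafilter $\omega$. Now suppose $(x_i)_\omega \in \prod_\omega \hat{A_i}$ is annihilated by $n$. Then $\{ i : n x_i = 0 \} \in \omega$, and intersecting with $I_n \in \omega$ forces $\{ i : x_i = 0 \} \in \omega$, i.e.\ $(x_i)_\omega = 0$. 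Thus $\hat{\bA}_1$ is torsion free, and in particular essentially torsion free.

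For the inductive step, set $B_1 := \hat{\bA}_1$ and $B_k := \hom^*(\bA, B_{k-1})$ for $k \geq 2$, so that by Theorem \ref{dualstruct} the group $\hat{\bA}_k$ embeds into $B_k$. Assume inductively that $B_{k-1}$ is torsion free; since torsion free groups are a fortiori essentially torsion free, Lemma \ref{estfree} (applied with $A_1 = \bA$ and $A_2 = B_{k-1}$) yields that $B_k$ is torsion free. Any subgroup of a torsion free group is torsion free, so $\hat{\bA}_k$ is torsion free, and the induction continues.

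The only place where the hypothesis on $\{A_i\}$ is actually used is the base case; the rest of the argument is a mechanical composition of Theorem \ref{dualstruct} with Lemma \ref{estfree}, so the only potential obstacle is verifying cleanly that no torsion survives the ultraproduct construction, and this is handled by exploiting the non-principality of $\omega$ against the cofiniteness of $I_n$.
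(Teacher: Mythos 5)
Your proposal is correct and follows essentially the same route as the paper: identify $\hat{\bA}_1$ with $\prod_\omega\hat{A_i}$ via Lemma \ref{firstdual}, deduce torsion-freeness of the base case from the characteristic-zero hypothesis, and then propagate upward through Theorem \ref{dualstruct} using Lemma \ref{estfree}. The only difference is that you spell out the ultrafilter argument for the base case (cofiniteness of $I_n$ against non-principality of $\omega$), which the paper leaves implicit; that detail is handled correctly.
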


\begin{proof} We have by lemma \ref{firstdual} that $\hat{\bA}_1$ is torsion free. Then lemma \ref{estfree} and theorem \ref{dualstruct} finish the proof.
\end{proof}

\section{Consequences of the main theorem}

\subsection{Regularization, inverse theorem and special families of groups} 

\bigskip

We prove theorem \ref{reglem}, theorem \ref{invthem}, theorem \ref{restreg} and theorem \ref{restinv}.

\noindent{\it Proof of theorem \ref{reglem}}~We proceed by contradiction. Let us fix $k$ and $F$. Assume that the statement fails for some $\epsilon>0$. 
This means that there is a sequence of measurable functions $\{f_i\}_{i=1}^\infty$ on the compact abelian groups $\{A_i\}_{i=1}^\infty$ with $|f_i|\leq 1$ such that $f_i$ does not satisfy the statement with $\epsilon$ and $n=i$. Let $\omega$ be a fixed non-principal ultra filter and $\bA=\prod_\omega A_i$.
We denote by $f$ the ultra limit of $\{f_i\}_{i=1}^\infty$.
By theorem \ref{main} we have that $f=f_s+f_r$ where $\|f_r\|_{U_{k+1}}=0$ and $f_s=\tilde{\gamma}\circ g$ for some strong nilspace factor $\tilde{\gamma}:\bA\rightarrow N$ and measurable function $g:N\rightarrow\mathbb{C}$.
Now we use theorem \ref{inverselimit} which says that $N$ is an inverse limit of finite dimensional nilspaces $\{N_i\}_{i=1}^\infty$ in such a way that the projections from $N$ to $N_i$ are all fibre surjective.
Let $\mathcal{N}_i$ denote the $\sigma$-algebra generated by the projection to $N_i$. Then we have that 
$g=\lim_{i\to\infty}\mathbb{E}(g|\mathcal{N}_i)$ in $L^1$.
It follows that there is an index $j$ such that
$g_j=\mathbb{E}(g|\mathcal{N}_j)$ satisfies $\|g-g_j\|_1\leq\epsilon/3$.
Let $\gamma:\bA\rightarrow N_j$ be the composition of $\tilde{\gamma}$ with the fibre surjective map $N\rightarrow N_j$. 
We have by lemma \ref{strongcirc} that $\gamma$ is a strong nilspace factor of $\bA$. 
Furthermore there is a Lipschitz function $h:N_j\rightarrow\mathbb{C}$ with $|h|\leq 1$ and Lipschitz constant $c$ such that $\|g_j-h\|_1\leq\epsilon/3$.

Using that $\gamma$ is a strong nilspace factor we have that $q=\gamma\circ h$ satisfies that $\|f_s-q\|\leq 2\epsilon/3$.
Let $f_e=f_s-q$. 
The function $\gamma$ is a continuous function so there is a sequence of continuous functions $\{\gamma_i':A_i\rightarrow N_j\}_{i=1}^\infty$ such that $\lim_{\omega}\gamma'_i=\gamma$. It is easy to see that $\gamma'_i$ is an approximate morphism with error tending to $0$.
It follows from \cite{NP} that it can be corrected to a morphism $\gamma_i$ (if $i$ is sufficiently big) such that the maximum point wise distance of $\gamma_i$ and $\gamma_i'$ goes to $0$. As a consequence we have that $\lim_\omega\gamma_i=\gamma$.

Let $f^i_s=\gamma_i\circ h$, and let $f^i_r$ be a sequence of measurable functions with $\lim_\omega f^i_r=f_r$.
We set $f^i_e=f_i-f^i_s-f^i_r$. It is clear that $\lim_\omega f^i_s=q$ and $\lim_\omega f^i_e=f_e$.
We also have that $\lim_\omega\|f^i_r\|_{U_{k+1}}=\|f_r\|_{U_{k+1}}=0$, $\lim_\omega (f^i_r,f^i_s)=(f_r,q)=0$ and $\lim_\omega(f^i_r,f^i_e)=(f_r,f_e)=0$.
Let $m$ be the maximum of the complexity of $N_i$ and $c$.
There is an index set $S$ in $\omega$ such that 
\begin{enumerate}
\item $\|f^i_r\|_{U_{k+1}}\leq F(\epsilon,m)$,
\item $\|f^i_e\|_1\leq \epsilon$,
\item $|(f^i_r,f^i_s)|~,~|(f^i_r,f^i_e)|\leq F(\epsilon,m)$,
\item $\gamma_i$ is at most $F(\epsilon,m)$ balanced,
\end{enumerate}
hold simultaneously on $S$.
Note that $\gamma$ itself is $0$ balanced.
This is a contradiction.

\bigskip

\noindent{\it Proof of theorem \ref{restreg}}~In the proof of theorem \ref{reglem} the nilspace $N_j$ that we construct is a character preserving factor of $\bA$. This means that the $i$-th structure group of $N_j$ is embedded into $\hat{\bA}_i$. This shows that $N_j$ is a $\mathfrak{A}$-nilspace. 

\bigskip

\noindent{\it Proof of theorem \ref{invthem} and theorem \ref{restinv}}~It is clear that if we apply theorem \ref{reglem} with $\epsilon_2>0$ and function $F(a,b)=a/b$ then in the decomposition $f=f_s+f_e+f_r$ the scalar product $(f,f_s)$ is arbitrarily close to $(f_s,f_s)$ and $\|f_s\|_{U_{k+1}}$ is arbitrarily close to $\|f\|_{U_{k+1}}$ if $\epsilon_2$ is small enough (depending only on $\epsilon$). This means by corollary \ref{l2becs} that $(f_s,f_s)\geq2\epsilon^{2^k}/3$ holds if $\epsilon_2$ is small and also $(f,f_s)\geq\epsilon^{2^k}/2$ holds simultaneously.

The inverse theorem is special families follows in the same way from theorem \ref{restreg}.

\subsection{Limit objects for convergent function sequences}

We start the chapter with an important observation.

\begin{lemma} Let $f\in L^\infty(\bA)$ and $M\in\mathcal{M}_i$ be a simple moment of degree $i$. Then $M(f)=M(\mathbb{E}(f|\mathcal{F}_i))$.
\end{lemma}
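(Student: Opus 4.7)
The plan is to realize $M(f)$ as the value at the origin of a corner convolution and then peel off the ``error'' by invoking lemma \ref{simpzero}. Encode $M$ by its hypergraph $E\subseteq 2^{[n]}$ (we may assume $\emptyset\notin E$) and let $T=\{v(S):S\in E\}\subset K_n$, where $v(S)$ is the characteristic vector of $S$. Since $M$ has degree $i$, every $v\in T$ satisfies $|v|\leq i+1$. Let $S_T\subseteq\{0,1\}^n$ be the simplicial closure of $T$; its maximal elements coincide with those of $T$, so $S_T$ has height at most $i+1$. Define a function system $F=\{f_v\}_{v\in K_n}$ by choosing $f_v\in\{f,\bar f\}$ so that $f_v^\con$ equals the term $f^{\epsilon(v)}$ prescribed by $M$ for $v\in T$, and $f_v=1_{\bA}$ for $v\in K_n\setminus T$. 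Formula (\ref{corner}) gives $M(f)=[F](0)$.

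Next I would apply theorem \ref{propfk} to write $f=f_s+f_r$ with $f_s=\mathbb{E}(f|\mathcal{F}_i)$ and $\|f_r\|_{U_{i+1}}=0$, inducing $f_v=g_v+r_v$ with $\|r_v\|_{U_{i+1}}=0$ for $v\in T$ and $r_v=0$ for $v\in K_n\setminus T$. Expanding multilinearly,
\begin{equation*}
[F](0)=\sum_{E''\subseteq T}[F^{E''}](0),
\end{equation*}
where $F^{E''}$ has $r_v$ at vertices $v\in E''$ and $g_v$ elsewhere. The summand $E''=\emptyset$ contributes $[G](0)=M(f_s)=M(\mathbb{E}(f|\mathcal{F}_i))$.

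The main step is to show $[F^{E''}](0)=0$ whenever $E''\neq\emptyset$. Pick any $s\in E''$; then $s\in S_T\setminus\{0\}$ (since $\emptyset\notin E$ keeps $0$ out of $T$), and the depth $d(s)$ of $s$ in $S_T$ satisfies $d(s)\leq i+1$. By the monotonicity $\|\cdot\|_{U_k}\leq\|\cdot\|_{U_{k+1}}$ of Gowers norms, $\|r_s\|_{U_{d(s)}}\leq\|r_s\|_{U_{i+1}}=0$. The function system $F^{E''}$ then meets the hypotheses of lemma \ref{simpzero} with $u=0$, $S=S_T$ and this $s$ (the vertices of $K_n\setminus S_T$ all carry $1_{\bA}$), so the convolution of $F^{E''}$ at $u=0$ is identically zero on $\bA$; in particular $[F^{E''}](0)=0$. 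Summing yields $M(f)=M(\mathbb{E}(f|\mathcal{F}_i))$. The main technical annoyance will be the bookkeeping of the conjugation operator $\con$ needed to make the identification $M(f)=[F](0)$ precise; once this is arranged, lemma \ref{simpzero} does all of the real work.
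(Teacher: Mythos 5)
Your proof is correct. It does, however, take a genuinely different route from the paper's. The paper encodes $M(f)$ as a corner convolution over the $(i+1)$-dimensional cube $K_{i+1}$ and then replaces each $f_v$ by $\mathbb{E}(f_v|\mathcal{F}_i)$ one vertex at a time, killing each error term with the inequality of lemma \ref{cornineq}: every nonzero $v\in K_{i+1}$ has some coordinate $j$ with $v_j=1$, so the factor $\|f_v-\mathbb{E}(f_v|\mathcal{F}_i)\|_{U_{i+1}}=0$ appears in the bound. You instead keep the cube dimension equal to the number $n$ of variables of the moment, support the nontrivial terms on a simplicial set $S_T$ of height at most $i+1$, and invoke the simplicial vanishing lemma \ref{simpzero}; your argument is in effect the corner-convolution analogue of corollary \ref{simpzerocor}. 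The two routes cost about the same, but yours is the more robust one: the paper's reduction to $K_{i+1}$ silently assumes that a degree-$i$ simple moment in $n>i+1$ variables can be rewritten as a convolution over $K_{i+1}$, which needs an extra (unstated) reduction, whereas your encoding on $K_n$ works verbatim for any number of variables, with the height bound on $S_T$ doing the work that the dimension bound does in the paper. Your application of lemma \ref{simpzero} checks out ($u=0$ lies in $S_T$, $s\neq 0$ since $\emptyset\notin E$, $d(s)\leq i+1$ so $\|r_s\|_{U_{d(s)}}=0$ by monotonicity, and all vertices outside $S_T$ carry $1_{\bA}$), and the conjugation bookkeeping you flagged is indeed harmless, since $\mathbb{E}(\overline{f}|\mathcal{F}_i)=\overline{\mathbb{E}(f|\mathcal{F}_i)}$ and conjugation preserves the vanishing of the $U_{i+1}$ seminorm.
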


\begin{proof}\label{momentproj} Let $F=\{f_v\}_{v\in K_{i+1}}$ be a function system such that each $f_v$ is one of $1_{\bA},f,\overline{f}$ and let $F'=\{\mathbb{E}(f_v|\mathcal{F}_i)\}_{v\in K_{i+1}}$. Observe that for each $M\in\mathcal{M}_i$ there is a fuction system of this form such that $[F](0)=M(f)$. Then by the multilinearity of convolutions and lemma \ref{cornineq} we have that $[F](0)=[F'](0)=M(\mathbb{E}(f|\mathcal{F}_i))$.
\end{proof}

We continue with a few technical notions. We denote the $\sigma$-algebra $\vee_{i=1}^\infty\mathcal{F}_i$ by $\mathcal{F}$. We say that a $\sigma$-algebra $\mathcal{B}$ is a nil $\sigma$-algebra of infinite order if $\mathcal{B}=\vee_{i=1}^\infty[\mathcal{B}]_i$ and (\ref{rhsnil}) holds for every $i\in\mathbb{N}$.
Lemma \ref{weaknilprop} shows that in this case $[\mathcal{B}]_i=\mathcal{B}\cap\mathcal{F}_i$ for every $i\in\mathbb{N}$ and that $[\mathcal{B}]_i$ is a nil $\sigma$-algebra of order $i-1$ for every $i\in\mathbb{N}$.
The proof of lemma \ref{embednil} with minor modifications shows that every separable sub $\sigma$-algebra in $\mathcal{F}$ is contained in a separable nil $\sigma$-algebra of infinite order. 
First we prove theorem \ref{simplim} and corollary \ref{simplimcor}.

\bigskip

Let $\{f_i:A_i\rightarrow\mathbb{C}\}$ be a sequence of functions with $|f_i|\leq r$.
Let $\bA$ be the ultra product of $\{A_i\}_{i=1}^\infty$ and $f$ be the ultra limit of $\{f_i\}_{i=1}^\infty$.
We have that $M(f)=\lim_\omega M(f_i)=\lim M(f_i)$ for every moment $M$.

Let $g$ denote the projection of $f$ to the $\sigma$-algebra $\mathcal{F}$ and let $g_i=\mathbb{E}(f|\mathcal{F}_i)$. 
Using that $\mathbb{E}(f|\mathcal{F}_i)=\mathbb{E}(g|\mathcal{F}_i)=g_i$ holds for every $i\in\mathbb{N}$ we get by lemma \ref{momentproj} that $M(g)=M(f)=M(g_i)$ holds for every moment $M\in\mathcal{M}_i$.

Let $\mathcal{B}$ be a separable nil $\sigma$-algebra of infinite order such that $g$ is measurable in $\mathcal{B}$ and let $\mathcal{B}_i=[\mathcal{B}]_{i+1}=\mathcal{B}\cap\mathcal{F}_i$.
Using theorem \ref{main} we can create a sequence $\gamma_i:\bA\rightarrow N_i$ of nilspace factors generating the $\sigma$-algebra $\mathcal{B}_i$ in a way that these factors form an inverse system.
Let $\gamma:\bA\rightarrow N$ be the inverse limit of these factors. 

Since all the functions $g_i$ are measurable in the $\sigma$-algebra generated by $\gamma$ the function $g$ is also measurable in it. This means that there is a function $h:N\rightarrow\mathbb{C}$ such that $\gamma\circ h=g$.
Using the rooted measure preserving property of the factors $\gamma_i$ we have $M(g)=M(h)$ for every simple moment $M$.
This completes the proof of theorem \ref{simplim}. For corollary \ref{simplimcor} let $h_i=\mathbb{E}(h|N_i)$. It is clear that $h_i\circ\gamma=g_i$ and so $M(h_i)=M(g_i)=M(f)$ holds for every $M\in\mathcal{M}_i$.

\bigskip

The proof of theorem \ref{genlim} goes in a very similar way. The only difference is that we project all the functions $f^a\overline{f^b}$ with $a,b\in\mathbb{N}$ to $\mathcal{F}$ (resp. $\mathcal{F}_i$). The resulting function system $g^{a,b}$ (resp. $g^{a,b}_i$) at almost every point $x$ describes the complex moments of a probability distribution on the complex disc of radius $r$. Then we chose the separable nil $\sigma$-algebra $\mathcal{B}\subset\mathcal{F}$ so that each $g^{a,b}$ is measurable in $\mathcal{B}$. The rest of the proof is essentially the same.

\vskip 0.2in

\noindent
Bal\'azs Szegedy
\noindent
University of Toronto, Department of Mathematics,
\noindent
St George St. 40, Toronto, ON, M5R 2E4, Canada

\end{document}